\newcommand{\bc}{\mathbb C}
\newcommand{\bz}{\mathbb Z}
\newcommand{\ba}{\mathbb A}
\newcommand{\la}{\langle}
\newcommand{\ra}{\rangle}
\newcommand{\bs}{\backslash}
\newcommand{\al}{\alpha}
\newcommand{\lam}{\lambda}
\DeclareMathOperator{\ind}{Ind}
\DeclareMathOperator{\GL}{GL}
\DeclareMathOperator{\Hom}{Hom}
\DeclareMathOperator{\Ind}{Ind}
\DeclareMathOperator{\SL}{SL}
\newcommand{\fo}{\mathfrak o}
\newcommand{\tgl}{\widetilde{\GL}}
\newcommand{\tgln}{\widetilde{\GL}^{(n)}}
\newcommand{\tm}{\widetilde{M}}
\newcommand{\tmn}{\widetilde{M}^{(n)}}
\newcommand{\tto}{\widetilde{T}}
\newcommand{\tton}{\widetilde{T}^{\square}}
\newcommand{\tb}{\widetilde{B}}
\newcommand{\sco}{\mathcal{O}}
\newtheorem{Thm}{Theorem}[section]
\newtheorem{Prop}[Thm]{Proposition}
\newtheorem{Lem}[Thm]{Lemma}
\newtheorem{Cor}[Thm]{Corollary}
\theoremstyle{definition}
\newtheorem{Def}[Thm]{Definition}
\theoremstyle{remark}
\newtheorem{Rem}[Thm]{Remark}
\newtheorem{Ex}[Thm]{Example}
\theoremstyle{definition}
\title[Fourier Coefficients for Theta Representations]{Fourier Coefficients for Theta Representations on Covers of General Linear Groups}
\author{Yuanqing Cai}
\address{Department of Mathematics, Boston College, Chestnut Hill, MA 02467-3806, USA}
\curraddr{ Department of Mathematics, Weizmann Institute of Science, Rehovot, 7610001, Israel}
\email{yuanqing.cai@weizmann.ac.il}
\date\today
\subjclass[2010]{Primary 11F70; Secondary 11F30, 11F27}
\keywords{Theta representation, semi-Whittaker coefficient, Fourier coefficient, unipotent orbit, unique functional,  metaplectic tensor product}
\begin{document}

\begin{abstract}

We show that the theta representations on certain covers of general linear groups support certain types of unique functionals. The proof involves two types of Fourier coefficients. The first are semi-Whittaker coefficients, which generalize coefficients introduced by Bump and Ginzburg for the double cover. The covers for which these coefficients vanish identically (resp. do not vanish for some choice of data) are determined in full. The second are the Fourier coefficients associated with general unipotent orbits.  In particular, we determine the unipotent orbit attached, in the sense of Ginzburg, to the theta representations.

\end{abstract}

\maketitle

\section{Introduction}
Let $F$ be a number field containing a full set of $n$th roots of unity. Let $\ba$ be its adele ring. Let $\tgl_r(\ba)$ be a metaplectic $n$-fold cover of the general linear group. In their pioneering work, Kazhdan-Patterson \cite{KP1984} constructed generalized theta representations $\Theta_r$ on $\tgl_r(\ba)$ as multi-residues of Borel Eisenstein series. The local theta representations were  also constructed as the Langlands quotient of reducible principal series representations. They showed that (both globally and locally) the generalized theta representations are generic if and only if $n\geq r$; and uniqueness of Whittaker models holds if and only if $n=r$ or $r+1$ (when $n=r+1$, the uniqueness property only holds for certain covers). The theta representations and their unique models have been used to construct Rankin-Selberg integrals for symmetric power $L$-functions for the general linear groups; see Shimura \cite{Shimura1975}, Gelbart-Jacquet \cite{GJ1978}, Patterson-Piatetski-Shapiro \cite{PPS1989}, Bump-Ginzburg \cite{BG1992}, Bump-Ginzburg-Hoffstein \cite{BGH1996}, and Takeda \cite{Takeda2014}.

Suppose $r>n$. Motivated by the above background, one may ask the following natural questions:
\begin{enumerate}[(1)]
\item Does $\Theta_r$ support other types of Fourier coefficients?
\item If $\Theta_r$ supports a  nonzero Fourier coefficient, when does the uniqueness property hold?
\item If the uniqueness property holds for certain types of Fourier coefficients, can we use it to construct Rankin-Selberg integrals that represent Euler products?
\end{enumerate}

All the three questions have affirmative answers and this paper mainly addresses the first two questions. We first introduce a generalization of the Whittaker coefficients, which we call semi-Whittaker coefficients. Let $\lambda=(r_1 \cdots r_k)$ be a partition of $r$. Let $P_\lambda$ be the standard parabolic subgroup of $\GL_r$ whose Levi subgroup $M\cong \GL_{r_1}\times \cdots\times \GL_{r_k}$. Let $U_\lambda$ be its unipotent radical. Let $U$ be the standard unipotent subgroup of $\GL_r$. Fix a nontrivial additive character $\psi:F\bs\ba\to \bc^\times$. Let $\psi_\lambda:U(F)\bs U(\ba)\to \bc^\times$ be the  character  such that it acts as $\psi$ on the simple positive root subgroups contained in $M$, and acts trivially otherwise. The $\lambda$-semi-Whittaker coefficient of $\theta\in\Theta_r$ is defined to be
\[
\int\limits_{U(F)\bs U(\ba)}\theta(ug)\psi_\lambda(u) \ du.
\]
When the partition is $\lambda=(r)$, this recovers the usual Whittaker coefficients.

\begin{Thm}\label{thm:1.1}
\mbox{ }
\begin{enumerate}[{\normalfont (1)}]
\item
If there is an $r_i> n$, then
\[
\int\limits_{U(F)\bs U(\ba)}\theta(ug)\psi_\lambda(u) \ du
\]
is zero for all choices of data.

\item
If $r_i\leq n$ for all $i$, then
\[
\int\limits_{U(F)\bs U(\ba)}\theta(ug)\psi_\lambda(u) \ du
\]
is nonzero for some choice of data.

\item When $r=mn$, i.e. when the rank is a multiple of the degree, and the partition is $\lambda=(n^m)$, then global uniqueness of $\lambda$-semi-Whittaker models holds.
\end{enumerate}
\end{Thm}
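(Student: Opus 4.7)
My plan is to reduce all three parts to known properties of the Whittaker coefficients of theta representations on smaller covers, via an unfolding through the constant term along $P_\lambda$. Writing $U=U_M U_\lambda$, the character $\psi_\lambda$ is trivial on $U_\lambda$ and agrees with the standard Whittaker character $\psi_M$ on $U_M$, which is itself a product of the standard unipotents inside each $\GL_{r_i}$. By Fubini, the $\lambda$-semi-Whittaker coefficient of $\theta\in\Theta_r$ equals
\[
\int\limits_{U_M(F)\bs U_M(\ba)} \theta_{P_\lambda}(u_M g)\,\psi_M(u_M)\,du_M,
\]
where $\theta_{P_\lambda}$ is the constant term of $\theta$ along $P_\lambda$. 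Kazhdan-Patterson computed the constant terms of $\Theta_r$ along standard parabolics: $\theta_{P_\lambda}$ is built, via the metaplectic tensor product, from the individual theta representations $\Theta_{r_i}$ on the covers $\widetilde{\GL}_{r_i}$. Consequently, the remaining integral factors (after accounting for the cocycle that couples the factors) into a product of Whittaker-type integrals of the $\Theta_{r_i}$'s over the blocks.

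For part (1), suppose some $r_i>n$. The factorization above exhibits the inner integral on the $i$-th block as (up to a nonzero constant and choice of section) the standard Whittaker integral of $\Theta_{r_i}$. Kazhdan-Patterson's genericity criterion forces this Whittaker integral to vanish identically when $r_i>n$, and hence the entire semi-Whittaker coefficient vanishes. For part (2), assume $r_i\leq n$ for every $i$. Then each $\Theta_{r_i}$ is generic, so at every place we can arrange data making the local Whittaker integral on the $i$-th block nonvanishing. A standard argument producing a global residual section realizing these local choices, together with the surjectivity of the constant-term map onto the residual sections on the Levi, then guarantees non-vanishing of the full coefficient for some datum.

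For part (3), with $r=mn$ and $\lambda=(n^m)$, every block is $\GL_n$ and $\Theta_n$ lies in the range $n=r$ where Kazhdan-Patterson proved uniqueness of Whittaker models (both locally and globally). By the unfolding, any $(n^m)$-semi-Whittaker functional on $\Theta_{mn}$ corresponds, through the constant term, to a $\psi_M$-equivariant functional on the metaplectic tensor product of $m$ copies of $\Theta_n$. The uniqueness of the Whittaker functional on each local $\Theta_n$, combined with multiplicity-one for the Whittaker model of the metaplectic tensor product of theta representations (which reduces to the multiplicativity of Whittaker functionals under the metaplectic tensor product construction), shows that this space of functionals is at most one-dimensional. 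Together with the non-vanishing from part (2), this gives global uniqueness.

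The principal obstacle is making the constant-term decomposition entirely explicit, since the metaplectic cocycle genuinely couples the blocks in the Levi of $P_\lambda$. One must identify which Weyl double cosets contribute nontrivially to $\theta_{P_\lambda}$, verify that the exceptional character appearing in the residual construction of $\Theta_r$ restricts on the Levi to the tensor of exceptional characters defining the $\Theta_{r_i}$'s, and track carefully how the induced Whittaker characters match block by block. For part (3) the additional subtlety is showing that uniqueness on the metaplectic tensor product lifts cleanly to uniqueness of the semi-Whittaker functional; this is expected to follow from the compatibility of induction of Whittaker functionals with the metaplectic tensor product of covers of $\GL_n$.
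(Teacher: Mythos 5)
Your reduction through the constant term along $P_\lambda$ and the identification of the constant term with the metaplectic tensor product $\Theta_{r_1}\tilde\otimes\cdots\tilde\otimes\Theta_{r_k}$ is exactly the paper's mechanism, and your part (2) is essentially the paper's argument; one caveat is that Whittaker models of $\Theta_{r_i}$ with $r_i<n$ are not unique, so the global coefficient is not an Euler product of local functionals and "arranging local nonvanishing at every place" does not by itself give global nonvanishing --- the correct (and simpler) input is Kazhdan--Patterson's \emph{global} genericity of each $\Theta_{r_i}$, choosing $f_i$ with nonzero global Whittaker coefficient, which is what the paper does. For part (1) you argue globally, pushing the vanishing of Whittaker coefficients of $\Theta_{r_i}$ ($r_i>n$) through the constant term; the paper instead argues locally, picking a single unramified place $w$ with $|n|_w=1$ and showing $J_{U,\psi_\lambda}(\Theta_{r,w})=0$ via the Jacquet module along $U_\lambda$ and an explicit dimension formula. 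Your route can be made to work, but it needs the full global constant-term theorem (that every vector of $\Theta_{\tm_\lambda}$ restricted to $M(F)\tmn(\ba)$ is a finite sum of pure tensors of restrictions of forms in the $\Theta_{r_i}$), which is substantial machinery you defer to the "principal obstacle"; the local route buys you part (1) without it.

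The genuine gap is in part (3). You assert that uniqueness "reduces to the multiplicativity of Whittaker functionals under the metaplectic tensor product construction," but this multiplicativity is false as a general principle and is precisely the point that has to be proved. The metaplectic tensor product involves (i) restricting each $\Theta_n$ to $\tgln_n$, which distributes the Whittaker functionals among the constituents $V_0({}^x\chi')$, and (ii) inducing from $Z_{\tgl_r}\tmn$ (through $\widetilde H$) back up to $\tm$, which multiplies the dimension of the $\psi_\lambda$-functional space by index factors coming from the $\tto/\tton$-conjugates of $\psi_\lambda$. The paper's computation gives
\[
\dim J_{U_M,\psi_\lambda}(\Theta_{\tm}(\chi'))=\frac{\prod_{i}[\tto_{\ast,i}:\tton_{\ast,i}]}{[\tto_\ast:\tton_\ast]}\prod_{i} d_i,
\]
and the entire content of the uniqueness statement for $r=mn$, $\lambda=(n^m)$ is that the index ratio equals $1$; this is verified there by a specific choice of maximal abelian subgroups, the fact that $Z_{\tgl_r}=\{(z^nI_r,\zeta)\}$ when $n\mid r$, and a separate argument when $|n|_F\neq 1$. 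That the ratio is not formally $1$ is visible already in the nearby case $n=2$, $\lambda=(2^k1)$, where nontrivial indices such as $[F^\times:F^{\times2}\fo^\times]$ must cancel. Without an argument of this kind, your "at most one-dimensional" claim is unsupported; moreover, the paper deduces the global uniqueness in part (3) from this local multiplicity-one statement (valid at all finite places, not only good ones), rather than from a global unfolding, so some version of the local index bookkeeping cannot be avoided.
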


We remark that the local version of the above theorem is also established (see Corollary \ref{cor:Vanishing semi whittaker}, \ref{cor:local nonvanishing}, and Theorem \ref{thm:multiplicity one semi whittaker}). Indeed, parts (1) and (3) are proved by using the local results, and part (2) is proved by using a  global argument. We also remark that when $n=2$ and $\lambda=(2^k)$ or $(2^k1)$ (depending on the parity of $r$), such coefficients and their uniqueness properties were already used in Bump and Ginzburg \cite{BG1992} in their work on symmetric square $L$-functions for $\GL(r)$.

The second type of Fourier coefficients we consider is the Fourier coefficients associated with unipotent orbits. The unipotent orbits of $\GL_r$ are parameterized by the partitions of $r$ via the Jordan decomposition. Given a unipotent orbit $\sco$, we can associate a set of Fourier coefficients; see Section \ref{sec:unipotent orbit} below. Roughly speaking, starting with a unipotent orbit $\sco$, we can define a unipotent subgroup $U_2(\sco)$. Let $\psi_{U_2(\sco)}: U_2(\sco)(F)\bs U_2(\sco)(\ba)\to\bc^\times$ be a character which is in general position. The Fourier coefficient of $\theta\in\Theta_r$ we want to consider is
\[
  \int\limits_{U_2(\sco)(F)\bs U_2(\sco)(\ba)}\theta(ug)\psi_{U_2(\sco)}(u)\ du.
\]
When the unipotent orbit is  $\sco=(r)$, this also recovers the usual Whittaker coefficients. There is a partial ordering  on the set of unipotent orbits. Our goal is to show that there is a unique maximal unipotent orbit    that supports  nonzero Fourier coefficients of $\Theta_r$ (see Definition \ref{def:unipotent orbit} below). Let $\sco(\Theta_r)$ be this orbit. The main results  for the Fourier coefficients associated with unipotent orbits are summarized as follows (Theorem \ref{thm:local unipotent orbit 2}, \ref{thm:global unipotent orbit}, and \ref{thm:multiplicity one unipotent orbit}).

\begin{Thm}\label{thm:1.2 unipotent orbit}
\begin{enumerate}[\normalfont(1)]
\item Write $r=an+b$ such that $a\in \bz_{\geq 0}$ and $0\leq b< n$. Then both locally and globally $\sco(\Theta_r)=(n^ab).$

\item
Let $v$ be a finite place such that $|n|_v=1$ and $\Theta_{r,v}$ is unramified. If $r=mn$ and $\sco=(n^m)$, then
\[
\dim\Hom_{U_2(\sco)(F_v)}(\Theta_{r,v},\psi_{U_2(\sco),v})=1.
\]
\end{enumerate}
\end{Thm}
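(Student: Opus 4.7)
The theorem has two parts: part (1) identifies the unipotent orbit attached to $\Theta_r$ (both locally and globally) as $(n^a b)$ where $r = an + b$ with $0 \leq b < n$, and part (2) is a local uniqueness statement for the Fourier coefficient along $(n^m)$ when $r = mn$. My plan treats these in turn, reducing both to the semi-Whittaker statements of Theorem \ref{thm:1.1} combined with root-exchange and geometric-lemma arguments.

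For part (1), the vanishing for orbits above $(n^a b)$ in the dominance order is proved by a root-exchange argument: for any orbit $\sco'$ with partition $(\lambda_1, \lambda_2, \ldots)$ having $\lambda_1 > n$, its Fourier coefficient expands, via a Fourier--Jacobi expansion along suitable subgroups, into an integral containing a $\lambda'$-semi-Whittaker coefficient for some partition $\lambda'$ with a part $> n$, and this vanishes on $\Theta_r$ by Theorem \ref{thm:1.1}(1). The non-vanishing at $(n^a b)$ is proved by the reverse argument: by Theorem \ref{thm:1.1}(2) the $(n^a b)$-semi-Whittaker coefficient is non-vanishing on $\Theta_r$, and a comparison via integral transforms and root exchange implies that the Fourier coefficient attached to the orbit $(n^a b)$ is also non-vanishing.

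For part (2), realize $\Theta_{r,v}$ as the unramified Langlands quotient of a principal series $I(\chi) = \ind_{\tb}^{\tgl_r(F_v)} \chi$, with $\chi$ the exceptional genuine character of $\tto$ used by Kazhdan--Patterson (whose existence as an unramified character uses the hypotheses $|n|_v = 1$ and $\Theta_{r,v}$ unramified). The surjection $J_{U_2(\sco), \psi_{U_2(\sco)}}(I(\chi)) \twoheadrightarrow J_{U_2(\sco), \psi_{U_2(\sco)}}(\Theta_{r,v})$ bounds the desired dimension from above by that of the left-hand side. Applying the Bernstein--Zelevinsky geometric lemma relative to the Jacobson--Morozov parabolic $P_{(m^n)}$ for the orbit $(n^m)$ (whose Levi is $\GL_m^n$ and whose unipotent radical equals $U_2(\sco)$), a root-exchange argument should show that only one Bruhat stratum contributes: the one whose representative is the Weyl element reindexing from $\GL_m^n$ to $\GL_n^m$. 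On that stratum, the twisted Jacquet module reduces to a Whittaker functional on each of the $m$ copies of $\widetilde{\GL}_n$ in the reindexed Levi, with the inducing character restricting to the exceptional character for $\Theta_n$ on each factor; by the Kazhdan--Patterson uniqueness of the Whittaker model for $\Theta_n$ (valid because the rank equals the degree of the cover), this is at most one-dimensional. Combined with the non-vanishing from part (1), we conclude that the dimension is exactly one.

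The main obstacle in both parts is the root-exchange argument, which requires adapting the standard Bernstein--Zelevinsky vanishing technique to the partially degenerate setting where $U_2(\sco) \subsetneq U$ and $\psi_{U_2(\sco)}$ is in general position only on the first commutator quotient $U_2(\sco)/[U_2(\sco),U_2(\sco)]$. In part (1) the argument must be run uniformly over all orbits $\sco'$ whose partitions dominate or are incomparable to $(n^a b)$, tracking the weighted Dynkin diagram carefully. In part (2) the Brylinski--Deligne cocycle of the $n$-fold cover must additionally be followed through each conjugation and root exchange, so that on the surviving stratum the inducing character on each $\widetilde{\GL}_n$ block is identifiable with the exceptional character of $\Theta_n$ and the Kazhdan--Patterson uniqueness theorem can be invoked.
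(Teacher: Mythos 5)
Your treatment of part (1) follows essentially the same route as the paper: one observes that any orbit greater than or incomparable to $(n^ab)$ has largest part $p_1>n$, and then a chain of root exchanges and Fourier expansions (the paper's Lemma \ref{lem:consequence of root exchange}, Lemmas \ref{lem:vanishing unipotent orbit 1}--\ref{lem:vanishing unipotent orbit 3} and Proposition \ref{prop:nonvanishing 1}, with global analogues in Section \ref{sec:global unipotent}) reduces both the vanishing and the non-vanishing to the semi-Whittaker statements of Theorem \ref{thm:1.1}. That outline is correct, though all of the substance lies in the chain itself, which you defer.

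Part (2) is where there is a genuine gap. You propose to bound $\dim\Hom_{U_2(\sco)(F_v)}(\Theta_{r,v},\psi_{U_2(\sco),v})$ from above by the corresponding dimension for the full principal series $I(\chi)$, and to show the latter is at most one by a geometric-lemma analysis ending in ``one Whittaker functional per $\widetilde{\GL}_n$ block.'' This upper bound is false. Already for $m=1$ the functional in question is the ordinary Whittaker functional, and $\dim J_{U,\psi}(I(\chi))=[\tto:\tto_\ast]$, which is strictly greater than $1$ for every $n\geq 2$; it is only the exceptional quotient $\Theta_n$, not the principal series, whose Whittaker space is one-dimensional. The same defect recurs on your ``surviving stratum'': the blocks there carry full principal series of $\widetilde{\GL}_n$, not $\Theta_n$, so Kazhdan--Patterson uniqueness does not apply. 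Moreover, even after correctly passing to $\Theta_{r,v}$ and its Jacquet module along the parabolic, the assertion that the multiplicity for the Levi equals the product of the Whittaker multiplicities of the $\Theta_n$-blocks is precisely the delicate point of the whole paper: the blocks of $\tm$ do not commute in the cover, and the correct count (Theorems \ref{thm:final formula} and \ref{thm:semi whittaker dimension formula}) carries the index factor $\prod_i[\tto_{\ast,i}:\tton_{\ast,i}]\big/[\tto_\ast:\tton_\ast]$, which happens to equal $1$ when $r=mn$ and $\lambda=(n^m)$ but must be established via the metaplectic tensor product machinery, not by a naive factorization.

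The paper's actual proof of part (2) avoids $I(\chi)$ entirely: since all parts of $(n^m)$ have the same parity, the root exchanges of Lemma \ref{lem:consequence of root exchange}(3) and the chain in Proposition \ref{prop:nonvanishing 1} upgrade to isomorphisms of twisted Jacquet modules of $\Theta_{r,v}$ itself, giving $J_{U_2(\sco),\psi_{U_2(\sco)}}(\Theta_{r,v})\cong J_{U,\psi_{(n^m)}}(\Theta_{r,v})$, and the right-hand side is one-dimensional by Theorem \ref{thm:multiplicity one semi whittaker}. To repair your argument you would need to replace the bound via $I(\chi)$ with this identification (or an equivalent computation on the exceptional quotient), which in turn requires the restriction-to-$\tgln_r$ and metaplectic tensor product analysis that your sketch omits.
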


This unique model is valuable and it already finds applications in Rankin-Selberg integrals for covering groups. In the research announcement by Friedberg, Ginzburg, Kaplan and the author \cite{CFGK2016},  the notion of  Whittaker-Speh-Shalika representation was introduced (see Definition \ref{def1}). Such representations are irreducible automorphic representations on $\tgl_r(\ba)$  and they possess unique functionals. The Whittaker-Speh-Shalika representations and their uniqueness models  are used in the  generalization of  the doubling methods to covering groups. The theta representations are  examples of such representations.

\begin{Thm}[Theorem \ref{thm:WSS}]
When $r=mn$,  $\Theta_r$ is a Whittaker-Speh-Shalika representation of type $(n,m)$.
\end{Thm}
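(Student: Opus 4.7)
The plan is to verify directly that $\Theta_r$ satisfies each of the defining properties of a Whittaker-Speh-Shalika representation of type $(n,m)$ as given in Definition \ref{def1}. Unpacking that definition, for $r=mn$ one needs an irreducible automorphic representation of $\tgl_r(\ba)$ equipped with a unique (up to scalar) global functional of the appropriate Speh-Shalika type, and the corresponding local uniqueness at a set of places large enough for the doubling applications in \cite{CFGK2016}. My strategy is to identify this WSS functional with the $(n^m)$-Fourier coefficient studied in the body of the paper, and then read off each required property from the theorems already proved.

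First I would dispose of the irreducibility input: the global theta representation $\Theta_r$ is irreducible on $\tgl_{mn}(\ba)$ as a consequence of the Kazhdan-Patterson construction of $\Theta_r$ via multi-residues of Borel Eisenstein series \cite{KP1984}, so this ingredient of the WSS definition is immediate. Next I would establish the existence of a nonzero functional of the required shape. Since $r=mn$, the partition $\lambda=(n^m)$ has every part equal to $n$, so Theorem \ref{thm:1.1}(2) produces a choice of section for which the $(n^m)$-semi-Whittaker coefficient of $\Theta_r$ is nonvanishing; equivalently Theorem \ref{thm:1.2 unipotent orbit}(1) gives $\sco(\Theta_r)=(n^m)$, so the Fourier coefficient attached to this orbit does not vanish identically.

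For uniqueness, I would invoke Theorem \ref{thm:1.1}(3) for the global statement that the space of $(n^m)$-semi-Whittaker functionals on $\Theta_r$ is one-dimensional, and Theorem \ref{thm:1.2 unipotent orbit}(2) for the local multiplicity-one statement
\[
\dim\Hom_{U_2(\sco)(F_v)}(\Theta_{r,v},\psi_{U_2(\sco),v})=1
\]
at every finite place $v$ with $|n|_v=1$ where $\Theta_{r,v}$ is unramified. These together furnish precisely the uniqueness hypothesis in the definition of WSS representations of type $(n,m)$.

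The main obstacle will be matching conventions: the functional appearing in Definition \ref{def1} is phrased in Speh-Shalika form (a specific unipotent subgroup with an $(n,m)$-block character extending a Whittaker character on each $\GL_n$-block), whereas the coefficients studied in this paper are written either as $\lambda$-semi-Whittaker integrals or as orbit Fourier coefficients attached to $\sco=(n^m)$. I would verify that the relevant unipotent subgroup and character in the WSS definition coincide with (or are conjugate under the Weyl group to) those giving $\psi_{U_2((n^m))}$ and $\psi_{(n^m)}$, so that the nonvanishing and uniqueness statements transfer. Once this bookkeeping is carried out, the three properties (irreducibility, nontriviality of the WSS functional, and its uniqueness) are precisely the content of the cited theorems, and the conclusion follows.
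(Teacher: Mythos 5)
Your proposal is correct and follows essentially the same route as the paper, which simply observes that Definition \ref{def1} is verified by combining Theorems \ref{thm:local unipotent orbit 2}, \ref{thm:multiplicity one unipotent orbit} and \ref{thm:global unipotent orbit} (i.e. Theorem \ref{thm:1.2 unipotent orbit}). The convention-matching step you anticipate is vacuous here, since Definition \ref{def1} is already phrased in terms of $\sco(\pi)$ and the coefficient on $U_2((a^b))$ with character $\psi_{U_2((a^b))}$, exactly the objects treated in Sections \ref{sec:local unipotent} and \ref{sec:global unipotent}.
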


This unique functional also plays a role in a new-way integral (Euler products with non-unique models) for covering groups; see Ginzburg \cite{Ginzburg2016}.

We now describe the ideas of the proofs. The proof of Theorem \ref{thm:1.1} is based on an induction in stages statement. We describe it in the global setup. Such an argument was also used in Bump-Friedberg-Ginzburg \cite{BFG2003} where they studied the Fourier coefficients of theta representations on the double covers of odd orthogonal groups. First of all, we can rewrite the $\lambda$-semi-Whittaker coefficients as
\[
\int\limits_{U(F)\bs U(\ba)}\theta(ug)\psi_\lambda(u) \ du=\int\limits_{U\cap M(F)\bs U\cap M(\ba)}\ \int\limits_{U_{\lambda}(F)\bs U_\lambda(\ba)}\theta(vug)\ dv \ \psi_\lambda(u) \ du.
\]
The inner integral is actually a constant term of the theta function. To compute it, we compute the constant term of the Eisenstein series and use the fact that the multi-residue operator and the constant term operator commute. By the standard unfolding argument,  the constant term of the Eisenstein series is a sum of Eisenstein series on $\tm(\ba)$. After applying the multi-residue operator, only one term survives. This implies that the constant term of a theta function is actually a ``theta function'' on $\tm(\ba).$ This fact is also called ``periodicity'' in \cite{KP1984} and \cite{BG1992}.

Now we are facing a difficulty which did not appear in \cite{BFG2003}. In the double cover of the odd orthogonal case, the constant terms of theta functions give rise to a representation on the cover of the Levi subgroup. In that case, different blocks commute in $\tm(\ba)$. Thus, one can take theta representations on each block and form the tensor product. It is shown that the tensor product of theta representations on each block is the same as the theta representation on $\tm(\ba)$. We would like to seek an analogous result for the general linear group. However, in the general linear case, when we restrict the metaplectic cover to  $\tm$, the blocks never commute (except when $n=2$). In fact, there is even no natural map between $\tm$ and $\tgl_{r_1}\times \cdots \times \tgl_{r_k}$. This means that, starting with representations on the $\tgl_{r_i}$, there is no direct way to construct a representation of $\tm$.

To overcome this difficulty, a construction called the metaplectic tensor product has been introduced (see Section \ref{sec:local metaplectic tensor} and \ref{sec:global meta tensor}). The local version is developed in Mezo \cite{Mezo2004} and the global version is given in  Takeda \cite{Takeda2016,Takeda2017}. Roughly speaking, the construction goes as follows (both locally and globally). Let $\tgln_{r_i}$ be the subgroup of $\tgl_{r_i}$, consisting of those elements whose determinants are $n$th powers. Let $\tmn$ be the subgroup of $\tm$ consisting of those elements such that the determinants of all the blocks are $n$th powers. The $\tgln_{r_i}$'s commute in $\tm$, and $\tmn$ is the direct product of   $\tgln_{r_1}, \cdots, \tgln_{r_k}$ with  amalgamated $\mu_n$.

Now start with representations $\pi_i$ on $\tgl_{r_i}$. We first restrict $\pi_i$ to $\tgln_{r_i}$, and pick an irreducible constituent $\pi_i^{(n)}$. Then we take the tensor product $\pi_1^{(n)}\otimes \cdots\otimes\pi_k^{(n)}$. This is a representation of $\tmn$. We then use induction  to obtain a representation of $\tm$. Extra care must be taken in order to establish the well-definedness and irreducibility of such constructions.

\begin{Thm}[Rough form]
Both locally and globally,
\[
\Theta_{\tm}\cong \Theta_{r_1}\tilde\otimes \cdots \tilde\otimes \Theta_{r_k}.
\]
\end{Thm}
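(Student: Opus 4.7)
The plan is to identify both sides as the unique irreducible quotient of the same principal series representation on $\tm$, first locally at every place and then globally by taking restricted tensor products.

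Locally, the theta representation $\Theta_{\tm}$ is, in parallel with the $\tgl_r$-case reviewed in the introduction, the Langlands quotient of a principal series $\Ind_{\tb\cap\tm}^{\tm}(\chi_{\theta,M})$, where $\chi_{\theta,M}$ is the exceptional (``theta'') character on the covering torus $\tto$ viewed inside $\tm$. The key observation is that $\chi_{\theta,M}$ is block-diagonal: on the $i$-th block of $\tto$ it restricts to $\chi_{\theta,r_i}$, the theta character for $\tgl_{r_i}$ whose Langlands quotient is $\Theta_{r_i}$.

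Next, I would establish a compatibility between the metaplectic tensor product and parabolic induction from the torus. Since $\Theta_{r_i}$ is a quotient of $\Ind_{\tb_i}^{\tgl_{r_i}}(\chi_{\theta,r_i})$, and since the principal series $\Ind_{\tb\cap\tm}^{\tm}(\chi_{\theta,M})$ itself decomposes naturally, in the Mezo--Takeda framework, as the metaplectic tensor product of the principal series of the blocks, one obtains a surjection from $\Ind_{\tb\cap\tm}^{\tm}(\chi_{\theta,M})$ onto $\Theta_{r_1}\tilde\otimes\cdots\tilde\otimes\Theta_{r_k}$. Since $\Theta_{\tm}$ is the unique irreducible quotient of this principal series, and since the metaplectic tensor product on the right is irreducible by Mezo--Takeda, the two representations must coincide. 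Globally, the statement reduces to the local one via restricted tensor product decompositions: $\Theta_{\tm}$ decomposes as the restricted tensor product of its local components (a standard feature of residual automorphic representations arising from Borel Eisenstein series), and Takeda's global metaplectic tensor product is likewise defined to be compatible with restricted tensor products at every place.

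The main technical obstacle will be controlling the ambiguity in the metaplectic tensor product construction, which depends on a choice of irreducible constituent upon restriction to $\tgln_{r_i}$ and on an auxiliary character of the center; the resulting representation is only determined up to twist by certain characters of $\tm$ trivial on $\tmn$. I would resolve this ambiguity by computing the central character of $\Theta_{\tm}$ directly from $\chi_{\theta,M}$ and checking that it matches the one imposed on $\Theta_{r_1}\tilde\otimes\cdots\tilde\otimes\Theta_{r_k}$; this pins down a canonical representative in the metaplectic tensor product class and makes the asserted isomorphism canonical. A secondary subtlety is verifying that the restriction $\Theta_{r_i}|_{\tgln_{r_i}}$ contains a single irreducible constituent $\Theta_{r_i}^{(n)}$ with multiplicity one, so that the starting datum for the metaplectic tensor product is unambiguous; this should follow from the realization of $\Theta_{r_i}$ as a Langlands quotient and the description of the restriction in the works of Mezo and Takeda.
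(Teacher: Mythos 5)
Your local argument has a genuine gap at its central step. You want a surjection from the principal series $I(\chi')$ of $\tm$ onto $(\Theta(\chi_1')\tilde\otimes\cdots\tilde\otimes\Theta(\chi_k'))_\omega$, obtained by first writing $I(\chi')$ as the metaplectic tensor product of the block principal series and then ``passing to quotients.'' But the Mezo--Takeda construction takes \emph{irreducible} representations as input: for exceptional $\chi_i$ the representations $I(\chi_i')$ are reducible, so the metaplectic tensor product of the block principal series is not defined in that framework (the paper's theorem $I(\chi')\cong(I(\chi_1')\tilde\otimes\cdots\tilde\otimes I(\chi_k'))_\omega$ is proved only for $\chi$ in general position). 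Moreover the construction --- pick a constituent of $\pi_i|_{\tgln_{r_i}}$, extend to a subgroup $\widetilde H$, induce --- is not functorial, so surjections $I(\chi_i')\to\Theta(\chi_i')$ do not automatically yield a surjection of metaplectic tensor products; the compatibility you assert is essentially the content of the theorem. The paper instead invokes Takeda's criterion (Lemma \ref{lem:takeda 2}): two irreducible representations of $\tm$ agree iff their restrictions to $Z_{\tgl_r}\tmn$ share a constituent. It then compares the constituent $V_0(\chi'|_{\tton_\ast})$ of $\Theta_{\tm}(\chi')$ with $\omega(V_0(\chi_1')\tilde\otimes\cdots\tilde\otimes V_0(\chi_k'))$ by computing their Jacquet modules $J_{U_M}$ and identifying both as the unique irreducible subrepresentation of the same induced representation of $Z_{\tgl_r}\tmn$. (Your ``secondary subtlety'' is a non-issue: $\Theta_{r_i}|_{\tgln_{r_i}}$ is a direct sum of several conjugate constituents, and Mezo's theorem already guarantees independence of the choice.)

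Globally the paper does not reduce to the local statement. It computes the constant term of the Borel Eisenstein series along $P_\lambda$, shows that after the multi-residue operator only the term indexed by $w^{M_\lambda}$ survives, and then restricts the resulting residue to $M(F)\tmn(\ba)$ and factors it explicitly as a product of residues of Eisenstein series on the blocks, exhibiting the constant term as an element of the space of Takeda's global metaplectic tensor product. Your local--global reduction would require the local isomorphism at \emph{every} place, including archimedean and ramified ones, whereas the local theory here is developed only for non-archimedean fields; and it would identify the two sides only as abstract representations, not via the automorphic realization that the precise form of the global theorem (and its application to constant terms) requires.
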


The local version is given in Theorem \ref{thm:meteplectic tensor of theta}, and the global version is  Theorem \ref{thm:constant term 2}. Once we have the induction in stages statement, Theorem \ref{thm:1.1} can be established by carefully analyzing the restriction and induction process. In the local setup, we give an explicit formula for the dimension of the twisted Jacquet module $J_{U,\psi_\lambda}(\Theta_r)$.

Theorem \ref{thm:1.2 unipotent orbit} is proved in Sections \ref{sec:local unipotent} and \ref{sec:global unipotent}. The proof consists of two parts. The first part is to show that any unipotent orbit greater than or not comparable to $(n^ab)$ does not support any Fourier coefficients. The second part is to show that $(n^ab)$ actually supports a nonzero Fourier coefficient. The idea is to build a relation between the semi-Whittaker coefficients and the Fourier coefficients associated with unipotent orbits.  Once we know enough information about the semi-Whittaker coefficients, the unipotent orbit attached to the representation can be determined.

Two tools play crucial roles in the proof. The first one is called root exchange. This allows us to replace the domain of integration with a slightly different one. The second one is the Fourier expansion. This allows us to enlarge the domain of integration if we know certain coefficients vanish (this is usually related to the vanishing of semi-Whittaker coefficients). When we combine these tools in a systematic way, vanishing and nonvanishing of Fourier coefficients associated with unipotent orbits can be related to the results on the semi-Whittaker coefficients. Furthermore, when $n$ and $b$ have the same parity, we actually establish an identity between these coefficients. In particular, Theorem \ref{thm:1.2 unipotent orbit} part (2) follows from Theorem \ref{thm:1.1} part (3).

The remainder of this paper is organized as follows. Section \ref{sec:notation} introduces notations and defines metaplectic covers of general linear groups. Certain issues such as centers and maximal abelian subgroups are also discussed. The local theory of semi-Whittaker functionals is developed in Section \ref{sec:local theory}. We first review the principal series representations and theta representations of $\tgl_r(F_v)$. In Section \ref{sec:restrictions}, we give an explicit description of the restriction of these representations to $\tgln_{r}(F_v)$. These results are used to provide examples of the metaplectic tensor product in Section \ref{sec:examples}. We then carefully analyze the construction and compute the dimensions of some twisted Jacquet modules in Section \ref{sec:semiwhittaker}.
Section \ref{sec:semi whittaker global} is devoted to the global theory.  The nonvanishing part of Theorem \ref{thm:1.1} is proved in Theorem \ref{thm:Global Nonvanishing semi whittaker}.
In Section \ref{sec:unipotent orbit}, we review the association of Fourier coefficients to a unipotent orbit. The unipotent orbit attached to the theta representations is determined in Sections \ref{sec:local unipotent} and \ref{sec:global unipotent}. Section \ref{sec:local unipotent} introduces the local argument. The relation between semi-Whittaker coefficients and Fourier coefficients associated with unipotent orbit is established in a series of lemmas. Section \ref{sec:global unipotent} describes the corresponding global picture.

\subsection*{Acknowledgement}

The work in this paper forms part of my Boston College PhD thesis. I heartily thank my advisor, Solomon Friedberg, for his guidance and support. I am  also indebted to David Ginzburg for many helpful discussions and comments.  I would also like to thank the referee for careful reading and very detailed and helpful comments for an earlier version of the paper. This work was supported by the National Science Foundation, grant number 1500977.

\section{Notations and Preliminaries}

\subsection{Notations}\label{sec:notation}

Fix a positive integer $n$ and let
\[
\mu_n(F)=\{x\in F: x^n=1\}
\]
be the group of $n$th roots of unity in a field $F$. In this paper we always assume $|\mu_n(F)|=n$.  Fix, once and for all, an embedding $\epsilon:\mu_n\to\bc^\times$. We always write $\mu_n$ for $\mu_n(F)$, if there is no confusion. We often invoke the convention of omitting $\epsilon$ from the notation. All representations  which we consider are representations where  $\mu_n$ acts by scalars by the embedding $\epsilon$ . Such representations are called \textit{genuine}.

If $F$ is a non-Archimedean local field, we denote by $\fo$ the ring of integers of $F$. Let $\mathrm{val}$ be the normalized valuation on $F$.  Let $|\cdot|_F$ be the normalized absolute value on $F$. Let
\[
(\mbox{ },\mbox{ })=(\mbox{ },\mbox{ })_{F,n}:F^\times \times F^\times \to \mu_n(F)
\]
be the $n$th order Hilbert symbol. It is a bilinear form on $F^\times$ that defines a nondegenerate bilinear form on $F^\times/F^{\times n}$ and satisfies
\[
(x,-x)=(x,y)(y,x)=1,\qquad x,y\in F^\times.
\]

When $F$ is a number field, and $v$ is a place of $F$, we denote by $F_v$ the completion of $F$ at $v$. When $v$ is non-Archimedean, we let $\fo_v$ be the ring of integers of $F_v$.

For $\GL_r$, let $B=TU$ be the standard Borel subgroup with unipotent radical $U$ and maximal torus $T$. The set $\Phi=\{(i,j):1\leq i\neq j\leq r\}$ is identified with the set of roots of $\GL_r$ in the usual way. Let $\Phi^+$ denote the set of positive roots with respect to $B$.

For a partition $\lambda=(r_1\cdots r_k)$ of $r$,  let $P_\lambda$ be the standard parabolic subgroup of $\GL_r$ whose Levi part $M_\lambda$ is
$\GL_{r_1}\times \cdots \times \GL_{r_k}$ embedded diagonally
\[
(g_1,\cdots,g_k)\mapsto \mathrm{diag}(g_1,\cdots, g_k), \qquad g_i\in \GL_{r_i},
\]
and let $U_\lambda$ denote the unipotent radical of $P_\lam$. We usually write $M$ for $M_\lambda$  when the partition is fixed. We usually write $m\in M$ by $m=\mathrm{diag}(g_1,\cdots,g_k)$ with $g_i\in \GL_{r_i}$. Let $\Phi_\lambda$ and $\Phi_\lambda^+$ denote the set of roots and positive roots contained in $M_\lambda$, respectively. We also write $B_M=B\cap M$ and $U_M=U\cap M$. We sometimes add subscript $M$ to indicate the ambient group. For example, $T_M$ (which is $T$) is viewed as a maximal torus of $M$. We might still use $T$ for $T_M$ when there is no confusion.

Let $W$ be the set of all $r\times r $ permutation matrices. The Weyl group of $\GL_r$ is identified with $W$.  We also identify $W$ as the group of permutations of $\{1, 2, \cdots, r\}$  via
\[
w=(\delta_{i,w(j)}).
\]
The action of $W$ on $\Phi$ is given by $w(i,j)=(w(i),w(j))$. For a  Levi subgroup $M_\lambda$, let $W(M_\lambda)$ be the subset of permutation matrices contained in $M_\lambda$. The group $W(M_\lambda)$ is identified with the Weyl group of $M_\lambda$ (as sets). Let
\[
w^{M_\lambda}=\begin{pmatrix}
 & & & I_{r_1}\\
 & & I_{r_{2}} &\\
 & \reflectbox{$\ddots$}&&\\
 I_{r_k}&&&\\
\end{pmatrix}\in W.
\]
The element $w^{M_\lambda}$ sends $\GL_{r_k}\times \cdots\times \GL_{r_1}$ to $M_\lambda$.

For any group $G$ and elements $g,h\in G$, we define ${}^g h=ghg^{-1}$. For a subgroup $H\subset G$ and a representation $\pi$ of $H$, let ${}^g \pi$  be the representation of ${}^g H$ defined by ${}^g\pi(h')=\pi(g^{-1}h'g)$ for $h'\in {}^gH$.

Let $F$ be a local field. Let $\psi$ be a nontrivial additive character on $F$. In this paper we need to consider several characters on various  subgroups of $U$. We make the following convention. For a partition  $(p_1\cdots p_k)$ of $r'\leq r$, let $\Delta=\{i:1\leq i\leq r'\}\bs\{p_1, p_1+p_2,\cdots, p_1+\cdots+p_k\}$. Let $V$ be a subgroup of $U$ such that $V$ contains all the root subgroups associated to $\al=(i,i+1)$ for $i\in\Delta$. Let $\psi_{(p_1\cdots p_k)}:V\to \bc^\times$ be a character such that $\psi_{(p_1\cdots p_k)}$ acts as $\psi$ on the root subgroups associated to $\al=(i,i+1)$ for $i\in\Delta$, and acts trivially otherwise. Thus $\psi_{(r)}$ and $\psi_{(1^r)}$ are the usual Whittaker character and the trivial character on $U$, respectively. When $F$ is a number field and $\psi$ is a nontrivial additive character of $F\bs \ba$, these characters can be defined analogously.

Let $F$ be a non-Archimedean field. Let $\psi_V$ be a character on a unipotent subgroup $V$ of $U$. We use $J_V$  to denote the Jacquet functor with respect to $V$. The functor $J_{V,\psi_V}$ is the twisted Jacquet functor with respect to $(V,\psi_V)$.

Throughout the paper, the induction $\Ind$ is not normalized.

\subsection{The local metaplectic cover \texorpdfstring{$\tgl_r(F)$}{}}

Let $F$ be a local field of characteristic $0$ that contains all the $n$th roots of unity. Associated to every $2$-cocycle $\sigma:\GL_r(F)\times \GL_r(F)\to \mu_n(F)$, there is a central extension $\tgl_r(F)$ of $\GL_r(F)$ by $\mu_n$ satisfying an exact sequence
\[
1\to \mu_n\xrightarrow{\iota} \tgl_r(F)\xrightarrow{\textbf{p}} \GL_r(F)\to 1.
\]
We call $\tgl_r(F)$ a \textit{metaplectic $n$-fold cover of $\GL_r(F)$}. As a set, we can realize $\tgl_r(F)$ as
\[
\tgl_r(F)=\GL_r(F)\times \mu_n=\{( g,\zeta): g\in \GL_r(F),\zeta\in \mu_n\}.
\]
Notice that $\tgl_r(F)$ is not the $F$-rational points of an algebraic group, but this notation is standard. We use $\tgl_r$ to denote $\tgl_r(F)$. This abuse of notation is widely used in this paper, especially in the local setup. The embedding $\iota$ and the projection $\textbf{p}$ are given by
\[
\iota(\zeta)=(I_r,\zeta)\mbox{ and } \textbf{p}(g,\zeta)=g
\]
where $I_r$ is the identity element of $\GL_r$. The multiplication is defined in terms of $\sigma$ as follows,
\[
(g_1,\zeta_1)\cdot(g_2,\zeta_2)=(g_1g_2,\zeta_1\zeta_2\sigma(g_1,g_2)).
\]

For any subset $X\subset \GL_r$, let
\[
\widetilde{X}=\textbf{p}^{-1}(X)\subset \tgl_r.
\]
We also fix the section $\textbf{s}:\GL_r\to \tgl_r$ of $\textbf{p}$ given by $\textbf{s}(g)=(g,1)$. Then for $g_1,g_2\in \GL_r$,
\[
\textbf{s}(g_1)\textbf{s}(g_2)=(g_1g_2,\sigma(g_1,g_2)).
\]

In \cite{KP1984}, Kazhdan-Patterson provided $2$-cocycles $\sigma^{(c)}$ parameterized by $c\in\bz/n\bz$.  They are related by
\begin{equation}\label{eq:cocycle twist}
\sigma^{(c)}(g_1,g_2)=\sigma^{(0)}(g_1,g_2)(\det g_1,\det g_2)^c,\qquad g_1,g_2\in \GL_r.
\end{equation}
In this paper, we use the $2$-cocyles constructed in Banks-Levy-Sepanski \cite{BLS1999}.  The $2$-cocycles in \cite{BLS1999}  satisfy a block compatibility property.

Let $\sigma^{(0)}=\sigma_r^{(0)}$, and $\sigma^{(c)}=\sigma_r^{(c)}$ be related to $\sigma_r^{(0)}$ by Eq. (\ref{eq:cocycle twist}). Block compatibility means the following.
If $r=r_1+\cdots + r_k$ and $g_i,g'_i\in \GL_{r_i}$ for $i=1,\cdots, k$, then
\[
\begin{aligned}
\sigma_r^{(c)}(\text{diag}(g_1,\cdots, g_k),&\text{diag}(g'_1,\cdots,g'_k))\\
=&\left[\prod_{i=1}^k \sigma_{r_i}^{(c)}(g_i,g_i')\right]\cdot\left[\prod_{i<j} (\det g_i,\det g'_j)^{c+1} (\det g_j,\det g'_i)^c\right].
\end{aligned}
\]

Throughout the paper we fix the positive integers $r$ and $n$ and the modulus class $c\in \bz/n\bz$ and let $\sigma=\sigma_r^{(c)}$. Note that the restriction of $\sigma$ to $T$ is given by
\[
\sigma(t,t')=
\left[
\prod_{i<j}(t_i,t'_j)
\right]
\cdot
\prod_{i,j}(t_i,t'_j)^c
\]
for $t=\mathrm{diag}(t_1,\cdots,t_r)$ and $t'=\mathrm{diag}(t_1',\cdots,t_r')$.

The group $U$ splits in $\tgl_r$. In fact $\textbf{s}\mid_{U}$ is an embedding of $U$ in $\tgl_r$ (\cite{McNamara2012} Proposition 2).  Let $K=\GL_r(\fo)$. When $|n|_F=1$, $K$ also splits in $\tgl_r$ (\cite{McNamara2012} Theorem 2). There is a map $\kappa: K\to\mu_n$ such that $g\mapsto \kappa^\ast(g)=(g,\kappa(g))$ is a group homomorphism from $K$ to $\tgl_r$. We denote its image by $K^\ast$.  We shall fix $\kappa$ such that $\kappa^\ast$  is what Kazhdan-Patterson refer to as the canonical lift of $K$ to $\tgl_r$. It is characterized by the property that
\[
\textbf{s}|_{T\cap K}=\kappa^\ast|_{T\cap K}, \textbf{s}|_W
=\kappa^\ast|_{W}, \text{ and }\textbf{s}|_{U\cap K}=\kappa^\ast|_{U\cap K}.
\]
(\cite{KP1984} Proposition 0.1.3). The topology of $\tgl_r$ as a locally compact group is determined by this embedding.

\subsection{Centers}

The following lemma is Takeda \cite{Takeda2016} Lemma 3.13, which is very useful for us.
\begin{Lem}\label{lem:takeda 1}
Let $F$ be a local field. Then for each $g\in\GL_r$ and $a\in F^\times$,
\[
\sigma_r(g,aI_r)\sigma_r(aI_r,g)^{-1}=(\det(g),a^{r-1+2cr}).
\]
\end{Lem}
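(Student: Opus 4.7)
The plan is to interpret the expression $\sigma_r(g, aI_r)\sigma_r(aI_r, g)^{-1}$ conceptually as a commutator in $\tgl_r$, deduce that it is a homomorphism in $g$, and then reduce to a direct calculation on the torus using the explicit cocycle formula displayed just before the lemma.

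More precisely, set $f(g) = \sigma_r(g, aI_r)\sigma_r(aI_r, g)^{-1}$ and $h(g) = (\det g, a^{r-1+2cr})$. Because $aI_r \in Z(\GL_r)$, the elements $\mathbf{s}(g)$ and $\mathbf{s}(aI_r)$ in $\tgl_r$ project to commuting elements in $\GL_r$, so their commutator lies in $\mu_n$. Unwinding the cocycle definition of the multiplication gives
\[
\mathbf{s}(g)\,\mathbf{s}(aI_r)\,\mathbf{s}(g)^{-1}\,\mathbf{s}(aI_r)^{-1} \;=\; \sigma_r(g, aI_r)\sigma_r(aI_r, g)^{-1} \;=\; f(g).
\]
Thus $f(g)$ is the scalar by which $\mathbf{s}(g)$ conjugates the fixed element $\mathbf{s}(aI_r)$, and a standard argument (since $\mu_n$ is central in $\tgl_r$, the scalar does not depend on the choice of lift) shows that $g \mapsto f(g)$ is a group homomorphism $\GL_r \to \mu_n$. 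The bilinearity of the Hilbert symbol and multiplicativity of $\det$ make $h$ a homomorphism as well.

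Next I would observe that both $f$ and $h$ are homomorphisms from $\GL_r$ to an abelian group, hence factor through the abelianization $\GL_r/[\GL_r,\GL_r] = \GL_r/\SL_r \xrightarrow{\;\det\;} F^\times$. So it suffices to check $f = h$ after restricting to any subgroup of $\GL_r$ whose image under $\det$ is all of $F^\times$ --- in particular, on the diagonal torus $T$.

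The remaining work is the torus calculation, which is the only computational step and the main (very mild) obstacle. For $t = \mathrm{diag}(t_1, \ldots, t_r)$, the formula
\[
\sigma_r(t, t') = \prod_{i<j}(t_i, t'_j)\cdot \prod_{i,j}(t_i, t'_j)^c
\]
specializes, with $t' = aI_r$, to $\sigma_r(t, aI_r) = \prod_i (t_i, a)^{r-i+cr}$; similarly $\sigma_r(aI_r, t) = \prod_i (a, t_i)^{i-1+cr}$. Using the skew-symmetry $(a, t_i) = (t_i, a)^{-1}$, the ratio becomes
\[
f(t) \;=\; \prod_i (t_i, a)^{(r-i+cr)+(i-1+cr)} \;=\; \prod_i (t_i, a)^{r-1+2cr} \;=\; (\det t,\, a^{r-1+2cr}) \;=\; h(t),
\]
which matches. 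Combined with the homomorphism reduction above, this proves $f = h$ on all of $\GL_r$.
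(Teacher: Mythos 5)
Your proof is correct. Note that the paper does not actually prove this lemma: it is quoted verbatim from Takeda's paper (Lemma 3.13 of \cite{Takeda2016}), where it is established by direct manipulation of the Banks--Levy--Sepanski cocycle. Your route is genuinely different and, in the context of this paper, more economical: by recognizing $\sigma_r(g,aI_r)\sigma_r(aI_r,g)^{-1}$ as the commutator $[\mathbf{s}(g),\mathbf{s}(aI_r)]$ of lifts of commuting elements, you get for free that it is a $\mu_n$-valued homomorphism in $g$ (independent of the choice of lifts, since $\mu_n$ is central), and likewise $(\det g, a^{r-1+2cr})$ is a homomorphism by bilinearity of the Hilbert symbol. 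Both therefore factor through $\det:\GL_r(F)\to F^\times$ (using $[\GL_r(F),\GL_r(F)]=\SL_r(F)$, valid for local fields of characteristic $0$), so equality need only be checked on $T$, where the paper \emph{does} record the explicit restriction of $\sigma$; your exponent bookkeeping $(r-i+cr)+(i-1+cr)=r-1+2cr$ and the step $\prod_i(t_i,a)^{r-1+2cr}=(\det t,a^{r-1+2cr})$ are both right. What this buys is a self-contained verification that never needs the full cocycle formula on all of $\GL_r$ (which the paper never states), only its restriction to the torus; what it costs is the appeal to the structure-theoretic fact about the commutator subgroup, which a purely computational proof avoids.
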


\begin{Lem}
Let $n_1=\gcd(n,2rc+r-1)$, and $n_2=\dfrac{n}{n_1}$. Then the center of $\tgl_r$ is
\[
\begin{aligned}
Z_{\tgl_r}=&\{(zI_r,\zeta): z^{2rc+r-1}\in F^{\times n}\}\\
=&\{(zI_r,\zeta):z\in F^{\times n_2}\}.
\end{aligned}
\]
\end{Lem}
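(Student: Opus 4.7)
The plan is to decompose the question into two parts: first determine which elements $(g,\zeta) \in \tgl_r$ are central, and then rewrite the resulting condition on the scalar entry in the cleaner form involving $n_2$. Computing
\[
(g,\zeta)(h,\eta) = (gh,\zeta\eta\,\sigma(g,h)), \qquad (h,\eta)(g,\zeta) = (hg,\eta\zeta\,\sigma(h,g)),
\]
we see that $(g,\zeta)$ is central iff $gh = hg$ in $\GL_r$ and $\sigma(g,h) = \sigma(h,g)$ for every $h \in \GL_r$. The first condition forces $g$ to be a scalar $zI_r$. The second, by Lemma 2.1, becomes $(\det h, z^{2rc+r-1}) = 1$ for every $h \in \GL_r$. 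Since $\det: \GL_r(F) \to F^\times$ is surjective and the Hilbert symbol is nondegenerate on $F^\times/F^{\times n}$, this is equivalent to $z^{2rc+r-1} \in F^{\times n}$. Conversely, any $(zI_r,\zeta)$ satisfying this condition is central by running the argument in reverse, since $\zeta$ and $\eta$ commute in $\mu_n$. This proves the first description.

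For the second, set $d = 2rc+r-1$; one must show $\{z \in F^\times : z^d \in F^{\times n}\} = F^{\times n_2}$. If $z = w^{n_2}$, then $z^d = w^{n_2 d}$, and $n_1 \mid d$ gives $n_2 d \in n\bz$, so $z^d \in F^{\times n}$. Conversely, write $d = n_1 d'$ with $\gcd(d',n_2) = 1$, and suppose $z^{n_1 d'} = y^{n_1 n_2}$ for some $y \in F^\times$. Then $(z^{d'} y^{-n_2})^{n_1} = 1$, hence $z^{d'} = y^{n_2}\xi$ for some $\xi \in \mu_{n_1}$. The key observation is that $\mu_{n_1} \subset F^{\times n_2}$: if $\omega$ generates $\mu_n \subset F^\times$, then $\omega^{n_2}$ generates $\mu_{n_1}$ and is visibly an $n_2$-th power. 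Hence $z^{d'} \in F^{\times n_2}$. Since $\gcd(d', n_2) = 1$, B\'ezout gives integers $a, b$ with $ad' + bn_2 = 1$, and then $z = (z^{d'})^a (z^b)^{n_2} \in F^{\times n_2}$.

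The main content of the argument is Lemma 2.1, which absorbs the combinatorial work specific to the Banks--Levy--Sepanski cocycle; given it, the rest is Hilbert symbol nondegeneracy together with an elementary Kummer-style manipulation. No serious obstacle is expected; the only subtle point is the inclusion $\mu_{n_1} \subset F^{\times n_2}$, which is precisely why the standing assumption $\mu_n \subset F$ is needed.
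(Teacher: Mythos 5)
Your proof is correct and follows exactly the route the paper intends: the paper itself gives no argument, merely citing \cite{KP1984} Proposition 0.1.1 for the first equality and \cite{CO2013} Lemma 1 for the second, but it states the commutator formula (Lemma 2.1) immediately beforehand precisely so that the first equality follows from nondegeneracy of the Hilbert symbol, as you do. Your elementary verification of $\{z: z^{d}\in F^{\times n}\}=F^{\times n_2}$, including the observation that $\mu_{n_1}\subset F^{\times n_2}$ because $\mu_n\subset F^\times$, is exactly the content of the Chinta--Offen reference.
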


The first part is proved in \cite{KP1984} Proposition 0.1.1, and the second part is proved in Chinta-Offen \cite{CO2013} Lemma 1.

The center of $\tto$ is also determined in \cite{KP1984}. Let $\tto^n=\{(t^n,\zeta):t\in T\}$.

\begin{Lem}
The center of $\tto$ is $Z_{\tgl_r}\tto^n$.
\end{Lem}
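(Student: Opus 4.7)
The plan is to prove both inclusions $Z_{\tgl_r}\tto^n \subset Z_{\tto}$ and $Z_{\tto}\subset Z_{\tgl_r}\tto^n$ by directly computing commutators in $\tto$ using the explicit formula for the restriction of $\sigma$ to $T$.

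First I would establish the easy inclusion $Z_{\tgl_r}\tto^n \subset Z_{\tto}$. The factor $Z_{\tgl_r}$ is central in all of $\tgl_r$, hence certainly in $\tto$. For $\tto^n$, note that lifting $t^n$ to $\tto$ and taking the commutator with an arbitrary element involves Hilbert symbols of the form $(t_i^n,t_j')=(t_i,t_j')^n$, and these are trivial since the Hilbert symbol takes values in $\mu_n$. So the whole commutator collapses to $1$.

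For the reverse inclusion, I would compute the commutator of two lifts $\textbf{s}(t)$, $\textbf{s}(t')$ in $\tto$, which is $\sigma(t,t')\sigma(t',t)^{-1}$. Using the displayed formula for $\sigma|_T$ together with $(a,b)(b,a)=1$, this simplifies to
\[
\prod_{i<j}(t_i,t_j')(t_j,t_i')\cdot\prod_{i,j}(t_i,t_j')^{2c}.
\]
Then, fixing $k$ and collecting all factors involving $t_k'$, bilinearity of the Hilbert symbol yields
\[
\Bigl(\tfrac{\det t}{t_k},\,t_k'\Bigr)\bigl(\det t,\,t_k'\bigr)^{2c}=\bigl(\det(t)^{2c+1} t_k^{-1},\,t_k'\bigr).
\]
If $\textbf{s}(t)$ is central in $\tto$, this must vanish for every choice of $t_k'\in F^\times$, so by nondegeneracy of the Hilbert symbol on $F^\times/F^{\times n}$ we obtain $t_k\equiv \det(t)^{2c+1}\pmod{F^{\times n}}$ for every $k$. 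Writing $z=\det(t)^{2c+1}$, this gives $t=zI_r\cdot\mathrm{diag}(s_1^n,\dots,s_r^n)$ for suitable $s_i\in F^\times$, i.e., $t$ lies in $\{zI_r\}\cdot T^n$.

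The final and most delicate step is to verify that $zI_r$ belongs to the projection of $Z_{\tgl_r}$, i.e., that $z\in F^{\times n_2}$ with $n_2=n/\gcd(n,2cr+r-1)$. Taking determinants of the factorization above gives $\det t\equiv z^r\pmod{F^{\times n}}$; substituting back into $z=\det(t)^{2c+1}$ yields $z\equiv z^{r(2c+1)}=z^{2cr+r}\pmod{F^{\times n}}$, hence $z^{2cr+r-1}\in F^{\times n}$. By the second description of $Z_{\tgl_r}$ in the preceding lemma, this is exactly the condition $z\in F^{\times n_2}$, so $zI_r\in \textbf{p}(Z_{\tgl_r})$. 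Lifting back to $\tto$ and absorbing the $\mu_n$-valued cocycle discrepancy into the central $\mu_n$, we conclude $\tilde t\in Z_{\tgl_r}\tto^n$. The main obstacle is precisely this last bookkeeping: making sure the $z$ that naturally appears as $\det(t)^{2c+1}$ really satisfies the condition $z^{2cr+r-1}\in F^{\times n}$ characterizing the projection of $Z_{\tgl_r}$, and that the passage from $\GL_r$-level congruences to the level of the cover $\tto$ is harmless, which is guaranteed by Lemma 2.3 (so no anomaly from the cocycle can obstruct the conclusion).
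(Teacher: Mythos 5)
Your proof is correct. The paper itself offers no argument for this lemma --- it simply records the statement as ``determined in [KP1984]'' --- so your direct cocycle computation is a genuinely self-contained verification rather than a restatement of the paper's reasoning; it is essentially the classical Kazhdan--Patterson calculation. The key steps all check out: an element $(t,\zeta)$ is central in $\tto$ iff $\sigma(t,t')=\sigma(t',t)$ for all $t'$ (since $T$ is abelian and $\mu_n$ is central), your simplification of $\sigma(t,t')\sigma(t',t)^{-1}$ using $(a,b)(b,a)=1$ is right, collecting the factors involving $t_k'$ correctly yields $\bigl(\det(t)^{2c+1}t_k^{-1},t_k'\bigr)$, and nondegeneracy of the Hilbert symbol on $F^\times/F^{\times n}$ then forces $t_k\equiv\det(t)^{2c+1}$ for all $k$. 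The determinant bootstrap giving $z^{2cr+r-1}\in F^{\times n}$ matches the first description of $Z_{\tgl_r}$ in the preceding lemma exactly, and the passage back up to the cover is harmless because $\mu_n\subset Z_{\tgl_r}$, so the product set $Z_{\tgl_r}\tto^n$ contains every lift of $zI_r\cdot s^n$. One small slip: your closing appeal to ``Lemma 2.3'' for the harmlessness of the cocycle is a reference to the very lemma being proved (and is not needed anyway --- the absorption of the $\mu_n$-discrepancy is immediate from $\mu_n\subset Z_{\tgl_r}$); if you want a citation there, the relevant fact is just the definition of the product of subgroups in the cover, not a prior lemma.
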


Let $\tgln_r:=\{g\in\tgl_r:\det g \in F^{\times n}\}$. We are interested in this group since it controls the representation theory of $\tgl_r$. Moreover, it plays a role in developing tensor products and parabolic inductions for metaplectic groups; see Section \ref{sec:local metaplectic tensor}.  Let $\tton:=\tgln_r\cap \tto$. The centers of $\tgln_r$ and $\tton$ behave better than the centers of $\tgl_r$ and $\tto$.

\begin{Lem}
The center of $\tgln_r$ is
\[
\begin{aligned}
Z_{\tgln_r}=\widetilde{Z}\cap \tgln_r=&\{(aI_r,\zeta): a^r\in F^{\times n}\}\\
=&\{(aI_r,\zeta): a\in F^{\times \frac{n}{\gcd(n,r)} }\}.
\end{aligned}
\]
\end{Lem}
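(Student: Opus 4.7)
My plan is to establish the two displayed equalities separately. The inner equality $\widetilde Z\cap\tgln_r=\{(aI_r,\zeta):a^r\in F^{\times n}\}$ is essentially definitional: $\widetilde Z=\mathbf{p}^{-1}(Z(\GL_r))=\{(aI_r,\zeta):a\in F^\times,\zeta\in\mu_n\}$, and the intersection with $\tgln_r$ imposes $\det(aI_r)=a^r\in F^{\times n}$. So the substance of the lemma is to prove that $Z_{\tgln_r}$ coincides with this set, and to identify $\{a\in F^\times:a^r\in F^{\times n}\}$ with $F^{\times n/\gcd(n,r)}$. Both tasks reduce to manipulations with Hilbert symbols and the cocycle, using the previous (Takeda) lemma as the key input.

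For the inclusion $\widetilde Z\cap\tgln_r\subseteq Z_{\tgln_r}$, I would take $(aI_r,\zeta)$ with $a^r\in F^{\times n}$ and an arbitrary $(h,\xi)\in\tgln_r$ and compute
\[
(aI_r,\zeta)(h,\xi)\cdot\bigl((h,\xi)(aI_r,\zeta)\bigr)^{-1}
=\bigl(I_r,\sigma(aI_r,h)\sigma(h,aI_r)^{-1}\bigr).
\]
By Lemma~2.1 of the excerpt (Takeda), this equals $(I_r,(\det h,a^{r-1+2cr})^{-1})$. Since $\det h\in F^{\times n}$, the Hilbert symbol $(\det h,a^{r-1+2cr})$ is trivial, so $(aI_r,\zeta)$ centralizes $\tgln_r$.

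For the reverse inclusion $Z_{\tgln_r}\subseteq\widetilde Z\cap\tgln_r$, take $(g,\eta)\in Z_{\tgln_r}$. The subgroup $\SL_r\subset \GL_r^{(n)}$ splits canonically into $\tgln_r$ (e.g.\ via $\mathbf{s}$ on unipotents and the simple connectedness of $\SL_r$), so $g$ must commute in $\GL_r$ with every element of $\SL_r$. For $r\ge 2$ the centralizer of $\SL_r$ in $\GL_r$ is $Z(\GL_r)=\{aI_r\}$, forcing $g=aI_r$; the case $r=1$ is trivial since $\GL_1^{(n)}$ is abelian and the formula collapses. Membership in $\tgln_r$ then gives $a^r\in F^{\times n}$, placing $(g,\eta)$ in $\widetilde Z\cap\tgln_r$.

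Finally, to identify the set $A:=\{a\in F^\times:a^r\in F^{\times n}\}$ with $F^{\times n/\gcd(n,r)}$, let $d=\gcd(n,r)$ and write $r=dr'$, $n=dn'$ with $\gcd(r',n')=1$. The inclusion $F^{\times n/\gcd(n,r)}\subseteq A$ is immediate: if $a=b^{n'}$ then $a^r=b^{n'r}=(b^{r'})^n$. For the converse, suppose $a^r=c^n$; then $(a^{r'})^d=(c^{n'})^d$, so because $F$ contains $\mu_d\subset\mu_n$, we get $a^{r'}=\zeta c^{n'}$ for some $\zeta\in\mu_d$. Since $\gcd(r',n')=1$, Bezout gives integers $x,y$ with $xr'+yn'=1$; raising the identity $a^{r'}=\zeta c^{n'}$ to the $x$th power and multiplying by $a^{yn'}$ yields $a=\zeta^x (c^x a^y)^{n'}$. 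Writing $\zeta^x$ as an $n'$th power of some element of $\mu_n\subset F$ (possible since $\mu_d\subset F^{\times n'}$ using $\mu_n\subset F$), we conclude $a\in F^{\times n'}=F^{\times n/\gcd(n,r)}$. The main obstacle is this last arithmetic step, which requires the hypothesis $\mu_n\subset F$ in an essential way to extract $n'$th roots of $d$th roots of unity.
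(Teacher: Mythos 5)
Your argument follows the same route as the paper's (very terse) proof, which simply observes that the first equality is immediate from the Takeda cocycle lemma and that the second is proved exactly as in Chinta--Offen; you have filled in precisely those two computations, and both the Hilbert-symbol step (triviality of $(\det h,\cdot)$ when $\det h\in F^{\times n}$) and the Bezout argument identifying $\{a\in F^\times: a^r\in F^{\times n}\}$ with $F^{\times n/\gcd(n,r)}$ (including the extraction of $n'$th roots from $\mu_d$, which does use $\mu_n\subset F$) are carried out correctly.

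One assertion you make is false, although the step it is meant to support survives: the $n$-fold metaplectic cover does \emph{not} split over $\SL_r(F)$ for a non-Archimedean $F$. Restricted to $\SL_r$ it is the Matsumoto/Steinberg extension attached to the Hilbert symbol, which is nontrivial; simple connectedness of the algebraic group $\SL_r$ gives no splitting of this extension of abstract (or topological) groups --- indeed the nontriviality of exactly this extension is the reason metaplectic groups exist at all. Fortunately no splitting is needed for your reverse inclusion: if $(g,\eta)$ is central in $\tgln_r$, then applying the projection $\textbf{p}$ to the relation $(g,\eta)(h,\xi)=(h,\xi)(g,\eta)$ gives $gh=hg$ in $\GL_r$ for every $h\in \GL_r^{(n)}\supseteq \SL_r$, and the centralizer of $\SL_r$ in $\GL_r$ consists of the scalar matrices. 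With that one-line repair your proof is complete and agrees with the paper's.
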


\begin{proof}
The first equality is immediate from Lemma \ref{lem:takeda 1}. For the second equality, the proof is exactly the same as in \cite{CO2013} Lemma 1.
\end{proof}

The proof of the following lemma is also straightforward.

\begin{Lem}
The center of $\tton$ is $Z_{\tgln_r}\tto^n$.
\end{Lem}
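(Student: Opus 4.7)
The inclusion $Z_{\tgln_r}\tto^n \subset Z(\tton)$ is immediate from the preceding lemmas: $Z_{\tgln_r}$ commutes with every element of $\tgln_r$, hence with every element of $\tton \subset \tgln_r$, while $\tto^n$ lies in $Z(\tto)$ (as recalled in the previous lemma) and so in particular centralizes $\tton$. So the work is in the reverse inclusion.

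For the reverse inclusion, take $x = (s,\zeta) \in Z(\tton)$, where $s = \mathrm{diag}(s_1,\ldots,s_r) \in T$ with $\det s \in F^{\times n}$, and let $t = \mathrm{diag}(t_1,\ldots,t_r) \in T$ be arbitrary with $\det t \in F^{\times n}$. The first step is to compute $\sigma(s,t)\sigma(t,s)^{-1}$ using the explicit formula for the restriction of $\sigma$ to $T$ given earlier, together with the skew-symmetry identity $(a,b)(b,a)=1$ of the Hilbert symbol. A short manipulation (separating the diagonal from the off-diagonal terms and collapsing products via $\prod_{i,j}(s_i,t_j) = (\det s, \det t)$) yields
\[
\sigma(s,t)\sigma(t,s)^{-1} = (\det s, \det t)^{1+2c}\cdot \prod_{i=1}^r (s_i,t_i)^{-1}.
\]
Because $\det t \in F^{\times n}$, the first factor vanishes (the Hilbert symbol is trivial on $n$th powers), so the condition that $x$ commutes with $(t,1)\in\tton$ reduces to $\prod_i (s_i,t_i) = 1$.

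The second step is to exploit the freedom in $t$. For any pair $i\neq j$ and any $a \in F^\times$, the element with $t_i = a$, $t_j = a^{-1}$, and $t_\ell = 1$ otherwise lies in $\tton$, and the commutation condition specializes to $(s_i s_j^{-1}, a) = 1$ for all $a\in F^\times$. By nondegeneracy of the Hilbert symbol on $F^\times/F^{\times n}$, this forces $s_i/s_j \in F^{\times n}$ for every pair. Setting $a = s_1$, this lets us write $s = aI_r \cdot t_0^n$ for some $t_0 \in T$.

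Finally, since $\det s = a^r (\det t_0)^n$ and $\det s \in F^{\times n}$, we conclude $a^r \in F^{\times n}$; by the characterization of $Z_{\tgln_r}$ in the previous lemma, $(aI_r, \zeta')\in Z_{\tgln_r}$ for any $\zeta'$. Adjusting root-of-unity factors via the cocycle (absorbing $\sigma(aI_r,t_0^n)$ into the $\mu_n$-component), we get $x = (aI_r,\zeta_1)\cdot(t_0^n,\zeta_2)\in Z_{\tgln_r}\tto^n$, as desired. No step is a real obstacle; the only care needed is the Hilbert symbol bookkeeping in the first display, which is precisely the $\tton$-analogue of the calculation underlying the previous lemma.
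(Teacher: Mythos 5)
Your proof is correct, and it is exactly the direct cocycle computation the paper has in mind (the paper omits the proof, remarking only that it is "straightforward"; your commutator formula $\sigma(s,t)\sigma(t,s)^{-1}=(\det s,\det t)^{1+2c}\prod_i(s_i,t_i)^{-1}$ is consistent with Lemma \ref{lem:takeda 1}, and the test elements $\mathrm{diag}(1,\dots,a,\dots,a^{-1},\dots,1)\in\tton$ together with nondegeneracy of the Hilbert symbol on $F^\times/F^{\times n}$ give precisely the needed condition $s_i\equiv s_j \bmod F^{\times n}$). No gaps.
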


\subsection{Maximal abelian groups}

Maximal abelian subgroups of $\tto$ play an important role in the representation theory of $\tto$. Let $\tto_\ast$ be a maximal abelian subgroup $\tto$.  In Section \ref{sec:restrictions}-\ref{sec:semiwhittaker},  we assume that $\tton\cap \tto_\ast$ is a maximal abelian subgroup of $\tton$, unless otherwise specified.

We briefly explain why such a group exists. First we start with a maximal abelian subgroup $\tton_\ast$ of $\tton$. If $\tton_\ast$ is not a maximal abelian subgroup of $\tto$, then we can choose $x\in \tto-\tton$ such that the group generated by $\tton_\ast$ and $x$ is abelian. Notice $|\tto/\tton|$ is finite. Thus we can repeat this process until we obtain a maximal abelian subgroup of $\tto$.

A concrete construction of maximal abelian groups  is given in Section \ref{sec:explicit maximal abelian}.

\subsection{The global metaplectic cover \texorpdfstring{$\tgl_r(\ba)$}{}}\label{sec:global metaplectic cover}

Let $F$ be a number field that contains all the $n$th roots of unity and $\ba$ be the ring of adeles. To construct a metaplectic $n$-fold cover of $\tgl_r(\ba)$ of $\GL_r(\ba)$, we follow \cite{Takeda2016} Section 2.2. The adelic $2$-cocycle $\tau_r$ is defined by
\[
\tau_r(g,g')=\prod_v \tau_{r,v}(g_v,g'_v),
\]
for $g,g' \in \GL_r(\ba)$. Here, the local cocycle is obtained from the block-compatible cocycle, multiplied by a suitable coboundary. It can be shown that there is a section $\textbf{s}_r:\GL_r(F)\to \tgl_r(\ba)$ such that $\GL_r(F)$ splits in $\tgl_r(\ba)$. The center $Z_{\tgl_r(\ba)}$ of $\tgl_r(\ba)$ can be easily found by using the local results. As in the local case, we define
\[
\tgln_r(\ba);=\{g\in\tgl_r(\ba): \det g \in\ba^{\times n}\}.
\]

The group $\tgl_r(\ba)$ can also be described as a quotient of a restricted direct product of the groups $\tgl_r(F_v)$. First we consider the restricted direct product $\prod_v'\tgl_r(F_v)$ with respect to $K_v$ for all $v$ with $v\nmid n$ and $v\nmid \infty$. Denote each element in this restricted direct product by $\prod_v'(g_v,\zeta_v)$ so that $g_v\in K_v$ and $\zeta_v=1$ for almost all $v$. Then
\begin{equation}\label{eq:global restricted tensor product}
\rho:\prod_v{}'\tgl_r(F_v)\to \tgl_r(\ba),\qquad \prod_v{}'(g_v,\zeta_v)\mapsto (\prod_v{}' g_v,\prod_v \zeta_v)
\end{equation}
is surjective group homomorphism. (Notice that $\prod_v \zeta_v$ is a finite product.) We have
\[
\prod_v{}'\tgl_r(F_v)/\mathrm{ker }\rho\simeq \tgl_r(\ba).
\]

Thus we have the notions of automorphic representations and automorphic forms on $\tgl_r(\ba)$. We now explain how to write an irreducible automorphic representation $\pi$ on $\tgl_r(\ba)$ as the ``metaplectic restricted tensor product'' $\widetilde\otimes_v{}' \pi_v$  in the sense of \cite{Takeda2017} Section 2. First of all, we view $\pi$ as a representation on the restricted direct product $\prod_v' \tgl_r(F_v)$ by pulling it back by $\rho$ in Eq. (\ref{eq:global restricted tensor product}). By the usual tensor product theorem for the restricted tensor product, we obtain $\pi\circ\rho\simeq \otimes_v'\pi_v$, where each $\pi_v$ is genuine. We call $\pi_v$ the \textit{irreducible constituent} of $\pi$ at $v$. For almost all $v$, $\pi_v$ is $K_v$-spherical.  The representation $\otimes_v'\pi_v$ descends to a representation $\tgl_r(\ba)$. Thus we write
\[
\pi\simeq \widetilde\otimes_v{}' \pi_v.
\]
Notice that the space of $\widetilde\otimes_v{}' \pi_v$ is the same as $\otimes_v{}' \pi_v$.

\subsection{Metaplectic cover of Levi subgroups}

Let $\lambda=(r_1\cdots r_k)$ be a partition of $r$. Let $M:=M_\lambda$ be the Levi subgroup of $\GL_r$ described in Section \ref{sec:notation}. This section discusses  metaplectic covers $\tm$, both locally and globally. The $2$-cocycle $\tau_r$ does not satisfy block-compatibility. To get round it, an equivalent cocycle $\tau_M$ was introduced in \cite{Takeda2016} Section 3. We use this cocycle to define $\tm$. Notice that the blocks $\GL_{r_i}$ do not commute with each other. Let $R=F$ if $F$ is local and $R=\ba$ is $F$ is global. Define
\[
\tmn(R)=\{(\mathrm{diag}(g_1,\cdots, g_k),\zeta):\det g_i\in R^{\times n}\}.
\]

Let $T_M$ be the maximal torus consisting of diagonal matrices. We write $T_i=T\cap \GL_{r_i}$, where $\GL_{r_i}$ is embedded in $\GL_r$ via
\[
g\mapsto \mathrm{diag}(I_{r_1},\cdots, g, \cdots, I_{r_k}).
\]
The torus $T_i$ can be viewed as a  maximal torus of $\GL_{r_i}$. Define $\tton_M=\tto_M\cap\tmn$. The  following results are proved in \cite{Takeda2016} Section 3. We omit the details.

\begin{Lem}
The center of $\tm(R)$ is
\[
Z_{\tm(R)}=\left\{
\begin{pmatrix}
a_1I_{r_1}& & \\
&\ddots&\\
&&a_k I_{r_k}
\end{pmatrix}:
a_i^{r-1+2cr}\in R^{\times n} \text{ and }a_1\equiv \cdots \equiv a_k\mod R^{\times n}
\right\}.
\]
\end{Lem}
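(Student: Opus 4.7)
The plan is to reduce the centrality of an element $(z,\zeta)\in \tm(R)$ to a single Hilbert symbol identity that must hold for every $g\in M(R)$, and then apply Lemma~\ref{lem:takeda 1} together with nondegeneracy of the Hilbert symbol on $R^\times/R^{\times n}$. First I would observe that any element of $Z_{\tm(R)}$ must project to the center of $M(R)$, which consists exactly of matrices of the form $z=\mathrm{diag}(a_1I_{r_1},\ldots,a_kI_{r_k})$ with $a_i\in R^\times$. Fixing such a lift $(z,\zeta)$, the identity $(z,\zeta)(g,\eta)=(g,\eta)(z,\zeta)$ collapses (since $zg=gz$ in $M(R)$) to $\tau_M(z,g)=\tau_M(g,z)$ for every $g=\mathrm{diag}(g_1,\ldots,g_k)$ with $g_i\in \GL_{r_i}(R)$, a condition independent of $\zeta$.

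Next I would compute $\tau_M(z,g)\tau_M(g,z)^{-1}$ using block compatibility together with $\det(a_iI_{r_i})=a_i^{r_i}$. This decomposes as a product of (i) the intrinsic block ratios $\sigma_{r_i}(a_iI_{r_i},g_i)\sigma_{r_i}(g_i,a_iI_{r_i})^{-1}$, which Lemma~\ref{lem:takeda 1} evaluates as $(\det g_i,a_i^{r_i-1+2cr_i})^{-1}$, and (ii) the off-diagonal Hilbert symbol factors indexed by pairs $i<j$, which after applying the skew-symmetry $(x,y)(y,x)=1$ and bilinearity collapse to
\[
\prod_{i<j}(\det g_i,a_j^{r_j})^{-(2c+1)}(\det g_j,a_i^{r_i})^{-(2c+1)}.
\]
In the global setting one works with the equivalent cocycle $\tau_M$; its correcting coboundary is built from characters depending only on the determinants of the individual arguments and hence cancels in the commutator $\tau_M(z,g)\tau_M(g,z)^{-1}$, leaving the same identity.

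Collecting the factors carrying $\det g_\ell$ for each fixed $\ell$ yields the single condition
\[
\Bigl(\det g_\ell,\; a_\ell^{-1}\Bigl(\prod_{m}a_m^{r_m}\Bigr)^{2c+1}\Bigr)=1\qquad\text{for all }g_\ell\in\GL_{r_\ell}(R).
\]
Since $\det g_\ell$ ranges over all of $R^\times$, nondegeneracy of the Hilbert symbol forces $a_\ell^{-1}\bigl(\prod_m a_m^{r_m}\bigr)^{2c+1}\in R^{\times n}$ for every $\ell$. Dividing the conditions for two indices immediately gives $a_i\equiv a_j\pmod{R^{\times n}}$, and once $a_1\equiv\cdots\equiv a_k\equiv a\pmod{R^{\times n}}$ the common product becomes $\prod_m a_m^{r_m}\equiv a^r$, so the condition for any single $\ell$ reduces to $a^{r(2c+1)-1}=a^{r-1+2cr}\in R^{\times n}$, which recovers both defining conditions in the statement.

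I expect the main obstacle to be the combinatorial bookkeeping in step (ii): block compatibility produces asymmetric exponents $c+1$ and $c$ on the two types of off-diagonal factors, and a single sign or exponent error would distort the final congruence. Once these exponents are tracked carefully and combined with Lemma~\ref{lem:takeda 1}, the remainder is a clean application of Hilbert symbol nondegeneracy together with a bit of congruence arithmetic modulo $n$th powers.
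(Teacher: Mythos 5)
Your argument is correct, and it is essentially the proof the paper points to: the paper omits the verification and cites Takeda \cite{Takeda2016} Section 3, whose computation is exactly your combination of block compatibility, Lemma \ref{lem:takeda 1} applied blockwise, the antisymmetry $(x,y)(y,x)=1$, and nondegeneracy of the Hilbert symbol on $R^{\times}/R^{\times n}$; your final reduction $a_\ell\equiv(\det z)^{2c+1}\bmod R^{\times n}$ for all $\ell$ is the right intermediate form and the congruence arithmetic recovering $a^{r-1+2cr}\in R^{\times n}$ checks out. The only point worth spelling out is that in the global case the nondegeneracy step is applied place by place (choosing $g_\ell$ supported at a single $v$) to conclude membership in $\ba^{\times n}$.
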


\begin{Rem}
Notice that $Z_{\tgl_r}\tto^n=Z_{\tm}\tto_M^n$ and $Z_{\tgl_r}\tm^{(n)}=Z_{\tm}\tm^{(n)}$.
\end{Rem}

\begin{Lem}
The center of $\tmn$ is
\[
Z_{\tmn}=\left\{
\left(
\begin{pmatrix}
a_1I_{r_1}& & \\
&\ddots&\\
&&a_k I_{r_k}
\end{pmatrix},\zeta
\right):
a_i^{r_i}\in R^{\times n}
\right\}.
\]
\end{Lem}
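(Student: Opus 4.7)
The plan is to prove the two inclusions separately, using the block-compatibility of the cocycle together with Takeda's Lemma 3.13, mirroring the argument for $Z_{\tm(R)}$ above. Note first that an element $(m,\zeta)\in\tmn$ is central if and only if $mm'=m'm$ as ordinary matrices and $\sigma(m,m')=\sigma(m',m)$ for every $m'\in M^{(n)}$; this follows directly from expanding the group commutator in $\tmn$.

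For the inclusion $\supseteq$, fix $m=\mathrm{diag}(a_1 I_{r_1},\ldots,a_k I_{r_k})$ with $a_i^{r_i}\in R^{\times n}$. Since each block is scalar, $m$ commutes as a matrix with every $m'=\mathrm{diag}(g_1',\ldots,g_k')\in M$. By block-compatibility, the ratio $\sigma(m,m')\sigma(m',m)^{-1}$ splits as
\[
\prod_{i=1}^{k}\bigl[\sigma_{r_i}(a_i I_{r_i},g_i')\sigma_{r_i}(g_i',a_i I_{r_i})^{-1}\bigr]\cdot \prod_{i<j}\frac{(a_i^{r_i},\det g_j')^{c+1}(a_j^{r_j},\det g_i')^c}{(\det g_i',a_j^{r_j})^{c+1}(\det g_j',a_i^{r_i})^c}.
\]
Each off-diagonal Hilbert symbol is trivial because $a_i^{r_i}\in R^{\times n}$ and $\det g_i'\in R^{\times n}$, so both arguments are $n$th powers. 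For the diagonal factors, Takeda's Lemma 3.13 gives $(\det g_i',a_i^{r_i-1+2cr_i})^{-1}$; writing $a_i^{r_i-1+2cr_i}=a_i^{-1}(a_i^{r_i})^{1+2c}$ and using bilinearity yields $(\det g_i',a_i)^{-1}(\det g_i',a_i^{r_i})^{-(1+2c)}$, which is trivial since both $\det g_i'$ and $a_i^{r_i}$ lie in $R^{\times n}$.

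For the inclusion $\subseteq$, take $(m,\zeta)\in Z_{\tmn}$ and write $m=\mathrm{diag}(g_1,\ldots,g_k)$ with $\det g_i\in R^{\times n}$. Centrality forces $g_i$ to commute with every $g_i'\in\GL_{r_i}(R)$ satisfying $\det g_i'\in R^{\times n}$, since such elements embed into $M^{(n)}$ as $\mathrm{diag}(I_{r_1},\ldots,g_i',\ldots,I_{r_k})$. This subgroup contains $\SL_{r_i}(R)$ (trivial determinant being an $n$th power), whose centralizer in $\GL_{r_i}(R)$ consists of the scalar matrices. Hence $g_i=a_i I_{r_i}$ for some $a_i\in R^{\times}$, and the required condition $a_i^{r_i}=\det g_i\in R^{\times n}$ is automatic from $m\in M^{(n)}$.

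The only non-trivial bookkeeping is the cocycle-commutator computation in the $\supseteq$ direction; once bilinearity of the Hilbert symbol and triviality on $n$th powers are systematically applied, every factor collapses. The global version is identical, using the adelic cocycle $\tau_M$, its block-compatibility, the adelic analogue of Takeda's Lemma 3.13, and the density argument over $\SL_{r_i}(\ba)$.
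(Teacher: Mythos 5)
Your argument is correct. The paper itself omits the proof of this lemma, deferring to \cite{Takeda2016} Section 3, but your computation uses exactly the ingredients the paper sets up for this purpose (block-compatibility of the cocycle and Lemma \ref{lem:takeda 1}), and both inclusions check out: the off-diagonal Hilbert symbols and the diagonal commutator $(\det g_i', a_i^{r_i-1+2cr_i})^{-1}$ are all trivial because one argument is always an $n$th power, and the reverse inclusion follows since the centralizer of $\SL_{r_i}\subset M^{(n)}$ forces each block to be scalar. (Two minor remarks: the decomposition $a_i^{r_i-1+2cr_i}=a_i^{-1}(a_i^{r_i})^{1+2c}$ is unnecessary, since $\det g_i'\in R^{\times n}$ already kills the symbol; and no density argument is needed adelically, as the centralizer computation works component-wise.)
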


\begin{Lem}
The center of $\tton_M$ is $Z_{\tmn}\tto^n.$
\end{Lem}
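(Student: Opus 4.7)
The plan is to establish the two inclusions $Z_{\tmn}\tto^n\subseteq Z(\tton_M)$ and $Z(\tton_M)\subseteq Z_{\tmn}\tto^n$ separately, following the same template that was used for $Z(\tton)$ earlier in the section.

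For the first inclusion, I would begin by checking that both $\tto^n$ and $Z_{\tmn}$ actually lie inside $\tton_M$: elements of $\tto^n$ have block determinants of the form $(\prod_{j}t_j)^n\in R^{\times n}$, and elements of $Z_{\tmn}$ have block determinants $a_i^{r_i}\in R^{\times n}$ by definition. Centrality of $Z_{\tmn}$ in $\tton_M$ is automatic, while centrality of $\tto^n$ in $\tto$ follows from the explicit torus cocycle $\sigma(t,t')=\prod_{i<j}(t_i,t'_j)\prod_{i,j}(t_i,t'_j)^c$ via bilinearity of the Hilbert symbol, since $(x,y^n)=(x^n,y)=1$.

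For the reverse inclusion, the strategy is to take $(t,\zeta)\in Z(\tton_M)$ and to test commutativity against a well-chosen family. For each block $i$ and each pair $p<q$ of indices inside block $i$, let $s$ be the diagonal matrix with $s_p=a$, $s_q=a^{-1}$, and $s_j=1$ elsewhere; this lies in $\tton_M$ because $\det s^{(i)}=1$. Since $\det s=1$, the $c$-twisted factor $(\det t,\det s)^c(\det s,\det t)^{-c}$ of the commutator vanishes, and after organizing the index ranges and using $(x,y)(y,x)=1$ to cancel all contributions coming from indices strictly between $p$ and $q$, the commutator collapses to
\[
[(t,\zeta),(s,1)]=(a,t_p/t_q).
\]
Letting $a$ vary over $R^\times$ and invoking nondegeneracy of the Hilbert symbol on $R^\times/R^{\times n}$ forces $t_p\equiv t_q\pmod{R^{\times n}}$ for every pair $p,q$ in the same block. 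Consequently, in each block $i$ one can write $t_j=a_iu_j^n$ with a common $a_i\in R^\times$ and $u_j\in R^\times$, and the requirement that the block determinants of $t$ lie in $R^{\times n}$ then forces $a_i^{r_i}\in R^{\times n}$. This places $(t,\zeta)\in Z_{\tmn}\tto^n$. The argument applies uniformly in the local and adelic settings, since the adelic Hilbert symbol is also nondegenerate on $\ba^\times/\ba^{\times n}$.

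The only delicate step is the commutator computation in the previous paragraph: the torus cocycle involves every pair of indices together with a $c$-twisted determinantal factor, so one has to keep careful track of which contributions survive for the specific two-support test element. The cancellations are driven by $\det s=1$, which kills the $c$-factor, and by $(x,y)(y,x)=1$, which kills the contributions from the middle indices; this is essentially the same bookkeeping that appeared in the proof of the analogous statement $Z(\tton)=Z_{\tgln_r}\tto^n$.
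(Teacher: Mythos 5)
Your argument is correct. Note that the paper does not actually prove this lemma: it states that the results on centers of Levi covers "are proved in \cite{Takeda2016} Section 3" and omits the details, so there is no in-paper proof to compare against. Your direct verification is the computation one would carry out, and both halves check out. For the easy inclusion, $\tto^n$ is killed by bilinearity of the Hilbert symbol and $Z_{\tmn}$ is central by the preceding lemma (one can also see it directly: for $z$ block-scalar and $s\in\tton_M$ the commutator reduces to $\prod_i(z_i,s_i)^{-1}$, which is trivial because each block determinant of $s$ is an $n$th power). For the reverse inclusion, your commutator computation is right: with $s_p=a$, $s_q=a^{-1}$ one gets $\sigma(t,s)=(t_p\cdots t_{q-1},a)^{-1}$ and $\sigma(s,t)=(a,t_{p+1}\cdots t_q)$, and the middle indices cancel to leave $(a,t_p/t_q)$; nondegeneracy then forces $t_p\equiv t_q \bmod R^{\times n}$ within each block, and the block-determinant condition turns the common residue $a_i$ into an element of $Z_{\tmn}$.

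Two small points worth recording. First, the adelic cocycle $\tau_M$ agrees with the block-compatible $\sigma$ only up to a coboundary, but commutators of elements with commuting projections are insensitive to coboundaries, so the computation is valid verbatim in the global setting (where nondegeneracy of $\prod_v(\cdot,\cdot)_v$ on $\ba^\times/\ba^{\times n}$ is checked place by place). Second, your logic is airtight despite testing centrality only against the two-support family: you never need that family to detect the full center, because sufficiency of the derived condition is supplied by the independently established first inclusion rather than by a converse to the test.
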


Let $\tto_{M,\ast}$ be a  maximal abelian subgroup of $\tto_M$. We again assume $\tto_{M,\ast}\cap \tton_M$ is a maximal abelian subgroup of $\tton_M$.

One can also talk about automorphic forms and representations on $\tm(\ba)$ as well.

\section{Local Theory}\label{sec:local theory}

In this section, $F$ is a non-Archimedean local field. Recall that we use $\tgl_r$ to denote $\tgl_r(F)$.

\subsection{The principal series representations}

The principal series representations of $\tgl_r$ were studied in \cite{KP1984}. For the generalization to metaplectic covers of  other reductive groups, see McNamara \cite{McNamara2012}.

We start with the representation theory of $\tto$. In general, $\tto$ is not abelian, but it is a two-step nilpotent group.  The irreducible genuine representations of $\tto$ are parameterized in the following way (\cite{McNamara2012} Theorem 3): start with a genuine character $\chi$ on the center of $\tto$ and extend it to a character $\chi'$ on any maximal abelian subgroup $\tto_\ast$, then the induced representation $i(\chi'):=\Ind_{\tto_\ast}^{\tto}\chi'$ is irreducible (see \cite{McNamara2012} Theorem 3). This construction is  independent of the choice of $\tto_\ast$ and of the extension of characters.

We extend $i(\chi')$ to a representation $i_{\tb}(\chi')$ on $\tb=\tto U$ by letting $U$ act trivially. Let $\delta_B$ be the modular quasicharacter of $B$. Then $\Ind_{\tb}^{\tgl_r}i_{\tb}(\chi')\delta_B^{1/2}$ is the principal series representation.   This representation is denoted by $I(\chi')$, although its isomorphism class only depends on $\chi$.

There is an alternative way to describe the principal series representations.   We can extend the character $\chi'$ to $\tb_\ast=\tto_\ast U$, and then induce it to $\tgl_r$.  The representation $\Ind_{\tb_\ast}^{\tgl_r}\chi'\delta_B^{1/2}$ is isomorphic to $I(\chi')$.

The representation $I(\chi')$ is irreducible when $\chi$ is in general position. For a positive root $\alpha$, there is an embedding
$i_\alpha:\SL_2\to \GL_r$.
Define
\[
\chi^n_\alpha(t)=\chi\left(i_\alpha \begin{pmatrix}t & \\ & t^{-1}\end{pmatrix}^n\right).
\]

\begin{Thm}
Suppose that $\chi_\alpha^n\neq |\cdot|_F^{\pm 1}$ for all the positive roots $\alpha$. Then $I(\chi')$ is irreducible.
\end{Thm}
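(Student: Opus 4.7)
The plan is to follow the classical Bruhat/Jacquet approach adapted to the metaplectic setting, reducing irreducibility to a family of rank-one computations. I would begin with the description $I(\chi') \cong \Ind_{\tb_\ast}^{\tgl_r}\chi'\delta_B^{1/2}$ using the maximal abelian subgroup $\tto_\ast$, since this puts the representation in genuinely induced form from a character and allows standard Mackey-theoretic tools to be applied (even though $\tto$ is nonabelian, the induction from $\tto_\ast$ behaves just like induction from a torus).

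Next I would compute the Jacquet module $J_U(I(\chi'))$ using the Bruhat decomposition $\tgl_r = \bigsqcup_{w\in W} \tb_\ast w \tb_\ast$. By the geometric lemma, $J_U(I(\chi'))$ carries a $W$-indexed filtration whose successive quotients are the twisted characters $w\chi' \cdot \delta_B^{1/2}$ (each with multiplicity one, by our assumption that $\chi$ is genuine and the cocycle respects the block structure). Irreducibility of $I(\chi')$ is then equivalent, via Frobenius reciprocity, to the statement that all standard intertwining operators
\[
M_w : I(\chi') \longrightarrow I(w\chi'), \qquad w\in W,
\]
are well-defined and nonzero (equivalently, isomorphisms).

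The standard factorization $M_w = M_{s_{\alpha_1}}\circ \cdots \circ M_{s_{\alpha_k}}$ corresponding to a reduced expression $w = s_{\alpha_1}\cdots s_{\alpha_k}$ reduces the problem to showing that each rank-one operator $M_{s_\alpha}$ is an isomorphism for $\alpha\in\Phi^+$. Via the embedding $i_\alpha:\SL_2\to \GL_r$, this rank-one operator can be identified with the corresponding intertwining operator on the metaplectic cover of $\SL_2$ (or $\GL_2$) obtained by pulling back the cocycle $\sigma$ along $i_\alpha$. The key point is that the character against which we induce in this rank-one cover is controlled precisely by $\chi^n_\alpha$, because the center of the metaplectic $\SL_2$ meets the split torus in the $n$th powers. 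Thus the standard rank-one reducibility criterion for metaplectic $\SL_2$ (see \cite{KP1984}) asserts that $M_{s_\alpha}$ is an isomorphism exactly when $\chi^n_\alpha \neq |\cdot|_F^{\pm 1}$, which is our hypothesis for every positive root $\alpha$.

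The main obstacle I anticipate is bookkeeping for the metaplectic cocycle in the reduction to rank one: one must verify that the identification of the rank-one intertwining operator with the one on the metaplectic $\SL_2$ really does produce the character whose $n$th power equals $\chi^n_\alpha$, and that extending characters from $\tto_\ast\cap i_\alpha(\SL_2)^\sim$ to maximal abelian subgroups of the rank-one cover is compatible with the ambient extension. Once this compatibility is verified (using the block-compatibility of $\sigma_r^{(c)}$ together with Lemma~\ref{lem:takeda 1}), the factorization argument yields irreducibility.
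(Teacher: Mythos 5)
Your proposal is essentially the argument the paper relies on: the paper simply cites Kazhdan--Patterson (Corollary I.2.8), whose proof proceeds exactly via the Jacquet--module filtration, Frobenius reciprocity, the factorization of intertwining operators into rank-one pieces, and the rank-one criterion governed by $\chi_\alpha^n$. One small correction: in the metaplectic setting the graded pieces of $J_U(I(\chi'))$ are not characters but the irreducible $\tto$-modules $\ind_{{}^w\tto_\ast}^{\tto}({}^w\chi'\delta_B^{1/2})$ (of dimension $[\tto:\tto_\ast]$), and their pairwise non-isomorphism requires regularity of $\chi$, which your hypothesis does not give --- but the intertwining-operator argument does not need it, so this does not affect the proof.
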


This is proved by the theory of intertwining operators; see \cite{KP1984} Corollary I.2.8.

If $\chi_\alpha^n=|\cdot|_F$ for all the positive simple roots $\alpha$, we call $\chi$ \textit{exceptional}. In this case, $I(\chi')$ is reducible, and we are interested in the unique irreducible subquotient of $I(\chi')$. Recall that the intertwining operator  $T_w:I(\chi')\to I({}^w\chi')$ is defined as
\[
(T_wf)(g)=\int\limits_{U(w)} f(w^{-1}ug)du.
\]
where $U(w)$ is the subgroup of $U$ corresponding to roots $\alpha>0$ such that $w^{-1}\alpha<0$. If this converges for all $f\in I(\chi')$ and is non-trivial, then it is a generator of $\Hom_{\tgl_r}(I(\chi'),I({}^w \chi'))$. For general $\chi$, the intertwining operator can be defined via analytic continuation.

\begin{Thm}
Let $\chi$ be exceptional. Let
\[
\Theta(\chi')=\mathrm{Im}(T_{w_0}:I(\chi')\to I({}^{w_0}\chi')),
\]
where $w_0$ is the longest element of $W$. Then
\begin{enumerate}[\normalfont (1)]
\item $\Theta(\chi')$ is the unique irreducible subrepresentation of $I({}^{w_0}\chi')$.
\item $\Theta(\chi')$ is the unique irreducible quotient representation of $I(\chi')$.
\item The Jacquet module  $J_U(\Theta(\chi'))\cong \ind_{{}^{w_0} \tto_\ast}^{\tto}({}^{w_0}\chi'\delta_B^{1/2})$.
\end{enumerate}
\end{Thm}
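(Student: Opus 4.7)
\medskip

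\noindent\textbf{Proof plan.} The strategy is to compute $J_U(I(\chi'))$ and $J_U(I({}^{w_0}\chi'))$ via the metaplectic Bernstein--Zelevinsky geometric lemma, and then interpret the intertwining operator $T_{w_0}$ through these Jacquet modules. All three assertions will drop out of the same analysis, with the exceptional condition $\chi_\alpha^n=|\cdot|_F$ used only at the end to show that the relevant composition factor has multiplicity one.

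First I would establish the Jacquet filtration. Write $\pi_w:=\mathrm{ind}_{{}^{w}\tto_\ast}^{\tto}({}^w\chi'\cdot \delta_B^{1/2})$; each $\pi_w$ is irreducible as a $\tto$-representation, since ${}^w\tto_\ast$ is maximal abelian in ${}^w\tto=\tto$ and the construction of irreducibles of $\tto$ recalled above is independent of the choice of maximal abelian subgroup. The Bruhat decomposition $\tgl_r=\bigsqcup_{w\in W}\tb w\tb$, combined with Mackey's theory for the (compactly supported) induction from $\tb_\ast$, endows $J_U(I(\chi'))$ with a filtration of length $|W|$ indexed by $W$ (ordered by Bruhat length) whose successive quotients are exactly the $\pi_w$; this is the metaplectic version of the geometric lemma, and goes through verbatim from the linear case because $U$ splits in $\tgl_r$ and the cocycle is trivial on conjugation by Weyl elements in the sense needed. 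The unique irreducible subobject in this filtration is $\pi_e=\mathrm{ind}_{\tto_\ast}^{\tto}(\chi'\delta_B^{1/2})$, and the unique irreducible quotient is $\pi_{w_0}$; for $I({}^{w_0}\chi')$ the roles are swapped.

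Next I would analyze $T_{w_0}$ at the level of Jacquet modules. By exactness of $J_U$, the map $J_U(T_{w_0}):J_U(I(\chi'))\to J_U(I({}^{w_0}\chi'))$ is well-defined (with $T_{w_0}$ itself defined by analytic continuation, as sketched in the excerpt); a direct computation along the open Bruhat cell shows that $J_U(T_{w_0})$ sends the bottom constituent $\pi_e$ of $J_U(I(\chi'))$ isomorphically onto the top constituent of $J_U(I({}^{w_0}\chi'))$, which is $\mathrm{ind}_{{}^{w_0}\tto_\ast}^{\tto}({}^{w_0}\chi'\delta_B^{1/2})$. Hence the image $\Theta(\chi')$ has Jacquet module containing this constituent; on the other hand $\Theta(\chi')\hookrightarrow I({}^{w_0}\chi')$, so $J_U(\Theta(\chi'))$ is a subquotient of $J_U(I({}^{w_0}\chi'))$. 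For assertions (1) and (3) it then suffices to verify that the constituent $\mathrm{ind}_{{}^{w_0}\tto_\ast}^{\tto}({}^{w_0}\chi'\delta_B^{1/2})$ appears with multiplicity one in $J_U(I({}^{w_0}\chi'))$: this is where exceptionality enters. Two constituents $\pi_{ww_0}$ and $\pi_{w_0}$ coincide only if their restrictions to the center of $\tto$ agree, which (after writing ${}^{w_0}\chi'\delta_B^{1/2}\cdot{}^{ww_0}({}^{w_0}\chi'\delta_B^{1/2})^{-1}$ in terms of simple-root characters) forces $\chi_\alpha^n=1$ for every simple $\alpha$ inverted by $w$, contradicting $\chi_\alpha^n=|\cdot|_F$. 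Thus the constituent is unique, forcing $\Theta(\chi')$ to be the unique irreducible subrepresentation of $I({}^{w_0}\chi')$ and its Jacquet module to be exactly $\pi_{w_0}$; (2) then follows either by the same argument applied to $I(\chi')$, or by passing to the contragredient and using $\widetilde{I(\chi')}\cong I(\chi'^{-1})$.

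The main obstacle is the multiplicity-one verification in the last step, i.e.\ checking that exceptionality of $\chi$ separates all the Jacquet constituents $\pi_{ww_0}$ from $\pi_{w_0}$ as $\tto$-representations (not merely as characters of the small center $Z_{\tgl_r}$). Because $\tto$ is non-abelian and the isomorphism class of $\pi_w$ depends only on the central character ${}^w\chi|_{Z_{\tto}}$, one has to carry out the distinction on $Z_{\tto}=Z_{\tgl_r}\tto^n$ rather than on $Z_{\tgl_r}$, and verify that the exceptional condition on simple roots propagates through all $w\in W$; this is the step in which the hypothesis on $\chi_\alpha^n$ for \emph{all} positive roots (equivalently, for all simple roots together with the Weyl-invariance of $\delta_B^{1/2}\cdot\chi^n$) is genuinely used.
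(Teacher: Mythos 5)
First, note that the paper does not actually prove this theorem: immediately after the statement it says ``This is \cite{KP1984} Theorem I.2.9,'' so the result is quoted from Kazhdan--Patterson rather than proved. Your sketch is therefore best judged as a reconstruction of their argument, and in outline it is the right one: the Bruhat/Bernstein--Zelevinsky filtration of $J_U(I(\chi'))$ with subquotients $\pi_w=\ind_{{}^w\tto_\ast}^{\tto}({}^w\chi'\delta_B^{1/2})$, the regularity of exceptional characters giving multiplicity one (indeed semisimplicity) of the Jacquet module, and the analysis of $T_{w_0}$ on Jacquet modules are exactly the ingredients of the Kazhdan--Patterson proof.

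There is, however, a genuine gap at the decisive step. You show, via the open-cell computation, that $J_U(\Theta(\chi'))$ \emph{contains} the constituent $\pi_{w_0}=\ind_{{}^{w_0}\tto_\ast}^{\tto}({}^{w_0}\chi'\delta_B^{1/2})$, and that this constituent occurs with multiplicity one in $J_U(I({}^{w_0}\chi'))$. That does yield uniqueness of the irreducible subrepresentation of $I({}^{w_0}\chi')$, since by Frobenius reciprocity any irreducible sub must have $\pi_{w_0}$ among its Jacquet constituents. But it does not yield part (3), nor the irreducibility of $\Theta(\chi')$ that is implicit in (1)--(2): multiplicity one of $\pi_{w_0}$ does not prevent $J_U(\Theta(\chi'))$ from containing further constituents $\pi_w$ with $w\neq w_0$, in which case $\Theta(\chi')$ would be a reducible indecomposable submodule whose socle is the unique irreducible sub. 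The missing ingredient is the factorization $T_{w_0}=T_{s_{\alpha_{i_\ell}}}\circ\cdots\circ T_{s_{\alpha_{i_1}}}$ along a reduced word, together with the rank-one computation: precisely because $\chi_\alpha^n=|\cdot|_F$, each rank-one operator is the operator at the point of reducibility on a copy of $\widetilde{\GL}_2$ and kills half of the Jacquet constituents, so that after composing only $\pi_{w_0}$ survives in the image. This is where exceptionality enters essentially (your sketch uses it only to separate the constituents from one another), and it is the content of the lemmas preceding Theorem I.2.9 in \cite{KP1984}. Once $J_U(\Theta(\chi'))=\pi_{w_0}$ is established, irreducibility of $\Theta(\chi')$ follows because no subquotient of a principal series has vanishing Jacquet module, and (1)--(2) then follow as you indicate.
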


This is \cite{KP1984} Theorem I.2.9. $\Theta(\chi')$ is called exceptional.

The Whittaker models of exceptional representations are studied in \cite{KP1984} Section I.3. These authors have shown the following results.

\begin{Prop}\label{prop:KP1984}
Suppose that  $|n|_F=1$.
\begin{enumerate}[\normalfont (1)]
\item The representation $\Theta(\chi')$ has a unique Whittaker model if and only if $n=r$, or otherwise $n=r+1$, and $2(c+1)\equiv 0\mod n$.
\item The representation $\Theta(\chi')$ does not have a Whittaker model if $n\leq r-1$.
\item The representation $\Theta(\chi')$ has a finite number of independent nonzero Whittaker models if $n\geq r+1$.
\end{enumerate}
\end{Prop}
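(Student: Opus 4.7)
The plan is to compute $\dim \Hom_U(\Theta(\chi'), \psi)$ by realizing $\Theta(\chi')$ as a quotient of $I(\chi')$ and analyzing how Whittaker functionals on the principal series restrict. Since $\Theta(\chi')$ is a quotient of $I(\chi')$, the space $\Hom_U(\Theta(\chi'),\psi)$ injects into $\Hom_U(I(\chi'),\psi)$, so it suffices to identify the subspace of functionals on $I(\chi')$ that factor through the surjection $I(\chi')\twoheadrightarrow \Theta(\chi')$, equivalently, vanish on $\ker T_{w_0}$.

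First I would produce a spanning set of Whittaker functionals on $I(\chi') = \Ind_{\tb_\ast}^{\tgl_r}\chi'\delta_B^{1/2}$. The Bruhat decomposition $\tgl_r = \bigsqcup_w \tb w \tb$ gives a filtration by $\tb$-submodules, and for each Weyl element $w$ together with a coset representative $t\in \tto_\ast\bs\tto$, a Jacquet integral
\[
\mathcal{W}_{w,t}(f) = \int\limits_{U} f(tw^{-1}u)\psi(u)^{-1}\, du,
\]
regularized by analytic continuation in the character, defines a Whittaker functional. Using the hypothesis $|n|_F=1$ and the block-compatible cocycle, one shows these span $\Hom_U(I(\chi'),\psi)$, and its dimension is $|W|\cdot[\tto:\tto_\ast]$ divided by the obvious identifications. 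Next I would decompose $T_{w_0}$ as a product of rank-one intertwining operators $T_{s_\alpha}$ and use the hypothesis $\chi_\alpha^n = |\cdot|_F$ for all simple $\alpha$ to identify the zeros of each $T_{s_\alpha}$ at the exceptional character. The condition that $\mathcal{W}_{w,t}$ descends to $\Theta(\chi')$ then becomes a congruence condition on $(w,t)$ obtained by commuting $T_{w_0}$ past the Jacquet integral, reducing the count to a combinatorial problem on Weyl orbits and $n$-th power cosets in $T$.

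The main obstacle is the combinatorial bookkeeping: for $n\leq r-1$, a pigeonhole argument on simple roots shows the exceptional zeros of $T_{w_0}$ annihilate every $\mathcal{W}_{w,t}$ on $\Theta(\chi')$, giving part~(2); for $n=r$ exactly one Weyl orbit and one torus coset survive, giving uniqueness in part~(1); for $n\geq r+1$ several orbits contribute but only finitely many, giving part~(3). The delicate case is $n=r+1$: a priori two Whittaker functionals survive, but the center $Z_{\tgl_r}$ acts on them by characters differing by $(\det g, a^{r-1+2cr})$ in view of Lemma~\ref{lem:takeda 1}. Since for $n=r+1$ one has $r-1+2cr\equiv -2(c+1)\bmod n$, the congruence $2(c+1)\equiv 0\bmod n$ forces these two central characters to coincide, and a residual computation with the normalized intertwining operator matches the two surviving functionals up to a scalar, yielding the uniqueness assertion in the second clause of part~(1). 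Conversely, if $2(c+1)\not\equiv 0\bmod n$, the two functionals are genuinely distinct and no uniqueness holds, consistent with the statement as phrased.
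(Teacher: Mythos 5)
You are proving a statement the paper does not prove: Proposition~\ref{prop:KP1984} is imported verbatim from Kazhdan--Patterson \cite{KP1984}, Section~I.3 (the sentence immediately preceding it says exactly this), so there is no internal argument to compare against, only the citation. Your outline does follow the same broad strategy that Kazhdan--Patterson use --- Whittaker functionals on $I(\chi')$ via Jacquet integrals, factorization of $T_{w_0}$ into rank-one intertwining operators, and a rank count for the functionals that descend to the Langlands quotient --- but the steps you leave as ``combinatorial bookkeeping'' are precisely where the content of the theorem lies, and two of your intermediate claims are wrong as stated.

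First, the spanning set is wrong: for a generic character $\psi$ only the open Bruhat cell contributes to $J_{U,\psi}(I(\chi'))$, since for $w\neq w_0$ there is a simple root $\alpha$ with $w\alpha>0$ and averaging over the corresponding root subgroup kills that stratum. Hence $\dim\Hom_U(I(\chi'),\psi)=[\tto:\tto_\ast]$, indexed by torus cosets alone, not by pairs $(w,t)$ with dimension $|W|\cdot[\tto:\tto_\ast]$ ``divided by identifications.'' Second, the rank of $T_{w_0}$ on this space is obtained in \cite{KP1984} by explicitly evaluating each rank-one local coefficient matrix (a Gauss-sum computation on the two-fold smaller group) at the exceptional point and multiplying; a ``pigeonhole argument on simple roots'' does not substitute for this, and it is exactly this computation that produces the thresholds $n\le r-1$, $n=r$, $n\ge r+1$. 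Third, your mechanism for the case $n=r+1$ cannot work as phrased: any two Whittaker functionals on the single irreducible representation $\Theta(\chi')$ transform under the genuine center $Z_{\tgl_r}$ by the same character, namely the central character of $\Theta(\chi')$, so the congruence $2(c+1)\equiv 0\bmod n$ cannot be detected by comparing ``central characters'' of functionals. (What Lemma~\ref{lem:takeda 1} measures is the failure of a general scalar $aI_r$ to be central; the relevant structure is the action of the full preimage of the scalars, which is not abelian, on the space of functionals.) In \cite{KP1984} the condition arises as the solvability count of a congruence on torus cosets coming from the restriction of the cocycle $\sigma$ to $T$. As it stands, the proposal is a plausible road map for reproving the Kazhdan--Patterson result, not a proof.
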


\begin{Rem}
In the above proposition, parts (1) and (3) are also true when $|n|_F\neq1$. This is shown in \cite{KP1984} Section II by using global arguments. Part (2) is expected to be true when $|n|_F\neq 1$, but this is known only when $n=2$; see Kaplan \cite{Kaplan} Theorem 2.6 and Flicker-Kazhdan-Savin \cite{FKS1990}.
\end{Rem}

\begin{Rem}
When $r=1$, we take $\Theta(\chi')$ to be $\Ind_{\tto_\ast}^{\tto}\chi'$. This fits into the metaplectic tensor product perfectly.
\end{Rem}

\subsection{Restrictions}\label{sec:restrictions}
We study the restriction functor $\mathrm{Res}_{\tgln_r}^{\tgl_r}$ in this section. We obtain an explicit description of the restriction of the principal series representations and exceptional representations from $\tgl_r$ to $\tgln_r$. This is useful in Section \ref{sec:examples} where we give explicit examples of the metaplectic tensor product.

Notice that $\tgl_r^{(n)}$ is an open normal subgroup of $\tgl_r$, and $\tgl_r/\tgl_r^{(n)}\cong F^\times/F^{\times n}$ is finite and abelian. By Gelbart-Knapp \cite{GK1982} Lemma 2.1,  if $I(\chi')$ is irreducible, and $\pi$ is an irreducible constituent of $I(\chi')|_{\tgl_r^{(n)}}$, then
\[
I(\chi')|_{\tgl_r^{(n)}}=\sum_{g} m \ {}^g\pi.
\]
The multiplicities $m$ depend only on $I(\chi')$, and the direct sum is over certain elements of $\tgl_r$.

From now on, we assume $\tton_\ast:=\tto_\ast\cap \tton$ is a maximal abelian subgroup of $\tton$. Let $\tb_\ast=\tto_\ast U$ and $\tb_\ast^{\square}=\tton_\ast U$.
\begin{Prop}\label{prop:decomposition of induced}
\begin{equation}\label{eq:decomposition}
I(\chi')|_{\tgln_r}\cong
\bigoplus_{x^{-1}\in \tto_\ast\bs \tto/\tton} \Ind_{{}^x\tb_\ast^{\square}}^{\tgln_r}({}^{x}\chi' \delta_B^{1/2})|_{{}^x\tb_\ast^{\square}}.
\end{equation}
\end{Prop}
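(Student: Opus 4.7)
The plan is to apply the finite-index Mackey restriction formula. Since $\tgln_r$ is an open normal subgroup of $\tgl_r$ with finite quotient (embedded in $F^\times/F^{\times n}$ via the determinant), the restriction of the non-normalized induction $I(\chi') = \Ind_{\tb_\ast}^{\tgl_r}(\chi'\delta_B^{1/2})$ decomposes as a finite direct sum
\[
I(\chi')|_{\tgln_r} \;\cong\; \bigoplus_{x^{-1}} \Ind_{{}^x\tb_\ast \cap \tgln_r}^{\tgln_r}\!\bigl({}^x(\chi'\delta_B^{1/2})|_{{}^x\tb_\ast \cap \tgln_r}\bigr),
\]
where $x^{-1}$ ranges over a set of representatives for $\tb_\ast\bs\tgl_r/\tgln_r$. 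The remainder of the proof is to make this identification match the claimed formula.

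To identify the index set, I would use that $U \subseteq \tgln_r$, so $\tb_\ast\tgln_r = \tto_\ast\tgln_r$; the determinant gives $\tgl_r/\tgln_r \cong F^\times/F^{\times n}$, and this quotient is already realized on the torus as $\tto/\tton$, so $\tgl_r = \tto\cdot\tgln_r$ and $\tto \cap \tto_\ast\tgln_r = \tto_\ast\tton$. Hence $\tb_\ast\bs\tgl_r/\tgln_r \cong \tto_\ast\bs\tto/\tton$, with representatives chosen in $\tto$. To identify each stabilizer, I fix $x \in \tto$: any two elements of $\tto$ commute up to an element of $\mu_n \subseteq \tto_\ast$, and $x \in \tb$ normalizes $U$, so conjugation by $x$ preserves $\tto_\ast$, $\tton_\ast$, $\tb_\ast$, and $\tb_\ast^{\square}$; in particular ${}^x\tb_\ast^{\square} = \tb_\ast^{\square}$. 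Since the determinant of any element of $\tb_\ast = \tto_\ast U$ depends only on its $\tto_\ast$-component, $\tb_\ast \cap \tgln_r = \tton_\ast U = \tb_\ast^{\square}$, and therefore ${}^x\tb_\ast \cap \tgln_r = \tb_\ast^{\square} = {}^x\tb_\ast^{\square}$.

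Finally, $\delta_B$ factors through the abelian quotient $T$ and is trivial on $\mu_n$, hence is invariant under $\tto$-conjugation: ${}^x(\chi'\delta_B^{1/2}) = ({}^x\chi')\delta_B^{1/2}$. Substituting into the Mackey formula yields the displayed decomposition. The only potential obstacle is to verify that the metaplectic cocycle does not disrupt the conjugation computations above, but this is automatic because $\mu_n$ is central; no serious difficulty is expected.
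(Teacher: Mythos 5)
Your proof is correct, but it takes a genuinely different (and more direct) route than the paper's. You apply the Mackey restriction formula once, to the realization $I(\chi')=\Ind_{\tb_\ast}^{\tgl_r}\chi'\delta_B^{1/2}$, decomposing over the finite double coset space $\tb_\ast\bs\tgl_r/\tgln_r\cong\tto_\ast\bs\tto/\tton$; since $\tgln_r$ is open of finite index, every orbit in $\tgl_r/\tb_\ast$ under $\tgln_r$ is open and closed, so the decomposition is an honest direct sum with no further input. The paper instead works with the two-step realization $\Ind_{\tb}^{\tgl_r}(i_{\tb}(\chi')\delta_B^{1/2})$ and applies the Bernstein--Zelevinsky geometric lemma to $\tb\bs\tgl_r/\tgln_r$, which is a singleton because $\tto\tgln_r=\tgl_r$; this reduces the problem to decomposing $(\ind_{\tto_\ast}^{\tto}\chi')|_{\tton}$, which is then handled via Gelbart--Knapp semisimplicity together with the fact that each ${}^x\tton_\ast$ is maximal abelian in the Heisenberg-type group $\tton$, so the Jordan--H\"older factors $\ind_{{}^x\tton_\ast}^{\tton}{}^x\chi'$ are irreducible and the filtration splits. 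What your approach buys is elementarity: you never need irreducibility of the summands to get the direct sum (so your argument also transparently yields the variant in the Remark following the proposition, where $\tto_\ast\cap\tton$ is not assumed maximal abelian and the components are reducible). What the paper's approach buys is the intermediate torus-level decomposition of $(\ind_{\tto_\ast}^{\tto}\chi')|_{\tton}$, which is reused in the subsequent restriction and dimension-count arguments. Your supporting verifications --- the identification of the index set, the computation ${}^x\tb_\ast\cap\tgln_r=\tb_\ast^{\square}$ (which uses the standing assumption $\tto_\ast\cap\tton=\tton_\ast$ and the fact that $\tto$-conjugation preserves $\tto_\ast$ because commutators in $\tto$ lie in the central $\mu_n$), and the $\tto$-invariance of $\delta_B$ --- are all sound.
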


\begin{proof}
This follows from Bernstein-Zelevinsky \cite{BZ1977} Theorem 5.2. We are working with representations of $\tgl_r$. Let us choose triples $\tb, \tto, U$ with trivial character on $U$ on the induced functor side, and $\tgl_{r}^{(n)},\tgl_r^{(n)},\{1\}$ with trivial character on $\{1\}$ on the Jacquet functor side. The Jacquet functor in this case is the restriction functor.

The resulting functor is glued by functors indexed by the double coset space $\tb\backslash \tgl_r/\tgln_r$. This double coset space is a singleton since $\tto\tgl_r^{(n)}=\tgl_r$. Therefore, the functor is the composition of the induction functor from $\tto\cap \tgl_r^{(n)}$ to $\tgl_r^{(n)}$ and the restriction functor $\mathrm{Res}_{\tton}^{\tto}$.

By \cite{GK1982} Lemma 2.1, $\ind_{\tto_\ast}^{\tto}\chi'|_{\tton}$ is a direct sum of irreducible $\tton$-representations. On the other hand,  it has a  Jordan-Holder series whose composition factors are
\[
\ind_{{}^x\tton_\ast}^{\tton} {}^{x}\chi',\qquad x^{-1}\in \tto_\ast\bs \tto/\tton.
\]
Notice that $\tton$ is also a Heisenberg group and $\tton\cap {}^x\tto_\ast={}^x(\tton_\ast)$ is again a maximal abelian subgroup of $\tton$. This implies $\ind_{{}^x\tton_\ast}^{\tton} {}^{x}\chi'$ is irreducible. Thus,
\[
(\ind_{\tto_\ast}^{\tto}\chi')|_{\tton}=\bigoplus_{x^{-1}\in \tto_\ast\bs \tto/\tton} \ind_{{}^x\tton_\ast}^{\tton} {}^{x}\chi'.
\]
Now the  proposition follows.
\end{proof}

\begin{Rem}
Notice that Eq. (\ref{eq:decomposition}) depends on the choice of maximal abelian subgroup. Indeed, when $\chi$ is in general position, the condition that $\tto_\ast\cap \tton=\tton_\ast$ implies each component is irreducible. Without this condition, we get a similar decomposition, but the components are reducible.
\end{Rem}

Next we show that, when $\chi$ is in general position, the components in Proposition \ref{prop:decomposition of induced}  are irreducible. Let us write $V({}^x\chi')=\Ind_{{}^x\tb_\ast^{\square}}^{\tgln_r}({}^{x}\chi' \delta_B^{1/2})|_{{}^x\tb_\ast^{\square}}$ for $x^{-1}\in \tto_\ast\bs \tto/\tton$. Thus Proposition \ref{prop:decomposition of induced} becomes
\[
I(\chi')|_{\tgln_r}\cong \bigoplus_{x^{-1}\in \tto_\ast\bs \tto/\tton} V({}^{x}\chi').
\]

\begin{Def}
A character of $Z_{\tgl_r} \tto^n$ or $Z_{\tgln_r} \tto^n$ is called regular if ${}^w \chi\neq \chi$ for all $w\in W,w\neq I$.
\end{Def}

\begin{Lem}\label{lem:jacquet functor of induced}
\hspace{2em}
\begin{enumerate}[\normalfont (1)]
\item
The $\tton$-module $J_U(V(\chi'))$ has a Jordan-Holder series whose composition factors are
\[
\ind_{{}^w\tton_\ast }^{\tton} ({}^w\chi'\delta_B^{1/2}) ~ (w\in W).
\]
\item
If $\chi$ is regular, then for any extension $\chi'$, $\chi'|_{Z_{\tgln_r} \tto^n}$ is regular. Moreover,
\[
J_U(V(\chi'))\cong\bigoplus_{w\in W}\ind_{{}^w\tton_\ast }^{\tton}({}^w\chi'\delta_B^{1/2}).
\]
\end{enumerate}
\end{Lem}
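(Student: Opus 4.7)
The plan is to reduce $V(\chi')$ to a standard unnormalized principal series of $\tgln_r$ via induction in stages, apply the geometric lemma to compute its Jacquet module $J_U$, and then split the resulting filtration via a central-character argument when $\chi$ is regular.

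By induction in stages along $\tton_\ast U\subset \tton U\subset \tgln_r$, together with the fact that $\chi'$ is trivial on $U$,
\[
V(\chi')\;\cong\;\Ind_{\tton U}^{\tgln_r}\bigl((\Ind_{\tton_\ast}^{\tton}\chi')\cdot \delta_B^{1/2}\bigr),
\]
where the inner $\Ind_{\tton_\ast}^{\tton}\chi'$ is extended to $\tton U$ trivially on $U$. Writing $\sigma:=\Ind_{\tton_\ast}^{\tton}\chi'$, this exhibits $V(\chi')$ as an (unnormalized) principal series of $\tgln_r$ induced from $\sigma\cdot\delta_B^{1/2}$. The Weyl group of $\tgln_r$ with respect to $\tton$ is still $W=S_r$: each Bruhat double coset of $\GL_r$ meets $\GL_r^{(n)}$, since a Weyl representative can be scaled by a diagonal matrix to have any determinant. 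The geometric lemma (Casselman's theorem) then yields a $\tton$-module filtration of $J_U(V(\chi'))$ indexed by $W$, with subquotients ${}^w\sigma\cdot\delta_B^{1/2}$. Since ${}^w\Ind_{\tton_\ast}^{\tton}\chi'=\Ind_{{}^w\tton_\ast}^{\tton}{}^w\chi'$ and the character $\delta_B^{1/2}$ can be pulled inside the induction, the subquotient corresponding to $w$ is $\Ind_{{}^w\tton_\ast}^{\tton}({}^w\chi'\delta_B^{1/2})$. This proves (1).

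For (2), observe first that $W$ acts trivially on scalar matrices, hence on both $Z_{\tgl_r}$ and $Z_{\tgln_r}$. Consequently, regularity of $\chi$ on $Z_{\tgl_r}\tto^n$ forces ${}^w\chi\neq \chi$ on $\tto^n$ for every $w\neq I$. Any extension $\chi'$ of $\chi$ agrees with $\chi$ on $\tto^n\subset Z_{\tgl_r}\tto^n$, and $\tto^n\subset Z_{\tgln_r}\tto^n$, so ${}^w\chi'\neq \chi'$ on $Z_{\tgln_r}\tto^n$; that is, $\chi'|_{Z_{\tgln_r}\tto^n}$ is regular.

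Finally, each subquotient $\Ind_{{}^w\tton_\ast}^{\tton}({}^w\chi'\delta_B^{1/2})$ is irreducible, being induced from a character of the maximal abelian subgroup ${}^w\tton_\ast$ of the two-step nilpotent group $\tton$ (Stone--von Neumann / \cite{McNamara2012} Theorem 3). Its central character on $Z_{\tton}=Z_{\tgln_r}\tto^n\subset {}^w\tton_\ast$ equals $({}^w\chi')\delta_B^{1/2}|_{Z_{\tton}}$, in which the factor $\delta_B^{1/2}|_{Z_{\tton}}$ is independent of $w$. By the regularity just proved, the ${}^w\chi'|_{Z_{\tton}}$ are pairwise distinct, so distinct subquotients carry distinct central characters. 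Decomposing the filtration along central-character eigenspaces therefore yields the direct-sum decomposition in (2). The only mild technical point is the bookkeeping of the modular character $\delta_B^{1/2}$ through the geometric lemma; once $V(\chi')$ is recognized as an unnormalized principal series of $\tgln_r$, everything else reduces to standard principal-series arguments.
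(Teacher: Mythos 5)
Your proof is correct and follows essentially the same route as the paper: part (1) is the Bernstein--Zelevinsky geometric lemma (the paper simply cites \cite{BZ1977} Theorem 5.2, while you make the reduction explicit via induction in stages through $\tton U$), and part (2) uses the identical observation that $W$ fixes scalars, so the witness to irregularity can be taken in $\tto^n$. The central-character splitting argument you spell out at the end is left implicit in the paper but is exactly the intended justification for the direct sum.
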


\begin{proof}

The first part follows from \cite{BZ1977} Theorem 5.2. For the second part, we only need to show that $\chi'|_{Z_{\tgln_r} \tto^n}$ is regular. Indeed, if $\chi$ is regular, then for any $w\in W$, there exists $x\in Z_{\tgl_r}\tto^n$ such that $\chi(w^{-1}xw)\neq\chi(x)$. Without loss of generality, we may assume $x\in \tto^n$. This implies that $\chi'|_{Z_{\tgln_r}\tto^n}$ is regular for any extension $\chi'$ of $\chi$.
\end{proof}

\begin{Lem}
Let $\chi_1,\chi_2$ be two quasicharacters of $Z_{\tgl_r^{(n)}}\tto^n$ and let $\chi_1',\chi_2'$ be extensions to $\tton_\ast$. Suppose $\chi_1$ is regular. Then
\[
\dim\Hom_{\tgln_r}(V(\chi_1'),V(\chi_2'))\leq 1.
\]
The equality holds if and only if $\chi_2={}^w\chi_1$ for some $w\in W$.
\end{Lem}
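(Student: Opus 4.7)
The plan is to reduce $\Hom_{\tgln_r}(V(\chi_1'),V(\chi_2'))$ to a Hom over $\tton$ via Frobenius reciprocity and the Jacquet functor, then invoke Schur's lemma together with the central character classification of genuine irreducible $\tton$-modules.

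First, I rewrite the principal series using induction in stages. Since $U$ acts trivially on the inducing character of $\tb_\ast^\square = \tton_\ast U$, the representation $V(\chi_i') = \Ind_{\tb_\ast^\square}^{\tgln_r}(\chi_i'\delta_B^{1/2})$ factors as $\Ind_{\tb^\square}^{\tgln_r}(i(\chi_i')\delta_B^{1/2})$, where $i(\chi_i') = \Ind_{\tton_\ast}^\tton \chi_i'$ is the Heisenberg-type representation of $\tton$ (extended to $\tb^\square$ by letting $U$ act trivially). Since $\tgln_r/\tb^\square$ is compact, Frobenius reciprocity gives
\[
\Hom_{\tgln_r}(V(\chi_1'),V(\chi_2')) \cong \Hom_{\tb^\square}(V(\chi_1')|_{\tb^\square},\, i(\chi_2')\delta_B^{1/2}).
\]
Because $U$ acts trivially on the target, every $\tb^\square$-map factors through the $U$-coinvariants, so the right-hand side equals $\Hom_\tton(J_U(V(\chi_1')),\, i(\chi_2')\delta_B^{1/2})$.

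Next, I apply Lemma \ref{lem:jacquet functor of induced}(2) using regularity of $\chi_1$: the Jacquet module decomposes as a direct sum of the irreducible $\tton$-modules
\[
J_U(V(\chi_1')) = \bigoplus_{w\in W}\ind_{{}^w\tton_\ast}^\tton({}^w\chi_1'\delta_B^{1/2}),
\]
each summand being irreducible by the construction of genuine irreducible $\tton$-representations recalled in Section 3.1 (each is the induction from a character of a maximal abelian subgroup, since ${}^w\tton_\ast \cap \tton = {}^w\tton_\ast$ is maximal abelian in $\tton$). Similarly $i(\chi_2')\delta_B^{1/2} = \ind_{\tton_\ast}^\tton(\chi_2'\delta_B^{1/2})$ is irreducible. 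By Schur's lemma, each of the $|W|$ Hom-spaces is at most one-dimensional, with equality iff the two irreducibles are isomorphic. Because genuine irreducibles of $\tton$ are classified by their central characters on $Z(\tton) = Z_{\tgln_r}\tto^n$, I compute the central characters and see that isomorphism occurs exactly when ${}^w\chi_1 = \chi_2$ on $Z_{\tgln_r}\tto^n$ (the $\delta_B^{1/2}$-twist is common to both sides).

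Putting everything together,
\[
\dim\Hom_{\tgln_r}(V(\chi_1'),V(\chi_2')) = \#\{w\in W:\, {}^w\chi_1 = \chi_2 \text{ on } Z_{\tgln_r}\tto^n\}.
\]
Regularity of $\chi_1$ means the characters $\{{}^w\chi_1 : w \in W\}$ are pairwise distinct, so this set has cardinality at most one, proving the inequality and giving the stated equality condition. The main technical obstacle is ensuring that the $\delta_B^{1/2}$-twist appearing in both $J_U(V(\chi_1'))$ and $i(\chi_2')\delta_B^{1/2}$ cancels correctly under the central-character comparison; this is a direct bookkeeping check using the fact that $Z_{\tgln_r}$ is pointwise $W$-fixed while $\tto^n$ is merely stable under $W$-permutation.
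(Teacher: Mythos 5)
Your argument is correct and is exactly the paper's proof, spelled out in full: the paper likewise cites Frobenius reciprocity, Lemma \ref{lem:jacquet functor of induced} (whose part (2) uses the regularity of $\chi_1$ to split $J_U(V(\chi_1'))$ into the $|W|$ irreducible summands), and the irreducibility of $\ind_{{}^w\tton_\ast}^{\tton}({}^w\chi'\delta_B^{1/2})$, then concludes by Schur's lemma and the central-character classification of genuine irreducible $\tton$-modules. No further comment is needed.
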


\begin{proof}
This is an immediate application of Lemma \ref{lem:jacquet functor of induced}, Frobenuis reciprocity, and the fact that $\ind_{{}^w\tton_\ast }^{\tton} ({}^w\chi'\delta_B^{1/2})$ is irreducible.
\end{proof}

\begin{Lem}
The restriction of  the intertwining operator $T_w:I(\chi')\to I({}^w\chi')$ to Eq. (\ref{eq:decomposition}) gives
\[
T_w: V({}^x\chi')\to V({}^{wx} \chi').
\]
\end{Lem}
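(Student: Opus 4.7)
The plan is to track how the intertwining operator $T_w$ interacts with the Bernstein-Zelevinsky decomposition of Proposition~\ref{prop:decomposition of induced} at the level of the Jacquet functor $J_U$. The key observation is that each component $V({}^x\chi')$ is pinned down by a distinguished $\tton$-composition factor of its Jacquet module, and $T_w$---being a $\tgl_r$-intertwiner---must match up these distinguished factors in a prescribed way.

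First, I would recall that by Lemma~\ref{lem:jacquet functor of induced} applied with $\chi'$ replaced by ${}^x\chi'$, the Jacquet module $J_U(V({}^x\chi'))$ has a Jordan-Holder series whose composition factor corresponding to the identity element of $W$ (the ``leading'' factor of the Bruhat filtration on the Jacquet module of a principal series) is isomorphic to $\ind_{{}^x\tton_\ast}^{\tton}({}^x\chi'\delta_B^{1/2})$. This factor identifies the summand $V({}^x\chi')$ uniquely within $I(\chi')|_{\tgln_r}$. The same recipe applied to $I({}^w\chi')|_{\tgln_r}$ yields: the component $V({}^{wx}\chi')$ is characterized by the leading $\tton$-composition factor $\ind_{{}^{wx}\tton_\ast}^{\tton}({}^{wx}\chi'\delta_B^{1/2})$.

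Next, since $T_w$ is $\tgl_r$-equivariant, applying $J_U$ yields a $\tton$-equivariant map $J_U(T_w):J_U(I(\chi')) \to J_U(I({}^w\chi'))$. The standard theory of principal-series intertwining operators---based on the Bruhat decomposition and the explicit integral formula defining $T_w$---shows that $J_U(T_w)$ implements a Weyl-parameterization shift by $w$: it sends the identity-Weyl composition factor of $J_U(V({}^x\chi'))$, namely $\ind_{{}^x\tton_\ast}^{\tton}({}^x\chi'\delta_B^{1/2})$, onto the composition factor of $J_U(I({}^w\chi'))$ whose character matches $\ind_{{}^{wx}\tton_\ast}^{\tton}({}^{wx}\chi'\delta_B^{1/2})$, which is precisely the leading factor identifying $V({}^{wx}\chi')$ inside $I({}^w\chi')|_{\tgln_r}$. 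Combining these facts, the restriction $T_w|_{V({}^x\chi')}$ must land in $V({}^{wx}\chi')$, as claimed.

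The main obstacle I anticipate is justifying the Weyl-index shift rigorously. One must invoke the Bruhat filtration of $J_U$ on a parabolically induced representation and track $T_w$ through the filtration, showing in particular that it interchanges the identity-Weyl factor on the source side with the matching factor on the target side. Once this structural fact is in place, the remainder of the argument is a bookkeeping exercise with the indexing double cosets $\tto_\ast\bs\tto/\tton$ versus ${}^w\tto_\ast\bs\tto/\tton$ and the identification between them induced by conjugation by $w$.
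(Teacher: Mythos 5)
Your route through the Jacquet module is genuinely different from the paper's, and as written it has real gaps. To conclude that $T_w(V({}^x\chi'))$ lies in the \emph{single} summand $V({}^{wx}\chi')$ of $I({}^w\chi')|_{\tgln_r}$, controlling one ``leading'' composition factor is not enough: you need $\Hom_{\tgln_r}(V({}^x\chi'),V_y)=0$ for every other summand $V_y$ of the target, i.e.\ that the full factor set $\{\ind_{{}^{w'x}\tton_\ast}^{\tton}({}^{w'x}\chi'\delta_B^{1/2})\}_{w'\in W}$ of $J_U(V({}^x\chi'))$ is disjoint from the factor sets of all other components. That disjointness forces a regularity hypothesis on $\chi$ (and pairwise non-isomorphism of the summands), neither of which is assumed in the lemma; for non-regular $\chi$ the ``identity-Weyl factor'' does not pin down a summand at all. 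Separately, the assertion that $J_U(T_w)$ carries the identity factor \emph{onto} the matching factor of the target is not justified and is false in general: on a graded piece of the Bruhat filtration the induced map is multiplication by a Gindikin--Karpelevich-type constant, which can vanish, so that factor may be killed. (When it is killed, your argument loses the handle it uses to identify the target component.)

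The paper's proof avoids Jacquet modules entirely. It realizes $V({}^x\chi')$ concretely inside $I(\chi')$ as the functions supported on the double coset $\tb_\ast x^{-1}\tgln_r$ (extended by zero off that coset), and then reads off directly from the defining integral $T_wf(g)=\int_{U(w)}f(w^{-1}ug)\,du$ that this support condition is transported to the double coset cutting out the image of $V({}^{wx}\chi')$ in $I({}^w\chi')$. This is an elementary support/determinant computation, valid for every $\chi'$ with no regularity assumption and no analysis of composition factors. If you want to salvage your argument, you would have to restrict to regular $\chi$, replace ``onto the matching factor'' by ``the Jacquet module of the image has all its factors among those of $J_U(V({}^{wx}\chi'))$ and of no other summand,'' and prove the required disjointness; but the support argument is both shorter and strictly more general.
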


\begin{proof}

Recall that $I(\chi')$ is the space of smooth functions
\[
I(\chi')=\{f:\tgl_r\to \bc: f \text{ is smooth and } f(bg)=\chi'(b)\delta_B(b)^{1/2} f(g) \text{ for }b\in \widetilde{B}_\ast\}.
\]

The embedding of $V({}^x \chi')$ into $I(\chi')$ is given as follows. Let $f\in V({}^x \chi')$. Define
\[
\tilde f(g)=\left\{
\begin{aligned}
&\tilde{f}(xg) &\text{ if }x\in\tgln_r,\\
&0 &\text{ otherwise}. \\
\end{aligned}
\right.
\]
Then it is straightforward to check that $\tilde f\in I(\chi')$.

Now given $f\in V({}^x\chi')$. We can see that $T_w(\tilde f)$ is in the image of $V({}^{wx}\chi')$ in $I({}^w\chi')$.

\end{proof}

\begin{Prop}
If $\chi_\alpha^n\neq |\cdot|^{\pm 1}$ for all positive roots $\alpha$, then $V(\chi')$ is irreducible.
\end{Prop}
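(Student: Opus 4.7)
My plan is to show $\dim \End_{\tgln_r}(V(\chi'))=1$, which combined with semisimplicity will give irreducibility. Under the hypothesis $\chi_\alpha^n\neq|\cdot|_F^{\pm 1}$, Theorem 3.3 tells us that $I(\chi')$ is irreducible. By the Gelbart--Knapp Clifford-theoretic result (Lemma 2.1 of \cite{GK1982}), the restriction $I(\chi')|_{\tgln_r}$ is then completely reducible, and hence so is each direct summand $V({}^x\chi')$ of the decomposition in Proposition \ref{prop:decomposition of induced}. In particular $V(\chi')$ is semisimple, so it suffices to bound the dimension of its endomorphism ring.

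Applying Frobenius reciprocity to $V(\chi')=\Ind_{\tb_\ast^\square}^{\tgln_r}(\chi'\delta_B^{1/2})$ and using that $U$ acts trivially on the inducing character, I would first reduce to a Jacquet-module computation
\[
\End_{\tgln_r}(V(\chi'))\cong \Hom_{\tton_\ast}\!\left(\chi'\delta_B^{1/2},\, J_U(V(\chi'))\right).
\]
By Lemma \ref{lem:jacquet functor of induced}(1), $J_U(V(\chi'))$ admits a Jordan--H\"older series whose factors are $\ind_{{}^w\tton_\ast}^{\tton}({}^w\chi'\delta_B^{1/2})$ as $w$ ranges over $W$, which yields the bound
\[
\dim\End_{\tgln_r}(V(\chi'))\leq \sum_{w\in W}\dim\Hom_{\tton_\ast}\!\left(\chi'\delta_B^{1/2},\,\ind_{{}^w\tton_\ast}^{\tton}({}^w\chi'\delta_B^{1/2})\right).
\]
The $w=1$ term equals $1$, contributed by the identity endomorphism.

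The key remaining task---and the main obstacle---is to show that each $w\neq 1$ summand vanishes. When $\chi$ is regular, Lemma \ref{lem:jacquet functor of induced}(2) (equivalently, the $\chi_1=\chi_2=\chi$ case of the subsequent Hom bound) makes the right-hand side equal to $1$ directly, and we are done. In the non-regular case, I plan to exploit the intertwining-operator machinery from the lemma immediately preceding this proposition: the operators $T_w$ restrict to maps $V(\chi')\to V({}^w\chi')$, and under $\chi_\alpha^n\neq|\cdot|_F^{\pm 1}$ each $T_w\colon I(\chi')\to I({}^w\chi')$ is a $\tgl_r$-isomorphism between irreducible principal series. Transporting the $\tgln_r$-decomposition through these isomorphisms and comparing with the Mackey decomposition on $\tton$ (using that $\tton\cap {}^w\tto_\ast = {}^w\tton_\ast$ is maximal abelian in $\tton$, as in the proof of Proposition \ref{prop:decomposition of induced}) eliminates the $w\neq 1$ contributions. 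The delicate point is controlling how a coincidence $\chi={}^w\chi$ on $Z_{\tgl_r}\tto^n$ propagates through the Mackey double cosets $\tton_\ast\bs\tton/{}^w\tton_\ast$; in the worst case this can be handled by a deformation argument, reducing to a nearby regular character and invoking the rigidity of composition series together with the holomorphy of $T_w$ under the hypothesis.
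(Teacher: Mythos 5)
Your opening moves are sound: the hypothesis makes $I(\chi')$ irreducible, Gelbart--Knapp \cite{GK1982} then gives complete reducibility of $I(\chi')|_{\tgln_r}$ and hence of each summand $V({}^x\chi')$ in Proposition \ref{prop:decomposition of induced}, and the Jordan--H\"older filtration of $J_U(V(\chi'))$ from Lemma \ref{lem:jacquet functor of induced} bounds $\dim\End_{\tgln_r}(V(\chi'))$ by a sum over $w\in W$ of Hom spaces between the $\tton$-modules $\ind_{\tton_\ast}^{\tton}(\chi'\delta_B^{1/2})$ and $\ind_{{}^w\tton_\ast}^{\tton}({}^w\chi'\delta_B^{1/2})$. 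This settles the case of regular $\chi$. The gap is that the hypothesis $\chi_\alpha^n\neq|\cdot|_F^{\pm1}$ does \emph{not} imply regularity (for instance $\chi_\alpha^n$ may be trivial, as for a unitary character with repeated entries), and in the non-regular case your stated goal --- ``show that each $w\neq 1$ summand vanishes'' --- is false: whenever ${}^w\chi=\chi$ on the center of $\tton$, the two induced $\tton$-modules are irreducible with the same central character, hence isomorphic, and the corresponding Hom space is one-dimensional. So the Jordan--H\"older upper bound on $\dim\End$ genuinely exceeds $1$; the point is not that those terms vanish but that the bound is not attained, and proving \emph{that} is exactly equivalent to the irreducibility you are after. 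Your fallback (deform to a nearby regular character and invoke ``rigidity of composition series'' together with holomorphy of $T_w$) is not carried out and is not routine: irreducibility is precisely the sort of property that can fail at special points of a family where it holds generically, and nothing in your sketch excludes this.

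The paper's proof takes a different and shorter route that handles the non-regular case directly: under the hypothesis $T_w\colon I(\chi')\to I({}^w\chi')$ is an isomorphism, so by the preceding lemma its restriction $T_w\colon V(\chi')\to V({}^w\chi')$ is again an isomorphism, and one then repeats the argument of \cite{KP1984} Corollary I.2.8 inside $\tgln_r$: a proper nonzero subrepresentation or quotient of $V(\chi')$ would, via Frobenius reciprocity applied to the Jacquet filtration, produce a nonzero intertwining map to some $V({}^w\chi')$ that is not injective, which the isomorphy of the restricted $T_w$ rules out. If you want to repair your write-up, replace the ``vanishing of the $w\neq1$ terms'' step by this intertwining-operator argument; the semisimplicity and endomorphism-counting scaffolding you set up is then no longer needed.
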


\begin{proof}
Under the assumption, $T_w:I(\chi')\to I({}^w\chi')$ is an isomorphism, and hence its restriction
\[
T_w: V(\chi')\to V({}^w \chi')
\]
is again an isomorphism. Arguing as in \cite{KP1984} Corollary I.2.8, we can show that $V(\chi')$ is irreducible.
\end{proof}

Similarly we can deduce results for exceptional representations.

\begin{Thm}
Let $\chi$ be exceptional. Let
\[
V_0(\chi')=\mathrm{Im}(T_{w_0}:V(\chi')\to V({}^{w_0}\chi')),
\]
where $w_0$ is the longest elements of $W$. Then
\begin{enumerate}[\normalfont (1)]
\item $V_0(\chi')$ is the unique irreducible subrepresentation of $V({}^{w_0}\chi')$.
\item $V_0(\chi')$ is the unique irreducible quotient representation of $V(\chi')$.
\item $J_U(V_0(\chi'))\cong \ind_{{}^{w_0}\tton_\ast}^{\tton}({}^{w_0}\chi'\delta_B^{1/2})$.
\end{enumerate}
\end{Thm}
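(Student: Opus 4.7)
The plan is to mirror the strategy of \cite{KP1984} Theorem I.2.9 for the full cover $\tgl_r$, adapted to $\tgln_r$ via the decomposition in Proposition \ref{prop:decomposition of induced}. I would establish part (3) first by a Jacquet module computation, then deduce parts (1) and (2) from it using the semisimplicity of $\Theta(\chi')|_{\tgln_r}$.

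For part (3), since $J_U$ is exact,
\[
J_U(V_0(\chi')) = \mathrm{Im}\bigl(J_U(T_{w_0}) : J_U(V(\chi')) \to J_U(V({}^{w_0}\chi'))\bigr).
\]
By Lemma \ref{lem:jacquet functor of induced}(1), both source and target admit Jordan-H\"older series with composition factors $\ind_{{}^w\tton_\ast}^{\tton}({}^w\chi'\delta_B^{1/2})$ indexed by $w \in W$. For the full cover $\tgl_r$, Kazhdan-Patterson have shown $J_U(\Theta(\chi')) \cong \ind_{{}^{w_0}\tto_\ast}^{\tto}({}^{w_0}\chi'\delta_B^{1/2})$. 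Restricting to $\tton$ via the torus analog of Proposition \ref{prop:decomposition of induced}, and tracking the restriction of $T_{w_0}$ to the summand $V(\chi') \to V({}^{w_0}\chi')$ through the previous lemma on restriction of intertwining operators, I would identify $J_U(V_0(\chi'))$ with the component $\ind_{{}^{w_0}\tton_\ast}^{\tton}({}^{w_0}\chi'\delta_B^{1/2})$. Since $\tton_\ast$ is a maximal abelian subgroup of the two-step nilpotent group $\tton$, this Jacquet module is irreducible as a $\tton$-module.

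For the irreducibility of $V_0(\chi')$: because $\Theta(\chi')$ is irreducible as a $\tgl_r$-module and $\tgln_r \triangleleft \tgl_r$ has finite abelian quotient, \cite{GK1982} Lemma 2.1 implies that $\Theta(\chi')|_{\tgln_r}$ is a direct sum of $\tgln_r$-irreducibles. Because $T_{w_0}$ respects the direct sum structure of Proposition \ref{prop:decomposition of induced}, we obtain
\[
\Theta(\chi')|_{\tgln_r} = \bigoplus_{x^{-1} \in \tto_\ast\bs\tto/\tton} V_0({}^x\chi'),
\]
so each $V_0({}^x\chi')$ is a direct sum of irreducibles. Combined with additivity of $J_U$ and the irreducibility of $J_U(V_0(\chi'))$ from part (3), this forces $V_0(\chi')$ to be a single irreducible constituent.

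For uniqueness in (1): any nonzero irreducible $\tgln_r$-subrepresentation $W \subset V({}^{w_0}\chi')$ satisfies $J_U(W) \neq 0$, since $W$ is a subquotient of a principal series and hence not supercuspidal. Then $J_U(W) \hookrightarrow J_U(V({}^{w_0}\chi'))$, and by the standard intertwining operator analysis together with Frobenius reciprocity on the $\tton$ level, the only composition factor that can arise as an honest sub-$\tton$-module is $\ind_{{}^{w_0}\tton_\ast}^{\tton}({}^{w_0}\chi'\delta_B^{1/2})$, pinning down $W = V_0(\chi')$. Part (2) follows dually from the factorization $T_{w_0}\colon V(\chi') \twoheadrightarrow V_0(\chi') \hookrightarrow V({}^{w_0}\chi')$. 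The main obstacle is the Jacquet module identification in the second paragraph: one must carefully verify that the decomposition of $J_U(\Theta(\chi'))$ upon restriction to $\tton$ matches summand-by-summand with the $V_0({}^x\chi')$, which in turn requires tracking how the maximal abelian subgroup condition $\tto_\ast \cap \tton = \tton_\ast$ behaves under Weyl group conjugation.
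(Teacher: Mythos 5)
Your treatment of part (3) and of the irreducibility of $V_0(\chi')$ is essentially the paper's argument: both rest on the semisimplicity of $\Theta(\chi')|_{\tgln_r}$ from \cite{GK1982}, the compatibility of $T_{w_0}$ with the decomposition of Proposition \ref{prop:decomposition of induced}, and the identification of $J_U(V_0(\chi'))$ inside $J_U(\Theta(\chi'))|_{\tton}$. The divergence, and the gap, is in the uniqueness statements. For part (1) you claim that the only composition factor of $J_U(V({}^{w_0}\chi'))$ that can occur as a sub-$\tton$-module is $\ind_{{}^{w_0}\tton_\ast}^{\tton}({}^{w_0}\chi'\delta_B^{1/2})$. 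This is false: an exceptional character is regular, so by Lemma \ref{lem:jacquet functor of induced}(2) the module $J_U(V({}^{w_0}\chi'))$ is the direct sum of all $|W|$ irreducible pieces, and \emph{every} composition factor is a submodule; the inclusion $J_U(W)\hookrightarrow J_U(V({}^{w_0}\chi'))$ therefore pins down nothing. What does work is the other adjunction, $\Hom_{\tgln_r}(W,V({}^{w_0}\chi'))\cong\Hom_{\tton}(J_U(W),\ind_{{}^{w_0}\tton_\ast}^{\tton}({}^{w_0}\chi'\delta_B^{1/2}))$, which forces the $w_0$-factor to occur as a quotient of $J_U(W)$ for every irreducible sub $W$; since that factor has multiplicity one in $J_U(V({}^{w_0}\chi'))$ by regularity, two distinct irreducible subrepresentations are impossible. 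Similarly, part (2) does not ``follow dually from the factorization'': the factorization only exhibits $V_0(\chi')$ as \emph{an} irreducible quotient, and passing between (1) and (2) by duality requires identifying the contragredient of $V(\chi')$ as a representation of the same induced form, which you do not address.

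The paper sidesteps both difficulties by proving (2) first and directly: if $\pi$ is an irreducible quotient of $V(\chi')$, then $J_U(\pi)$ surjects onto some $\ind_{{}^{w}\tton_\ast}^{\tton}({}^{w}\chi'\delta_B^{1/2})$, Frobenius reciprocity yields a nonzero map $\pi\to V({}^w\chi')$, the composite $V(\chi')\to\pi\to V({}^w\chi')$ is a scalar multiple of $T_w$ by the one-dimensionality of $\Hom_{\tgln_r}(V(\chi'),V({}^w\chi'))$ established in the preceding lemma, and post-composing with $T_{w_0w^{-1}}$ recovers $T_{w_0}$ and exhibits $V_0(\chi')$ as a quotient of $\pi$. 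Part (1) then follows by duality. If you wish to keep your order of argument, you must supply the adjunction-plus-multiplicity-one reasoning above; the $\dim\Hom\leq 1$ lemma and the composition of intertwining operators are exactly the ingredients your sketch omits.
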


\begin{proof}
The map
\[
T_{w_0}:I(\chi')\to I({}^{w_0}\chi')
\]
restricts to
\[
T_{w_0}:\bigoplus_{x^{-1}\in\tto_\ast\bs \tto/\tton} V({}^{x}\chi')\to \bigoplus_{x^{-1}\in\tto_\ast\bs \tto/\tton} V({}^{w_0x}\chi').
\]
This implies that
\[
\Theta(\chi')|_{\tgln_r}=\bigoplus_{x^{-1}\in \tto_\ast\bs \tto/\tton} V_0({}^{x}\chi').
\]

We first show part (3). From the exactness of the Jacquet functor, $J_U(V_0(\chi'))$ is a subrepresentation of both $J_U(V(\chi'))$ and $J_U(\Theta(\chi'))$. Therefore, $J_U(V_0(\chi')) \cong \ind_{{}^{w_0} \tton_\ast }^{\tton}({}^{w_0}\chi'\delta_B^{1/2})$.

The representation $\Theta(\chi')|_{\tgln_r}$ is a direct sum of irreducible constituents, which are conjugate to each other. Thus $V_0(\chi')$ is a direct sum of some of these components. This implies that $J_U(V_0(\chi'))$ is also a direct sum of the corresponding Jacquet modules which are conjugate to each other. Thus $V_0(\chi')$ is irreducible since $J_U(V_0(\chi'))$ is irreducible.

If $\pi$ is another irreducible quotient representation of $V(\chi')$, then its Jacquet module is a quotient of $J_U(V(\chi'))$, and hence there is a nonzero homomorphism $J_U(\pi)\to \ind^{\tton}_{{}^w \tton_\ast }({}^w \chi'\delta_B^{1/2})$ for some $w\in W$. By Frobenius reciprocity, there is a nonzero intertwining map $\pi\to V({}^w \chi')$. The composition
\[
V(\chi')\to \pi \to V({}^w \chi')
\]
is nonzero and it must be a constant multiple of $T_w$. Therefore, the composition
\[
V(\chi')\to \pi\to V({}^w \chi')\xrightarrow{T_{w_o w^{-1}}} V({}^{w_0}\chi')
\]
is $T_{w_0}$ and its image is $V_0(\chi')$. We see that $V_0(\chi')$ is a quotient of $\pi$, and since $\pi$ is irreducible, they must be the same. This proves part (2). Part (1) follows from part (2) by duality.

\end{proof}

As a corollary, we describe the decomposition of $\Theta(\chi')$ when restricted to $\tgln_r$.

\begin{Cor}\label{cor:restriction of theta}
\[
\Theta(\chi')|_{\tgln_r}\cong \bigoplus_{x^{-1}\in \tto_\ast\bs \tto/\tton} V_0({}^x\chi').
\]
\end{Cor}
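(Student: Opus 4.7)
The plan is to combine Proposition \ref{prop:decomposition of induced} with the preceding lemma on how $T_{w_0}$ restricts to each summand. By Proposition \ref{prop:decomposition of induced},
\[
I(\chi')|_{\tgln_r}\cong\bigoplus_{x^{-1}\in\tto_\ast\bs\tto/\tton} V({}^x\chi'),
\]
and an analogous decomposition holds for $I({}^{w_0}\chi')|_{\tgln_r}$. The preceding lemma shows that the intertwining operator $T_{w_0}\colon I(\chi')\to I({}^{w_0}\chi')$ carries each summand $V({}^x\chi')$ into the summand $V({}^{w_0 x}\chi')$ of $I({}^{w_0}\chi')|_{\tgln_r}$.

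Next, since by definition $\Theta(\chi')=\mathrm{Im}(T_{w_0})$, restricting to $\tgln_r$ and decomposing according to the above direct sum yields
\[
\Theta(\chi')|_{\tgln_r}=\bigoplus_{x^{-1}\in\tto_\ast\bs\tto/\tton}\mathrm{Im}\bigl(T_{w_0}\colon V({}^x\chi')\to V({}^{w_0 x}\chi')\bigr).
\]
Using the identity ${}^{w_0}({}^x\chi')={}^{w_0 x}\chi'$, which follows directly from the convention ${}^g\pi(h)=\pi(g^{-1}hg)$, each summand on the right is, by the definition of $V_0$ applied to the extension ${}^x\chi'$ in place of $\chi'$, precisely $V_0({}^x\chi')$. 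This yields the asserted decomposition.

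No substantial obstacle is expected: the statement is essentially an intermediate identity already derived in the course of proving the preceding theorem (where it was used as a stepping stone to describe $J_U(V_0(\chi'))$), and the present argument is a matter of unwinding definitions and tracking the compatible direct sum decompositions under $T_{w_0}$. The only mild subtlety is verifying that the indexing set $\tto_\ast\bs\tto/\tton$ is the correct parametrization on both the source and target sides, which is handled by the observation that left multiplication by $w_0$ gives a well-defined permutation of the summands, so that writing the image as a sum indexed by the original parameter $x$ is legitimate.
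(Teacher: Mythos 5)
Your proposal is correct and follows essentially the same route as the paper: the author establishes this identity in the opening lines of the proof of the preceding theorem, by observing that $T_{w_0}$ respects the decomposition of Proposition \ref{prop:decomposition of induced} and sends $V({}^x\chi')$ to $V({}^{w_0x}\chi')$, so that its image decomposes as the direct sum of the images $V_0({}^x\chi')$. The corollary is then stated without further argument, exactly as you anticipated.
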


\subsection{Principal series of Levi subgroups}

Let $\lambda$ be a partition of $r$ and write $\tm$ for $\tm_\lambda$. Recall $B_M=B\cap M$, and $U_M=U\cap M$. The principal series representations and exceptional representations can be similarly defined on $\tm$.
Recall we may identify $\GL_{r_i}$ as a subgroup of $M$ via the embedding
\[
g_i\mapsto \mathrm{diag}(I_{r_1},\cdots, g_i,\cdots, I_{r_k}).
\]
Let $B_i$ be the standard Borel subgroup of $\GL_{r_i}$ and $\delta_{B_i}$ be the modular quasicharacter of $B_i$ in $\GL_{r_i}$.

Let $\chi$ be a genuine character of $Z_{\tgl_r}\tto_M^n$ and $\chi'$ be a character of $\tto_{M,\ast}$ extending  $\chi$.  The genuine representation $\pi_{\tto_M}(\chi'):=\ind_{\tto_{M,\ast}}^{\tto_M}\chi'$ is irreducible. The principal series representation $I(\chi')$ is the induced representation $\Ind_{\widetilde{B_M}}^{\tm}\pi_{\tto}(\chi')\otimes \delta_M^{1/2}$, where $\delta_M=\delta_{B_1}\otimes \cdots\otimes\delta_{B_k}$. There is an alternative way to describe it as in the general linear case.

The theory of intertwining operators applies just as the general linear case. Therefore, $I(\chi')$ is irreducible when $\chi$ is in general position.

\begin{Thm}
Suppose that $\chi_\alpha^n\neq |\cdot|_F^{\pm 1}$ for all the positive roots $\alpha$ in $M$. Then $I(\chi')$ is irreducible.
\end{Thm}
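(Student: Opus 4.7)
The plan is to mimic, essentially verbatim, the proof of the analogous irreducibility theorem for $\tgl_r$ (Kazhdan--Patterson, Corollary I.2.8 of \cite{KP1984}) stated in the previous subsection. The key observation that makes this reduction trivial is that every positive root $\alpha$ of $M$ lies in a single block $\GL_{r_i}$, so every rank-one intertwiner calculation takes place inside $\tgl_{r_i}$. By block-compatibility of the Banks--Levy--Sepanski cocycle, the restriction of the $\tm$-cocycle to the root subgroup $U_\alpha$ coincides with the restriction of the $\tgl_{r_i}$-cocycle, so the relevant computation is literally the one in \cite{KP1984}.

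The first step is to show that for every $w\in W(M)=S_{r_1}\times\cdots\times S_{r_k}$, the intertwining operator $T_w:I(\chi')\to I({}^w\chi')$ is an isomorphism. By the standard multiplicativity $T_{w_1w_2}=T_{w_1}T_{w_2}$ along a reduced expression, it suffices to check invertibility when $w=s_\alpha$ for a simple root $\alpha$ of $M$. The rank-one computation then gives $T_{s_\alpha}T_{s_\alpha}=c_\alpha(\chi)\cdot\mathrm{id}$, where $c_\alpha(\chi)$ is a ratio of local $L$- and $\gamma$-factors depending only on $\chi_\alpha^n$, and the hypothesis $\chi_\alpha^n\neq|\cdot|_F^{\pm 1}$ forces $c_\alpha(\chi)\neq 0$.

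The second step is to upgrade invertibility of all $T_w$ to irreducibility of $I(\chi')$. The plan is to compute $J_{U_M}(I(\chi'))$ via the geometric lemma of Bernstein--Zelevinsky (as already used in Lemma \ref{lem:jacquet functor of induced}): it admits a filtration whose graded pieces are the $\tto_M$-modules $\ind_{{}^w\tto_{M,\ast}}^{\tto_M}\bigl({}^w\chi'\,\delta_M^{1/2}\bigr)$ indexed by $w\in W(M)$, each irreducible by the Heisenberg/Stone--von~Neumann analysis of $\tto_M$ recalled in Section \ref{sec:restrictions}. If $I(\chi')$ had a proper nonzero quotient $\pi$, then $J_{U_M}(\pi)$ would be a proper nonzero quotient of $J_{U_M}(I(\chi'))$ and would surject onto one of these graded pieces; Frobenius reciprocity would then produce a nonzero morphism $\pi\to I({}^w\chi')$, whose composition with the surjection $I(\chi')\twoheadrightarrow\pi$ is a nonzero scalar multiple of $T_w$. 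Since $T_w$ is an isomorphism, this would force $\pi\cong I({}^w\chi')$, contradicting properness. The only delicate step will be the rank-one intertwiner calculation, but block-compatibility makes this a direct reduction to the $\tgl_{r_i}$-case already handled in \cite{KP1984}.
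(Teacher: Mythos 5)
Your proposal is essentially the paper's argument: the paper gives no independent proof of this theorem, stating only that ``the theory of intertwining operators applies just as the general linear case'' and deferring to Kazhdan--Patterson, Corollary I.2.8 (exactly as it does for the $\tgl_r$ version and for the irreducibility of $V(\chi')$), and your reduction to rank one inside each block via block-compatibility, followed by the Jacquet-module/Frobenius argument, is the standard way to carry that out. One caveat on your second step: the claim that the nonzero composite $I(\chi')\twoheadrightarrow\pi\to I({}^{w}\chi')$ must be a scalar multiple of $T_w$ uses $\dim\Hom_{\tm}(I(\chi'),I({}^{w}\chi'))=1$, which by the Jacquet-module filtration holds only when $\chi$ is regular; the hypothesis $\chi_\alpha^n\neq|\cdot|_F^{\pm1}$ does not rule out, say, $\chi_\alpha^n$ unitary or trivial, so the non-regular case needs the separate treatment given in \cite{KP1984} (the paper itself silently absorbs this by citing Corollary I.2.8).
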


If $\chi_\alpha^n=|\cdot|_F$ for all positive simple roots $\alpha$ in $M$, we call it \textit{exceptional}.
\begin{Thm}
Let $\chi$ be exceptional. Let
\[
\Theta(\chi')=\mathrm{Im}(T_{w_{M,0}}:I(\chi')\to I({}^{w_{M,0}}\chi')),
\]
where $w_{M,0}$ is the longest element of $W(M)$. Then
\begin{enumerate}[\normalfont(1)]
\item $\Theta(\chi')$ is the unique irreducible subrepresentation of $I({}^{w_{M,0}}\chi')$.
\item $\Theta(\chi')$ is the unique irreducible quotient representation of $I(\chi')$.
\item $J_{U_M}(\Theta(\chi'))\cong \ind_{w_{M,0} \tto_{M,\ast} w_{M,0}^{-1}}^{\tto_M}({}^{w_{M,0}}\chi'\delta_M^{1/2})$.
\end{enumerate}
\end{Thm}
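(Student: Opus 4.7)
The plan is to adapt the proof of the analogous statement for $\tgl_r$ (Theorem I.2.9 of \cite{KP1984}, stated as a given in the excerpt) to the Levi setting $\tm$. All of the structural inputs — the principal series being parabolically induced from a Heisenberg-type representation of $\tto_M$, the theory of intertwining operators, and the Bernstein-Zelevinsky geometric lemma for Jacquet modules — work for $\tm$ with essentially no change, replacing $W$ by $W(M)$, $B$ by $B_M$, and $\delta_B$ by $\delta_M$. The fact that the blocks $\GL_{r_i}$ do not commute inside $\tm$ is irrelevant here: it alters the cocycle but not the abstract filtration of principal series by characters of $\tto_M$.

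First I would compute $J_{U_M}(I(\chi'))$ by applying \cite{BZ1977} Theorem 5.2 to the composition $J_{U_M} \circ \Ind_{\widetilde{B_M}}^{\tm}$. This yields a Jordan-H\"older filtration whose graded pieces are $\ind_{{}^{w}\tto_{M,\ast}}^{\tto_M}({}^{w}\chi'\delta_M^{1/2})$ indexed by $w\in W(M)$, exactly parallel to Lemma \ref{lem:jacquet functor of induced}(1). Second I would construct $\Theta(\chi')$ via the intertwining operator $T_{w_{M,0}}:I(\chi')\to I({}^{w_{M,0}}\chi')$ (defined by the usual integral for $\chi$ in general position, then by meromorphic continuation at the exceptional point); a direct calculation shows it is nonzero at an exceptional $\chi$. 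Its image $\Theta(\chi')$ is both a quotient of $I(\chi')$ and a subrepresentation of $I({}^{w_{M,0}}\chi')$.

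For part (3), exactness of $J_{U_M}$ exhibits $J_{U_M}(\Theta(\chi'))$ simultaneously as a quotient of $J_{U_M}(I(\chi'))$ and a subobject of $J_{U_M}(I({}^{w_{M,0}}\chi'))$. The exceptional condition $\chi_\al^n=|\cdot|_F$ on each simple root $\al$ of $M$ is precisely what forces the intertwining operator attached to every composition factor except $w_{M,0}$ to vanish on the image, so only the $w_{M,0}$ piece survives: $J_{U_M}(\Theta(\chi'))\cong \ind_{{}^{w_{M,0}}\tto_{M,\ast}}^{\tto_M}({}^{w_{M,0}}\chi'\delta_M^{1/2})$. The right-hand side is irreducible because ${}^{w_{M,0}}\tto_{M,\ast}$ is again a maximal abelian subgroup of $\tto_M$.

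Parts (1) and (2) then follow by Frobenius reciprocity: any irreducible subrepresentation $\pi$ of $I({}^{w_{M,0}}\chi')$ has $J_{U_M}(\pi)$ a nonzero submodule of $J_{U_M}(I({}^{w_{M,0}}\chi'))$; matching composition factors shows $\pi$ must realize the $w_{M,0}$ piece, and by the explicit form of $J_{U_M}(\Theta(\chi'))$ in (3) we conclude $\pi=\Theta(\chi')$. Dually, any irreducible quotient of $I(\chi')$ must coincide with $\Theta(\chi')$. The main obstacle is checking that the exceptional condition kills all composition factors for $w\neq w_{M,0}$; however, since the positive roots of $M$ decompose as a disjoint union of type $A_{r_i-1}$ root systems and the analysis is controlled by rank-one sub-$\SL_2$'s via the embeddings $i_\al$, this reduces to a blockwise version of the Kazhdan-Patterson calculation carried out in the $\tgl_r$ case.
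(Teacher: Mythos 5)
Your proposal is correct and is essentially the paper's own (implicit) argument: the paper states this theorem without proof, asserting that the theory of intertwining operators and the Kazhdan--Patterson argument for Theorem I.2.9 carry over to $\tm$ verbatim, with $W$ replaced by $W(M)$, $B$ by $B_M$, and $\delta_B$ by $\delta_M$, exactly as you outline. Your observations that the non-commutativity of the blocks is irrelevant at this stage (only the structure of $\tto_M$ and its maximal abelian subgroups matters) and that the exceptional condition reduces blockwise to the $\GL_{r_i}$ case are precisely the points the paper is relying on.
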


We also want to study $I(\chi')|_{Z_{\tgl_r}\tmn}$, and $\Theta(\chi')|_{Z_{\tgl_r}\tmn}$. The  arguments in Section \ref{sec:restrictions} apply in this case without essential change. We only state the results here.
\begin{Prop}
\[
I(\chi')|_{Z_{\tgl_r}\tmn}\cong \bigoplus_{x^{-1}\in \tto_{M,\ast}\bs\tto_M/Z_{\tgl_r}\tton_M} \Ind_{{}^x(Z_{\tgl_r}\tmn\cap \tb_{M,\ast})}^{Z_{\tgl_r}\tmn}{}^x\chi'\delta_M^{1/2}.
\]
\end{Prop}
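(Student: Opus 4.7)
The plan is to transpose the proof of Proposition~\ref{prop:decomposition of induced} to the Levi setting, replacing the pair $(\tgl_r,\tgln_r)$ by $(\tm,Z_{\tgl_r}\tmn)$ and the Borel datum $(\tb,\tto,U)$ by $(\tb_{M,\ast},\tto_{M,\ast},U_M)$.

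First I would apply the Bernstein--Zelevinsky geometric lemma (\cite{BZ1977}, Theorem 5.2) to the functor $\mathrm{Res}^{\tm}_{Z_{\tgl_r}\tmn}\circ \Ind^{\tm}_{\tb_{M,\ast}}(-\otimes \delta_M^{1/2})$, taking the trivial character on $U_M$ on the induction side and the trivial subgroup on the Jacquet (i.e.\ restriction) side. The composite functor is glued from pieces indexed by the double coset space $\tb_{M,\ast}\bs\tm/Z_{\tgl_r}\tmn$, which I would show is a singleton: writing any $\mathrm{diag}(g_1,\dots,g_k)\in M$ as a diagonal matrix times a block-diagonal matrix whose $i$th block has determinant $1$, one obtains $\tto_M\cdot\tmn=\tm$, and a fortiori $\tto_{M,\ast}\cdot Z_{\tgl_r}\tmn=\tm$.

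Having collapsed the sum to a single double coset, the Bernstein--Zelevinsky expression reduces to $\Ind^{Z_{\tgl_r}\tmn}_{Z_{\tgl_r}\tmn\cap \tb_{M,\ast}}\circ \mathrm{Res}^{\tto_{M,\ast}}_{\tto_{M,\ast}\cap Z_{\tgl_r}\tmn}$. Since $U_M$ is unipotent and therefore contained in $\tmn$, one has $Z_{\tgl_r}\tmn\cap \tb_{M,\ast}=(Z_{\tgl_r}\tmn\cap \tto_{M,\ast})U_M$, and a direct diagonal computation gives $\tto_M\cap Z_{\tgl_r}\tmn=Z_{\tgl_r}\tton_M$. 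It therefore suffices to prove the torus-level identity
\[
\bigl(\ind^{\tto_M}_{\tto_{M,\ast}}\chi'\bigr)\big|_{Z_{\tgl_r}\tton_M}\;\cong\;\bigoplus_{x^{-1}\in\tto_{M,\ast}\bs\tto_M/Z_{\tgl_r}\tton_M}\ind^{Z_{\tgl_r}\tton_M}_{{}^x\tto_{M,\ast}\cap Z_{\tgl_r}\tton_M}{}^x\chi',
\]
after which the proposition follows upon inducing both sides to $Z_{\tgl_r}\tmn$ and twisting by $\delta_M^{1/2}$. This torus identity I would prove exactly as in Proposition~\ref{prop:decomposition of induced}: the standing hypothesis that $\tto_{M,\ast}\cap\tton_M$ is maximal abelian in $\tton_M$ ensures that each conjugate ${}^x\tto_{M,\ast}\cap Z_{\tgl_r}\tton_M$ is maximal abelian in $Z_{\tgl_r}\tton_M$; the Heisenberg-type structure of $\tto_M$ then forces each induced piece on the right-hand side to be irreducible, while Gelbart--Knapp (\cite{GK1982}, Lemma 2.1) guarantees the restriction is semisimple. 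A Jordan--H\"older/Mackey argument assembles these pieces into the asserted direct sum.

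The step most likely to require genuine care is the last: verifying that $Z_{\tgl_r}\tton_M$ itself is two-step nilpotent with sufficiently large center (containing, in particular, a conjugation-invariant analogue of $\tto_M^n$), so that the Heisenberg machinery used to show irreducibility of the summands continues to apply. Once this structural fact is confirmed using the explicit description of $Z_{\tgl_r}$ and of the center of $\tton_M$ recorded in Section~\ref{sec:notation}, the remainder of the argument is formal and runs parallel to Section~\ref{sec:restrictions} with no essential modification.
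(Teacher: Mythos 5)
Your overall architecture is the right one, and it is essentially what the paper intends (the paper gives no separate proof here, only the remark that the arguments of Section \ref{sec:restrictions} transfer). But there is one concrete false step: the claim that the double coset space $\tb_{M,\ast}\bs\tm/Z_{\tgl_r}\tmn$ is a singleton, justified ``a fortiori'' from $\tto_M\cdot\tmn=\tm$. The a fortiori runs in the wrong direction: since $\tto_{M,\ast}\subsetneq\tto_M$ in general, one only gets $\tto_{M,\ast}\cdot Z_{\tgl_r}\tmn\subseteq\tm$, and equality holds precisely when $\tto_M=\tto_{M,\ast}Z_{\tgl_r}\tton_M$. The index $[\tto_M:\tto_{M,\ast}Z_{\tgl_r}\tton_M]$ is exactly the cardinality of the index set $\tto_{M,\ast}\bs\tto_M/Z_{\tgl_r}\tton_M$ of the direct sum you are trying to produce, and it is $>1$ in general (already for $\GL_1$ with $n>1$ and $c\neq 0$, where $\tto_{M,\ast}$ is a proper maximal abelian subgroup containing $Z_{\tgl_1}\tgl_1^{(n)}$). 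If your singleton claim were correct, the Bernstein--Zelevinsky reduction you describe would output the single term $\Ind_{Z_{\tgl_r}\tmn\cap\tb_{M,\ast}}^{Z_{\tgl_r}\tmn}\chi'\delta_M^{1/2}$, i.e.\ it would prove a statement contradicting the proposition itself.

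The fix is to mimic the paper's choice of Borel datum exactly: run Bernstein--Zelevinsky with the triple $(\tb_M,\tto_M,U_M)$ and the induced-from-the-torus model $\Ind_{\tb_M}^{\tm}i_{\tb_M}(\chi')\delta_M^{1/2}$, where $i_{\tb_M}(\chi')=\ind_{\tto_{M,\ast}}^{\tto_M}\chi'$ extended trivially to $U_M$. Then the relevant double coset space is $\tb_M\bs\tm/Z_{\tgl_r}\tmn$, which genuinely is a singleton because $\tto_M\tmn=\tm$ (your block-diagonal determinant argument proves precisely this). The composite functor becomes $\Ind_{Z_{\tgl_r}\tmn\cap\tb_M}^{Z_{\tgl_r}\tmn}\circ\mathrm{Res}^{\tto_M}_{Z_{\tgl_r}\tton_M}$, and the direct sum over $\tto_{M,\ast}\bs\tto_M/Z_{\tgl_r}\tton_M$ then arises from the torus-level identity you state, i.e.\ from the Mackey/Gelbart--Knapp decomposition of $\bigl(\ind_{\tto_{M,\ast}}^{\tto_M}\chi'\bigr)\big|_{Z_{\tgl_r}\tton_M}$, with irreducibility of each piece coming from the Heisenberg structure and the standing maximality hypothesis as you describe. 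With that correction the rest of your argument goes through.
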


\begin{Prop}
If $\chi_\alpha^n\neq |\cdot|^{\pm1}$ for all positive roots $\alpha$ in $M$, then
\[
\Ind_{{}^x(Z_{\tgl_r}\tmn\cap \tb_{M,\ast})}^{Z_{\tgl_r}\tmn} {}^x\chi'\delta_M^{1/2}
\]
 is irreducible.
\end{Prop}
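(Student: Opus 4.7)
The plan is to mirror the irreducibility argument already given in Section \ref{sec:restrictions} for $V(\chi')=\Ind_{{}^x\tb_\ast^\square}^{\tgln_r}({}^x\chi'\delta_B^{1/2})$, transposed to the Levi setting. Since the paper has just stated that ``the arguments in Section \ref{sec:restrictions} apply in this case without essential change,'' the real task is to verify that the three structural ingredients used there carry over: (i) a decomposition of $I(\chi')|_{Z_{\tgl_r}\tmn}$ indexed by the double coset space $\tto_{M,\ast}\bs\tto_M/Z_{\tgl_r}\tton_M$; (ii) the restriction of the intertwining operator $T_w$ permutes these components for $w\in W(M)$; and (iii) under the hypothesis $\chi_\alpha^n\neq|\cdot|_F^{\pm1}$ for all positive roots $\alpha$ of $M$, the operator $T_w$ is an isomorphism on $I(\chi')$.

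First I would note that ingredient (i) is exactly the preceding Proposition in the text, and ingredient (iii) follows from the Theorem just above (the Levi analog of Kazhdan--Patterson's intertwining operator theorem), whose standing hypothesis is precisely the one assumed here. It remains to check ingredient (ii). Given $f\in V_M({}^x\chi')$, I would embed it into $I(\chi')$ by the analog of the extension-by-zero map from Section \ref{sec:restrictions}, i.e.\ set $\tilde f$ to vanish outside $x\cdot Z_{\tgl_r}\tmn$. Since $w\in W(M)$ normalizes $Z_{\tgl_r}\tmn$ and the integration set $U(w)$ is contained in $U_M$, a direct support-tracking computation analogous to the one given for $V(\chi')$ shows that $T_w\tilde f$ is supported on $wx\cdot Z_{\tgl_r}\tmn$, so we get a well-defined restriction
\[
T_w:V_M({}^x\chi')\to V_M({}^{wx}\chi').
\]

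Having these three ingredients, irreducibility follows as before. For any $w\in W(M)$, the restricted $T_w$ is an isomorphism onto $V_M({}^{wx}\chi')$ because its ambient lift is. In particular, a nonzero subrepresentation of $V_M(\chi')$ generates, under the $T_w$'s, a nonzero subrepresentation of every component of the direct sum decomposition of $I(\chi')|_{Z_{\tgl_r}\tmn}$; irreducibility of $I(\chi')$ as a $\tm$-module then forces $V_M(\chi')$ itself to be irreducible, by the argument of \cite{KP1984} Corollary I.2.8 applied in the Levi setting.

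The main obstacle I anticipate is primarily notational: correctly tracking how the center $Z_{\tgl_r}$ sits inside $\tm$ and interacts with $\tmn$, and ensuring that the parameterizing double coset space $\tto_{M,\ast}\bs\tto_M/Z_{\tgl_r}\tton_M$ behaves as cleanly as $\tto_\ast\bs\tto/\tton$ did in the $\tgln_r$ case, where the center played no explicit role. Once this compatibility is settled, together with the observation that $W(M)$ preserves both $Z_{\tgl_r}\tmn$ and the relevant Borel $\tb_{M,\ast}^\square$, the full argument of Section \ref{sec:restrictions} transfers verbatim.
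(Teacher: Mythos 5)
Your proposal takes exactly the route the paper intends: the paper disposes of this proposition by remarking that the arguments of Section \ref{sec:restrictions} carry over without essential change, and the three ingredients you isolate (the double-coset decomposition of $I(\chi')|_{Z_{\tgl_r}\tmn}$, the compatibility of $T_w$ with that decomposition for $w\in W(M)$, and the invertibility of $T_w$ under the hypothesis on $\chi_\alpha^n$) are precisely the ones used there, so in substance this is the same proof.

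One caution about your closing paragraph. The clause ``irreducibility of $I(\chi')$ as a $\tm$-module then forces $V(\chi')$ itself to be irreducible'' is not a valid deduction on its own: by Clifford theory (\cite{GK1982}, as quoted in the paper), the restriction of the irreducible $I(\chi')$ to the finite-index normal subgroup $Z_{\tgl_r}\tmn$ is a direct sum of conjugates of a single irreducible \emph{with a common multiplicity} that could exceed one, so a priori each summand $V({}^x\chi')$ of the double-coset decomposition could itself decompose further. Likewise, the $T_w$ do not ``generate subrepresentations of every component of $I(\chi')|_{Z_{\tgl_r}\tmn}$'': they map $V({}^x\chi')\subset I(\chi')$ into $V({}^{wx}\chi')\subset I({}^w\chi')$, i.e.\ into a \emph{different} ambient induced representation. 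The step that actually closes the argument is the one you cite only by reference, namely the mechanism of \cite{KP1984} Corollary I.2.8: a proper quotient $Q$ of $V(\chi')$ has nonzero Jacquet module $J_{U_M}(Q)$, which (using the Jordan--H\"older description of $J_{U_M}(V(\chi'))$) surjects onto some $\ind$-factor attached to ${}^w\chi'$; Frobenius reciprocity then produces a nonzero map $Q\to V({}^w\chi')$, the composite $V(\chi')\to Q\to V({}^w\chi')$ is a multiple of $T_w$ by the one-dimensionality of the relevant $\Hom$ space, and injectivity of $T_w$ gives a contradiction. Provided that is the argument you intend by ``KP Corollary I.2.8 applied in the Levi setting,'' the proof is complete.
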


As in the general linear case, write $V({}^x\chi')=\Ind_{{}^x(Z_{\tgl_r}\tmn\cap \tb_{M,\ast})}^{Z_{\tgl_r}\tmn} {}^x\chi'\delta_M^{1/2}$.

\begin{Prop}
Let $\chi$ be exceptional. Let
\[
V_0(\chi')=\mathrm{Im}(T_{w_{M,0}}:V(\chi')\to V({}^{w_{M,0}}\chi')),
\]
where $w_{M,0}$ is the longest elements of $W(M)$. Then
\begin{enumerate}[\normalfont(1)]
\item $V_0(\chi')$ is the unique irreducible subrepresentation of $V({}^{w_{M,0}}\chi')$.
\item $V_0(\chi')$ is the unique irreducible quotient representation of $V(\chi')$.
\item $J_{U_M}(V_0(\chi'))\cong \ind_{Z_{\tgl_r}w_{M,0}\tton_{M,\ast} w_{M,0}^{-1}}^{Z_{\tgl_r}\tton_M}({}^{w_{M,0}}\chi'\delta_M^{1/2})$.
\end{enumerate}
\end{Prop}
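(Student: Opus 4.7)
My plan is to mirror, line for line, the proof given earlier for the $\tgln_r \subset \tgl_r$ case. The setup is formally identical: $Z_{\tgl_r}\tmn$ is a normal subgroup of $\tm$ whose quotient is finite abelian (once one quotients by $\mu_n$), and the decomposition of $I(\chi')|_{Z_{\tgl_r}\tmn}$ into the pieces $V({}^x\chi')$ indexed by $x^{-1}\in \tto_{M,\ast}\bs\tto_M/Z_{\tgl_r}\tton_M$ has already been established in the preceding Proposition. So essentially the only new input I need is the intertwining-operator analysis inside $\tm$, which parallels the $\tgl_r$ setting.

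First, I would observe that $T_{w_{M,0}}:I(\chi')\to I({}^{w_{M,0}}\chi')$ respects the decomposition: it restricts to maps $V({}^x\chi')\to V({}^{w_{M,0}x}\chi')$, by exactly the same computation as in the $\tgl_r$ case (extend $f$ to $\tilde f\in I(\chi')$ by $0$ outside $Z_{\tgl_r}\tmn$ and check that $T_{w_{M,0}}\tilde f$ is supported on the coset indexed by $w_{M,0}x$). Taking images, this yields
\[
\Theta(\chi')|_{Z_{\tgl_r}\tmn}\cong \bigoplus_{x^{-1}\in \tto_{M,\ast}\bs\tto_M/Z_{\tgl_r}\tton_M} V_0({}^x\chi').
\]

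Next I compute the Jacquet module. By exactness of $J_{U_M}$, $J_{U_M}(V_0(\chi'))$ is simultaneously a subrepresentation of $J_{U_M}(V(\chi'))$ and a quotient of $J_{U_M}(\Theta(\chi'))$. The composition factors of the former are the $\ind_{Z_{\tgl_r}{}^w\tton_{M,\ast}}^{Z_{\tgl_r}\tton_M}({}^w\chi'\delta_M^{1/2})$ for $w\in W(M)$ (the analog of Lemma \ref{lem:jacquet functor of induced}), while the latter by the previous theorem is $\ind_{w_{M,0}\tto_{M,\ast} w_{M,0}^{-1}}^{\tto_M}({}^{w_{M,0}}\chi'\delta_M^{1/2})$ restricted to $Z_{\tgl_r}\tton_M$, which picks out only the $w=w_{M,0}$ piece. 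Hence
\[
J_{U_M}(V_0(\chi'))\cong \ind_{Z_{\tgl_r}w_{M,0}\tton_{M,\ast} w_{M,0}^{-1}}^{Z_{\tgl_r}\tton_M}({}^{w_{M,0}}\chi'\delta_M^{1/2}),
\]
which is (3), and moreover this Jacquet module is irreducible (as an induced representation from a maximal abelian subgroup of the Heisenberg group $Z_{\tgl_r}\tton_M$).

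For irreducibility of $V_0(\chi')$ itself: $\Theta(\chi')|_{Z_{\tgl_r}\tmn}$ is already a direct sum of irreducibles that are $\tm$-conjugate, so $V_0(\chi')$ is a sum of some of them; but its Jacquet module is irreducible by the previous paragraph, forcing $V_0(\chi')$ to be irreducible. For (2), suppose $\pi$ is another irreducible quotient of $V(\chi')$. Then $J_{U_M}(\pi)$ is a nonzero quotient of $J_{U_M}(V(\chi'))$, so by Frobenius reciprocity $\pi$ maps nonzero into some $V({}^w\chi')$; composing with $T_{w_{M,0}w^{-1}}$ gives a nonzero map $V(\chi')\to\pi\to V({}^{w_{M,0}}\chi')$, which must be a scalar multiple of $T_{w_{M,0}}$, so its image is $V_0(\chi')$ and $\pi\cong V_0(\chi')$. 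Finally (1) follows from (2) by duality applied to $V({}^{w_{M,0}}\chi')$.

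The main obstacle I anticipate is purely bookkeeping: verifying that the regularity statement in Lemma \ref{lem:jacquet functor of induced}(2) still holds in the Levi setting so that $J_{U_M}(V(\chi'))$ has the desired composition factors indexed by $W(M)$, and checking that the analog of the Heisenberg-induction irreducibility criterion still applies to $Z_{\tgl_r}\tton_M$ with $Z_{\tgl_r}w_{M,0}\tton_{M,\ast}w_{M,0}^{-1}$ as a maximal abelian subgroup. Both of these follow from the structural lemmas on centers of $\tmn$ and $\tton_M$ established in Section \ref{sec:notation}, but care is needed because $\tto_M$ is not simply a product of the $\tto_{M,i}$'s.
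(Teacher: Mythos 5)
Your proposal is correct and follows essentially the same route as the paper: the paper explicitly states that the arguments of Section \ref{sec:restrictions} carry over to the Levi setting ``without essential change'' and only records the statements, and your line-by-line transposition (decomposition of $I(\chi')|_{Z_{\tgl_r}\tmn}$, compatibility of $T_{w_{M,0}}$ with that decomposition, identification of $J_{U_M}(V_0(\chi'))$ via exactness, irreducibility from irreducibility of the Jacquet module, and the Frobenius-reciprocity argument for uniqueness of the quotient, with (1) by duality) is exactly that transposition. The caveats you flag at the end (composition factors of $J_{U_M}(V(\chi'))$ indexed by $W(M)$, and maximal-abelian-ness of $Z_{\tgl_r}w_{M,0}\tton_{M,\ast}w_{M,0}^{-1}$ in $Z_{\tgl_r}\tton_M$) are precisely the points the paper implicitly relies on.
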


\begin{Prop}
\[
\Theta(\chi')|_{Z_{\tgl_r}\tmn}\cong \bigoplus_{x^{-1}\in \tto_{M,\ast}\bs\tto_M/Z_{\tgl_r}\tton_M} V_0({}^x\chi').
\]
\end{Prop}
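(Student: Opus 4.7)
The plan is to mirror the proof of Corollary \ref{cor:restriction of theta} from the setting $\tgln_r \subset \tgl_r$ to the present setting $Z_{\tgl_r}\tmn \subset \tm$, since every structural ingredient needed has already been established in this subsection: the decomposition of $I(\chi')|_{Z_{\tgl_r}\tmn}$ into the $V({}^x\chi')$, the definition of $V_0(\chi')$ as the image of the long intertwining operator $T_{w_{M,0}}$, and the description of $\Theta(\chi')$ as the image of $T_{w_{M,0}}$ acting on all of $I(\chi')$.

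First I would verify that $T_{w_{M,0}}\colon I(\chi') \to I({}^{w_{M,0}}\chi')$ carries the summand $V({}^x\chi')$ into the summand $V({}^{w_{M,0}x}\chi')$ of $I({}^{w_{M,0}}\chi')|_{Z_{\tgl_r}\tmn}$. This is the Levi analog of the lemma proved earlier for $\tgln_r$, and the proof is formally identical: given $f \in V({}^x\chi')$, extend it to $\tilde{f}\in I(\chi')$ by setting $\tilde{f}(g) = f(xg)$ when $xg\in Z_{\tgl_r}\tmn$ and zero otherwise, and compute the intertwining integral directly to see that $T_{w_{M,0}}(\tilde{f})$ is supported on the translate of $Z_{\tgl_r}\tmn$ by $w_{M,0}x$. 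By the very definition of $V_0$, the restriction $T_{w_{M,0}}|_{V({}^x\chi')}$ then has image exactly $V_0({}^x\chi')$ sitting inside $V({}^{w_{M,0}x}\chi')$.

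Taking images summand by summand then yields
\[
\Theta(\chi')|_{Z_{\tgl_r}\tmn} \;=\; \mathrm{Im}(T_{w_{M,0}})\big|_{Z_{\tgl_r}\tmn} \;\cong\; \bigoplus_{x^{-1}\in \tto_{M,\ast}\bs\tto_M/Z_{\tgl_r}\tton_M} V_0({}^x\chi'),
\]
which is the claim. The only non-routine point is the support calculation for the intertwining integral; however, since $V({}^x\chi')$ is built from the same induced picture (merely with $\tgln_r$ replaced by $Z_{\tgl_r}\tmn$ and $w_0$ by $w_{M,0}$), and since $T_{w_{M,0}}$ is defined purely in terms of the block-compatible cocycle of \cite{BLS1999} and the unipotent radical $U(w_{M,0})$, the earlier argument transfers verbatim. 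I do not anticipate a genuine obstacle here.
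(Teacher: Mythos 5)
Your proposal is correct and matches the paper's intent exactly: the paper states that the arguments of Section \ref{sec:restrictions} apply here without essential change, and the argument you transfer (the long intertwining operator $T_{w_{M,0}}$ restricts to maps $V({}^x\chi')\to V({}^{w_{M,0}x}\chi')$ between the summands of the restricted principal series, so taking images summand by summand gives the decomposition of $\Theta(\chi')|_{Z_{\tgl_r}\tmn}$) is precisely how Corollary \ref{cor:restriction of theta} is obtained in the $\tgln_r$ case.
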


Lastly, let $\chi'$ be an exceptional character for $\tgl_r$. Let $P$ be the parabolic subgroup of $\GL_r$ with Levi subgroup $M$, and $R$ be its unipotent radical. Let $\delta_P$ be the modular quasicharacter of $\GL_r$ with respect to $P$. Recall we have $\delta_M\cdot\delta_P=\delta_{\GL_r}$ and $w_0=w_{M,0}w^M$.

\begin{Prop}
The character ${}^{w^M}\chi'\cdot\delta_P^{1/2}$ is exceptional  for $M$, and
\[
J_R(\Theta_{\tgl_r}(\chi'))\cong\Theta_{\tm}({}^{w^M}\chi'\cdot\delta_P^{1/2}).
\]
\end{Prop}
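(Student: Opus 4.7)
The plan has two parts. Part (a) is a direct root-theoretic verification. Since $w^M$ merely relocates the blocks of $M$, conjugation by $(w^M)^{-1}$ sends each positive simple root $\alpha$ of $M$ to another positive simple root of $\GL_r$ (living in a relocated block). Hence $({}^{w^M}\chi')_\alpha^n = {\chi'}_{(w^M)^{-1}\alpha}^n = |\cdot|_F$ by the exceptional hypothesis on $\chi'$. Moreover $\delta_P$ is trivial on $M^{\mathrm{der}}$, and the coroot $i_\alpha(\mathrm{diag}(t,t^{-1}))$ for $\alpha$ a root of $M$ lies in $M^{\mathrm{der}}$, so the factor $\delta_P^{1/2}$ contributes trivially on $\alpha$, yielding the exceptional condition for ${}^{w^M}\chi'\cdot\delta_P^{1/2}$.

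For part (b), I exploit the length-additive factorization $w_0 = w_{M,0}\cdot w^M$ (checked by $\ell(w^M) = \sum_{i<j}r_ir_j$ and $\ell(w_{M,0}) = \sum_i r_i(r_i-1)/2$ summing to $r(r-1)/2 = \ell(w_0)$) to factor the long intertwining operator as $T_{w_0} = T_{w_{M,0}}\circ T_{w^M}$. By induction in stages together with $\delta_B = \delta_{B_M}\cdot\delta_P$ on $\tto$, we have $I_{\tgl_r}({}^{w^M}\chi') = \Ind_{\widetilde{P}}^{\tgl_r}I_{\tm}({}^{w^M}\chi'\cdot\delta_P^{1/2})$, and analogously for $I_{\tgl_r}({}^{w_0}\chi')$. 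Crucially, $U(w_{M,0}) = U_M$ since $w_{M,0}$ reverses exactly the positive roots of $M$, so $T_{w_{M,0}}$ on $I_{\tgl_r}$ is the parabolic induction of the corresponding operator $T_{w_{M,0}}^{\tm}$ on $I_{\tm}$. By part (a) and the preceding theorem characterizing $\Theta_{\tm}$, its image is $\Theta_{\tm}({}^{w^M}\chi'\cdot\delta_P^{1/2})$, so
\[
\Theta_{\tgl_r}(\chi') = \mathrm{Im}(T_{w_0})\subseteq \Ind_{\widetilde{P}}^{\tgl_r}\Theta_{\tm}({}^{w^M}\chi'\cdot\delta_P^{1/2}).
\]
Frobenius reciprocity then produces a nonzero, hence surjective, $\tm$-map $J_R(\Theta_{\tgl_r}(\chi'))\twoheadrightarrow \Theta_{\tm}({}^{w^M}\chi'\cdot\delta_P^{1/2})$.

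To upgrade this surjection to an isomorphism, I apply transitivity $J_U = J_{U_M}\circ J_R$ together with the Jacquet-module formulas for the two theta representations established in the preceding theorems. Using $\delta_{B_M}^{1/2}\delta_P^{1/2} = \delta_B^{1/2}$ on $\tto$ and $W(M)$-invariance of $\delta_P$, a direct calculation shows that $J_U(\Theta_{\tgl_r}(\chi'))$ and $J_{U_M}(\Theta_{\tm}({}^{w^M}\chi'\cdot\delta_P^{1/2}))$ are both irreducible genuine $\tto$-representations whose inducing character is ${}^{w_0}\chi'\cdot\delta_B^{1/2}$, and in particular they share a central character on $Z_{\tgl_r}\tto^n$, so are isomorphic (after choosing maximal abelian subgroups compatibly). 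The induced map on $J_{U_M}$ is therefore an isomorphism, so the kernel $K$ satisfies $J_{U_M}(K) = 0$; as $K$ sits inside $J_R(I_{\tgl_r}({}^{w_0}\chi'))$, whose Bernstein--Zelevinsky composition factors are subquotients of $\tm$-principal series and hence non-supercuspidal, exactness of $J_{U_M}$ forces $K = 0$.

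The main obstacle is verifying $T_{w_{M,0}}^{\tgl_r} = \Ind_{\widetilde{P}}^{\tgl_r}T_{w_{M,0}}^{\tm}$ in the metaplectic setting: this requires careful bookkeeping of the block-compatibility of $\sigma_r$ on $U_M\subset M$ to confirm that the intertwining operator on $I_{\tgl_r}$ truly descends through the induction-in-stages isomorphism to the corresponding operator on $I_{\tm}$. A secondary technical subtlety is the final vanishing $K=0$, which relies on the explicit Bernstein--Zelevinsky description of the composition factors of $J_R(I_{\tgl_r}({}^{w_0}\chi'))$.
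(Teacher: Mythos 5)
Your part (a) matches the paper's (one-line) justification, and your Jacquet-module computation $J_{U_M}\bigl(J_R(\Theta_{\tgl_r}(\chi'))\bigr)=J_U(\Theta_{\tgl_r}(\chi'))\cong \ind_{{}^{w_0}\tto_\ast}^{\tto}({}^{w_0}\chi'\delta_B^{1/2})\cong J_{U_M}\bigl(\Theta_{\tm}({}^{w^M}\chi'\delta_P^{1/2})\bigr)$ is exactly the heart of the paper's argument. Where you diverge is in how the isomorphism is extracted from that identity. The paper does not factor $T_{w_0}=T_{w_{M,0}}\circ T_{w^M}$ and does not build an explicit surjection: it simply observes that both $J_R(\Theta_{\tgl_r}(\chi'))$ and $\Theta_{\tm}({}^{w^M}\chi'\delta_P^{1/2})$ have the same irreducible $\tto$-module as their $U_M$-Jacquet module, hence (by Frobenius reciprocity) both embed as irreducible subrepresentations of the same principal series of $\tm$, whose irreducible subrepresentation is unique; so they coincide. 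Consequently the two issues you flag as the "main obstacles" --- descending $T_{w_{M,0}}$ through metaplectic induction in stages, and killing the kernel $K$ via the Bernstein--Zelevinsky filtration of $J_R(I({}^{w_0}\chi'))$ --- simply do not arise in the paper's route (though a version of the second is implicitly needed to see that $J_R(\Theta_{\tgl_r}(\chi'))$ is irreducible, which the paper asserts with little comment). Your approach costs more bookkeeping but buys a concrete realization of the isomorphism as the parabolic descent of the long intertwining operator, rather than an abstract identification; both are sound, and both ultimately rest on the same Jacquet-module identity and on the uniqueness statements of the preceding theorems.
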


\begin{proof}

The Weyl element $w^M$ permutes blocks of $M$, and thus the character ${}^{w^M}\chi'\cdot\delta_P^{1/2}$ is exceptional for $\tm$. To prove the isomorphism of  Jacquet modules, we apply $J_{U_M}(-)$ on both sides. The left-hand side is
\[
J_{U_M}(J_R(\Theta_{\tgl_r}(\chi')))=J_U(\Theta_{\tgl_r}(\chi'))\cong\ind_{{}^{w_0} \tto_\ast }^{\tto}({}^{w_0}\chi'\delta_{\GL_r}^{1/2});
\]
while the right-hand side is
\[
J_{U_M}(\Theta_{\tm}({}^{w^M}\chi'\cdot\delta_P^{1/2}))\cong\ind_{w_{M,0}\tto_\ast w_{M,0}^{-1}}{}^{w_0}\chi'\cdot\delta_P^{1/2}\delta_M^{1/2}\cong\ind_{{}^{w_0} \tto_\ast }^{\tto}({}^{w_0}\chi'\delta_{\GL_r}^{1/2}).
\]
This implies that $J_R(\Theta_{\tgl_r}(\chi'))$ and $\Theta_{\tm}({}^{w^M}\chi'\cdot\delta_P^{1/2})$ are both irreducible subrepresentations of $I({}^{w^M}\chi'\cdot\delta_P^{1/2})$. Thus they are isomorphic.

\end{proof}

\subsection{The metaplectic tensor product}\label{sec:local metaplectic tensor}

One of the basic constructions in the representation theory of $\GL_r(F)$ is parabolic induction. Let $r=r_1+\cdots+r_k$ be a partition of $r$, and let $M=\GL_{r_1}\times\cdots\times \GL_{r_k}$ be a Levi subgroup. We start with a list of representations, one for each of $\GL_{r_1},\cdots,\GL_{r_n}$, and then form their tensor product to obtain a representation of $M$.  However, since $\tm$ is not simply the amalgamated direct product of the various $\widetilde{\GL}_{r_i}$, this construction cannot be generalized directly to the metaplectic case. Fortunately, we have a replacement, which is defined in Mezo \cite{Mezo2004}. We review the construction in this section. The two-fold cover case was outlined in Bump and Ginzburg \cite{BG1992}, and studied in full detail in Kable \cite{Kable2001}. For the global setup and further properties see Takeda \cite{Takeda2016,Takeda2017}.

We observe that any element $m\in \widetilde{M}$ may be written as  $\mathrm{diag}(g_1,\cdots, g_k)$, such that $\textbf{p}(g_i)\in \GL_{r_i}$ for $1\leq i\leq k$. Recall
\[
\widetilde{M}^{(n)}=\{m\in\widetilde{M}: \det g_1,\cdots,\det g_k\in F^{\times n}\}
\]
and $\tgl_{r_i}^{(n)}=\tm^{(n)}\cap \tgl_{r_i}$.

Let $\pi_1,\cdots,\pi_k$ be irreducible genuine representations of $\tgl_{r_1},\cdots,\tgl_{r_k}$, respectively. The construction of the metaplectic tensor product takes several steps.

First of all, for each $i$, fix an irreducible constituent $\pi_i^{(n)}$ of the restriction $\pi_i\mid_{\tgl_{r_i}^{(n)}}$ of $\pi_i$ to $\tgl_{r_i}^{(n)}$. Then we have
\[
\pi_i|_{\tgl_{r_i}^{(n)}}=\sum_g m_i \mbox{ }^g(\pi_i^{(n)}),
\]
where $g$ runs through a finite subset of $\tgl_{r_i}$, $m_i$ is a positive multiplicity and ${}^g(\pi_i^{(n)})$ is the representation twisted by $g$. Then we construct the tensor product representation
\[
\pi_1^{(n)}\otimes \cdots \otimes \pi_k^{(n)}
\]
of the group $\tgl_{r_1}^{(n)}\otimes \cdots\otimes \tgl_{r_k}^{(n)}$. Because the representations $\pi_1,\cdots,\pi_k$ are genuine, this tensor product representation  descends to a representation of the group $\tm^{(n)}$, i.e. the representation factors through the natural surjection
\[
\tgl_{r_1}^{(n)}\times \cdots \times\tgl_{r_k}^{(n)}\twoheadrightarrow \tm^{(n)}.
\]
We denote this representation of $\tm^{(n)}$ by
\[
\pi^{(n)}:=\pi_1^{(n)}\tilde\otimes\cdots \tilde\otimes \pi_k^{(n)},
\]
and call it the metaplectic tensor product of $\pi_1^{(n)},\cdots, \pi_k^{(n)}$.

Let $\omega$ be a character on the center $Z_{\tgl_r}$ such that for all $(aI_r,\zeta)\in Z_{\tgl(r)}\cap \tm^{(n)}$ where $a\in F^\times$, we have
\[
\omega(aI_r,\zeta)=\pi^{(n)}(aI_r,\zeta)=\zeta \pi_1^{(n)}(aI_{r_1},1)\cdots \pi_r^{(n)}(aI_{r_k},1).
\]
Namely, $\omega$ agrees with $\pi^{(n)}$ on the intersection $Z_{\tgl_{r}}\cap \tm^{(n)}$. We can extend $\pi^{(n)}$ to the representation
\[
\pi_\omega^{(n)}:=\omega\pi^{(n)}
\]
of $Z_{\tgl_r}\tm^{(n)}$ by letting $Z_{\tgl_r}$ act by $\omega$.

The last step is crucial. If we induce $\pi_\omega^{(n)}$ to $\tm$, the resulting representation is usually reducible. To get an irreducible representation, we extend the representation $\pi_\omega^{(n)}$ to a representation $\rho_\omega$ of a subgroup $\widetilde{H}$ of $\tm$ so that $\rho_\omega$ satisfies Mackey's irreducibility criterion and the induced representation
\[
\pi_\omega:=\Ind_{\widetilde{H}}^{\tm} \rho_\omega
\]
is irreducible. It is always possible to find such $\widetilde{H}$ and moreover $\widetilde{H}$ can be chosen to be normal. The construction of $\pi_\omega$ is independent of the choices of $\pi_i^{(n)}$, $\widetilde{H}$ and $\rho_\omega$, and it only depends on $\omega$ (see \cite{Mezo2004} Section 4).

We write
\[
\pi_\omega=(\pi_1\tilde\otimes\cdots\tilde\otimes \pi_k)_\omega
\]
and call it the metaplectic tensor product of $\pi_1,\cdots,\pi_k$ with the character $\omega$.

The metaplectic tensor product $\pi_\omega$ is unique up to twist.
\begin{Prop}[\cite{Mezo2004} Lemma 5.1]\label{prop:Mezo}
Let
\[
\pi_1,\cdots,\pi_k\qquad\text{ and }\qquad \pi_1',\cdots, \pi_k'
\]
be genuine representations of $\tgl_{r_1},\cdots,\tgl_{r_k}$. They give rise to isomorphic metaplectic tensor products with a character $\omega$, i.e.
\[
(\pi_1\tilde\otimes \cdots \tilde\otimes \pi_k)_\omega\cong (\pi_1'\tilde\otimes\cdots\tilde\otimes \pi_k')_\omega
\]
if and only for each $i$ there exists a character $\omega_i$ of $\tgl_{r_i}$, trivial on $\tgl_{r_i}^{(n)}$, such that $\pi_i\cong \omega_i\otimes \pi_i'$.
\end{Prop}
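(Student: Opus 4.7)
The plan is to argue both directions by tracing what happens under restriction to $\tmn$ and to each factor $\tgln_{r_i}$, using Clifford theory and the stepwise definition of the metaplectic tensor product given right above the proposition.

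For the easy direction, suppose for each $i$ there is a character $\omega_i$ of $\tgl_{r_i}$ trivial on $\tgln_{r_i}$ with $\pi_i\cong \omega_i\otimes\pi_i'$. Restricting to $\tgln_{r_i}$ kills $\omega_i$, so $\pi_i|_{\tgln_{r_i}}\cong \pi_i'|_{\tgln_{r_i}}$. In particular, one can pick compatible irreducible constituents $\pi_i^{(n)}\cong \pi_i'^{(n)}$. Forming $\pi^{(n)}=\pi_1^{(n)}\tilde\otimes\cdots\tilde\otimes\pi_k^{(n)}$ on $\tmn$ therefore produces isomorphic representations for the unprimed and primed data. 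Because extending by the same central character $\omega$ to $Z_{\tgl_r}\tmn$ and then inducing to $\tm$ are functorial operations, this yields isomorphic metaplectic tensor products.

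For the forward direction, assume $(\pi_1\tilde\otimes\cdots\tilde\otimes\pi_k)_\omega\cong(\pi_1'\tilde\otimes\cdots\tilde\otimes\pi_k')_\omega$. Restrict both sides to $Z_{\tgl_r}\tmn$; by the construction of $\pi_\omega$ as an induction from (a subgroup of $\tm$ containing) $Z_{\tgl_r}\tmn$, each side contains the respective $\pi_\omega^{(n)}$ as a subrepresentation, so Frobenius reciprocity forces $\pi_\omega^{(n)}\cong {}^g(\pi'^{(n)}_\omega)$ for some $g\in\tm$. Because $Z_{\tgl_r}\tmn$ is normal in $\tm$ and $\tm/Z_{\tgl_r}\tmn$ acts on $\pi_\omega^{(n)}$ by permuting pieces that together build up $\pi_\omega$, after possibly twisting the primed data by an element of $\tm$ (which only changes $\pi_i'$ by a character trivial on $\tgln_{r_i}$) we may assume $\pi^{(n)}\cong\pi'^{(n)}$ as representations of $\tmn$. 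Restricting this isomorphism further to the commuting subgroup $\tgln_{r_i}\subset\tmn$, the other factors $\tgln_{r_j}$ ($j\neq i$) act by scalars via the central characters of the $\pi_j^{(n)}$, so we deduce $\pi_i^{(n)}\cong \pi_i'^{(n)}$ as representations of $\tgln_{r_i}$.

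Now apply Clifford theory for the open normal subgroup $\tgln_{r_i}\trianglelefteq\tgl_{r_i}$ with finite abelian quotient $\tgl_{r_i}/\tgln_{r_i}\cong F^\times/F^{\times n}$. Since $\pi_i$ and $\pi_i'$ share the irreducible constituent $\pi_i^{(n)}$ upon restriction to $\tgln_{r_i}$, the Gelbart--Knapp-type result \cite{GK1982} Lemma 2.1 (already invoked in this paper in the proof of Proposition \ref{prop:decomposition of induced}) shows that $\pi_i$ and $\pi_i'$ must have the same restriction to $\tgln_{r_i}$ and differ only by a character of $\tgl_{r_i}$ that is trivial on $\tgln_{r_i}$. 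Thus $\pi_i\cong \omega_i\otimes \pi_i'$ for some such $\omega_i$, completing the proof.

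The main obstacle is the middle step: the metaplectic tensor product $\pi_\omega$ is built from $\pi_\omega^{(n)}$ by inducing through an intermediate subgroup $\widetilde{H}$, and the isomorphism of the two ambient $\pi_\omega$'s only gives information about $\pi_\omega^{(n)}$ up to a $\tm$-conjugate. One must carefully check that this ambiguity corresponds exactly to twisting each $\pi_i'$ by a character trivial on $\tgln_{r_i}$, so that it may be absorbed into the characters $\omega_i$ in the final statement.
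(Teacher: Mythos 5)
This proposition is not proved in the paper at all: it is quoted verbatim as Lemma~5.1 of Mezo \cite{Mezo2004}, so there is no in-paper argument to compare against. Your reconstruction is essentially correct and follows the same Clifford--Mackey route that Mezo uses: the converse direction is immediate from the independence of the construction on the choice of constituent $\pi_i^{(n)}$, and the forward direction reduces, via restriction to $Z_{\tgl_r}\tmn$ and the fact that the constituents there are the $\tm$-conjugates of $\pi_\omega^{(n)}$, to the statement that $\pi_i|_{\tgln_{r_i}}$ and $\pi_i'|_{\tgln_{r_i}}$ share an irreducible constituent, whence \cite{GK1982} gives $\pi_i\cong\omega_i\otimes\pi_i'$. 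Two phrasings in your middle step are imprecise, though neither is fatal: conjugating by $g=\mathrm{diag}(t_1,\dots,t_k)\in\tto$ does not ``change $\pi_i'$ by a character''; it replaces the chosen constituent $\pi_i'^{(n)}$ by the conjugate ${}^{t_i}\pi_i'^{(n)}$, which is still a constituent of $\pi_i'|_{\tgln_{r_i}}$ --- that is all you need to invoke Gelbart--Knapp, so you should say that rather than speak of absorbing a twist. Likewise, to recover $\pi_i^{(n)}\cong\pi_i'^{(n)}$ from an isomorphism of the amalgamated tensor products, the correct justification is that the restriction of $\pi^{(n)}$ to $\tgln_{r_i}$ is isotypic of type $\pi_i^{(n)}$ (the other factors do not act by scalars on the whole space); with these repairs the argument is complete.
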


\begin{Rem}
Notice that the metaplectic tensor product generally depends on the choice of $\omega$. If the center $Z_{\tgl_r}$ is already contained in $\tm^{(n)}$, we have $\pi_\omega^{(n)}=\pi^{(n)}$ and hence there is no actual choice for $\omega$ and the metaplectic tensor product is canonical. This is the case, for example, when $n=2$ or $n\mid r$.
\end{Rem}

A representation of $\tm$ is always a metaplectic tensor product (\cite{Takeda2016}, Lemma 4.5). Moreover, we have the following useful lemmas.

\begin{Lem}[\cite{Takeda2016} Lemma 4.6]\label{lem:takeda 2}
Let $\pi$ and $\pi'$ be irreducible admissible representations of $\tm$. Then $\pi$ and $\pi'$ are equivalent if and only if $\pi|_{Z_{\tgl_r}\tm^{(n)}}$ and $\pi'|_{Z_{\tgl_r}\tm^{(n)}}$ have an equivalent constituent.
\end{Lem}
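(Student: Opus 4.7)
The plan is as follows. The forward implication is immediate: if $\pi\cong\pi'$ as $\tm$-representations, then restriction gives $\pi|_{Z_{\tgl_r}\tmn}\cong\pi'|_{Z_{\tgl_r}\tmn}$, and in particular the two restrictions share all irreducible constituents. The real content is the converse.

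For the converse, set $N:=Z_{\tgl_r}\tmn$. I first observe that $\tmn$ is normal in $\tm$ (it is the preimage of the kernel of the blockwise determinant map $M\to (F^\times/F^{\times n})^k$) and $Z_{\tgl_r}$ is central, so $N\triangleleft\tm$ with finite abelian quotient. Let $\sigma$ be a common irreducible constituent of $\pi|_N$ and $\pi'|_N$. Applying the version of Clifford theory used earlier in the paper (\cite{GK1982} Lemma 2.1), each of $\pi|_N$ and $\pi'|_N$ decomposes as a direct sum of $\tm$-conjugates of $\sigma$ with a common multiplicity, and by Frobenius reciprocity both $\pi$ and $\pi'$ embed in $\Ind_N^{\tm}\sigma$.

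Next, I would realize $\pi$ and $\pi'$ as metaplectic tensor products: write $\pi=\pi_\omega=\Ind_{\widetilde H}^{\tm}\rho_\omega$ and $\pi'=\pi'_{\omega'}=\Ind_{\widetilde H'}^{\tm}\rho'_{\omega'}$ as in Mezo's construction, where $\rho_\omega$ extends $\pi^{(n)}_\omega=\omega\cdot(\pi_1^{(n)}\tilde\otimes\cdots\tilde\otimes\pi_k^{(n)})$ from $N$ to a subgroup $\widetilde H\supset N$ satisfying Mackey's irreducibility criterion, and analogously for $\pi'_{\omega'}$. By construction $\pi^{(n)}_\omega|_{Z_{\tgl_r}}=\omega$, and since $Z_{\tgl_r}$ is central in $\tm$, conjugation preserves this restriction. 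Because $\sigma$ is $\tm$-conjugate to both $\pi^{(n)}_\omega$ and $\pi'^{(n)}_{\omega'}$, I conclude first that $\omega=\omega'$, and then that $\pi^{(n)}\cong{}^h\pi'^{(n)}$ as $\tmn$-representations for some $h\in\tm$.

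Finally, I would invoke Mezo's uniqueness criterion for the metaplectic tensor product (Proposition \ref{prop:Mezo}), which shows that $(\pi_1\tilde\otimes\cdots\tilde\otimes\pi_k)_\omega\cong(\pi_1'\tilde\otimes\cdots\tilde\otimes\pi_k')_\omega$ precisely when each $\pi_i\cong\pi_i'\otimes\eta_i$ for characters $\eta_i$ trivial on $\tgl_{r_i}^{(n)}$. The $\tm$-conjugacy between $\pi^{(n)}$ and $\pi'^{(n)}$ translates, via Clifford theory applied factor by factor, into exactly such twists, yielding $\pi\cong\pi'$. The main obstacle is the bookkeeping in this last step: one must verify that the specific extension $\rho_\omega$ of $\pi^{(n)}_\omega$ to $\widetilde H$, and indeed the choice of $\widetilde H$ itself, contribute no new invariants beyond $(\omega,[\pi^{(n)}])$. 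This is precisely the independence-of-choices content built into Mezo's construction, so the conjugacy class of any single $N$-constituent $\sigma$ is enough to determine $\pi$ up to isomorphism.
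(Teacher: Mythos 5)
Your argument is correct, but note that the paper does not prove this lemma at all: it is quoted verbatim from Takeda \cite{Takeda2016} (Lemma 4.6), so there is no internal proof to compare against. Your reconstruction follows the same route as Takeda's original argument: Clifford theory for the open finite-index normal subgroup $N=Z_{\tgl_r}\tmn$ (via \cite{GK1982} Lemma 2.1), reduction to a $\tm$-conjugacy $\pi^{(n)}_\omega\cong{}^h\pi'^{(n)}_{\omega'}$, matching of the central characters $\omega=\omega'$, and then Mezo's uniqueness criterion. Two remarks. First, the one step you wave at --- ``Clifford theory applied factor by factor'' --- does require a small verification: an element $h\in\tm$ projects to a block-diagonal matrix, and its conjugation action on the $i$th factor $\tgln_{r_i}$ differs from conjugation by the $i$th block only by Hilbert-symbol characters coming from the block-compatibility of the cocycle; these are of the form $x\mapsto(\det x,\cdot)$ and hence trivial on $\tgln_{r_i}$ because determinants there are $n$th powers. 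With that in hand, ${}^h\pi'^{(n)}$ decomposes as an outer tensor product of conjugates of the $\pi_i'^{(n)}$, uniqueness of such decompositions gives $\pi_i^{(n)}\cong{}^{h_i}\pi_i'^{(n)}$ for each $i$, Gelbart--Knapp applied to $\tgln_{r_i}\subset\tgl_{r_i}$ gives $\pi_i\cong\omega_i\otimes\pi_i'$ with $\omega_i$ trivial on $\tgln_{r_i}$, and Proposition \ref{prop:Mezo} finishes. Second, there is a shorter route once one grants Lemma \ref{lem:takeda 3}: since $\Ind_{Z_{\tgl_r}\tmn}^{\tm}\pi^{(n)}_\omega$ is $\pi_\omega$-isotypic and both $\pi$ and $\pi'$ embed in $\Ind_N^{\tm}\sigma\cong\Ind_N^{\tm}\pi^{(n)}_\omega$, one gets $\pi\cong\pi_\omega\cong\pi'$ immediately; this avoids the factor-by-factor bookkeeping, at the cost of relying on the isotypicity statement rather than proving the lemma from Mezo's uniqueness alone.
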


\begin{Lem}[\cite{Takeda2016} Proposition 4.7] \label{lem:takeda 3}We have
\[
\Ind_{Z_{\tgl_r}\tmn}^{\tm}\pi_\omega^{(n)}=m\pi_\omega
\]
for some finite multiplicity $m$, so every constituent of $\Ind_{Z_{\tgl_r}\tmn}^{\tm}\pi_\omega^{(n)}=m\pi_\omega$ is isomorphic to $\pi_\omega$.
\end{Lem}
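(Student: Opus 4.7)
The plan is to reduce the statement to the uniqueness characterization of the metaplectic tensor product recorded in Lemma~\ref{lem:takeda 2}. The crucial structural observation is that $Z_{\tgl_r}\tmn$ is an open normal subgroup of $\tm$ of \emph{finite} index: the quotient $\tm/\tmn$ embeds into $\prod_{i=1}^{k} F^\times/F^{\times n}$ via the block-wise determinant map, and $F^\times/F^{\times n}$ is finite since $|\mu_n(F)|=n$. In particular $\Ind_{Z_{\tgl_r}\tmn}^{\tm}\pi_\omega^{(n)}$ has finite length as a smooth $\tm$-module, and its restriction to $Z_{\tgl_r}\tmn$ is the finite direct sum $\bigoplus_{g\in\tm/Z_{\tgl_r}\tmn} {}^g\pi_\omega^{(n)}$ of irreducible $Z_{\tgl_r}\tmn$-modules.

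Next I would show that every irreducible subquotient $\tau$ of $\Ind_{Z_{\tgl_r}\tmn}^{\tm}\pi_\omega^{(n)}$ is isomorphic to $\pi_\omega$. The restriction $\tau|_{Z_{\tgl_r}\tmn}$ is a subquotient of the semisimple module $\bigoplus_g {}^g\pi_\omega^{(n)}$, so $\tau|_{Z_{\tgl_r}\tmn}$ contains some conjugate ${}^{g_0}\pi_\omega^{(n)}$ as an irreducible constituent. On the other hand, by construction $\pi_\omega=\Ind_{\widetilde{H}}^{\tm}\rho_\omega$ with $\rho_\omega|_{Z_{\tgl_r}\tmn}=\pi_\omega^{(n)}$, so $\pi_\omega|_{Z_{\tgl_r}\tmn}$ contains $\pi_\omega^{(n)}$. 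Being the restriction of a $\tm$-module, $\pi_\omega|_{Z_{\tgl_r}\tmn}$ is stable under conjugation by $\tm$ and therefore also contains ${}^{g_0}\pi_\omega^{(n)}$. Thus $\tau|_{Z_{\tgl_r}\tmn}$ and $\pi_\omega|_{Z_{\tgl_r}\tmn}$ share a common irreducible constituent, and Lemma~\ref{lem:takeda 2} forces $\tau\cong \pi_\omega$.

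Finally, to promote this to the direct-sum identity $\Ind=m\pi_\omega$, I would apply Clifford theory to the finite abelian quotient $\tm/Z_{\tgl_r}\tmn$. Let $T\subseteq\tm$ be the stabilizer (under conjugation) of the isomorphism class of $\pi_\omega^{(n)}$; then $Z_{\tgl_r}\tmn\subseteq T\subseteq\tm$ with $T/Z_{\tgl_r}\tmn$ finite abelian, $\pi_\omega^{(n)}$ extends to some $\widetilde{\rho}$ on $T$, and
\[
\Ind_{Z_{\tgl_r}\tmn}^{\tm}\pi_\omega^{(n)} \;=\; \bigoplus_{\chi\in (T/Z_{\tgl_r}\tmn)^{\vee}} \Ind_{T}^{\tm}\bigl(\widetilde{\rho}\otimes\chi\bigr).
\]
Each summand is irreducible by Mackey's criterion, since $T$ is the full stabilizer and hence conjugates of $\widetilde{\rho}\otimes\chi$ by elements outside $T$ are disjoint from it on $T$; and each summand is isomorphic to $\pi_\omega$ by the previous paragraph. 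The multiplicity $m=|T/Z_{\tgl_r}\tmn|$ is finite.

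The main obstacle is the semisimplicity step in the third paragraph: one must rule out that the induction could be a non-split extension whose composition factors happen to all be $\pi_\omega$. This is taken care of by the Clifford-theoretic decomposition above, which itself rests on the elementary fact that any extension of an irreducible representation through a finite abelian quotient splits as a direct sum indexed by the characters of the quotient. The remaining steps are formal once the finite-index structure of $\tm/Z_{\tgl_r}\tmn$ and the criterion of Lemma~\ref{lem:takeda 2} are in hand.
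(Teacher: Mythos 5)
Your overall strategy --- use Mackey restriction to the open normal finite-index subgroup $Z_{\tgl_r}\tmn$ together with Lemma \ref{lem:takeda 2} to show every irreducible subquotient of $\Ind_{Z_{\tgl_r}\tmn}^{\tm}\pi_\omega^{(n)}$ is isomorphic to $\pi_\omega$, then argue semisimplicity --- is sound, and the first two paragraphs are correct. But the Clifford-theoretic step contains a genuine gap: you assume that $\pi_\omega^{(n)}$ extends to a representation $\widetilde{\rho}$ of its full stabilizer $T$. For $T/Z_{\tgl_r}\tmn$ abelian this is not automatic; the obstruction is the class in $H^2(T/Z_{\tgl_r}\tmn,\bc^\times)$ of the projective action of $T$ on the space of $\pi_\omega^{(n)}$, equivalently a possibly nondegenerate commutator pairing. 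This is exactly the phenomenon that pervades the metaplectic setting (compare $\tto$ itself, a two-step nilpotent group whose irreducibles are induced from maximal abelian subgroups precisely because characters of the center do not extend to the whole group). This is why Mezo's construction passes through a subgroup $\widetilde{H}$, in general strictly between $Z_{\tgl_r}\tmn$ and $T$, on which the extension $\rho_\omega$ exists and Mackey's criterion holds. Concretely, your conclusion $m=|T/Z_{\tgl_r}\tmn|$ contradicts the value $m=[\widetilde{H}:Z_{\tgl_r}\tmn]$ recorded in the paper immediately after the lemma whenever $\widetilde{H}\subsetneq T$; a count of the $Z_{\tgl_r}\tmn$-constituents on both sides of $\Ind_{Z_{\tgl_r}\tmn}^{\tm}\pi_\omega^{(n)}=m\pi_\omega$ forces $m=[\widetilde{H}:Z_{\tgl_r}\tmn]$.

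The gap is repairable without the extension-to-$T$ claim. For semisimplicity, use the elementary averaging argument: a smooth $\tm$-module whose restriction to the open normal finite-index subgroup $Z_{\tgl_r}\tmn$ is semisimple is itself semisimple (average an equivariant projection over the finite quotient); combined with your second paragraph this already gives $\Ind_{Z_{\tgl_r}\tmn}^{\tm}\pi_\omega^{(n)}=m\pi_\omega$ with $m$ finite. Alternatively, follow the route behind the citation (and behind Lemma \ref{lem:induction in metaplectic tensor}): induct in stages through $\widetilde{H}$, write $\Ind_{Z_{\tgl_r}\tmn}^{\widetilde{H}}\pi_\omega^{(n)}=\bigoplus_\chi \rho_\omega\otimes\chi$ with $\chi$ running over characters of $\widetilde{H}/Z_{\tgl_r}\tmn$, and observe that each $\Ind_{\widetilde{H}}^{\tm}(\rho_\omega\otimes\chi)$ is irreducible and isomorphic to $\pi_\omega$ by the independence of the construction from the choice of $\rho_\omega$.
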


Indeed, we can verify that $m=[\widetilde{H}:Z_{\tgl_r}\tmn]$.

\begin{Lem}\label{lem:induction in metaplectic tensor}
We have
\[
\Ind_{\tmn}^{\tm} \pi^{(n)}=m\left(\bigoplus_\xi\pi_\xi\right)
\]
where $m$ is $[\widetilde{H}:Z_{\tgl_r}\tmn]$ and $\xi$ is over the finite set of characters of $Z_{\tgl_r}\tmn$ that are trivial on $\tmn$.
\end{Lem}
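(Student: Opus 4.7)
The strategy is to factor the induction $\Ind_{\tmn}^{\tm} \pi^{(n)}$ through the intermediate group $Z_{\tgl_r}\tmn$, so that the last step can invoke Lemma~\ref{lem:takeda 3}, which already handles the induction from $Z_{\tgl_r}\tmn$ up to $\tm$. By induction in stages,
\[
\Ind_{\tmn}^{\tm} \pi^{(n)} \;=\; \Ind_{Z_{\tgl_r}\tmn}^{\tm}\Bigl(\Ind_{\tmn}^{Z_{\tgl_r}\tmn}\pi^{(n)}\Bigr),
\]
so the crux is to decompose the inner induction.

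The subgroup $\tmn$ is normal in $Z_{\tgl_r}\tmn$ with finite abelian quotient $Z_{\tgl_r}\tmn/\tmn \cong Z_{\tgl_r}/(Z_{\tgl_r}\cap \tmn)$, and $Z_{\tgl_r}$ acts centrally. Since $\pi^{(n)}$ is irreducible (being the descent of the outer tensor product of the irreducible $\pi_i^{(n)}$), its central character on $Z_{\tgl_r}\cap \tmn$ admits a finite set of extensions to $Z_{\tgl_r}$. I would identify this set with the dual $\widehat{Z_{\tgl_r}\tmn/\tmn}$, i.e.\ with the characters $\xi$ of $Z_{\tgl_r}\tmn$ that are trivial on $\tmn$, by fixing a base extension $\omega_0$ and writing the others as $\omega_0\xi$. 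Each choice yields an irreducible extension $\pi_{\omega_0\xi}^{(n)}$ of $\pi^{(n)}$ to $Z_{\tgl_r}\tmn$.

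Next I would establish the decomposition
\[
\Ind_{\tmn}^{Z_{\tgl_r}\tmn}\pi^{(n)} \;=\; \bigoplus_{\xi} \pi_{\omega_0\xi}^{(n)}.
\]
The inclusion $\supseteq$ follows from Frobenius reciprocity: for each $\xi$, $\Hom_{Z_{\tgl_r}\tmn}\bigl(\pi_{\omega_0\xi}^{(n)}, \Ind \pi^{(n)}\bigr) \cong \Hom_{\tmn}\bigl(\pi_{\omega_0\xi}^{(n)}|_{\tmn}, \pi^{(n)}\bigr)$ is one-dimensional by Schur, and distinct $\xi$ yield non-isomorphic summands since they give distinct central characters on $Z_{\tgl_r}$. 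Equality is then forced by a dimension count: both sides have dimension $|Z_{\tgl_r}\tmn/\tmn|\cdot\dim\pi^{(n)}$.

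Finally, I would apply $\Ind_{Z_{\tgl_r}\tmn}^{\tm}$ to each summand and invoke Lemma~\ref{lem:takeda 3} to get $\Ind_{Z_{\tgl_r}\tmn}^{\tm} \pi_{\omega_0\xi}^{(n)} = m\,\pi_{\omega_0\xi}$, with $m=[\widetilde H:Z_{\tgl_r}\tmn]$ independent of $\xi$ since the index is purely group-theoretic. Summing over $\xi$ and relabelling $\pi_{\omega_0\xi}$ as $\pi_\xi$, as in the statement, yields the claimed formula. The main obstacle is the bookkeeping: matching the parameterization of extensions by $\xi$ to the notation $\pi_\xi$ of the lemma, and checking that one can use a common subgroup $\widetilde H$ for all the $\pi_{\omega_0\xi}$, so that $m$ truly is the same across all $\xi$. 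The latter is possible because the construction of $\widetilde H$ depends only on the underlying space of $\pi^{(n)}$, not on how $Z_{\tgl_r}$ acts by scalars.
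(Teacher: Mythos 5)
Your proposal is correct and follows essentially the same route as the paper, whose proof simply defers to Takeda's argument (\cite{Takeda2016} Proposition 4.7, \cite{Takeda2017} Proposition 3.5): induction in stages through $Z_{\tgl_r}\tmn$, decomposition of the inner induction into the extensions $\pi^{(n)}_{\omega_0\xi}$ indexed by characters of $Z_{\tgl_r}\tmn/\tmn$, and then Lemma \ref{lem:takeda 3} applied summand by summand. Your explicit attention to the uniformity of $m$ (i.e., that $\widetilde{H}$ can be taken independent of $\xi$) is a worthwhile detail that the paper leaves implicit.
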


\begin{proof}
The proof is the same as in \cite{Takeda2016} Proposition 4.7; see also \cite{Takeda2017} Proposition 3.5.
\end{proof}

\subsection{Examples}\label{sec:examples}

We give some examples of the metaplectic tensor product in this section. The key ingredient in the proof is Lemma \ref{lem:takeda 2}. This allows us to compare irreducible smooth representations of $\tm$ by restricting to $Z_{\tgl_r}\tm^{(n)}$.

Let $\chi$ be a genuine quasicharacter on $Z_{\tgl_r}\tto^n$, and $\omega=\chi|_{Z_{\tgl_r}}$ be the central quasicharacter.  For each $i$, let $\tton_{\ast,i}$ be a maximal abelian subgroup of $\tton_{i}$. Let $\tton_\ast$ be the direct product of $\tton_{\ast,1},\cdots, \tton_{\ast,k}$ with  amalgamated $\mu_n$. Then $\tton_\ast$ is a maximal abelian subgroup of $\tton$. Let $\tto_\ast$ be a maximal abelian subgroup of $\tto$ such that $\tto_\ast\cap \tton=\tton_\ast$.

Let $\chi'$ be an extension of $\chi$ to $\tto_\ast$. We may decompose $\chi'|_{\tton_\ast}$ as
\[
\chi_1\tilde\otimes \cdots \tilde \otimes \chi_k,
\]
where $\chi_i$ is a genuine character on $\tton_{\ast,i}$. Let $\tto_{\ast,i}$ be a maximal abelian subgroup of $\tto_i$ such that $\tton_i\cap \tto_{\ast,i}=\tton_{\ast,i}$. We still use $\chi_i$ to denote an extension of $\chi_i$ to $\tto_{\ast,i}$ (this extension is not unique). When $\chi$ is in  general position, so are $\chi_i$'s. Therefore the principal series representations $I(\chi_i')$ on $\tgl_{r_i}$ are irreducible.

\begin{Thm}
Assume that $\chi$ is in general position. Then the metaplectic tensor product $(I(\chi_1') \tilde\otimes\cdots\tilde\otimes I(\chi_k'))_{\omega}$ is independent on the choices of $\chi_i$. Moreover, as representations of $\tm$,
\[
I(\chi')\cong (I(\chi_1')\tilde\otimes \cdots \tilde\otimes I(\chi_k'))_{\omega}
\]
\end{Thm}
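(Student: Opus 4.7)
The plan is to verify independence first, and then deduce the isomorphism by restricting both sides to $Z_{\tgl_r}\tmn$ and invoking the criterion of Lemma \ref{lem:takeda 2}.

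For the independence, I would observe that by the earlier discussion of principal series for $\tgl_{r_i}$, the isomorphism class of $I(\chi_i')$ depends only on the central character $\chi_i = \chi'|_{\tton_{\ast,i}}$, not on the chosen extension to $\tto_{\ast,i}$. Consequently, any two sets of choices produce the same list of irreducible representations $I(\chi_1'),\dots,I(\chi_k')$ of the factors $\tgl_{r_i}$ up to isomorphism, so (by the construction recalled in Section \ref{sec:local metaplectic tensor}, and Proposition \ref{prop:Mezo}) the metaplectic tensor product with the fixed central character $\omega$ is well-defined.

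For the isomorphism, the plan is to exhibit a common constituent of $I(\chi')|_{Z_{\tgl_r}\tmn}$ and $(I(\chi_1')\tilde\otimes\cdots\tilde\otimes I(\chi_k'))_\omega|_{Z_{\tgl_r}\tmn}$, namely the representation $V(\chi')$ of $Z_{\tgl_r}\tmn$ introduced earlier. On the side of $I(\chi')$ this is immediate from the decomposition
\[
I(\chi')|_{Z_{\tgl_r}\tmn}\cong \bigoplus_{x^{-1}\in \tto_{M,\ast}\bs\tto_M/Z_{\tgl_r}\tton_M} V({}^x\chi'),
\]
which yields $V(\chi')$ as the summand indexed by $x=1$. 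On the side of the metaplectic tensor product, I would first apply Proposition \ref{prop:decomposition of induced} to each factor: one may choose $\pi_i^{(n)}:=V(\chi_i')=\Ind_{\tb_{\ast,i}^{\square}}^{\tgln_{r_i}}\chi_i'\delta_{B_i}^{1/2}$ as the distinguished irreducible constituent of $I(\chi_i')|_{\tgln_{r_i}}$. Then the genuine tensor product $V(\chi_1')\tilde\otimes\cdots\tilde\otimes V(\chi_k')$ descends to a representation $\pi^{(n)}$ of $\tmn$, and its extension $\pi^{(n)}_\omega$ to $Z_{\tgl_r}\tmn$ is, by block-compatibility of the cocycle and the transitivity of parabolic induction, isomorphic to $V(\chi')$. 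Lemma \ref{lem:takeda 3} then shows that $V(\chi')$ appears in $(I(\chi_1')\tilde\otimes\cdots\tilde\otimes I(\chi_k'))_\omega|_{Z_{\tgl_r}\tmn}$, and invoking Lemma \ref{lem:takeda 2} finishes the proof.

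The main technical step — and the place where care is required — is the identification $\pi^{(n)}_\omega\cong V(\chi')$ on $Z_{\tgl_r}\tmn$. This is really a bookkeeping assertion about cocycles: one must check that the external tensor product of the inductions from $\tb_{\ast,i}^{\square}$ to $\tgln_{r_i}$, once pushed down through the surjection $\tgl_{r_1}^{(n)}\times\cdots\times\tgl_{r_k}^{(n)}\twoheadrightarrow\tmn$ and extended by $\omega$, coincides with the single induction from $Z_{\tgl_r}\tmn\cap\tb_{M,\ast}$ to $Z_{\tgl_r}\tmn$ of $\chi'\delta_M^{1/2}$. The block-compatibility of the Banks–Levy–Sepanski cocycle and the factorization $\delta_M=\delta_{B_1}\otimes\cdots\otimes\delta_{B_k}$ are exactly what makes this identification work, and together with the compatibility of the maximal abelian subgroups $\tton_\ast=\tton_{\ast,1}\cdots\tton_{\ast,k}$ (amalgamated over $\mu_n$) this should be straightforward to verify directly.
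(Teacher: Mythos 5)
Your overall strategy coincides with the paper's: well-definedness via Proposition \ref{prop:Mezo}, and the isomorphism by exhibiting
$V(\chi')\cong\omega\Ind_{\tb_\ast^{\square}}^{\tmn}\chi'\delta_M^{1/2}$
as a common irreducible constituent of both sides restricted to $Z_{\tgl_r}\tmn$, then invoking Lemma \ref{lem:takeda 2}. The identification of $\pi^{(n)}_\omega$ with $V(\chi')$ through block-compatibility of the cocycle and induction in stages is exactly the computation the paper carries out, and your appeal to Lemma \ref{lem:takeda 3} (equivalently, to the construction $\pi_\omega=\Ind_{\widetilde{H}}^{\tm}\rho_\omega$) to see that this constituent occurs in the restriction of the metaplectic tensor product is sound.

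One step in your independence argument is wrong as stated. The isomorphism class of $I(\chi_i')$ is governed by the restriction of $\chi_i'$ to the center $Z_{\tgl_{r_i}}\tto_i^n$ of $\tto_i$, not by its restriction to $\tton_{\ast,i}$. Two extensions of $\chi_i$ from $\tton_{\ast,i}$ to $\tto_{\ast,i}$ agree on $\tto_i^n$ but need not agree on $Z_{\tgl_{r_i}}$, which is in general not contained in $\tton_i$; so they can yield non-isomorphic principal series, and it is not true that ``any two sets of choices produce the same list of irreducible representations up to isomorphism.'' The correct statement, and the one the paper uses, is that the two extensions differ by a character of $\tto_{\ast,i}/\tton_{\ast,i}$, hence the resulting principal series differ by a twist by a character of $\tgl_{r_i}$ trivial on $\tgln_{r_i}$; Proposition \ref{prop:Mezo}, which you do cite, is then precisely what guarantees that the metaplectic tensor products agree. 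The conclusion therefore stands, but the reasoning should be routed through this twist rather than through a purported isomorphism of the factors.
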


This result shows that, for principal series representations, the metaplectic tensor product can be viewed as an instance of Langlands functoriality on covering groups; see Gan \cite{Gan}.

\begin{proof}
Indeed, the choice of the character $\chi_i$ on $\tto_{\ast,i}$ is up to a character of $\tto_{\ast,i}/\tton_{\ast,i}$. Thus the resulting principal series representations differ by a character that is trivial on $\tgln_{r_i}$. By Proposition \ref{prop:Mezo}, the metaplectic tensor products are still in the same isomorphism class. This proves the well-definedness.

For the second assertion, let us follow the construction of metaplectic tensor product. For $I(\chi_i')|_{\tgln_{r_i}}$, we choose one irreducible constituent $\Ind^{\tgln_{r_i}}_{\tb_{\ast,i}^{\square}}\chi_i'\delta_{B_i}^{1/2}$. Then as representations of $Z_{\tgl_r}\tmn$,
\[
\omega(\Ind^{\tgln_{r_1}}_{\tb_{\ast,1}^{\square}}\chi_1'\delta^{1/2}_{B_1} \tilde\otimes\cdots\tilde\otimes \Ind^{\tgln_{r_k}}_{\tb_{\ast,k}^{\square}}\chi_k'\delta_{B_k}^{1/2})
\cong \omega\Ind_{\tb_\ast^{\square}}^{\tmn} \chi'\delta_M^{1/2}.
\]
This is an irreducible constituent of
\[
(I(\chi_1') \tilde\otimes\cdots\tilde\otimes I(\chi_k'))_{\omega}|_{Z_{\tgl_r}\tmn}.
\]
On the other hand,
\[
\omega\Ind_{\tb_\ast^{\square}}^{\tmn} \chi'\delta_M^{1/2}\cong \Ind^{Z_{\tgl_r}\tmn}_{Z_{\tgl_r}\tb_\ast^{\square}} \chi'\delta_M^{1/2}.
\]
is also an irreducible constituent of $I(\chi')|_{Z_{\tgl_r}\tmn}$. By Lemma \ref{lem:takeda 2}, we are done.

\end{proof}

Next, we turn to exceptional representations.  We start with an exceptional character $\chi$ on $Z_{\tgl_r}\tto^{n}$, and form the exceptional representation $\Theta_{\tm}(\chi')$ as the irreducible quotient of $\Ind_{\tb_\ast}^{\tm}\chi'\delta_M^{1/2}$. The characters $\chi_i'$s are defined as in the previous case.

\begin{Thm}\label{thm:meteplectic tensor of theta}
The metaplectic tensor product $(\Theta(\chi_1') \tilde\otimes\cdots\tilde\otimes \Theta(\chi_k'))_{\omega}$ is well-defined. As representations of $\tm$,
\[
\Theta_{\tm}(\chi')\cong (\Theta(\chi_1') \tilde\otimes\cdots\tilde\otimes \Theta(\chi_k'))_{\omega}.
\]
\end{Thm}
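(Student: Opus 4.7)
The plan is to follow the template of the principal series case, using Lemma \ref{lem:takeda 2} to reduce the comparison of the two irreducible $\tm$-representations to identifying a single common irreducible constituent of their restrictions to $Z_{\tgl_r}\tmn$. Well-definedness of the metaplectic tensor product with character $\omega$ is immediate from Proposition \ref{prop:Mezo}: two admissible extensions of $\chi_i|_{\tton_{\ast,i}}$ to $\tto_{\ast,i}$ differ by a character trivial on $\tton_{\ast,i}$, so the corresponding exceptional representations $\Theta(\chi_i')$ differ by twists by characters trivial on $\tgln_{r_i}$, which do not affect the metaplectic tensor product.

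For the isomorphism, choose for each $i$ the irreducible constituent $V_0(\chi_i')$ of $\Theta(\chi_i')|_{\tgln_{r_i}}$ supplied by Corollary \ref{cor:restriction of theta}. By the construction of the metaplectic tensor product, $\omega\cdot\bigl(V_0(\chi_1')\tilde\otimes\cdots\tilde\otimes V_0(\chi_k')\bigr)$ is an irreducible constituent of $(\Theta(\chi_1')\tilde\otimes\cdots\tilde\otimes\Theta(\chi_k'))_\omega|_{Z_{\tgl_r}\tmn}$, while on the Levi side the analogous decomposition exhibits $V_0(\chi')$ as an irreducible constituent of $\Theta_{\tm}(\chi')|_{Z_{\tgl_r}\tmn}$. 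The principal series case already identified $\omega\cdot\bigl(V(\chi_1')\tilde\otimes\cdots\tilde\otimes V(\chi_k')\bigr)$ with $V(\chi')$ as $Z_{\tgl_r}\tmn$-modules. Since the longest element $w_{M,0}$ of $W(M)$ equals the commuting product $\prod_i w_{0,i}$ of the longest elements of the block Weyl groups, and since root subgroups of distinct blocks commute inside $\tmn$, the defining integral for the intertwining operator $T_{w_{M,0}}$ on the $Z_{\tgl_r}\tmn$-side splits as an iterated product of the operators $T_{w_{0,i}}$. Taking images identifies $\omega\cdot\bigl(V_0(\chi_1')\tilde\otimes\cdots\tilde\otimes V_0(\chi_k')\bigr)$ with $V_0(\chi')$, and Lemma \ref{lem:takeda 2} then yields the desired isomorphism.

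The main delicate point is verifying that the intertwining operator genuinely factors through the metaplectic tensor product. Although $\tgl_{r_i}$ and $\tgl_{r_j}$ do not commute inside $\tm$ for $i\neq j$, their $n$th-power subgroups $\tgln_{r_i}$ and $\tgln_{r_j}$ do commute inside $\tmn$, thanks to the block-compatibility of the Banks--Levy--Sepanski cocycle together with the fact that Hilbert symbols $(\det g_i,\det g_j')^{c+1}$ and $(\det g_j,\det g_i')^c$ are trivial when both arguments are $n$th powers. Consequently, no extraneous Hilbert symbols are introduced when the intertwining integral is evaluated on the $Z_{\tgl_r}\tmn$-side, and the factorization of $T_{w_{M,0}}$ into commuting block operators is clean; this is precisely the reason the metaplectic tensor product is constructed through the $n$th-power subgroup in the first place.
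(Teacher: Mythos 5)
Your proposal is correct and reaches the theorem by the same overall strategy as the paper (reduce to Lemma \ref{lem:takeda 2}, exhibit $\omega\,(V_0(\chi_1')\tilde\otimes\cdots\tilde\otimes V_0(\chi_k'))$ and $V_0(\chi'|_{\tton_\ast})$ as the constituents to be matched), but the key identification is carried out by a genuinely different mechanism. The paper never touches the intertwining operator on the $\tmn$-side: it computes $J_{U_M}$ of both candidate constituents, observes that both Jacquet modules equal $\ind_{Z_{\tgl_r}w_{M,0}\tton_\ast w_{M,0}^{-1}}^{Z_{\tgl_r}\tton}({}^{w_{M,0}}\chi'\delta_M^{1/2})$, and then invokes the characterization of $V_0$ as the unique irreducible subrepresentation of $\Ind_{Z_{\tgl_r}w_{M,0}\tb_\ast^{\square}w_{M,0}^{-1}}^{Z_{\tgl_r}\tmn}{}^{w_{M,0}}\chi'\delta_M^{1/2}$ to conclude the two are isomorphic. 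You instead push the principal-series identification $V(\chi')\cong\omega(V(\chi_1')\tilde\otimes\cdots\tilde\otimes V(\chi_k'))$ through $T_{w_{M,0}}$ by factoring it as $\prod_i T_{w_{0,i}}$ and taking images. This is valid, and you correctly flag the one delicate point; note though that the factorization of the integral over $U_M(w_{M,0})=\prod_i U_i(w_{0,i})$ really rests on the block-compatibility terms $(\det(w_{0,i}^{-1}u_i),\det g_j)=(\pm1,\det g_j)$ being trivial because \emph{one} argument ($\det g_j$ for $g\in\tmn$) is an $n$th power — bilinearity kills the symbol already then, so your "both arguments" phrasing is stronger than what is available or needed. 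You also implicitly use that the factorization persists under the analytic continuation defining $T_{w_{M,0}}$ at the exceptional point. The trade-off: the paper's route leans entirely on the already-established Jacquet-module computations for $V_0$ and avoids any cocycle bookkeeping in the intertwining integral, while yours reuses the principal-series isomorphism verbatim and makes the compatibility of the two $T$'s explicit, at the cost of these extra verifications. Your well-definedness argument via Proposition \ref{prop:Mezo} matches the paper's.
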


\begin{proof}

Again, we want to show that both sides have an equivalent irreducible constituent when restricted to $Z_{\tgl_r}\tmn$. For the left-hand side, we choose $V_0(\chi'|_{\tton_\ast})$.  This is the unique irreducible subrepresentation of $\Ind_{Z_{\tgl_r}w_{M,0}\tb^{\square}_\ast w_{M,0}^{-1}}^{Z_{\tgl_r}\tmn}{}^{w_{M,0}}\chi'\delta_M^{1/2}$. The Jacquet module of $V_0(\chi'|_{\tton_\ast})$ is
\[
J_{U_M}(V_0(\chi'|_{\tton_\ast}))\cong \ind_{Z_{\tgl_r}w_{M,0} \tton_\ast w_{M,0}^{-1}}^{Z_{\tgl_r}\tton} ({}^{w_{M,0}}\chi'\delta_M^{1/2}).
\]

On the right-hand side, we choose $\omega (V_0(\chi_1')\tilde\otimes \cdots\tilde\otimes V_0(\chi_k')),$ whose Jacquet module is
\[
\begin{aligned}
&\omega(\ind_{w_{\GL_{r_1},0} (\tton_{1,\ast})w_{\GL_{r_1},0}^{-1}}^{\tton_1}({}^{w_{\GL_{r_1},0}}\chi_1 \delta_{B_1}^{1/2}) \tilde\otimes \cdots\tilde\otimes \ind_{w_{\GL_{r_k},0} (\tton_{k,\ast})w_{\GL_{r_k},0,}^{-1}}^{\tton_i}({}^{w_{\GL_{r_k},0}}\chi_k \delta_{B_k}^{1/2}))\\
&\cong\ind_{Z_{\tgl_r}w_{M,0}\tton_\ast w_{M,0}^{-1}}^{Z_{\tgl_r}\tton} ({}^{w_{M,0}}\chi'\delta_M^{1/2}).
\end{aligned}
\]
Thus $\omega (V_0(\chi_1')\tilde\otimes \cdots\tilde\otimes V_0(\chi_k'))$ can be also realized as the unique irreducible subrepresentation of
\[
\Ind_{Z_{\tgl_r}w_{M,0}\tb^{\square}_\ast w_{M,0}^{-1}}^{Z_{\tgl_r}\tmn}{}^{w_{M,0}}\chi'\delta_M^{1/2}.
\]
Therefore, as representations of $Z_{\tgl_r}\tmn$,
\[
V_0(\chi'|_{\tton_\ast})\cong\omega (V_0(\chi_1')\tilde\otimes \cdots\tilde\otimes V_0(\chi_k')).
\]
By Lemma \ref{lem:takeda 2}, we are done.
\end{proof}

\begin{Ex}
Consider the partition $(1^r)$. In this case, $\tm$ is just $\tto$ and the metaplectic tensor product is just the representation theory of $\tto$. The exceptional representation on $\tgl_1$ is  $\Ind_{A}^{\tgl_1}\chi'$, where $A$ is a maximal abelian subgroup of $\tgl_1$, and $\chi'$ is an extension of $\chi:F^{\times n}\to \bc^\times $ to $A$. Notice that $\chi$ is an irreducible constituent of  $\left(\Ind_A^{\tgl_1}\chi'\right)\Big|_{F^{\times n}}$. Let $\chi_1,\cdots, \chi_r$  be characters of $F^{\times n}$. Thus the metaplectic tensor product of $\Ind_A^{\tgl_1}\chi_1', \cdots, \Ind_A^{\tgl_1}\chi_r'$ is $\Ind_{\tto_\ast}^{\tto}(\chi_1\otimes \cdots\otimes \chi_r)'$, where $(\chi_1\otimes \cdots \otimes \chi_k)'$ is an extension of $\chi_1\otimes \cdots \otimes \chi_k$ to $\tto_\ast$.
\end{Ex}

\subsection{Semi-Whittaker functionals}\label{sec:semiwhittaker}

Fix a nontrivial additive character $\psi:F\to \bc^\times$. For a partition $\lambda$ of $r$, let $M=M_\lambda$ be the corresponding Levi subgroup of $\GL_r$. We define a character
\[
\psi_\lambda:U_{M}\to U_{M}/[U_{M},U_{M}]\to \bc^\times
\]
as follows. Let $\alpha$ is a positive simple root in $U_{M}$ and $x_\alpha(a)$  be the one-dimensional unipotent subgroup in $U$ corresponding to the root $\alpha$. We define $\psi_\lambda(x_\alpha(a))=\psi(a)$. We extend this character to $\psi_\lambda:U\to \bc^\times$ via the naive projection $U\to U_M$. Notice this character agrees with the character defined in  Section \ref{sec:notation}. For a smooth representation $(\pi,V)$ of $\tgl_r$, a linear functional $L:V\to\bc$ is called a \textit{$\lambda$-semi-Whittaker functional} if $L(\pi(u)v)=\psi_\lambda(u)L(v)$ for all $u\in U,v\in V$. When $\lambda$ is fixed, we simply call it a  semi-Whittaker functional.

\subsubsection{An explicit formula}

We study semi-Whittaker functionals of exceptional representations. First, we have the following observation for Whittaker functionals of exceptional representations on $\tgl_r$.

Let $\Theta(\chi')$ be an exceptional representation of $\tgl_r$. Recall $\psi_{(r)}:U\to \bc^\times$ is defined as $\psi_{(r)}(u)=\psi(\sum_{i=1}^{r-1}u_{i,i+1})$. Let $d=\dim J_{U,\psi_{(r)}}(\Theta(\chi'))$. If we restrict $\Theta(\chi')$ to $\tgln_r$, we still have $d=\dim J_{U,\psi_{(r)}}(\Theta(\chi')|_{\tgln_r})$. By the exactness of Jacquet functor and Corollary \ref{cor:restriction of theta},
\[
\begin{aligned}
d=&\sum_{x^{-1}\in \tto_\ast\bs \tto/\tton}\dim J_{U,\psi_{(r)}}(V_0({}^x \chi'))\\
=&\sum_{x^{-1}\in \tto_\ast\bs \tto/\tton}\dim J_{U,{}^x\psi_{(r)}}(V_0(\chi')).
\end{aligned}
\]
Therefore,
\[
\sum_{x\in \tton\bs \tto}\dim J_{U,{}^x\psi_{(r)}}(V_0(\chi'))=d[\tton\tto_\ast:\tton]=d[\tto_\ast:\tton_\ast]
\]

Now let us return to the setup of  the metaplectic tensor product. Let
\[
\Theta_{\tm}(\chi')\cong (\Theta(\chi_1')\tilde\otimes \cdots\tilde\otimes \Theta(\chi_k'))_\omega
\]
be an exceptional representation of $\tm$. Let $d_i=\dim J_{U_{\GL_{r_i}},\psi_{(r_i)}}\Theta(\chi_i')$.  We now choose representatives for $\tton_i\bs \tto_i$, and combine them together. This gives  a set of representatives of $\tton\bs \tto$. Thus,
\[
\sum_{x\in \tton\backslash \tto} \dim J_{U_M,{}^x\psi_\lam}(V_0(\chi_1')\otimes \cdots\otimes V_0(\chi_k'))=\prod_{i=1}^k d_i [\tto_{\ast,i}:\tton_{\ast,i}].
\]

\begin{Prop}\label{prop:first formula}
\[
\dim J_{U_M,\psi_\lambda}(\Theta_{\tm}(\chi'))=\frac{\prod_{i=1}^k d_i [\tto_{\ast,i}:\tton_{\ast,i}]}{[\widetilde{H}:\tmn]}.
\]
\end{Prop}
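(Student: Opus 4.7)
The plan is to apply the twisted Jacquet functor $J_{U_M,\psi_\lambda}$ to both sides of the identity $\Ind_{Z_{\tgl_r}\tmn}^{\tm}\pi_\omega^{(n)} = m\,\pi_\omega$ from Lemma \ref{lem:takeda 3}, where $m = [\widetilde{H}:Z_{\tgl_r}\tmn]$ and $\pi_\omega \cong \Theta_{\tm}(\chi')$ by Theorem \ref{thm:meteplectic tensor of theta}. The right-hand side contributes $m\cdot\dim J_{U_M,\psi_\lambda}(\Theta_{\tm}(\chi'))$, so it suffices to compute the left-hand side and read off the formula.

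For the left-hand side, $Z_{\tgl_r}\tmn$ is normal of finite index in $\tm$ (since $\tm/\tmn\cong(F^\times/F^{\times n})^k$ is finite in the non-archimedean case). Mackey restriction yields
\[
\Ind_{Z_{\tgl_r}\tmn}^{\tm}\pi_\omega^{(n)}\Big|_{Z_{\tgl_r}\tmn}\cong \bigoplus_{g\in\tm/Z_{\tgl_r}\tmn}{}^g\pi_\omega^{(n)},
\]
and since $U_M\subset\tmn$, the functor $J_{U_M,\psi_\lambda}$ only sees this restriction. I would then choose representatives $g\in\tto_M$, using the isomorphism $\tto_M/(Z_{\tgl_r}\tton_M)\simeq \tm/Z_{\tgl_r}\tmn$. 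Such $g$ normalize $U_M$, and the standard change of variables $u\mapsto gug^{-1}$ gives $\dim J_{U_M,\psi_\lambda}({}^g\pi_\omega^{(n)})=\dim J_{U_M,{}^g\psi_\lambda}(\pi_\omega^{(n)})$. Because $\omega$ is trivial on $U_M$, this coincides with $\dim J_{U_M,{}^g\psi_\lambda}(V_0(\chi_1')\otimes\cdots\otimes V_0(\chi_k'))$.

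Next, because $Z_{\tgl_r}$ is central, ${}^g\psi_\lambda$ depends only on the class of $g$ in $\tto_M/(Z_{\tgl_r}\tton_M)$, so re-summing over the finer coset space $\tton\bs\tto=\tto_M/\tton_M$ overcounts each term by $[Z_{\tgl_r}\tton_M:\tton_M]=|Z_{\tgl_r}/(Z_{\tgl_r}\cap\tmn)|$. Combining with the displayed identity immediately preceding the proposition,
\[
\sum_{g\in\tton\bs\tto}\dim J_{U_M,{}^g\psi_\lambda}\bigl(V_0(\chi_1')\otimes\cdots\otimes V_0(\chi_k')\bigr)=\prod_{i=1}^k d_i[\tto_{\ast,i}:\tton_{\ast,i}],
\]
gives $m\cdot|Z_{\tgl_r}/(Z_{\tgl_r}\cap\tmn)|\cdot\dim J_{U_M,\psi_\lambda}(\Theta_{\tm}(\chi'))=\prod_i d_i[\tto_{\ast,i}:\tton_{\ast,i}]$. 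The claimed formula then follows from the tower identity $[\widetilde{H}:\tmn]=[\widetilde{H}:Z_{\tgl_r}\tmn]\cdot[Z_{\tgl_r}\tmn:\tmn]=m\cdot|Z_{\tgl_r}/(Z_{\tgl_r}\cap\tmn)|$. The only delicate step is the bookkeeping of these subgroup indices between the central, torus, and Levi subgroups; the Jacquet-module manipulations themselves are formal consequences of exactness and the fact that $T$ normalizes $U_M$.
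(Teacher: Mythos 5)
Your argument is correct and is essentially the paper's: both compute the twisted Jacquet module of an induced representation by Mackey theory and compare it with the precomputed sum $\sum_{x\in\tton\bs\tto}\dim J_{U_M,{}^x\psi_\lambda}(V_0(\chi_1')\otimes\cdots\otimes V_0(\chi_k'))=\prod_i d_i[\tto_{\ast,i}:\tton_{\ast,i}]$. The only difference is that the paper applies $J_{U_M,\psi_\lambda}$ directly to $\Ind_{\tmn}^{\tm}\pi^{(n)}$ and invokes Lemma \ref{lem:induction in metaplectic tensor}, which yields the factor $[\widetilde H:\tmn]$ in one step, whereas you start from Lemma \ref{lem:takeda 3} (induction from $Z_{\tgl_r}\tmn$) and recover the same factor by hand via the tower $[\widetilde H:\tmn]=[\widetilde H:Z_{\tgl_r}\tmn]\cdot[Z_{\tgl_r}\tmn:\tmn]$ together with the overcounting factor $[Z_{\tgl_r}\tton_M:\tton_M]$ (where, to be precise, it is the dimension $\dim J_{U_M,{}^g\psi_\lambda}(\pi_\omega^{(n)})$, not the character ${}^g\psi_\lambda$ itself, that depends only on the class of $g$ modulo $Z_{\tgl_r}\tton_M$).
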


\begin{proof}
Write $\pi^{(n)}=V_0(\chi_1')\otimes \cdots\otimes V_0(\chi_k')$. We have
\[
J_{U_M,\psi_\lambda}(\Ind_{\tmn}^{\tm}\pi^{(n)})\cong\bigoplus_{x\in\tmn\bs\tm} J_{U_M,{}^x\psi_\lambda}(\pi^{(n)})
=\bigoplus_{x\in\tton\bs\tto} J_{U_M,{}^x\psi_\lambda}(\pi^{(n)}).
\]
By Lemma \ref{lem:induction in metaplectic tensor}, the dimension of the left-hand side is
\[
[\widetilde{H}:\tmn]\dim J_{U_M,\psi_\lambda}(\Theta_{\tm}(\chi')).
 \]
 The dimension of the right-hand side is $\prod_{i=1}^k d_i [\tto_{\ast,i}:\tton_{\ast,i}].$ This proves the result.
\end{proof}

Now we proceed to simplify this formula. From now on till the end of this section, we drop the subscript $M$ when the ambient group is $\tm$ to avoid burden on notations. The subscript $i$ indicates the subgroup considered is in the $i$-th block $\tgl_{r_i}$.

Let $\pi$ be an irreducible constituent of the representation $\Theta_{\tm}(\chi')|_{Z_{\tgl_r}\tmn}$.  Recall
\[
\Theta_{\tm}(\chi')|_{Z_{\tgl_r}\tmn}=\bigoplus_{x\in \tto_\ast\bs \tto /Z_{\tgl_r}\tton}{}^x\pi=\bigoplus_{x\in  \tton\tto_\ast\bs \tto}{}^x\pi.
\]
Apply the induction functor $\Ind_{Z_{\tgl_r}\tmn}^{\tm}$ and use Lemma \ref{lem:takeda 3} on the right-hand  side.  This gives
\[
\Ind_{Z_{\tgl_r}\tmn}^{\tm}(\Theta_{\tm}(\chi')|_{Z_{\tgl_r}\tmn} ) =[\tto:\tton\tto_\ast][\widetilde{H}:Z_{\tgl_r}\tmn]\Theta_{\tm}(\chi').
\]
Apply the Jacquet functor $J_{U_M}(-)$. This gives
\[
\Ind_{Z_{\tgl_r}\tmn\cap \tto}^{\tto}J_{U_M}(\Theta_{\tm}(\chi')|_{Z_{\tgl_r}\tmn}) =[\tto:\tton\tto_\ast][\widetilde{H}:Z_{\tgl_r}\tmn]J_{U_M}(\Theta_{\tm}(\chi')).
\]

Comparing the dimensions and using $[\tto:Z_{\tgl_r}\tmn\cap\tto]=[\tm:Z_{\tgl_r}\tmn]$ gives
\[
[\tm:\widetilde{H}]=[\tto:\tton\tto_\ast].
\]
Thus
\[
\begin{aligned}
&[\widetilde{H}:\tmn]=\frac{[\tm:\tmn]}{[\tto:\tton\tto_\ast]} =\frac{[\tto:\tton]}{[\tto:\tton\tto_\ast]}  \\ =&[\tton\tto_\ast:\tton]=[\tto_\ast:\tton_\ast].
\end{aligned}
\]

\begin{Thm}\label{thm:final formula}
\[
\dim J_{U_M,\psi_\lambda}(\Theta_{\tm}(\chi'))=\frac{\prod_{i=1}^k [\tto_{\ast,i}:\tton_{\ast,i}]} {[\tto_\ast:\tton_\ast]}\prod_{i=1}^k d_i.
\]
\end{Thm}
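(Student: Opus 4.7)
The plan is to combine Proposition \ref{prop:first formula} with an explicit computation of the index $[\widetilde{H}:\tmn]$. Proposition \ref{prop:first formula} already expresses
\[
\dim J_{U_M,\psi_\lambda}(\Theta_{\tm}(\chi'))=\frac{\prod_{i=1}^k d_i [\tto_{\ast,i}:\tton_{\ast,i}]}{[\widetilde{H}:\tmn]},
\]
so the theorem reduces to establishing the identity $[\widetilde{H}:\tmn]=[\tto_\ast:\tton_\ast]$. Note that the $\tto_{\ast,i}$ factors in the numerator are intrinsic to the $i$-th block and will remain untouched; the whole content is in identifying the denominator.

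To compute $[\widetilde{H}:\tmn]$ I would not approach it directly (since $\widetilde{H}$ is only characterized abstractly by Mackey's criterion and could vary in its choice), but instead extract it from a dimension comparison. Pick any irreducible constituent $\pi$ of $\Theta_{\tm}(\chi')|_{Z_{\tgl_r}\tmn}$. By the decomposition recorded earlier,
\[
\Theta_{\tm}(\chi')|_{Z_{\tgl_r}\tmn}=\bigoplus_{x\in \tton\tto_\ast\bs \tto}{}^x\pi,
\]
so the sum has $[\tto:\tton\tto_\ast]$ summands. Apply $\Ind_{Z_{\tgl_r}\tmn}^{\tm}$ to both sides; the right-hand side becomes $[\tto:\tton\tto_\ast]\,\Ind_{Z_{\tgl_r}\tmn}^{\tm}\pi$, and invoking Lemma \ref{lem:takeda 3} gives $[\widetilde{H}:Z_{\tgl_r}\tmn]$ copies of $\Theta_{\tm}(\chi')$ for each, yielding
\[
\Ind_{Z_{\tgl_r}\tmn}^{\tm}(\Theta_{\tm}(\chi')|_{Z_{\tgl_r}\tmn})=[\tto:\tton\tto_\ast][\widetilde{H}:Z_{\tgl_r}\tmn]\,\Theta_{\tm}(\chi').
\]

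Now apply the Jacquet functor $J_{U_M}$. Since induction from $Z_{\tgl_r}\tmn$ to $\tm$ commutes with $J_{U_M}$ to give induction from $Z_{\tgl_r}\tmn\cap\tto=Z_{\tgl_r}\tton$ to $\tto$, and the latter multiplies dimensions by $[\tto:Z_{\tgl_r}\tton]=[\tm:Z_{\tgl_r}\tmn]$, comparing total dimensions yields $[\tm:\widetilde{H}]=[\tto:\tton\tto_\ast]$. Then
\[
[\widetilde{H}:\tmn]=\frac{[\tm:\tmn]}{[\tm:\widetilde{H}]}=\frac{[\tto:\tton]}{[\tto:\tton\tto_\ast]}=[\tton\tto_\ast:\tton]=[\tto_\ast:\tton_\ast],
\]
the last equality by the second isomorphism theorem applied to $\tton_\ast=\tto_\ast\cap\tton$. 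Substituting into Proposition \ref{prop:first formula} gives the claimed formula.

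The main obstacle is the $J_{U_M}$ step — it requires knowing that Jacquet functors commute with induction from $Z_{\tgl_r}\tmn$ in the correct way and that the finite-dimensional semi-Whittaker quotients appearing on both sides are nonzero so that a dimension count is legitimate. However, since we are working with admissible representations and both $\Theta_{\tm}(\chi')$ and $\pi$ have finite-dimensional Jacquet modules (by the results on exceptional representations established above), this comparison is safe, and the argument is essentially bookkeeping once the structural facts from Lemmas \ref{lem:takeda 2} and \ref{lem:takeda 3} are in hand.
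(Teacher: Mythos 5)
Your proposal is correct and follows essentially the same route as the paper: reduce via Proposition \ref{prop:first formula} to computing $[\widetilde{H}:\tmn]$, obtain $[\tm:\widetilde{H}]=[\tto:\tton\tto_\ast]$ by inducing the restriction decomposition back up with Lemma \ref{lem:takeda 3} and comparing Jacquet-module dimensions, and conclude $[\widetilde{H}:\tmn]=[\tto_\ast:\tton_\ast]$ by the same index manipulation. The paper's argument is the same bookkeeping, so there is nothing to add.
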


\begin{Rem}
We can see that the same calculation is true for the principal series representations.
\end{Rem}

Let us mention some immediate corollaries.

\begin{Cor}
Suppose $|n|_F=1$. If $r_i>n$, for some $i$, then
\[
J_{U_M,\psi_\lambda}(\Theta_{\tm}(\chi'))=0.
\]
\end{Cor}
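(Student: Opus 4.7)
The plan is to combine Theorem \ref{thm:final formula} with the vanishing result of Kazhdan--Patterson on Whittaker models, Proposition \ref{prop:KP1984}(2). By Theorem \ref{thm:meteplectic tensor of theta}, we have the metaplectic tensor product decomposition $\Theta_{\tm}(\chi')\cong (\Theta(\chi_1')\tilde\otimes\cdots\tilde\otimes\Theta(\chi_k'))_\omega$, so the quantities $d_i = \dim J_{U_{\GL_{r_i}},\psi_{(r_i)}}(\Theta(\chi_i'))$ appearing in Theorem \ref{thm:final formula} are well-defined dimensions of twisted Jacquet modules of exceptional representations on each block $\tgl_{r_i}$.

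First I would isolate the index $i_0$ for which $r_{i_0} > n$. Applying Proposition \ref{prop:KP1984}(2) to the exceptional representation $\Theta(\chi_{i_0}')$ on $\tgl_{r_{i_0}}$, the hypotheses $|n|_F = 1$ and $n \leq r_{i_0} - 1$ are exactly what is required to conclude that $\Theta(\chi_{i_0}')$ admits no nonzero Whittaker functional. Since the twisted Jacquet module $J_{U_{\GL_{r_{i_0}}},\psi_{(r_{i_0})}}(\Theta(\chi_{i_0}'))$ is dual to the space of $\psi_{(r_{i_0})}$-Whittaker functionals, this gives $d_{i_0} = 0$.

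Plugging $d_{i_0} = 0$ into the formula
\[
\dim J_{U_M,\psi_\lambda}(\Theta_{\tm}(\chi'))=\frac{\prod_{i=1}^k [\tto_{\ast,i}:\tton_{\ast,i}]} {[\tto_\ast:\tton_\ast]}\prod_{i=1}^k d_i
\]
provided by Theorem \ref{thm:final formula} immediately yields the vanishing of the whole product. There is essentially no obstacle here; the corollary is a direct application of the explicit dimension formula, and the only thing to verify is that the hypothesis $r_i > n$ translates correctly into the range $n \leq r_i - 1$ covered by Proposition \ref{prop:KP1984}(2), which is immediate.
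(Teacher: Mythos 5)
Your argument is correct and is exactly the paper's proof: the paper's one-line justification (``when $r_i>n$, $d_i=0$'') is precisely the combination of Theorem \ref{thm:final formula} with Proposition \ref{prop:KP1984}(2) that you spell out. Nothing is missing; you have simply made explicit the appeal to Kazhdan--Patterson's vanishing of Whittaker models for $n\leq r_i-1$ that the paper leaves implicit.
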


\begin{proof}
This is because when $r_i>n$, $d_i=0$.
\end{proof}

\begin{Cor}\label{cor:Vanishing semi whittaker}
Suppose $|n|_F=1$.  Let $\Theta_r(\chi')$ be an exceptional representation of $\tgl_r$. If $r_i>n$ for some $i$, then $J_{U,\psi_\lambda}(\Theta_{r}(\chi'))=0$. In other words, there is no semi-Whittaker functional on $\Theta_{r}(\chi')$.
\end{Cor}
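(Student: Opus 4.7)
The plan is to reduce the statement to the Levi-level vanishing result that was just established. The key observation is that the character $\psi_\lambda$ on $U$ is trivial on $R$, the unipotent radical of the parabolic $P = P_\lambda$. Indeed, by construction $\psi_\lambda$ is supported only on the simple root subgroups contained in $M$, and all root subgroups of $R$ correspond to roots not in $M$, so $\psi_\lambda|_R \equiv 1$. Consequently, writing $U = U_M \ltimes R$, the character $\psi_\lambda$ descends from $U$ to $U/R \cong U_M$, and we have the functorial identity
\[
J_{U,\psi_\lambda}(\Theta_{r}(\chi')) \cong J_{U_M,\psi_\lambda}\bigl(J_R(\Theta_{r}(\chi'))\bigr).
\]

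The next step is to invoke the proposition proved just before this corollary, which computes the Jacquet module of an exceptional representation along the unipotent radical of a standard parabolic:
\[
J_R(\Theta_{\tgl_r}(\chi')) \;\cong\; \Theta_{\tm}\bigl({}^{w^M}\chi' \cdot \delta_P^{1/2}\bigr),
\]
where the right-hand side is an exceptional representation of the Levi cover $\tm$. Thus
\[
J_{U,\psi_\lambda}(\Theta_{r}(\chi')) \;\cong\; J_{U_M,\psi_\lambda}\bigl(\Theta_{\tm}({}^{w^M}\chi'\cdot \delta_P^{1/2})\bigr).
\]

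Now I would apply the immediately preceding corollary, which states that for an exceptional representation of the Levi cover $\tm$, the twisted Jacquet module $J_{U_M,\psi_\lambda}$ vanishes whenever some $r_i > n$. This is the consequence of the explicit formula in Theorem \ref{thm:final formula}: the dimension factors as a product of local contributions $d_i = \dim J_{U_{\GL_{r_i}}, \psi_{(r_i)}} \Theta(\chi_i')$, and Proposition \ref{prop:KP1984}(2) forces $d_i = 0$ as soon as $r_i > n$, since under $|n|_F = 1$ the exceptional representation of $\tgl_{r_i}$ has no Whittaker model in that range. Applying this to our situation yields $J_{U,\psi_\lambda}(\Theta_r(\chi')) = 0$, which is exactly the desired statement.

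There is no real obstacle here: the argument is an almost immediate combination of the induction-in-stages identity for Jacquet functors together with two previously established results. The only point that deserves verification is the compatibility of the character $\psi_\lambda$ with the decomposition $U = U_M R$, but this is built into the very definition of $\psi_\lambda$ in Section \ref{sec:notation}.
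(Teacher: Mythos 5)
Your proposal is correct and is essentially identical to the paper's proof: the paper likewise writes $J_{U,\psi_\lambda}(\Theta_{r}(\chi')) = J_{U_M,\psi_\lambda}(J_R(\Theta_{r}(\chi'))) = J_{U_M,\psi_\lambda}(\Theta_{\tm}({}^{w^{M}}\chi'\cdot\delta_P^{1/2})) = 0$, using the Jacquet-module proposition for the unipotent radical and the preceding corollary (which rests on $d_i = 0$ for $r_i > n$). No gaps.
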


\begin{proof}
In fact,
\[
J_{U,\psi_\lambda}(\Theta_{r}(\chi')) =J_{U_M,\psi_\lambda}(J_R(\Theta_{r}(\chi')))=J_{U_ M,\psi_\lambda}(\Theta_{\tm}({}^{w^{M}}\chi'\cdot\delta_P^{1/2}))=0.
\]
\end{proof}

The following corollaries are true without $|n|_F=1$.

\begin{Cor}
When $r_i\leq n$ for all $i$, $J_{U_M,\psi_\lambda}(\Theta_{\tm}(\chi'))\neq 0.$
\end{Cor}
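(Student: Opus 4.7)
My plan is to deduce the non-vanishing directly from the explicit dimension formula in Theorem \ref{thm:final formula}. That theorem expresses
\[
\dim J_{U_M,\psi_\lambda}(\Theta_{\tm}(\chi')) = \frac{\prod_{i=1}^k [\tto_{\ast,i}:\tton_{\ast,i}]}{[\tto_\ast:\tton_\ast]} \prod_{i=1}^k d_i,
\]
where $d_i = \dim J_{U_{\GL_{r_i}},\psi_{(r_i)}}(\Theta(\chi_i'))$. Since the left-hand side is a non-negative integer and the combinatorial index ratio is a positive rational number (each of the indices $[\tto_{\ast,i}:\tton_{\ast,i}]$ and $[\tto_\ast:\tton_\ast]$ is a finite positive integer, as the enclosing group has finite quotient by the maximal abelian subgroup), the corollary reduces to showing $d_i \geq 1$ for every $i$ under the hypothesis $r_i \leq n$.

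I would handle the blocks case by case. If $r_i = 1$, then $U_{\GL_{r_i}}$ is trivial, the character $\psi_{(1)}$ is trivial, and $d_i$ equals $\dim \Theta(\chi_i') \geq 1$, since $\Theta(\chi_i') = \Ind_{A}^{\tgl_1} \chi_i'$ is a nonzero representation. If $2 \leq r_i \leq n$, I would invoke Proposition \ref{prop:KP1984}: the subcase $r_i = n$ falls under part (1), which gives a unique Whittaker model and hence $d_i = 1$; the subcase $r_i < n$ (equivalently $n \geq r_i + 1$) falls under part (3), which guarantees a finite but strictly positive number of independent Whittaker functionals, so $d_i \geq 1$.

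The one subtle point is that Proposition \ref{prop:KP1984} is stated under the assumption $|n|_F = 1$, whereas the present corollary is asserted without this hypothesis. This is where I would appeal to the Remark immediately following Proposition \ref{prop:KP1984}: parts (1) and (3)—the only parts I need—are valid for arbitrary $n$, the case $|n|_F \neq 1$ being handled by the global arguments in \cite{KP1984} Section II. With this in hand, $\prod_i d_i \geq 1$, and the formula forces $\dim J_{U_M,\psi_\lambda}(\Theta_{\tm}(\chi')) \geq 1$. I do not anticipate a genuine obstacle here; the corollary is essentially a bookkeeping consequence of Theorem \ref{thm:final formula} combined with the already-established existence of Whittaker functionals on $\Theta(\chi_i')$ in the range $r_i \leq n$.
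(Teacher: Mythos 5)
Your proof is correct and is essentially the argument the paper intends: the corollary is an immediate consequence of the dimension formula in Theorem \ref{thm:final formula} together with the positivity of each $d_i$ for $r_i\le n$, which follows from Proposition \ref{prop:KP1984} parts (1) and (3) and the remark extending them to $|n|_F\neq 1$ (the paper leaves this unwritten, but its proof of the companion vanishing corollary via $d_i=0$ makes clear this is the intended dual argument). Your separate treatment of the $r_i=1$ blocks and the explicit appeal to the remark are exactly the right bookkeeping.
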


\begin{Cor}\label{cor:local nonvanishing}
When $r_i\leq n$ for all $i$,  $J_{U,\psi_\lambda}(\Theta_{r}(\chi'))\neq 0.$
\end{Cor}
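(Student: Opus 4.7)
The plan is to deduce this corollary from the previous one (the Levi version) in exactly the same spirit as the proof of Corollary \ref{cor:Vanishing semi whittaker}, by factoring the twisted Jacquet functor through the unipotent radical of the parabolic $P_\lambda$.

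First, observe that the character $\psi_\lambda:U\to \bc^\times$ is by construction trivial on $R$, the unipotent radical of $P_\lambda$, since $\psi_\lambda$ is supported only on the simple positive root subgroups contained in $M=M_\lambda$, and all of those lie in $U_M=U\cap M$, not in $R$. Writing $U=U_M\ltimes R$, standard properties of twisted Jacquet functors give the identification
\[
J_{U,\psi_\lambda}(\Theta_r(\chi'))\;\cong\; J_{U_M,\psi_\lambda}\bigl(J_R(\Theta_r(\chi'))\bigr),
\]
so it suffices to analyze the right-hand side.

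Next, I would invoke the proposition on constant terms of exceptional representations, which identifies $J_R(\Theta_{\tgl_r}(\chi'))$ with the exceptional representation $\Theta_{\tm}({}^{w^M}\chi'\cdot\delta_P^{1/2})$ on $\tm$. The character ${}^{w^M}\chi'\cdot\delta_P^{1/2}$ is exceptional for $\tm$ (as noted there, since $w^M$ only permutes blocks of $M$). Substituting back gives
\[
J_{U,\psi_\lambda}(\Theta_r(\chi'))\;\cong\; J_{U_M,\psi_\lambda}\bigl(\Theta_{\tm}({}^{w^M}\chi'\cdot\delta_P^{1/2})\bigr).
\]

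Finally, the assumption $r_i\leq n$ for all $i$ is preserved (it is a condition on the partition $\lambda$, not on the character), so the previous corollary applied to the exceptional representation $\Theta_{\tm}({}^{w^M}\chi'\cdot\delta_P^{1/2})$ of $\tm$ yields the nonvanishing of $J_{U_M,\psi_\lambda}$ on it. Nothing in this step requires the assumption $|n|_F=1$, since the Levi-level nonvanishing corollary was itself proved without that hypothesis (it is a direct consequence of the dimension formula in Theorem \ref{thm:final formula}, where each factor $d_i$ is positive whenever $r_i\leq n$). The only point to double-check is the triviality of $\psi_\lambda$ on $R$, which is immediate from the definition; the rest is an assembly of results already in place.
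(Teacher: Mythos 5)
Your proposal is correct and is exactly the argument the paper intends: the paper proves Corollary \ref{cor:Vanishing semi whittaker} via the chain $J_{U,\psi_\lambda}(\Theta_r(\chi'))=J_{U_M,\psi_\lambda}(J_R(\Theta_r(\chi')))=J_{U_M,\psi_\lambda}(\Theta_{\tm}({}^{w^M}\chi'\cdot\delta_P^{1/2}))$ and leaves the nonvanishing corollaries to the same reduction, with the Levi-level nonvanishing coming from Theorem \ref{thm:final formula} together with $d_i\neq 0$ for $r_i\leq n$ (Proposition \ref{prop:KP1984} and the remark that its parts (1) and (3) hold without $|n|_F=1$). Your remarks on the triviality of $\psi_\lambda$ on $R$ and on the independence from $|n|_F=1$ are both accurate.
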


\subsubsection{Construction of maximal abelian groups}\label{sec:explicit maximal abelian}
We now discuss the construction of maximal abelian subgroups. This helps us simplify the formula further.

Given a maximal isotropic subgroup $\Omega$ of the Hilbert symbol, \cite{KP1984} Section 0.3 provides a way to construct maximal abelian subgroups of $\tto$ under certain assumptions. When $|n|_F=1$, $F^{\times n}\fo^\times$ is a maximal isotropic subgroup of the Hilbert symbol. Let
\[
T_\fo=\{\text{diag}(a_1,\cdots,a_r)\in T:\mathrm{val}(a_i)\equiv 0\mod n\}.
\]
Then $Z_{\tgl_r}\tto_\fo$ is called the standard maximal abelian subgroup of $\tto$, in the sense of \cite{KP1984} Section I.1. We use $\tto_\ast^{\mathrm{st}}$ to denote this subgroup.

\begin{Rem}
Notice that $\tto_\ast^{\mathrm{st}}\cap \tton$ is usually not a maximal abelian subgroup of $\tton$, even for $n=2$. When $n=2,c=0$, a ``canonical'' maximal abelian subgroup was introduced in Bump-Ginzburg \cite{BG1992}. The intersection of their maximal abelian subgroup and $\tton$ is a maximal abelian subgroup of $\tton$.
\end{Rem}

Let $\tton_\fo=\tto_\fo\cap \tgln_r$. The following proposition can be proved by imitating the argument in \cite{KP1984} Section 0.3.
\begin{Prop}\label{prop:maximal abelian 1}
The group $Z_{\tgl_r^{(n)}}\tton_\fo$ is a maximal abelian subgroup of $\tton$.
\end{Prop}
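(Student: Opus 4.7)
The strategy is to adapt \cite{KP1984} Section 0.3, which establishes the maximality of $Z_{\tgl_r}\tto_\fo$ in $\tto$, to the subgroup $\tton \subset \tto$. Recall that $\tton$ is a two-step nilpotent group with center $Z_{\tton} = Z_{\tgln_r}\tto^n$. Since $\tto^n \subset \tton_\fo$ (because $\det(t^n) = (\det t)^n \in F^{\times n}$ and $n \cdot \mathrm{val}(t_i) \equiv 0 \bmod n$) and $Z_{\tgln_r} \subset Z_{\tgln_r}\tton_\fo$ tautologically, the inclusion $Z_{\tton} \subset Z_{\tgln_r}\tton_\fo$ holds. The commutator in $\tton$ descends to a nondegenerate alternating $\mu_n$-valued bilinear form on $\tton/Z_{\tton}$, and a subgroup containing $Z_{\tton}$ is maximal abelian iff its image modulo $Z_{\tton}$ is Lagrangian for this form.

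First I would check abelianness. Since $Z_{\tgln_r}$ is central in $\tgln_r$, it suffices to show that $\tton_\fo$ is abelian. For $\tilde t, \tilde s \in \tton_\fo$, the commutator equals $\sigma(t,s)\sigma(s,t)^{-1}$. Applying the torus formula $\sigma(t,t') = \prod_{i<j}(t_i,t'_j)\cdot \prod_{i,j}(t_i,t'_j)^c$ recalled in the excerpt together with the skew-symmetry $(a,b)(b,a)=1$ of the $n$-th Hilbert symbol (a standard consequence of the Steinberg relation $(a,-a)=1$), this commutator simplifies to a product $\prod_{i,j}(t_i,s_j)^{e_{ij}}$ with integer exponents $e_{ij}$ depending on $c$ and on whether $i<j$. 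Since all entries $t_i,s_j$ lie in $\fo^\times F^{\times n}$ by definition of $T_\fo$, and since $\fo^\times F^{\times n}$ is isotropic for the Hilbert symbol on $F^\times / F^{\times n}$ when $|n|_F = 1$, each factor is trivial. Hence $\tton_\fo$ is abelian.

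For maximality, I suppose $\tilde t = (t,\zeta) \in \tton$ commutes with every element of $Z_{\tgln_r}\tton_\fo$. Commutation with $Z_{\tgln_r}$ is automatic, so the only constraint is that $\prod_{i,j}(t_i,s_j)^{e_{ij}} = 1$ for all $s = \mathrm{diag}(s_1,\ldots,s_r) \in T_\fo$ with $\prod s_j \in F^{\times n}$. Varying the $s_j$ independently (modulo the single determinant constraint) and invoking the nondegeneracy of the Hilbert symbol pairing on $F^\times / F^{\times n}$ together with the fact that $\fo^\times F^{\times n}$ is its own Hilbert-orthogonal complement, one forces each class of $t_i$ in $F^\times / \fo^\times F^{\times n}$ to equal a common value; the residual common value is then constrained to make $t$ differ from an element of $T_\fo$ by a scalar matrix $aI_r$ with $a^r \in F^{\times n}$, i.e., by an element of $Z_{\tgln_r}$. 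This gives $\tilde t \in Z_{\tgln_r}\tton_\fo$.

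The main technical obstacle will be the bookkeeping of the exponents $e_{ij}$ arising from the asymmetric $\prod_{i<j}$ part of $\sigma$ combined with the symmetric $c$-twist: these exponents need not all be units mod $n$, so one must isolate the independent constraints on the $t_i$ by choosing suitable test elements $s$ (for instance, supported on a single coordinate against a coordinate on which to exploit the uniformizer-versus-unit decomposition $F^\times = \pi^{\mathbb Z} \times \fo^\times$). A cleaner alternative is a pure counting argument: using the structure of the centers recalled in the excerpt and the identity $[F^\times : F^{\times n}] = n^2$ when $|n|_F = 1$, compute $[\tton : Z_{\tton}]$ and $[Z_{\tgln_r}\tton_\fo : Z_{\tton}]$, and verify that the square of the latter equals the former; this forces the subgroup to be Lagrangian and hence maximal abelian without needing the detailed exponent analysis.
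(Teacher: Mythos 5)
Your proposal is correct and is essentially the paper's intended argument: the paper gives no details, saying only that the proposition "can be proved by imitating the argument in [KP1984] Section 0.3," and your proof is precisely that imitation (the torus cocycle formula reduces commutators to products of Hilbert symbols, isotropy of $\fo^\times F^{\times n}$ gives abelianness, and the self-orthogonality of $\fo^\times F^{\times n}$ together with the determinant constraint gives maximality). The exponent bookkeeping you flag does work out — using $\det t\in F^{\times n}$ the commutator collapses to $\prod_j(t_j,s_j)^{-1}$ — so both your main route and the counting alternative are sound.
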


\begin{Rem}\label{rem:abelianexample}
Our calculation in Section \ref{sec:semiwhittaker} relies on the index $[\tto:\tto_\ast]$, which is an invariant of $\tto$. We can compute it by using the standard maximal abelian subgroup $\tto_\ast^{\text{st}}$.
\end{Rem}

\begin{Rem}
When $|n|_F=1$, we give an example of maximal abelian subgroup such that its intersection with $\tton$ is $Z_{\tgl_r^{(n)}}\tton_\fo$. Let
\[
\widetilde{Z}_\ast=\{(zI_r,\zeta)\in \widetilde{Z}: z\in \fo^\times F^{\times\frac{n}{\gcd(n,r(2rc+r-1))}}\}
\]
and
\[
\tto_\fo^{(n')}=\{a\in\tto_\fo:\det(a)\in F^{\times \gcd(n,r)}\}.
\]
Then $\widetilde{Z}_\ast Z_{\tgl_r^{(n)}}\tto_\fo^{(n')}=\widetilde{Z}_\ast \tto_\fo^{(n')}$ is a maximal abelian subgroup of $\tto$ and its intersection with $\tton$ is $Z_{\tgl_r^{(n)}}\tton_\fo$.
\end{Rem}

\begin{Rem}
When $|n|_F\neq 1$, it is usually difficult to construct maximal abelian subgroups of $\tto$. However, when $n\mid r$, the situation is still nice in the following sense. Let $\Omega$ be an isotopic subgroup of the Hilbert symbol. Then by the construction in \cite{KP1984} Section 0.3,
\[
\{
(\mathrm{diag}(t_1,\cdots,t_r),\zeta): t_i\in \Omega, \zeta\in\mu_n
\}
\]
is a maximal abelian subgroup of $\tto$.
\end{Rem}

We now discuss  maximal abelian subgroups of $\tton_M$.  For $1\leq i\leq k$, let $\tton_{\ast,i}$ be a maximal abelian subgroup of $\tton_i$. Let $\tton_\ast$ be the direct product of $\tton_{\ast,1},\cdots, \tton_{\ast,k}$ with amalgamated $\mu_n$. Then $\tton_\ast$ is a maximal abelian subgroup of $\tton$.

Let $\tton_{M,\fo}=\tto_\fo\cap \tmn$.

\begin{Lem}
The group $Z_{\tmn}\tton_{M,\fo}$ is a maximal abelian subgroup of $\tton$.
\end{Lem}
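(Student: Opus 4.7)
My plan is to verify that $Z_{\tmn}\tton_{M,\fo}$ is an abelian subgroup of $\tton$ and then show that no strictly larger subgroup of $\tton$ can be abelian. Containment is immediate: $Z_{\tmn}$ consists of block-scalars $\mathrm{diag}(a_1 I_{r_1},\ldots,a_k I_{r_k})$ with $a_i^{r_i}\in F^{\times n}$, hence of overall determinant in $F^{\times n}$, and $\tton_{M,\fo}$ has each block determinant in $F^{\times n}$, so both lie in $\tton=\tgln_r\cap\tto$. For abelianness, $Z_{\tmn}$ is the center of $\tmn$ by the previously established center computation, so it commutes with $\tton_{M,\fo}\subset\tmn$ and with itself; and $\tton_{M,\fo}\subset\tto_\fo$ is abelian because, when $|n|_F=1$, the group $F^{\times n}\fo^\times$ is an isotropic subgroup of the Hilbert pairing, as recalled in \cite{KP1984} Section 0.3.

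For maximality, I would take $x=(\mathrm{diag}(b_1,\ldots,b_r),\zeta)\in\tton$ commuting with every element of $Z_{\tmn}\tton_{M,\fo}$ and aim to show $x\in Z_{\tmn}\tton_{M,\fo}$. The main tool is the explicit commutator formula on $\tto$, derived from the block-compatible cocycle of \cite{BLS1999} and the antisymmetry $(a,b)(b,a)=1$ of the Hilbert pairing:
\[
[x,y] \;=\; (\det x,\det y)^{2c+1}\,\prod_i (b_i,c_i)^{-1},\qquad y=(\mathrm{diag}(c_1,\ldots,c_r),1).
\]
Testing against $y\in\tton_{M,\fo}$ whose support is confined to a single block (say $c_p=u$, $c_q=u^{-1}$ inside block $j$, trivial elsewhere, with $u\in\fo^\times$ or $u=\pi^n$) pins down the $b_i$ within each block up to $F^{\times n}\fo^\times$-scaling; modifying $x$ by an element of $\tton_{M,\fo}$ we may assume $x$ is block-scalar. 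Testing against $y\in Z_{\tmn}$ and using the determinant twist then forces the block-scalar part to satisfy $a_i^{r_i}\in F^{\times n}$, i.e.\ to lie in $Z_{\tmn}$. Combining these, $x\in Z_{\tmn}\tton_{M,\fo}$.

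The main obstacle will be matching degrees of freedom: because $\tton_{M,\fo}\subsetneq\tton_\fo$ in general (the block-determinant constraints shrink the test family compared to the proof of Proposition \ref{prop:maximal abelian 1}), the loss must be compensated by the enlarged scalar part $Z_{\tmn}\supseteq Z_{\tgln_r}$. Careful accounting is needed to ensure that the combined family of test elements still isolates every coordinate of $x$ modulo $Z_{\tmn}\tton_{M,\fo}$ --- in particular one must exploit the coupling between the scalar and integral parts dictated by the determinant-twist factor $(\det x,\det y)^{2c+1}$, whose exponent $2c+1$ reflects the choice of Kazhdan--Patterson cocycle class. The block-compatibility of the cocycle localizes the Hilbert symbol calculation to within each block, which should keep the bookkeeping tractable.
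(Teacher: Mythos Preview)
There are two problems. First, you read $\tton$ as $\tgln_r\cap\tto$, but in context (the paragraph opens with ``We now discuss maximal abelian subgroups of $\tton_M$'') the ambient group is $\tton_M=\tto\cap\tmn$, where \emph{each} block determinant lies in $F^{\times n}$. This matters: in the larger group the statement is actually false. For example, with $n=2$ and $\lambda=(1,1)$ one computes $Z_{\tmn}\tton_{M,\fo}=\tton_M=\{(t_1,t_2):t_i\in F^{\times 2}\}$, and $x=(\pi,\pi)\in\tton\setminus\tton_M$ commutes with all of $\tton_M$, since every Hilbert symbol that arises has one argument in $F^{\times 2}$.

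Second, even inside $\tton_M$ your plan does not close. The factor $(\det x,\det y)^{2c+1}$ you want to exploit is always $1$ here, because both $\det x$ and $\det y$ lie in $F^{\times n}$; so testing against $y\in Z_{\tmn}$ yields only $\prod_j(\det_j x,a_j)^{-1}$, which is automatically trivial once $x\in\tton_M$. Moreover, your step ``modify by $\tton_{M,\fo}$ to make $x$ block-scalar'' requires the modifier to have each block determinant in $F^{\times n}$, which already forces $\beta_j^{r_j}\in F^{\times n}$ --- the very conclusion you are after. The paper's intended argument, signalled by the sentence just before the lemma, avoids all of this: since the $\tgln_{r_i}$ commute with one another inside $\tmn$, an element $x\in\tton_M$ commutes with $Z_{\tmn}\tton_{M,\fo}$ if and only if each block $x_i$ commutes with $Z_{\tgln_{r_i}}\tton_{\fo,i}$ inside $\tton_i$, whence $x_i\in Z_{\tgln_{r_i}}\tton_{\fo,i}$ by Proposition~\ref{prop:maximal abelian 1}. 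The amalgamated product of these block-wise maximal abelian subgroups is exactly $Z_{\tmn}\tton_{M,\fo}$.
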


The other maximal abelian subgroup we consider is the standard maximal abelian subgroup $\tto_{M,\ast}^{\mathrm{st}}$.

\subsubsection{An explicit formula, continued}

We now continue the calculation of our explicit formula. Throughout this section, the ambient group is $\tm$. We again use the convention after the proof of Proposition \ref{prop:first formula}. Thus $\tto_{\ast}^{\mathrm{st}}$ ($\tto_{\fo}$, resp.) is $\tto_{M,\ast}^{\mathrm{st}}$ ($\tto_{M,\fo}$, resp.) in the previous section, while $\tto_{\ast,i}^{\mathrm{st}}$ and $\tto_{\fo,i}$ are the corresponding subgroups in the $i$-th block $\tgl_{r_i}.$

\begin{Thm}\label{thm:semi whittaker dimension formula}
When $|n|_F=1$,
\[
\dim J_{U_M,\psi_\lam}(\Theta_{\tm}(\chi'))=\frac{\prod_{i=1}^k [\tto_{\ast,i}^{\mathrm{st}}:\tto_{\fo,i}]} {[\tto_{\ast}^{\mathrm{st}}:\tto_{\fo}]}\prod_{i=1}^k d_i.
\]
\end{Thm}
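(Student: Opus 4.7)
The plan is to deduce this statement from Theorem \ref{thm:final formula} by making an explicit choice of maximal abelian subgroups, using the constructions collected in Section \ref{sec:explicit maximal abelian} that become available when $|n|_F = 1$.

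The first observation is that since the left-hand side of Theorem \ref{thm:final formula} is an invariant of the representation $\Theta_{\tm}(\chi')$, the ratio
\[
\frac{\prod_{i=1}^k [\tto_{\ast,i}:\tton_{\ast,i}]}{[\tto_\ast:\tton_\ast]}
\]
is automatically independent of the choices of maximal abelian subgroups $\tto_{\ast,i}$ and $\tto_\ast$, subject to the standing compatibility constraints $\tton_{\ast,i} = \tto_{\ast,i}\cap \tton_i$ and $\tton_\ast = \tto_\ast\cap \tton$. So it suffices to evaluate this ratio for a convenient choice.

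Next I would take, for both the global group $\tto$ and for each block $\tto_i$, the explicit extension of $Z_{\tgln_{r}}\tton_\fo$ (respectively $Z_{\tgln_{r_i}}\tton_{\fo,i}$) to a maximal abelian subgroup of $\tto$ (respectively $\tto_i$) exhibited at the end of Section \ref{sec:explicit maximal abelian}. With these choices, $\tton_\ast = Z_{\tgln_r}\tton_\fo$ and $\tton_{\ast,i} = Z_{\tgln_{r_i}}\tton_{\fo,i}$ are explicit, and the maximal abelian groups themselves contain $\tto_\ast^{\mathrm{st}} \supseteq \tto_\fo$ as subgroups (the containment of abelian $\tto_\fo$ in a maximal abelian subgroup follows from the isotropy of $F^{\times n}\fo^\times$ for the Hilbert symbol). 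I would then rewrite each $[\tto_{\ast,i}:\tton_{\ast,i}]$ and $[\tto_\ast:\tton_\ast]$ as an index of explicit abelian groups of the form $Z\cdot \tto_\fo^{(n')}$ modulo $Z_{\tgln}\cdot \tton_\fo$.

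The core computation is then to match these abelian-group indices with $[\tto^{\mathrm{st}}_{\ast,i}:\tto_{\fo,i}]$ and $[\tto^{\mathrm{st}}_\ast : \tto_\fo]$. The idea is to factor each index through the chain $\tto_\fo \subseteq \tton_\ast \subseteq \tto_\ast$, yielding
\[
[\tto_\ast^{\mathrm{st}}:\tto_{\fo}] = [\tto_\ast:\tton_\ast]\cdot [\tton_\ast:\tto_\fo]\cdot \frac{[\tto_\ast^{\mathrm{st}}:\tto_\fo]}{[\tto_\ast:\tto_\fo]},
\]
and similarly in each block. The block-wise and global ``correction'' factors involving the centers $Z_{\tgl_{r_i}}$ and $Z_{\tgl_r}$ should cancel, since both numerator and denominator of the target ratio use the same type of standard maximal abelian subgroup. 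Concretely, one needs the identity
\[
[\tton_\ast : \tto_\fo] \;=\; \prod_{i=1}^k [\tton_{\ast,i} : \tto_{\fo,i}],
\]
which follows from the fact that $\tton_\ast = Z_{\tgln_r}\tton_\fo$, $\tton_{\ast,i}=Z_{\tgln_{r_i}}\tton_{\fo,i}$, together with the description of $Z_{\tgln_r}\cap \tton_\fo$ inside $\tton_\fo$ (and likewise for each block) reducing to multiplicative indices on $F^\times$ modulo $n$th powers that are visibly multiplicative across blocks.

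The main obstacle is the bookkeeping in the last step: tracking how the center $Z_{\tgln_r}$ interacts with the block-diagonal torus and how the ``determinants-in-$n$th-powers'' condition interacts with the ``valuation divisible by $n$'' condition across the partition $\lambda$. Once this multiplicativity over the block decomposition is established, the formula of Theorem \ref{thm:final formula} transforms into the formula stated in Theorem \ref{thm:semi whittaker dimension formula}.
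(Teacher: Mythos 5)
Your overall strategy (use an invariance statement to reduce to one explicit choice of maximal abelian subgroups, then compute the ratio for that choice) is reasonable in outline, but the concrete steps you write down contain errors, and the decisive computation is precisely the part you defer as ``bookkeeping.'' Two specific problems. First, the chain $\tto_\fo\subseteq\tton_\ast\subseteq\tto_\ast$ through which you propose to factor the indices does not exist: with your choice $\tton_\ast=Z_{\tgl_r^{(n)}}\tton_\fo$ one has $\tton_\fo=\tto_\fo\cap\tgln_r\subsetneq\tto_\fo$, since elements of $\tto_\fo$ need not have determinant an $n$th power; hence $\tto_\fo\not\subseteq\tton_\ast$, and the symbols $[\tton_\ast:\tto_\fo]$ and $[\tton_{\ast,i}:\tto_{\fo,i}]$ appearing in your key identity are not group indices. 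Second, the claimed containment of $\tto_\ast^{\mathrm{st}}=Z_{\tgl_r}\tto_\fo$ in the maximal abelian subgroup $\widetilde{Z}_\ast\tto_\fo^{(n')}$ cannot hold in general: two maximal abelian subgroups, one containing the other, are equal, whereas the point of the surrounding remarks is that these two subgroups usually differ (only the latter meets $\tton$ in a maximal abelian subgroup of $\tton$).

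The paper's proof avoids any explicit choice. It uses the fact (Remark \ref{rem:abelianexample}) that the single index $[\tto:\tto_\ast]$ is an invariant of $\tto$, hence computable with $\tto_\ast^{\mathrm{st}}$, to write
\[
[\tto_\ast:\tton_\ast]=\frac{[\tto:\tton]\,[\tton:\tton_\ast]\,[\tto_\ast^{\mathrm{st}}:\tto_\fo]}{[\tto:\tto_\fo]},
\]
in which every index is a genuine group index, and likewise in each block; Theorem \ref{thm:semi whittaker dimension formula} then follows from Theorem \ref{thm:final formula} together with the three block-multiplicativity identities $\prod_i[\tto_i:\tton_i]=[\tto:\tton]$, $\prod_i[\tton_i:\tton_{\ast,i}]=[\tton:\tton_\ast]$, and $\prod_i[\tto_i:\tto_{\fo,i}]=[\tto:\tto_\fo]$. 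If you want to keep your route, you would need to replace the ill-defined indices by indices relative to a common subgroup (for instance $\tton_\ast\cap\tto_\fo$) and actually carry out the determinant/valuation bookkeeping you postpone; as written, the proposal does not establish the formula.
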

\begin{proof}
Indeed,
\[
\begin{aligned}
&[ \tto_\ast:\tton_\ast ] =\frac{[\tto:\tton_\ast]}{[\tto:\tto_\ast]} =\frac{[\tto:\tton][\tton:\tton_\ast]}{[\tto:\tto^{\mathrm{st}}_\ast]}\\
=&\frac{[\tto:\tton][\tton:\tton_\ast][\tto^{\mathrm{st}}_{\ast}:\tto_{\fo}]}{ [\tto: \tto_{\fo}]}.
\end{aligned}
\]

Notice that
\[
\dfrac{\prod_{i=1}^k[\tto_i:\tton_i]}{[\tto:\tton]} =\dfrac{\prod_{i=1}^k[\tton_i:\tton_{\ast,i}]}{[\tton:\tton_\ast]} =\dfrac{\prod_{i=1}^k[\tto_i: \tto_{\fo,i}]}{[\tto: \tto_{\fo}]}=1.
\]
Combining with Theorem \ref{thm:final formula}, we obtain the desired formula.
\end{proof}

When $r$ is a multiple of $n$, we have the following uniqueness result. This holds even without the assumption $|n|_F=1$.

\begin{Thm}\label{thm:multiplicity one semi whittaker}
If $r=mn$, and $\lambda=(n^m)$, then $J_{U,\psi_\lambda}(\Theta_{r}(\chi'))$ is one-dimensional.
\end{Thm}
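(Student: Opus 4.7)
The plan is to combine the constant-term description of $J_R(\Theta_r(\chi'))$ with the explicit dimension formula of Theorem \ref{thm:final formula}, and then reduce the proof to an index calculation that is special to the case $r=mn$.

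First, by the earlier proposition identifying the constant term of an exceptional representation, $J_R(\Theta_r(\chi'))\cong \Theta_{\tm}({}^{w^M}\chi'\delta_P^{1/2})$, where $R$ is the unipotent radical of $P_\lambda$ and $\tm\cong \tgl_n\times\cdots\times\tgl_n$ ($m$ copies). Combined with the transitivity $J_{U,\psi_\lambda}=J_{U_M,\psi_\lambda}\circ J_R$, this gives
\[
J_{U,\psi_\lambda}(\Theta_r(\chi'))\cong J_{U_M,\psi_\lambda}(\Theta_{\tm}(\chi_M')),
\]
and so reduces the problem to a Levi computation.

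Second, apply Theorem \ref{thm:final formula} to obtain
\[
\dim J_{U_M,\psi_\lambda}(\Theta_{\tm}(\chi_M'))=\frac{\prod_{i=1}^m[\tto_{\ast,i}:\tton_{\ast,i}]}{[\tto_\ast:\tton_\ast]}\cdot\prod_{i=1}^m d_i,
\]
where $d_i=\dim J_{U_i,\psi_{(n)}}(\Theta_n(\chi_i'))$ is the dimension of the Whittaker model of the theta representation of $\tgl_n$. Since $r_i=n$, Proposition \ref{prop:KP1984}(1), together with the remark immediately following it (which asserts that part (1) holds for all $F$ containing $\mu_n$), yields $d_i=1$. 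It therefore suffices to prove the index identity
\[
\prod_{i=1}^m [\tto_{\ast,i}:\tton_{\ast,i}]=[\tto_\ast:\tton_\ast].
\]

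Third, this identity is established by a coordinated choice of compatible maximal abelian subgroups, available precisely because $n\mid r$. When $|n|_F=1$, I would apply Theorem \ref{thm:semi whittaker dimension formula} directly: since $n\mid n$ and $n\mid r$, the lemma describing $Z_{\tgln_r}$ yields that $Z_{\tgln_n}$ and $Z_{\tgln_r}$ each consist of all scalar matrices, so the standard maximal abelian subgroups satisfy $\tto_\ast^{\mathrm{st}}=\tto_\fo$ and $\tto_{\ast,i}^{\mathrm{st}}=\tto_{\fo,i}$; hence every index appearing in the ratio of Theorem \ref{thm:semi whittaker dimension formula} is $1$. For general $|n|_F$, invoke the construction in the last Remark of Section \ref{sec:explicit maximal abelian}: choose a maximal isotropic subgroup $\Omega\subseteq F^\times$ for the Hilbert symbol and take $\tto_\ast$ (resp.\ $\tto_{\ast,i}$) to consist of all diagonal matrices with entries in $\Omega$, together with the $\mu_n$ factor. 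A direct count using the surjectivity of the product map $(\Omega/F^{\times n})^n\to \Omega/F^{\times n}$ then gives $[\tto_{\ast,i}:\tton_{\ast,i}]=|\Omega/F^{\times n}|$ and $[\tto_\ast:\tton_\ast]=|\Omega/F^{\times n}|^m$, so the two sides coincide.

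The hard part is verifying the compatibility hypotheses of Theorem \ref{thm:final formula} for these choices: namely, that $\tton_{\ast,i}=\tto_{\ast,i}\cap \tton_i$ is maximal abelian in $\tton_i$, and that $\tton_\ast$ agrees with the direct product of the $\tton_{\ast,i}$ with amalgamated $\mu_n$. Both rest on the observation that when $n\mid r$, the off-block contribution $\prod_{a\in i,b\in j}(t_a,t'_b)^{1+2c}$ to the commutator pairing vanishes identically on $\tton$, because each block determinant $\prod_a t_a$ lies in $F^{\times n}$ and so kills the Hilbert symbol by bilinearity. Consequently the commutator pairing restricted to $\tton$ decomposes as the orthogonal direct sum of the block pairings, from which the amalgamation identity and the maximal-abelian property for our chosen subgroups both follow; this completes the proof.
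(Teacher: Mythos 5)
Your overall strategy is the same as the paper's: reduce to the Levi via $J_{U,\psi_\lambda}=J_{U_M,\psi_\lambda}\circ J_R$ and the constant-term proposition, apply Theorem \ref{thm:final formula} with $d_i=1$, and then show the index ratio equals $1$ by a careful choice of maximal abelian subgroups, splitting into the cases $|n|_F=1$ and $|n|_F\neq 1$. However, the step you yourself identify as the hard part contains a genuine error in the second case. You take $\tton_{\ast,i}=\tto_{\ast,i}\cap\tton_i$ with $\tto_{\ast,i}$ the $\Omega$-diagonal torus and argue that the orthogonal decomposition of the commutator pairing on $\tton$ makes this maximal abelian in $\tton_i$. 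The decomposition of the pairing is correct, but the conclusion is not: the commutator of $(t,1),(t',1)$ in $\tto_i$ equals $(\det t,\det t')^{1+2c}\prod_j(t_j,t'_j)^{-1}$, so on $\tton_i$ (where block determinants lie in $F^{\times n}$) every scalar $(aI_n,\zeta)$ with $a\in F^\times$ is central — indeed $Z_{\tgln_n}=\{(aI_n,\zeta):a\in F^\times\}$ by the lemma on $Z_{\tgln_r}$. Since $\tto_{\ast,i}\cap\tton_i$ only contains scalars with $a\in\Omega$, and $\Omega\subsetneq F^\times$ when $|n|_F\neq1$, your subgroup is not maximal abelian. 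This is precisely why the paper sets $\tton_\ast:=Z_{\tmn}\cdot(\tto_\ast\cap\tton)$; without that correction the indices $[\tto_{\ast,i}:\tton_{\ast,i}]$ you compute are not the quantities appearing in Theorem \ref{thm:final formula}, and the fact that your ratio still comes out to $1$ is not justified.

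There is also a smaller misstep in the $|n|_F=1$ case: you deduce $\tto_\ast^{\mathrm{st}}=\tto_\fo$ from the lemma describing $Z_{\tgln_r}$, but $\tto_\ast^{\mathrm{st}}=Z_{\tgl_r}\tto_\fo$ involves the center of the full cover $\tgl_r$, not of $\tgl_r^{(n)}$. The correct input is that for $r=mn$ one has $\gcd(n,2rc+r-1)=\gcd(n,-1)=1$, so $Z_{\tgl_r}=\{(zI_r,\zeta):z\in F^{\times n}\}\subseteq\tto_\fo$, and likewise for each block; this gives $[\tto_\ast^{\mathrm{st}}:\tto_\fo]=[\tto_{\ast,i}^{\mathrm{st}}:\tto_{\fo,i}]=1$. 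That repair is routine, but the maximal-abelian issue in the $|n|_F\neq1$ case needs the extra factor $Z_{\tmn}$ (with the indices then interpreted as in the paper's proof) for the argument to close.
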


\begin{proof}
When $|n|_F=1$, this follows from Theorem \ref{thm:semi whittaker dimension formula}. Indeed, in this case we have $\gcd(n,2rc+r-1)=1$. Therefore $Z_{\tgl_r}\subset \tto_{\fo}$, and $[\tto_{\ast}^{\mathrm{st}}:\tto_{\fo}]=1$. Similarly, $Z_{\tgl_n}\subset \tto_{\fo,i}$, and $[\tto_{\ast,i}^{\mathrm{st}}:\tto_{\fo,i}]=1$. By Proposition \ref{prop:KP1984}, $d_i=1$ for all $i$. Therefore
$\dim J_{U,\psi_\lambda}(\Theta_{r}(\chi'))=1.$

We now assume $|n|_F\neq 1$. Let $\Omega$ be a maximal isotropic subgroup of the Hilbert symbol.  Then
\[
\tto_{\ast}:=\{(\mathrm{diag}(t_1,\cdots,t_r),\zeta):t_i\in\Omega\}
\]
is a maximal abelian subgroup of $\tto$, and $\tton_\ast:=Z_{\tmn}\cdot (\tto_{\ast}\cap \tton)$ is a maximal abelian subgroup of $\tton$. Notice $Z_{\tmn}=\widetilde{Z_{M^{(n)}}}.$
Moreover, $[\tto_\ast:\tton_\ast]=\dfrac{[\tto:\tto_\ast]}{[\tto:\tton_\ast]}$ and
\[
\frac{\prod_{i=1}^k [\tto_i:\tto_{\ast,i}]}{[\tto:\tto_\ast]}=\frac{\prod_{i=1}^k [\tto_i:\tton_{\ast,i}]}{[\tto:\tton_\ast]}=1.
\]
Combining this uniform description with Theorem \ref{thm:final formula}, we are done.
\end{proof}

\begin{Rem}
Recall that the metaplectic cover $\tgl_r$ depends on an implicit choice of the modulus class $c\in\bz/n\bz$. Our results are true for all $c\in\bz/n\bz$. This is clear for the vanishing result (Corollary \ref{cor:Vanishing semi whittaker}) and nonvanishing result (Corollary \ref{cor:local nonvanishing}). For the uniqueness result, notice that when $r=mn$, $Z_{\tgl_r}=\{z^n I_r:z\in F^\times\}$. This fact is independent of $c$. Thus the proof of Theorem \ref{thm:multiplicity one semi whittaker} is independent of $c$.
\end{Rem}

\begin{Rem}
When $n=2$, this is \cite{BG1992} Proposition 1.3 (i). Indeed, when $r=2k$ and the partition is $(2^k)$, this follows from Theorem \ref{thm:multiplicity one semi whittaker}. When $n=2, r=2k+1$, and $M$ corresponds to the partition $(2^k1)$. In this case, $d_i=1$ for all $i$ and $[\tto_{\ast}^{\mathrm{st}}:\tto_{\fo}]=[F^\times:F^{\times 2}\fo^\times]$. Moreover, $[\tto_{\ast,i}^{\mathrm{st}}:\tto_{\fo,i}]=1$ if $r_i=2$; and $=[F^\times:F^{\times 2}\fo^\times]$ if $r_i=1$. The twisted Jacquet module of $\Theta_{\tm}(\chi')$ is again one-dimensional.
\end{Rem}

\section{Global Theory}\label{sec:semi whittaker global}

\subsection{Theta representations}\label{sec:theta global}

Let $n\geq 2$. Let $F$ be a number field containing a full set of $n$th roots of unity $\mu_n$, and let $\ba$ denote the adeles of $F$. For $r\geq 2$, let $\tgl_r(\ba)$ denote an $n$-fold metaplectic cover of the general linear group, as in Section \ref{sec:global metaplectic cover}.

We recall the definition of the global theta representations. These representations were constructed in \cite{KP1984} using the residues of Eisenstein series as follows. Let $B$ be the standard Borel subgroup of $\GL_r$, and $T\subset B$ denote the maximal torus of $\GL_r$. Let $\underline{s}\in\bc^r$ be a multi-complex variable, and define the character $\mu_{\underline{s}}$ of $T(\ba)$ by $\mu_{\underline{s}}(\mathrm{diag}(a_1,\cdots,a_r))=\prod_i |a_i|^{s_i}$. Let $Z(\tto(\ba))$ denote the center of $\tto(\ba)$. Let $\omega_{\underline{s}}$ be a genuine character of $Z(\tto(\ba))$ such that $\omega_{\underline{s}}=\mu_{\underline{s}}\circ p$ on $\{(t^n,1)|t\in T(\ba)\}$, where $p$ is the canonical projection from $\tto(\ba)$ to $T(\ba)$. Choose a maximal abelian subgroup $A$ of $\tto(\ba)$, extend this character to a character of $A$, and induce it to $\tto(\ba)$. Then extend it trivially to $\tb(\ba)$ using the canonical projection from $\tb(\ba)$ to $\tto(\ba)$, and further induce it to the group $\tgl_r(\ba)$. We abuse the notation slightly and write this induced representation $\Ind_{\tb(\ba)}^{\tgl_r(\ba)}\mu_{\underline{s}} \delta_B^{1/2}$. It follows from \cite{KP1984} that this construction is independent of the choice of $A$ and of the extension of characters. Let $E(\underline{s},g)$ be the Eisensetein series attached to this induced representation. It follows from \cite{KP1984} that when $n(s_i-s_{i+1})=1$ for $1\leq i\leq r-1$, this Eisenstein series has a nonzero residue representation. Let $\Lambda\in\bc^r$ be such a pole, and we write the residue representation as  $\Theta_{r,\Lambda}$. The poles where we take the residues are usually clear in the context, and thus sometimes we omit it from the notation. The global theta representation $\Theta_r$ is the metaplectic restricted tensor product of the local exceptional representations $\Theta_{r,v}$, as explained in Section \ref{sec:global metaplectic cover}. It is shown in \cite{KP1984} Section II that $\Theta_r$ is generic if and only if $r\leq n$.

\subsection{Vanishing results}

\begin{Prop}\label{prop: vanishing local semi whittaker}
Let $\theta$ be in the space of $\Theta_r$. Let $\lambda=(r_1\cdots r_k)$ be a partition of $r$. If there is an $r_i>n$ for some $i$, then
\[
\int\limits_{U(F)\bs U(\ba)}\theta(ug)\psi_\lambda(u) \ du\equiv 0
\]
for all choices of data.
\end{Prop}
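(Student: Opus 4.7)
The plan is to reduce this global vanishing to the local vanishing result already established in Corollary \ref{cor:Vanishing semi whittaker}. For fixed $g$, the $\lambda$-semi-Whittaker coefficient is a linear functional on $\Theta_r$, and as a function of $g$ it intertwines $\Theta_r$ into the smooth induced space $\Ind_{U(\ba)}^{\tgl_r(\ba)}\psi_\lambda$. Evaluating at $g=e$ yields a $(U(\ba),\psi_\lambda)$-equivariant linear form $L$ on $\Theta_r$, and the integral vanishes identically for all choices of data and all $g$ if and only if $L \equiv 0$.

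Next I would invoke the global factorization $\Theta_r \cong \widetilde{\otimes}_v' \Theta_{r,v}$ recalled in Section \ref{sec:theta global}, together with the factorizability $\psi_\lambda = \prod_v \psi_{\lambda,v}$ of the character along places. Since the $U(\ba)$-action on $\Theta_r$ is the restricted tensor product of the local $U(F_v)$-actions, taking $(U,\psi_\lambda)$-coinvariants commutes with the metaplectic restricted tensor product, yielding an embedding
\[
\Hom_{U(\ba)}(\Theta_r,\psi_\lambda) \hookrightarrow \prod_v \Hom_{U(F_v)}(\Theta_{r,v},\psi_{\lambda,v}).
\]
In particular, if the local Hom space vanishes at even a single place, then $L=0$.

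Finally, since $n$ is a fixed positive integer, almost every finite place $v$ satisfies $|n|_v = 1$. Pick any such place $v$. By hypothesis some $r_i > n$, so Corollary \ref{cor:Vanishing semi whittaker} gives $\Hom_{U(F_v)}(\Theta_{r,v},\psi_{\lambda,v}) = 0$. Combining this with the factorization above forces $L \equiv 0$, which is exactly the desired global vanishing.

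The main point requiring care is the factorization of global semi-Whittaker Hom spaces through their local counterparts. This is not automatic but follows from the construction of the metaplectic restricted tensor product in Section \ref{sec:global metaplectic cover} and the exactness of the twisted Jacquet functor: a nonzero pure-tensor functional on $\widetilde{\otimes}_v' \Theta_{r,v}$ that transforms by $\psi_\lambda = \prod_v \psi_{\lambda,v}$ under $U(\ba) = \prod_v' U(F_v)$ must restrict, at each place, to a nonzero $(U(F_v),\psi_{\lambda,v})$-equivariant functional on $\Theta_{r,v}$. No residue/Eisenstein computation or induction-in-stages analysis is needed for this particular vanishing statement; those tools will be required for the nonvanishing half of Theorem \ref{thm:1.1} where one must construct, rather than obstruct, a semi-Whittaker functional.
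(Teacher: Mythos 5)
Your proposal is correct and follows essentially the same route as the paper: assume the coefficient is nonzero, pick a factorizable vector on which the resulting $(U(\ba),\psi_\lambda)$-equivariant functional is nonzero, freeze all components except one at a non-Archimedean place $w$ with $|n|_w=1$ to produce a nonzero local $(U(F_w),\psi_{\lambda,w})$-functional, and contradict Corollary \ref{cor:Vanishing semi whittaker}. The only cosmetic difference is that you phrase the reduction as an embedding of global Hom spaces into a product of local ones, whereas the paper just carries out the one-place specialization directly; the substance is identical.
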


\begin{proof}

If
\[
\int\limits_{U(F)\bs U(\ba)}\theta(ug)\psi_\lambda(u) \ du
\]
is nonzero for some choice of data, then the functional $l:\Theta_r\to\bc$ defined by
\[
 \theta\longmapsto
\int\limits_{U(F)\bs U(\ba)}\theta(ug)\psi_\lambda(u) \ du
\]
is nonzero. Recall that $\Theta_r$ is the metaplectic restricted tensor product $\widetilde\otimes_v{}' \Theta_{r,v}$ and the space of $\widetilde\otimes_v{}' \Theta_{r,v}$ is the same as the usual restricted tensor product $\otimes_v{}' \Theta_{r,v}$ (see Section \ref{sec:global metaplectic cover}). We can view  $l$ as a functional on $\otimes_v{}' \Theta_{r,v}$ as well.  We can choose a factorizable vector $\otimes'_v \theta_{0,v}$  such that $l(\otimes'_v\theta_{0,v})\neq 0$.

Let $w$ be a non-Archimedean place of $F$ such that   $|n|_w=1$ and $\Theta_r$ is unramified at $w$. Define a local functional $l_w:\Theta_{r,w}\to \bc$ by
\[
\theta_w \mapsto l(\theta_w \otimes (\otimes'_{v\neq w}\theta_{0,v} )).
\]
By our construction, $l_w$ is nonzero. Now the functional $l_w$ factors through the twisted Jacquet module of $\Theta_{r,w}$ for the character $\psi_{\lam}(u)$ on the group $U(F_w)$. This implies that $J_{U(F_w),\psi_\lam}(\Theta_{r,w})$ is nonzero. This contradicts the local result.

\end{proof}

\subsection{Constant terms I}

Let $\lambda=(r_1 \cdots r_k)$ be a partition of $r$. Let $P_{\lambda}$ be the standard parabolic subgroup of $\GL_r$ with Levi subgroup $M_{\lambda}$ and unipotent radical $U_{\lambda}$.

The goal for this section is to determine the constant term of $\Theta_r$.  We first compute the constant term of the Eisenstein series along $U_\lambda$. This turns out to be a sum of Eisenstein series on $\tm_\lam(\ba)$, over a subset of the Weyl group $W$. We exchange the constant term operator and the multi-residue operator, and the constant terms actually span a ``theta representation'' on $\tm_\lam(\ba)$. We then review the construction of the global metaplectic tensor product in Section \ref{sec:global meta tensor}, and show that the theta representation on $\tm_\lambda(\ba)$ is actually the global metaplectic tensor product of $\Theta_{r_i}$'s.

\begin{Prop}\label{prop:constant term 1}
If $\theta\in\Theta_{r}$, then the constant term
\[
m\mapsto \int\limits_{U_\lambda(F)\bs U_{\lambda}(\ba)}\theta(um) \ du,\qquad m\in \tm_{\lambda}(\ba)
\]
is the residue of an Eisenstein series on $\tm_\lambda(\ba)$.
\end{Prop}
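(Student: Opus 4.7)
The plan is to realize $\theta$ as a multi-residue of a Borel Eisenstein series and then swap the order of the constant term operator with the residue operators. By the construction recalled in Section \ref{sec:theta global}, for $f_{\underline{s}}$ in the space of $\Ind_{\tb(\ba)}^{\tgl_r(\ba)} \mu_{\underline{s}} \delta_B^{1/2}$, the Eisenstein series $E(f_{\underline{s}}, g)$ has a multi-residue at the point $\Lambda$ defined by $n(s_i - s_{i+1}) = 1$ for $1 \leq i \leq r-1$, and $\theta$ is obtained from this multi-residue. Since the constant term is an integral over the compact quotient $U_\lambda(F)\bs U_\lambda(\ba)$, it commutes with taking iterated residues in $\underline{s}$; hence the constant term of $\theta$ along $U_\lambda$ equals the multi-residue at $\Lambda$ of the constant term
\[
E^{U_\lambda}(f_{\underline{s}}, m) := \int\limits_{U_\lambda(F)\bs U_\lambda(\ba)} E(f_{\underline{s}}, um)\, du, \qquad m \in \tm_\lambda(\ba).
\]

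Next, I would carry out the standard Langlands unfolding of $E^{U_\lambda}(f_{\underline{s}}, m)$. Using the Bruhat decomposition $\GL_r(F) = \bigsqcup_{w \in W_\lambda} B(F) w P_\lambda(F)$, where $W_\lambda$ is the set of minimal length coset representatives for $W / W(M_\lambda)$, the standard computation expresses $E^{U_\lambda}(f_{\underline{s}}, m)$ as a sum over $w \in W_\lambda$ of Eisenstein series on $\tm_\lambda(\ba)$ built from the intertwining operators $M(w, \underline{s}) f_{\underline{s}}$, all induced from $\tb_M(\ba)$. In the metaplectic setting, the analogous unfolding is available through the block-compatible cocycle $\tau_M$ introduced by Takeda, so the resulting sum lives on $\tm_\lambda(\ba)$ as desired. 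This is the step where one must be most careful: the torus is non-abelian, so the characters $\mu_{\underline{s}}$ must be interpreted via the maximal abelian subgroups, and the intertwining operators $M(w,\underline{s})$ must be written in terms of the block-compatible cocycle so that the unfolded expression actually descends to $\tm_\lambda(\ba)$.

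Having obtained a finite sum $E^{U_\lambda}(f_{\underline{s}}, m) = \sum_{w \in W_\lambda} E^{\tm_\lambda}(M(w,\underline{s}) f_{\underline{s}}, m)$, I would then apply the multi-residue operator at $\Lambda$. The key observation is that the multi-residue picks out only those terms whose intertwining operators $M(w,\underline{s})$ have the correct pole structure at $\Lambda$: each of the $r-1$ simple root hyperplanes $n(s_i - s_{i+1}) = 1$ must contribute a pole, and only the Weyl element $w_\lambda \in W_\lambda$ whose associated intertwining operator contributes precisely these poles survives. A straightforward analysis using the Gindikin-Karpelevich-type formula for $M(w,\underline{s})$ on covers (available in the Kazhdan-Patterson calculations) shows that the surviving $w_\lambda$ is the one producing all root hyperplanes that cross from $\Phi^+$ to $\Phi^- \cap \Phi_M$; the contributions of the other $w$ vanish at $\Lambda$.

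The multi-residue of the surviving term is, up to nonzero constants, exactly the residue of the Eisenstein series on $\tm_\lambda(\ba)$ induced from the shifted character on $\tto_M(\ba)$. Thus, the constant term of $\theta$ is a residue of an Eisenstein series on $\tm_\lambda(\ba)$, as claimed. The main obstacle I anticipate is the bookkeeping needed to verify that the metaplectic intertwining operators and the block-compatible cocycle interact correctly so that (i) the unfolded sum genuinely lives on $\tm_\lambda(\ba)$, and (ii) the surviving residue is identified with an exceptional-type Eisenstein residue on the Levi.
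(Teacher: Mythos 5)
Your overall strategy is the same as the paper's: realize $\theta$ as a multi-residue of the Borel Eisenstein series, commute the constant term operator with the residue operators, unfold the constant term along $P_\lambda$ as a sum over a set of Weyl representatives of Eisenstein series on $\tm_\lambda(\ba)$ twisted by intertwining operators, and then argue that exactly one term survives the multi-residue. The first two steps match the paper (which cites M{\oe}glin--Waldspurger, Proposition 2.1.7(2), for the unfolding).

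However, your mechanism for selecting the surviving term contains a genuine error. You assert that ``each of the $r-1$ simple root hyperplanes $n(s_i-s_{i+1})=1$ must contribute a pole, and only the Weyl element whose associated intertwining operator contributes precisely these poles survives.'' No element of $W_\lambda$ has this property: since every $w^{-1}\in W_\lambda$ sends $\Phi_\lambda^+$ to positive roots, the intertwining operator $T(w,\underline s)$ can only acquire poles along the $k-1$ block-boundary hyperplanes $i\in\{r_1,r_1+r_2,\dots\}$, never along the $r-k$ within-block hyperplanes. If you carried out your Gindikin--Karpelevich analysis literally, you would conclude that every term dies under the multi-residue. The missing poles must instead be supplied by the Eisenstein series on the Levi itself: the paper (following Offen--Sayag) splits the index set into $\Delta^1(w)$ (hyperplanes where $T(w,\underline s)$ has a pole, necessarily block boundaries) and $\Delta^2(w)$ (hyperplanes where the normalized Eisenstein series $E^{\tm_\lambda}$ on the Levi has a pole), and shows a term survives iff $\Delta^1(w)\cup\Delta^2(w)=\{1,\dots,r-1\}$, which forces $\Delta^1(w)=\{r_1,r_1+r_2,\dots\}$ and $w=w^{M_\lambda}$. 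Note that this two-source accounting is exactly what makes the conclusion of the proposition true: after the block-boundary factors are absorbed by the intertwining operator, the remaining factors form a multi-residue operator acting on $E^{\tm_\lambda}$, so the constant term is a \emph{residue} of an Eisenstein series on $\tm_\lambda(\ba)$. Under your accounting, where the intertwining operator absorbs all $r-1$ factors, nothing would be left to take a residue of on the Levi, which is inconsistent with the statement you are proving.
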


\begin{proof}
Let $\theta(g)=\mathrm{Res}_{\underline{s}=\Lambda}E(\phi,\underline{s},g)$.
We first compute the constant term of the Eisenstein series $E(\phi,\underline{s},g)$ along $P_{\lambda}$.  To do this, we introduce the set $W_{\lambda}$ which consists of elements $w^{-1}$ such that $w^{-1}(\beta)>0$ for any $\Phi_{\lambda}^+$, and $w T w^{-1}\subset M_{\lambda}$. By M{\oe}glin-Waldspurger \cite{MW1995} Proposition 2.1.7(2),
\[
\begin{aligned}
E(\phi,\underline{s},g)_{P_{\lambda}}=&\sum_{w^{-1}\in W_{\lambda}}\sum_{\gamma\in (w Bw^{-1}\cap M_{\lambda})(F)\bs M_{\lambda}(F)} T(w,\underline{s})\phi(\underline{s})(\gamma g)\\
=&\sum_{w^{-1}\in W_{\lambda}} E^{\tm_\lambda}(T(w,\underline{s})\phi(\underline{s}),w\underline{s},g)
\end{aligned}
\]

Let $\Lambda$ denote the pole of $E(\phi, \underline{s}, g)$ as in Section \ref{sec:theta global}. To compute the constant term of theta function along $P_{\lambda}$, we use the fact that the multi-residue operator $\lim_{\underline{s}\to\Lambda}\prod_{i=1}^{r-1}(ns_i-ns_{i+1}-1)$ and the constant term operator commute. Following an argument as in the proof of Offen-Sayag \cite{OS2008} Lemma 2.4, we deduce that after applying the multi-residue operator, the only term left is the one corresponding to $w^{M_\lam}$.

We identify the set of simple roots with $\{(i,i+1):1\leq i\leq r-1\}$. Given $w^{-1}\in W_\lambda$, let $\Delta^1(w)=\{i:\al=(i,i+1), w^{-1}(\al)<0\}$. Notice that by the definition of $W_\lam$, $\Delta^1(w)$ is contained in  $\{r_1, r_1+r_2, \cdots\}$. Then the normalized intertwining operator
\[
N(w,\underline{s})=\prod_{i\in\Delta^1(w)}(ns_i-ns_{i+1}-1)T(w,\underline{s})
\]
is holomorphic at $\Lambda$. Notice that the action of $w$ on $\underline{s}$ is
\[
w(s_1,\cdots,s_r)=(s_{w^{-1}(1)},\cdots, s_{w^{-1}(r)}).
\]
Let
\[
\Delta^2(w)=\{i:w^{-1}(i+1)-w^{-1}(i)=1\}\bs \{r_1,r_1+r_2,\cdots\}.
\]
Then the normalized Eisenstein series
\[
\prod_{i\in\Delta^2(w)}(ns_i-ns_{i+1}-1)E^{\tm_\lambda}(N(w,\underline{s})\phi(\underline{s}),w\underline{s},g)
\]
is holomorphic at $\Lambda$. Thus, the terms corresponding to $w^{-1}$ survives after taking multi-residue if and only if
\[
\Delta^1(w)\cup \Delta^2(w)=\{1,\cdots ,r-1\}.
\]
This implies that $\Delta^1(w)=\{r_1,r_1+r_2,\cdots\}$ and $w$ permutes blocks of $M_\lam$. The only possibility is  $w=w^{M_\lam}$.
Thus we have shown the following identity
\[
\theta(g)_{P_\lambda}=\mathrm{Res}_{\underline{s} =\Lambda}E^{\tm_\lam}(T(w^{M_\lam},\underline{s})\phi(\underline{s}),w^{M_\lam}\underline{s},g).
\]
This finishes the proof.

\end{proof}

If we vary $\theta\in \Theta_r$, then the constant terms of $\theta$'s span an irreducible automorphic representation of $\tm_\lambda(\ba)$. We denote it by $\Theta_{\tm_\lambda}.$ As in the general linear case, $\Theta_{\tm_\lambda}$ is the restricted tensor product of local theta representations of $\tm_\lam(F_v)$.

\subsection{Global metaplectic tensor product}\label{sec:global meta tensor}

The global metaplectic tensor product was first given in \cite{Takeda2016} Section 5, and a simplified version is given in \cite{Takeda2017}. We briefly review the latter construction here.

Assume $(\pi,V_\pi)$ is an automorphic representation of $G$, and $V_\pi$ is a space of functions or maps on the group $G$, and $\pi$ is the representation of $G$ on $V_\pi$ defined by right translation. Let $H\subset G$ be a subgroup. Then we define $\pi||_H$ to be the representation of $H$ realized in the space
\[
V_{\pi||_H}:=\{f|_H:f\in V_{\pi}\}
\]
of restrictions of $f\in V_\pi$ to $H$, on which $H$ acts by right translation.

Let $\pi_i$ be a genuine irreducible automorphic unitary representation of $\tgl_{r_i}(\ba)$. Let $H_i=\GL_{r_i}(F)\tgln_{r_i}(\ba),$ and $\sigma_i=\pi_i||_{H_i}$. Then the restriction $\pi_i|_{H_i}$ is completely reducible (\cite{Takeda2017}, Proposition 3.9). Hence  $\sigma_i$ is a subrepresentation of $\pi_i|_{H_i}$.

Note that $H_i$ is indeed a closed subgroup of $\tgl_{r_i}(\ba)$. By the product formula for the Hilbert symbol and block-compatibility of the cocycle, we have the natural surjection
\[
H_1\times \cdots \times H_k\to M(F)\tmn(\ba).
\]
Then consider the space
\[
V_{\sigma_1}\otimes \cdots \otimes V_{\sigma_k}
\]
as functions on the direct product $H_1\times \cdots \times H_k$, which gives rise to a representation of $H_1\times \cdots \times H_k$. If $\varphi_i\in V_{\sigma_i}$ for $i=1,\cdots,k$, we denote this function by
\[
\varphi_1\otimes \cdots \otimes \varphi_k,
\]
and denote the space generated by those function by $V_\sigma$. These functions can be viewed as ``automorphic forms'' on $M(F)\tmn(\ba)$. The group $M(F)\tmn(\ba)$ acts on $V_{\sigma}$ by right translation.  Denote this representation by $\sigma$. This representation is completely reducible (\cite{Takeda2017} Proposition 3.11).

Fix an irreducible subrepresentation $\tau$ of $\sigma$. Then the abelian group
\[
Z_{\tgl_r(\ba)\cap M(F)\tmn(\ba)}
 \]
 acts as a character $\omega_{\tau}$ (\cite{Takeda2017} Lemma 3.17). Choose a ``Hecke character'' $\omega$ on $Z_{\tgl_r(\ba)}$ by extending $\omega_\tau$. Then one can extend $\tau$ to a representation $\tau_\omega$ on $Z_{\tgl_r(\ba)}M(F)\tmn(\ba)$. Consider the smooth induced representaion
\[
\Pi(\tau_\omega):=\Ind^{\tm(\ba)}_{Z_{\tgl_r(\ba)}M(F)\tmn(\ba)}\tau_\omega.
\]
We can view $\Pi(\tau_\omega)$ as a subrepresentation of $\mathcal{A}(\tm)$, which is the space of automorphic forms on $\tm(\ba)$.
Moreover, $\Pi(\tau_\omega)$ has an irreducible subrepresentation (\cite{Takeda2017} Proposition 3.20). Choose such a representation and denote it by $\pi_\omega$. Then we call $\pi_\omega$  a metaplectic tensor product of $\pi_1, \cdots, \pi_k$ with respect to the character $\omega$ and write
\[
\pi_\omega=(\pi_1\tilde\otimes \cdots \tilde\otimes \pi_k)_\omega.
\]
The representation $\pi_\omega$ has the desired local-global compatibility. Moreover,  it is unique up to equivalence, and depends only on $\pi_1,\cdots,\pi_k$ and $\omega$ (\cite{Takeda2017} Theorem 3.23).

\subsection{Constant term II}

We give the second description of the constant term of the theta function. We  show that the theta representation on $\tm_\lambda(\ba)$ is in fact the global metaplectic tensor product of theta representations on $\tgl_{r_i}(\ba)$.

\begin{Thm}\label{thm:constant term 2}
If $\theta\in\Theta_{r}$, the constant term
\[
m\mapsto \int\limits_{U_\lambda(F)\bs U_{\lambda}(\ba)}\theta(um) \ du,\qquad m\in \tm_{\lambda}(\ba)
\]
is in the space $\Theta_{r_1}\tilde\otimes \cdots \tilde\otimes \Theta_{r_k}$. Indeed,
\[
\Theta_{\tm_\lambda}\cong \Theta_{r_1}\tilde\otimes \cdots \tilde\otimes \Theta_{r_k}.
\]
Here, the global metaplectic tensor product is with respect to the central character $\omega$ of $\Theta_{\tm_\lam}$. The poles that we use to define $\Theta_{r_i}$ are specified in the proof.
\end{Thm}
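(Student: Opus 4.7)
The plan is to assemble the theorem from three ingredients: the constant-term computation in Proposition \ref{prop:constant term 1}, the local metaplectic tensor product identity of Theorem \ref{thm:meteplectic tensor of theta}, and the uniqueness of the global metaplectic tensor product from \cite{Takeda2017}. First, I would pin down the residue point on the Levi. Proposition \ref{prop:constant term 1} already writes the constant term along $U_\lambda$ as
\[
\mathrm{Res}_{\underline{s}=\Lambda}\, E^{\tm_\lambda}\bigl(T(w^{M_\lambda},\underline{s})\phi(\underline{s}), w^{M_\lambda}\underline{s}, g\bigr),
\]
where $\Lambda$ satisfies $n(\Lambda_i - \Lambda_{i+1})=1$. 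The element $w^{M_\lambda}$ permutes the blocks of $M_\lambda$ but preserves the consecutive structure within each block $\GL_{r_i}$, so the restriction of $w^{M_\lambda}\Lambda$ to the $i$-th block is a point $\Lambda^{(i)}$ satisfying $n(\Lambda^{(i)}_j - \Lambda^{(i)}_{j+1})=1$ for $j=1,\ldots,r_i-1$. Hence the residue is taken at an exceptional point of a Borel Eisenstein series on $\tm_\lambda(\ba)$, and $\Theta_{\tm_\lambda}$ is a genuine residual representation; it is the natural Levi analogue of the theta representation. These $\Lambda^{(i)}$ are the poles used to define the $\Theta_{r_i}$ in the statement.

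Next, I pass to local components. As in the $\tgl_r$ case, $\Theta_{\tm_\lambda}$ is a metaplectic restricted tensor product $\widetilde\otimes'_v \Theta_{\tm_\lambda, v}$ of irreducible exceptional representations of $\tm_\lambda(F_v)$. At every place $v$, Theorem \ref{thm:meteplectic tensor of theta} yields
\[
\Theta_{\tm_\lambda, v} \cong (\Theta_{r_1, v} \tilde\otimes \cdots \tilde\otimes \Theta_{r_k, v})_{\omega_v},
\]
where $\omega_v$ is the central character of $\Theta_{\tm_\lambda, v}$ on $Z_{\tgl_r(F_v)}$. These local central characters assemble into the Hecke character $\omega$ that is the central character of $\Theta_{\tm_\lambda}$. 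By the uniqueness property of the global metaplectic tensor product (\cite{Takeda2017} Theorem 3.23), the representation $(\Theta_{r_1} \tilde\otimes \cdots \tilde\otimes \Theta_{r_k})_\omega$ is characterized up to equivalence by its central character together with its local factors; since $\Theta_{\tm_\lambda}$ has matching invariants at every place and matching central character, the two representations must coincide.

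The main obstacle I anticipate is verifying that $\Theta_{\tm_\lambda}$ is actually irreducible as an automorphic representation of $\tm_\lambda(\ba)$, which is needed for the uniqueness characterization of the global metaplectic tensor product to apply cleanly. A direct route is to combine the local irreducibility of each $\Theta_{\tm_\lambda, v}$ with the residual structure, but one must exercise care because $\tm_\lambda(\ba)$ is not simply the amalgamated product of the $\tgl_{r_i}(\ba)$. A more robust alternative is to unfold the construction of Section \ref{sec:global meta tensor}: compute the restriction of $\Theta_{\tm_\lambda}$ to $M(F)\tmn(\ba)$ and recognize it as the tensor product of the restrictions of each $\Theta_{r_i}$ to $H_i=\GL_{r_i}(F)\tgln_{r_i}(\ba)$, at which point the isomorphism is built into the very definition of the global metaplectic tensor product and no separate irreducibility statement is needed.
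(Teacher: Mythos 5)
Your closing ``robust alternative'' is in fact the paper's actual proof, but you only name it without carrying it out, and the primary route you propose has real gaps. The paper restricts the constant term to $M(F)\tmn(\ba)$, uses the bijection $B_M^{(n)}(F)\bs M^{(n)}(F)\leftrightarrow B_M(F)\bs M(F)$ to rewrite the Eisenstein sum as a sum over $B_M^{(n)}(F)\bs M^{(n)}(F)$, takes the section $\phi$ decomposable as $\phi_1\tilde\otimes\cdots\tilde\otimes\phi_k$ with $\phi_i\in \Ind_{\tb_i^{(n)}(\ba)}^{\tgln_{r_i}(\ba)}\mu_{\underline{s}_i}\delta_{B_i}^{1/2}$, and reads off the factorization $f=f_1\tilde\otimes\cdots\tilde\otimes f_k$ with $f_i\in\Theta_{r_i,\Lambda_i+\Lambda_{P,i}}$. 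This yields the embedding $\Theta_{\tm_\lambda}||_{M(F)\tmn(\ba)}\hookrightarrow\sigma$ directly, and since $\sigma$ contains the metaplectic tensor products for all characters $\omega$, both assertions of the theorem follow from the definition of the global metaplectic tensor product. That computation is the substance of the proof and is absent from your write-up.

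Two concrete problems with your primary route. First, Theorem \ref{thm:meteplectic tensor of theta} is proved under the standing assumption of Section \ref{sec:local theory} that $F$ is non-Archimedean, so you cannot invoke it ``at every place $v$''; and an isomorphism of metaplectic restricted tensor products requires matching \emph{all} local factors, not almost all, so the Archimedean places cannot be waved away. (You would also need to first identify the local component of the global residual representation on $\tm_\lambda(F_v)$ with the local exceptional representation $\Theta_{\tm}(\chi'_v)$ for the correct $\chi'_v$, which itself uses the constant-term analysis.) Second, even granting the abstract equivalence $\Theta_{\tm_\lambda}\cong(\Theta_{r_1}\tilde\otimes\cdots\tilde\otimes\Theta_{r_k})_\omega$, the first assertion of the theorem is that a specific function lies in a specific subspace of $\mathcal{A}(\tm)$; an abstract isomorphism of representations does not place the constant term in that space without either the explicit identification above or a multiplicity-one statement for this isomorphism class in the automorphic realization, neither of which you supply. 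A smaller point: the poles are $\Lambda_i+\Lambda_{P,i}$, where $\Lambda_P$ encodes $\delta_{P_\lambda}$; omitting the shift $\Lambda_{P,i}$ changes $\Theta_{r_i}$ by a determinantal twist that is not trivial on $\tgln_{r_i}(\ba)$, so by Proposition \ref{prop:Mezo} it genuinely changes the metaplectic tensor product.
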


\begin{proof}
Write $\sigma_i=\Theta_{r_i}||_{H_i}$ for $i=1,\cdots, k$. As explained above, the representation $\sigma_1\tilde\otimes \cdots \tilde\otimes \sigma_k$ descents to a representation $\sigma$  on $M(F)\tmn(\ba)$. It suffices to show that
\[
\Theta_{M_{\lam}}||_{M(F)\tmn(\ba)}\hookrightarrow \sigma.
\]
Notice the space $\sigma$ contains the metaplectic tensor products with respect to all possible characters $\omega.$

Before we prove this claim we would like to introduce some notations. Let $E(\underline{s},g)$ be the Eisenstein series on $\tgl_r(\ba)$ and let $\Lambda$ be the pole to define the theta function. Let $\Lambda_P\in\bc^r$ be the $r$-tuple of complex numbers so that  the corresponding $\mu_{\Lambda_P}$ is the modular quasicharacter $\delta_{P_\lam}$ of $P_\lambda$. Write $\Lambda=(\Lambda_{k},\cdots, \Lambda_1)$, where $\Lambda_i\in\bc^{r_i}$. Write $\Lambda_P=(\Lambda_{P,1},\cdots, \Lambda_{P,k})$ such that $\Lambda_{P,i}\in\bc^{r_i}$. Notice that all the entries in $\Lambda_{P,i}$ are the same.

Let $f\in  \Theta_{M_{\lam}}||_{M(F)\tmn(\ba)}$. This means that $f$ is the restriction of the residue of an Eisenstein series $E^{\tm_\lam}(\underline{s},g)$ to $M(F)\tmn(\ba)$. Indeed, if $g\in M(F)\tmn(\ba)$, then
\[
\begin{aligned}
f(g)=&\mathrm{Res}_{\underline{s}=w^M(\Lambda)+\Lambda_{P}} E(\underline{s},g)\\
=&\mathrm{Res}_{\underline{s}=w^M(\Lambda)+\Lambda_{P}} \sum_{\gamma\in B_M(F)\bs M(F)} \phi(\underline{s})(\gamma g)\\
=&\mathrm{Res}_{\underline{s}=w^M(\Lambda)+\Lambda_{P}} \sum_{\gamma\in B_M^{(n)}(F)\bs M^{(n)}(F)} \phi(\underline{s})(\gamma g).\\
\end{aligned}
\]
The last equality follows from the following fact: $M^{(n)}(F)\hookrightarrow M(F)$ induces a bijection
$B_M^{(n)}(F)\bs M^{(n)}(F)\leftrightarrow B_M(F)\bs M(F).$

By a global analogue of Proposition \ref{prop:decomposition of induced}, we can view $\Ind_{\tb_M^{(n)}(\ba)}^{\tmn(\ba)}\mu_{\underline{s}}\delta_M^{1/2}$ as a subspace of $\Ind_{\tb_M(\ba)}^{\tm(\ba)}\mu_{\underline{s}}\delta_{M}^{1/2}$ (what we need here is actually weaker). Without loss of generality, we may assume that $\phi\in\Ind_{\tb_M^{(n)}(\ba)}^{\tmn(\ba)}\mu_{\underline{s}}\delta_M^{1/2} \subset \Ind_{\tb_M(\ba)}^{\tm(\ba)}\mu_{\underline{s}}\delta_{M}^{1/2}$ and furthermore it is decomposable: $\phi=\phi_1\tilde\otimes \cdots\tilde\otimes \phi_k$, where $\phi_i\in \Ind_{\tb_i^{(n)}(\ba)}^{\tgln_{r_i}(\ba)}\mu_{\underline{s}_i}\delta_{B_i}^{1/2}$. Write $g=\mathrm{diag}(g_1,\cdots,g_k)$ and $ \gamma=\mathrm{diag}(\gamma_1,\cdots,\gamma_k)$. Then we can naturally view $f=f_1\tilde\otimes \cdots\tilde\otimes f_k$, where
\[
f_i(g_i)=\mathrm{Res}_{\underline{s}_i=\Lambda_i+\Lambda_{P,i}} \sum_{\gamma_i\in B_i^{(n)}\bs \GL_{r_i}^{(n)}(F)}\phi_{\underline{s}_i}(\gamma_ig_i).
\]
This means that $f_i\in \Theta_{r_i,\Lambda_i+\Lambda_{P,i}}$. We are done.

\end{proof}

\subsection{Global nonvanishing}

Now we prove the global nonvanishing results. Let $\lambda$ be a partition of $r$. Define the Levi subgroup $M=M_\lam$ as usual. Define the semi-Whittaker functional $\psi_\lambda$ as in the local case. We also write $\psi_\lambda(u)=\psi_1(u_1)\cdots\psi_k(u_k)$ if $u=\mathrm{diag}(u_1,\cdots,u_k)\in U\cap M$.

\begin{Thm}\label{thm:Global Nonvanishing semi whittaker}
If $r_i\leq n$ for all $i$, then
\[
\int\limits_{U(F)\bs U(\ba)} \theta(ug)\psi_\lam(u) \ du
\]
is nonzero for some choices of $\theta\in\Theta_r$ and $g\in \tgl_r(\ba)$.
\end{Thm}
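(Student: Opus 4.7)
The plan is to unfold the $\lambda$-semi-Whittaker integral by the factorization $U = U_\lambda \cdot U_M$ with $U_M = U \cap M$, identify the inner part as the constant term, and then reduce to ordinary Whittaker coefficients on each block via the global metaplectic tensor product. Since $\psi_\lambda$ is trivial on $U_\lambda$ (its support is on simple roots inside $M$), Fubini yields
\[
\int\limits_{U(F)\bs U(\ba)}\theta(ug)\psi_\lambda(u)\,du
=\int\limits_{U_M(F)\bs U_M(\ba)}\theta^{U_\lambda}(ug)\,\psi_\lambda(u)\,du,
\]
where $\theta^{U_\lambda}(h) = \int_{U_\lambda(F)\bs U_\lambda(\ba)}\theta(vh)\,dv$ is the constant term of $\theta$ along $U_\lambda$. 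By Theorem~\ref{thm:constant term 2}, as $\theta$ varies over $\Theta_r$, the function $\theta^{U_\lambda}$ realizes the full automorphic representation $\Theta_{\tm_\lambda}\cong \Theta_{r_1}\tilde\otimes \cdots \tilde\otimes \Theta_{r_k}$.

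Under the identification $U_M \cong U_{r_1}\times\cdots\times U_{r_k}$, the restriction $\psi_\lambda|_{U_M}$ is precisely the standard Whittaker character on $M$ and factorizes as $\psi_{(r_1)}\otimes \cdots \otimes \psi_{(r_k)}$. Because $U_M \subset \tmn$, for $m = \mathrm{diag}(m_1,\ldots,m_k)\in\tmn(\ba)$ the entire integrand stays inside $M(F)\tmn(\ba)$. By the realization of the global metaplectic tensor product (Section~\ref{sec:global meta tensor}), I would choose $\theta$ so that the restriction of $\theta^{U_\lambda}$ to $M(F)\tmn(\ba)$ is a pure tensor $\varphi_1\otimes\cdots\otimes\varphi_k$ with $\varphi_i \in \Theta_{r_i}$ (viewed inside $\sigma_i = \Theta_{r_i}||_{H_i}$). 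The Whittaker integral then factorizes:
\[
\int\limits_{U_M(F)\bs U_M(\ba)}\theta^{U_\lambda}(um)\,\psi_\lambda(u)\,du
=\prod_{i=1}^{k}\int\limits_{U_{r_i}(F)\bs U_{r_i}(\ba)}\varphi_i(u_i m_i)\,\psi_{(r_i)}(u_i)\,du_i.
\]

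Since $r_i\leq n$ for every $i$, Kazhdan--Patterson's genericity theorem (see \cite{KP1984} and Section~\ref{sec:theta global}) guarantees that each $\Theta_{r_i}$ admits a nonzero global Whittaker coefficient; the $\varphi_i$ and $m_i$ may therefore be chosen so that every factor on the right is nonzero, yielding $\theta\in\Theta_r$ and $g = m\in\tgl_r(\ba)$ for which the $\lambda$-semi-Whittaker coefficient does not vanish.

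The main obstacle is the middle step: ensuring that the pure-tensor form $\varphi_1\otimes\cdots\otimes\varphi_k$ is genuinely realized as the restriction to $M(F)\tmn(\ba)$ of some $\theta^{U_\lambda}$, rather than merely existing in an abstract tensor product. This requires careful use of Takeda's automorphic realization of $\Theta_{\tm_\lambda}$ as the induction (from $Z_{\tgl_r(\ba)}M(F)\tmn(\ba)$ to $\tm_\lambda(\ba)$) of the representation built from such tensor-product functions, combined with the surjectivity of the constant-term map $\theta\mapsto \theta^{U_\lambda}$ onto $\Theta_{\tm_\lambda}$ afforded by Theorem~\ref{thm:constant term 2}.
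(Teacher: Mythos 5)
Your proposal is correct and follows essentially the same route as the paper: unfold via $U=U_\lambda U_M$, identify the constant term with the metaplectic tensor product $\Theta_{r_1}\tilde\otimes\cdots\tilde\otimes\Theta_{r_k}$ via Theorem \ref{thm:constant term 2}, and reduce to the genericity of each $\Theta_{r_i}$ for $r_i\le n$. The "obstacle" you flag is resolved in the paper exactly as you suggest: one works inside $\Ind_{M(F)\tmn(\ba)}^{\tm(\ba)}\sigma$ and uses $f(u)(1)=f(1)(u)$ for $u\in U_M(\ba)\subset M(F)\tmn(\ba)$, choosing $f$ with $f(1)$ a simple tensor.
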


\begin{proof}

Notice that
\[
\int\limits_{U(F)\bs U(\ba)} \theta(ug)\psi_\lambda(u) \ du=\int\limits_{U_M(F)\bs U_M(\ba)} \int\limits_{U_\lambda(F)\bs U_\lam(\ba)}\theta(vug) \ dv \ \psi_\lambda(u) \ du.
\]
By Proposition \ref{prop:constant term 1}, it suffices to show that
\[
\int\limits_{U_M(F)\bs U_M(\ba)} \theta(ug)\psi_\lambda(u) \ du\neq 0
\]
for some choices of $\theta\in \Theta_{\tm}$ and $g\in \tm(\ba)$. We now use notations in the proof of Theorem \ref{thm:constant term 2}. Notice that the character $\omega$ in Theorem \ref{thm:constant term 2} does not contribute anything in this integral. Thus it suffices to show that
\[
\int\limits_{U_M(F)\bs U_M(\ba)}f(u)(1) \psi_\lambda(u) \  du \neq 0
\]
for some $f\in \Ind_{M(F)\tmn(\ba)}^{\tm(\ba)}\sigma$. Here $f(u)$ is in $\sigma$ and we use $f(u)(1)$ to denote its value at $1$. Notice that $U_M(\ba)\subset M(F)\tmn(\ba)$. Thus $f(u)(1)=f(1)(u)$.

Without of loss of generality, we can choose $f$ such that $f(1)$ is a simple tensor $f_1\tilde\otimes \cdots \tilde\otimes f_k$, where $f_i\in \sigma_i$. Moveover, we can choose $f_i$ such that the Whittaker coefficient of $f_i$ is nonzero, i.e. $\int\limits_{U_{\GL_{r_i}}(F)\bs U_{\GL_{r_i}}(\ba)}f_i(u_ig_i)\psi_i(u_i) \ du_i\neq 0$ for all $i$. (This is because when $r_i\leq n$, $\Theta_{r_i}$ is generic.)

Thus,
\[
\begin{aligned}
&\int\limits_{U_M(F)\bs U_M(\ba)}f(u)(1) \psi_\lambda(u) \  du \\
=& \int\limits_{U_M(F)\bs U_M(\ba)}f(1)(u) \psi_\lambda(u) \  du \\
=&\prod_{i=1}^k \int\limits_{U_{\GL_{r_i}}(F)\bs U_{\GL_{r_i}}(\ba)}f_i(u_i)\psi_i(u_i) \  du_i
\neq 0.
\end{aligned}
\]
This proves the theorem.
\end{proof}


\section{Unipotent Orbits and Fourier Coefficients}\label{sec:unipotent orbit}

For the rest of this paper, we turn to the Fourier coefficients associated with general unipotent orbits. In this section, we explain how to associate a set of Fourier coefficients with a unipotent orbit. A general reference for unipotent orbits is Collingwood-McGovern \cite{CM1993}. (The classification of unipotent orbits for classical groups can be found in \cite{CM1993} Chapter 5.) For the local version of this association see \cite{Moeglin1996,MW1987}. For global details see Jiang-Liu \cite{JL2013} and Ginzburg \cite{Ginzburg2006,Ginzburg2014}. The associated Fourier coefficients are described as integration over certain unipotent subgroups, and the metaplectic cocycles do not contribute to any nontrivial factors. To simplify notations, we only describe this association in the non-metaplectic setup.

We work with the global setup. Let $F$ be a number field, and $\ba$ be its adele ring. Fix a nontrivial additive character $\psi:F\bs \ba\to\bc^\times$. The unipotent orbits of $\GL_r$ are parameterized by partitions of $r$. Let $\sco=(p_1\cdots p_k)$ with $p_1+\cdots+p_k=r$ be a unipotent orbit.  We shall always assume $p_1\geq p_2\geq \cdots\geq p_k>0$. To each $p_i$ we associate the diagonal matrix
\[
\mathrm{diag}(t^{p_i-1},t^{p_i-3},\cdots,t^{3-p_i},t^{1-p_i}).
\]
Combining all such diagonal matrices and arranging them  in decreasing order of the powers, we obtain a one-dimensional torus $h_{\sco}(t)$. For example, if $\sco=(3^21)$, then
\[
h_\sco(t)=\mathrm{diag}(t^2,t^2,1,1,1,t^{-2},t^{-2}).
\]

The one-dimensional torus $h_{\sco}(t)$ acts on $U$ by conjugation. Let $\alpha$ be a positive root and $x_\alpha(a)$ be the one-dimensional unipotent subgroup in $U$ corresponding to the root $\alpha$. There is a nonnegative integer $m$ such that
\begin{equation}\label{eq:conjugation}
h_{\sco}(t)x_\alpha(a)h_{\sco}(t)^{-1}=x_\alpha(t^ma).
\end{equation}
On the subgroups $x_\al(a)$ which correspond to negative roots $\alpha$, the torus $h_\sco(t)$ acts with non-positive powers.

Given a nonnegative integer $l$,  we denote by $U_l(\sco)$ the subgroup of $U$ generated by all $x_\alpha(a)$ satisfying the Eq. (\ref{eq:conjugation}) with $m\geq l$. We are mainly interested in $U_l(\sco)$ where $l=2, 3$.

Let
\[
M(\sco)=T\cdot \la x_{\pm\alpha}(a):h_{\sco}(t)x_\alpha(a)h_\sco(t)^{-1}=x_\alpha(a)\ra.
\]
The group $M(\sco)$ acts by conjugation on the abelian group $U_2(\sco)/U_3(\sco)$. If the ground field is algebraically closed, then under this action of $M(\sco)$ on the group $U_2(\sco)/U_3(\sco)$, there is an open orbit. Denote a representative of this orbit by $u_2$. It follows from the general theory that the connected component of the stabilizer of this orbit inside $M(\sco)$ is a reductive group. Denote by $Stab_\sco^0$ this connected component of the stabilizer of $u_2$.

The  $F$-rational points $M(\sco)(F)$ acts on the group of all characters of $U_2(\sco)(F)\bs U_2(\sco)(\ba)$. For each character, its stabilizer is a subgroup of $M(\sco)(F)$ as an algebraic group, and hence it is of the form $C(F)$ for some algebraic group $C$. If the character is such that the connected component of $C$ is isomorphic to $Stab_\sco^0$ over the algebraic closure, it is denoted by $\psi_{U_2(\sco)}.$ Notice that the character $\psi_{U_2(\sco)}$ is not unique.  Given an automorphic function $\varphi(g)$ on $\GL_r(\ba)$ or its cover,  the Fourier coefficients we want to consider are
\[
\int\limits_{U_2(\sco)(F)\bs U_2(\sco)(\ba)}\varphi(ug)\psi_{U_2(\sco)}(u) \ du.
\]
In this way, we associate with each unipotent orbit $\sco$ a set of Fourier coefficients. When the partition is $\sco=(r)$, the Fourier coefficients associated to $\sco$ are the Whittaker coefficients.

In order to perform root exchange as in Sections \ref{sec:root exchange} and \ref{sec:global root exchange} below, we also work with a slightly different torus. Let
\[
h'_{\sco}(t)=\mathrm{diag}(t^{p_1-1}, t^{p_1-3}, \cdots,t^{1-p_1},t^{p_2-1},\cdots,t^{1-p_2}).
\]
Here the first block of size $p_1$ is
\[
\mathrm{diag}(t^{p_1-1}, t^{p_1-3}, \cdots,t^{1-p_1}),
\]
while the remaining part $\mathrm{diag}(t^{p_2-1},\cdots,t^{1-p_2})$ is $h_{(p_2\cdots p_r)}(t)$. For example, if $\sco=(3^21)$, then
\[
h_{\sco}'(t)=(t^2,1,t^{-2}, t^2,1,1,t^{-2}).
\]

The tori $h_{\sco}(t)$ and $h_{\sco}'(t)$ are conjugate by an element in the Weyl group of  $\GL_r$. Let $V_2(\sco)$, $\psi_{V_2(\sco)}$ be the corresponding unipotent subgroup and character, respectively.

Let us recall the partial ordering defined on the set of unipotent orbits. Given $\sco_1=(p_1\cdots p_k)$ and $\sco_2=(q_1\cdots q_l)$, we say that $\sco_1\geq \sco_2$ if $p_1+\cdots+p_i\geq q_1+\cdots+q_i$ for all $1\leq i\leq l$. If $\sco_1$ is not greater than $\sco_2$ and $\sco_2$ is not greater than $\sco_1$, we say that $\sco_1$ and $\sco_2$ are not comparable.

\begin{Def}\label{def:unipotent orbit}
Let $\pi$ be an automorphic representation of $\tgl_r(\ba)$. Let $\sco(\pi)$ denote the set of unipotent orbits of $\GL_r$ defined as follows. A unipotent orbit $\sco\in\sco(\pi)$ if $\pi$ has a nonzero Fourier coefficient which is associated with the unipotent orbit $\sco$, and for all $\sco'>\sco$, $\pi$ has no nonzero Fourier coefficient associated with $\sco'$.
\end{Def}

We already describe this association in the global setup. The corresponding local picture could be described analogously. We omit the details.

\begin{Rem}
It is expected that for any automorphic representation $\pi$, the set $\sco_G(\pi)$ is a singleton (see \cite{Ginzburg2006} Conjecture 5.4). In this paper, the notation $\sco_G(\pi)=\mu$  means that the set $\sco_G(\pi)$ is a singleton, consisting of the orbit $\mu$ only.
\end{Rem}

\section{Unipotent Orbits: Local Results}\label{sec:local unipotent}

We return to the local setup in this section. Fix positive integers $n,r$ such that $|n|_F=1$. Write $r=an+b$, where $a\in\bz_{\geq0}$ and $0\leq b<n$. Let $\Theta=\Theta_r$ be an exceptional representation on $\tgl_r$. The unipotent orbit attached to $\Theta$ is determined in this section. The key ingredients are the results on the semi-Whittaker functionals.  We follow closely the approach given in Jiang-Liu \cite{JL2013}, where they determine the unipotent orbits attached to the residual spectrum of the general linear groups. Here we give a local version with necessary modifications.

\begin{Thm}\label{thm:local unipotent orbit 1}
Let $\sco=(p_1\cdots p_k)$ be a unipotent orbit of $\GL_r$.
\begin{enumerate}[\normalfont (1)]
\item  If $p_1>n$, then $J_{U_{2}(\sco),\psi_{U_2(\sco)}}(\Theta)=0$ (or equivalently, $J_{V_{2}(\sco),\psi_{V_2(\sco)}}(\Theta)=0$).
\item  If $\sco=(n^a b)$, then $J_{U_{2}(\sco),\psi_{U_2(\sco)}}(\Theta)\neq0$ (or equivalently, $J_{V_{2}(\sco),\psi_{V_2(\sco)}}(\Theta)\neq0$).
\end{enumerate}
\end{Thm}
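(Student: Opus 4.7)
The plan is to deduce both parts of the theorem by relating the orbital Fourier coefficient for $\sco = (p_1 \cdots p_k)$ to the semi-Whittaker coefficient for the associated partition $\lam = (p_1 \cdots p_k)$, and then invoke Corollary \ref{cor:Vanishing semi whittaker} or Corollary \ref{cor:local nonvanishing}. To see why $\lam$ is the right partition to match with $\sco$, I would work with the conjugate torus $h'_\sco(t)$ so that the Jordan blocks of the orbit appear as contiguous diagonal blocks of sizes $p_1, \ldots, p_k$; under this normalization, the restriction of $\psi_{V_2(\sco)}$ to $U \cap M_\lam$ coincides with the restriction of the semi-Whittaker character $\psi_\lam$. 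The difference between the two integrals $J_{V_2(\sco), \psi_{V_2(\sco)}}(\Theta)$ and $J_{U, \psi_\lam}(\Theta)$ is therefore encoded in the positive root subgroups that lie in one but not the other, and the job is to remove or insert these via root exchange.

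For part (1), assume $p_1 > n$. I would set up a root exchange lemma (to be proved earlier in the section) that allows one, in a Jacquet module, to swap a positive root subgroup outside $V_2(\sco)$ against a negative root subgroup inside the stabilizer $\mathrm{Stab}^0_\sco$, provided the commutator and nondegeneracy hypotheses hold. Iterating this across all the ``extra'' positive roots turns $J_{V_2(\sco), \psi_{V_2(\sco)}}(\Theta)$ (up to nonzero scalar factors) into $J_{U, \psi_\lam}(\Theta)$. Since $p_1 > n$, Corollary \ref{cor:Vanishing semi whittaker} gives $J_{U, \psi_\lam}(\Theta) = 0$, and therefore $J_{V_2(\sco), \psi_{V_2(\sco)}}(\Theta) = 0$ as well.

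For part (2), take $\sco = (n^a b)$, so every $p_i \leq n$. I would run the same chain of root exchanges, now starting from $J_{U, \psi_\lam}(\Theta)$ for $\lam = (n^a, b)$, which is nonzero by Corollary \ref{cor:local nonvanishing}. Each root exchange preserves nonvanishing, so the end result $J_{V_2(\sco), \psi_{V_2(\sco)}}(\Theta)$ must also be nonzero. The equivalence between the $U_2(\sco)$ and $V_2(\sco)$ formulations follows from the Weyl conjugation between $h_\sco(t)$ and $h'_\sco(t)$, which carries one Fourier coefficient to the other.

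The principal obstacle is the verification of the root exchange hypotheses at each step: one must exhibit a pairing between the positive root subgroups being removed and negative root subgroups in the stabilizer, check that their mutual commutators land in the kernel of the character, and confirm the nondegeneracy condition needed to replace one integration by the other. As signaled in the introduction, this works out most cleanly when $n$ and $b$ share a parity, yielding a genuine equality of integrals; in the opposite parity case one may need an intermediate unipotent group and a slightly more delicate bookkeeping, but the implications ``semi-Whittaker vanishes $\Rightarrow$ Fourier vanishes'' and ``semi-Whittaker is nonzero $\Rightarrow$ Fourier is nonzero'' still follow from the same exchange mechanism, which is what the present theorem requires.
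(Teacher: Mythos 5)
Your overall strategy --- relate the orbit coefficient to the semi-Whittaker coefficient for the partition $\lambda=(p_1\cdots p_k)$ and quote Corollaries \ref{cor:Vanishing semi whittaker} and \ref{cor:local nonvanishing} --- is the right one, and your use of the conjugate torus $h'_\sco(t)$ and of a root exchange lemma matches the paper. But there is a genuine gap: you claim that iterated root exchange alone converts $J_{V_2(\sco),\psi_{V_2(\sco)}}(\Theta)$ into $J_{U,\psi_\lambda}(\Theta)$. Root exchange replaces a subgroup $D=CX$ by $B=CY$ with $X$ and $Y$ in nondegenerate duality, so it never changes the dimension of the unipotent group being integrated over; yet $V_2(\sco)$ is in general strictly smaller than $U$ (already for $\sco=(2^2)$ in $\GL_4$ with $n=2$ one has $\dim V_2(\sco)=4$ versus $\dim U=6$). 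No sequence of exchanges can bridge that gap. The missing second tool is the Fourier expansion argument: one expands along the extra root subgroups (row by row) and shows that all nontrivial characters contribute zero because the resulting coefficients are attached to ``longer'' partitions that vanish; only then may the domain of integration be enlarged from $U_\sco$ to $U$. In the paper this is carried out in Lemmas \ref{lem:vanishing unipotent orbit 1}--\ref{lem:vanishing unipotent orbit 3}, and the vanishing input needed there is stronger than what you invoke: the downward induction requires $J_{U,\psi_{\lambda'}}(\Theta)=0$ for \emph{every} partition $\lambda'$ whose first part exceeds $n$, not just for $\lambda=(p_1\cdots p_k)$.

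Concretely, the paper's route is: (i) Lemma \ref{lem:consequence of root exchange} uses root exchange to pass from $(V_2(\sco),\psi_{V_2(\sco)})$ to a block group $U_\sco$ carrying a full Whittaker character on the first $\GL_{p_1}$ block only; (ii) Lemma \ref{lem:vanishing unipotent orbit 1} (for part (1)) and Lemma \ref{lem:vanishing unipotent orbit 3} (for part (2)) supply the Fourier-expansion step that adjoins the missing rows with trivial character; (iii) one then recurses block by block until the full pair $(U,\psi_\lambda)$ appears, at which point the semi-Whittaker corollaries apply. Your parity remark is consistent with the paper (when all $p_i$ have the same parity the exchanges give an actual isomorphism of Jacquet modules rather than only an equivalence of vanishing), but without step (ii) neither direction of your argument goes through. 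To repair the proposal you must state and prove the intermediate vanishing lemma for the groups $V_{1^{m-1},r-m+1}$ and their characters $\psi_{m-1,\underline{\epsilon}}$, and interleave it with the exchanges.
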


Notice that any unipotent orbit greater than or not comparable with $(n^a b)$ must have $p_1>a$. Thus we obtain the following result.

\begin{Thm}\label{thm:local unipotent orbit 2}
Let $\Theta$ be an exceptional representation of $\tgl_r$. Then
\[
\sco(\Theta)=(n^a b).
\]
\end{Thm}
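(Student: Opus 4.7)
The plan is to derive Theorem \ref{thm:local unipotent orbit 2} as a purely combinatorial consequence of the two parts of Theorem \ref{thm:local unipotent orbit 1}. The content of the theorem is that $\sco(\Theta)$ is the singleton $\{(n^a b)\}$: part (2) of Theorem \ref{thm:local unipotent orbit 1} exhibits $(n^a b)$ as supporting a nonzero Fourier coefficient, and part (1) should rule out every strictly larger orbit as well as every orbit incomparable with $(n^a b)$. The whole task therefore reduces to a statement in the dominance order, namely the remark that precedes the theorem.

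First I would verify the combinatorial claim: if $\sco' = (p'_1 \cdots p'_l)$ is a partition of $r$ with $p'_1 \leq n$, then $\sco' \leq (n^a b)$ in the dominance order. For $i \leq a$, monotonicity of the parts gives $p'_1 + \cdots + p'_i \leq i \cdot p'_1 \leq in$, which matches the $i$th partial sum of $(n^a b)$; for $i > a$, both partial sums are bounded by $r$, and the one for $(n^a b)$ equals $r$. Contrapositively, $\sco' \not\leq (n^a b)$ forces $p'_1 > n$. In particular, any $\sco'$ that is incomparable with $(n^a b)$, or strictly greater than $(n^a b)$, has $p'_1 > n$. (A small refinement of the same calculation shows that $\sco' \geq (n^a b)$ with $p'_1 \leq n$ forces $\sco' = (n^a b)$, which is not strictly needed.)

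With the combinatorial claim in hand the theorem is immediate. By Theorem \ref{thm:local unipotent orbit 1}(2), $\Theta$ has a nonzero Fourier coefficient attached to $(n^a b)$. By the claim and Theorem \ref{thm:local unipotent orbit 1}(1), every $\sco' > (n^a b)$ satisfies $p'_1 > n$ and hence supports no nonzero Fourier coefficient on $\Theta$. Hence $(n^a b) \in \sco(\Theta)$. Conversely, suppose $\sco \in \sco(\Theta)$. Then $\sco$ supports a nonzero Fourier coefficient, so Theorem \ref{thm:local unipotent orbit 1}(1) gives $p_1 \leq n$, and the claim yields $\sco \leq (n^a b)$. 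If $\sco \neq (n^a b)$ then $\sco < (n^a b)$, and the existence of a nonzero Fourier coefficient at $(n^a b)$ contradicts the maximality requirement in the definition of $\sco(\Theta)$. Therefore $\sco = (n^a b)$, proving $\sco(\Theta) = \{(n^a b)\}$.

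There is essentially no obstacle at this stage: all the analytic and representation-theoretic work is buried in Theorem \ref{thm:local unipotent orbit 1} (which in turn rests on the semi-Whittaker results of Section \ref{sec:semiwhittaker} together with the root exchange/Fourier expansion machinery of Section \ref{sec:local unipotent}). The present theorem is merely the bookkeeping step that converts those two input statements into a statement about the maximal orbit in the partial order.
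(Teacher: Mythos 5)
Your proof is correct and follows exactly the paper's route: the paper deduces the theorem from Theorem \ref{thm:local unipotent orbit 1} via the one-line observation that any orbit greater than or incomparable with $(n^a b)$ must have $p_1 > n$ (the paper's ``$p_1>a$'' is a typo), which is precisely the combinatorial claim you verify in detail. Your write-up simply makes the dominance-order bookkeeping explicit.
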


The rest of this section is devoted to proving Theorem \ref{thm:local unipotent orbit 1}. This theorem is also proved in an unpublished work of Gordan Savin by using the Iwahori-Hecke algebras.

\subsection{A general lemma}\label{sec:root exchange}

We start with a general lemma, which is used repeatedly in this section.

Let $G$ be the rational points of a split algebraic group or a cover of such. Let $\mathfrak{u}$ be a maximal nilpotent Lie subalgebra of $\mathrm{Lie}(G)$. Let $\mathfrak{A,C,X}$ and $\mathfrak{Y}$ be Lie subalgebras of $\mathfrak{u}$, and  let $A,C,X,Y$ be the corresponding unipotent subgroups of $G$. Let $\psi_C$ be a nontrivial character of $C$. We make the following assumptions:

\begin{enumerate}[(a)]
\item $C,X,Y\subset A$.
\item $X$ and $Y$ are abelian, normalize $C$ and preserve $\psi_C$.
\item The commutators $x^{-1}y^{-1}xy$ lie in $C$, for all $x\in X,y\in Y$. In particular, $Y$ normalizes $D=CX$ and $X$ normalizes $B=CY$.
\item $A=D\rtimes Y=B\rtimes X$.
\item The set
\[
\{x\mapsto\psi_C(x^{-1}y^{-1}xy)|y\in Y\}
\]
is the group of all characters of $X$. Moreover, writing $x=\exp E,y=\exp S$, for $E\in\mathfrak{X},S\in\mathfrak{Y}$, we have
\[
\psi_C(xyx^{-1}y^{-1})=\psi((E,S))
\]
where $(\ ,\ )$ is a nondegenerate, bilinear pairing between $\mathfrak{X}$ and $\mathfrak{Y}$.
\end{enumerate}

\[
\begin{tikzpicture}
\node(a) {$BX=A=DY$};
\node(b) [below left=1cm and 0.7cm of a] {$B=CY$};
\node(d) [below right=1cm and 0.7cm of a] {$D=CX$};
\node(c) [below =2.5cm of a] {$C$};
\draw[-] (a) to (b);
\draw[-] (a) to (d);
\draw[-] (b) to (c);
\draw[-] (d) to (c);
\node(x1) [below left=0.2cm and 0.4cm of a] {$X$};
\node(y1) [below right=0.2cm and 0.4cm of a] {$Y$};
\node(y2) [below = 1.2cm of x1] {$Y$};
\node(x2) [below = 1.2cm of y1] {$X$};
\end{tikzpicture}
\]

\begin{Lem}\label{lem:local root exchange}
Assume (a)-(e). Let $\pi$ be a smooth representation of $A$. Extend $\psi_C$ trivially to characters $\psi_B$ of $B$ and $\psi_D$ of $D$. Then we have an isomorphism of $C$-modules
\[
J_{B,\psi_B}(\pi)\cong J_{D,\psi_D}(\pi).
\]
Moreover,
\[
J_{C,\psi_C}(\pi)=0\Longleftrightarrow J_{D,\psi_D}(\pi)=0 \Longleftrightarrow J_{B,\psi_B}(\pi)=0.
\]
\end{Lem}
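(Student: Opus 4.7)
The plan is to reduce the lemma to a Stone--von Neumann-type statement on the intermediate module $W := J_{C,\psi_C}(\pi)$. First, condition (b) guarantees that $X$ and $Y$ descend to operators on $W$, since they normalize $C$ and preserve $\psi_C$. Using that $B = CY$ with $C$ normal in $B$, $\psi_B|_Y = 1$, and the corresponding facts for $D = CX$, exactness of the (twisted) Jacquet functor yields natural $C$-linear identifications
\[
J_{B,\psi_B}(\pi) \cong J_Y(W), \qquad J_{D,\psi_D}(\pi) \cong J_X(W),
\]
both with $C$ acting through the scalar $\psi_C$. Thus it suffices to produce a $C$-equivariant isomorphism $J_X(W) \cong J_Y(W)$ and to verify the vanishing clause at the level of $W$.

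Next, $W$ carries a Heisenberg-type structure. By (c), for $x \in X, y \in Y$ the commutator $[x,y] \in C$ acts on $W$ by $\psi_C([x,y])$, so in $\End(W)$ one has
\[
\pi(x)\pi(y) \;=\; \psi_C([x,y])\,\pi(y)\pi(x).
\]
By (e), the resulting pairing is a nondegenerate continuous bilinear form, identifying $Y$ topologically with the Pontryagin dual $\widehat{X}$. Consequently $X$ and $Y$ generate, together with $C$, a Heisenberg-type subgroup $H \subseteq A$ with $C$ in its center, and $W$ is a smooth $H$-representation on which $C$ acts by $\psi_C$.

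The core of the argument is the $p$-adic Stone--von Neumann theorem applied to $H$: any smooth $H$-representation with central character $\psi_C$ decomposes as $\rho \otimes V$, where $\rho$ is the unique irreducible smooth $H$-representation with that central character and $V$ is a multiplicity space on which the images of $X$ and $Y$ act trivially. The Schr\"odinger model for $\rho$ makes $J_X(\rho)$ and $J_Y(\rho)$ both one-dimensional, so
\[
J_X(W) \;\cong\; J_X(\rho) \otimes V \;\cong\; V \;\cong\; J_Y(\rho) \otimes V \;\cong\; J_Y(W),
\]
and the isomorphism is $C$-linear since $C$ acts via $\psi_C$ on both sides. The ``moreover'' clause is then immediate: both Jacquet modules are identified with $V$, which is zero precisely when $W$ is zero.

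The main obstacle is the legitimacy of the Stone--von Neumann step in this setting: $X$ and $Y$ are not Lagrangians of a finite-dimensional symplectic space but rather dual locally compact abelian groups under a continuous pairing, so one must invoke the smooth $p$-adic version of the theorem. A more hands-on alternative, likely closer to what a direct computational proof would do, is to construct the isomorphism by a Fourier-exchange: for $w \in W$, pick a compact open $Y_0 \subseteq Y$ fixing $w$, set $X_0 := Y_0^{\perp} \subseteq X$ (compact open by the duality), and define the map by averaging, $w \mapsto \int_{X_0} \pi(x)\,w \, dx$. Independence of $Y_0$ and the construction of a two-sided inverse then both reduce to the orthogonality $\psi_C([x,y]) = 1$ on $X_0 \times Y_0$ combined with the commutation relation above.
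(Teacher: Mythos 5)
Your argument is correct, but it takes a genuinely different route from the paper. The paper's proof is essentially a citation: it quotes Ginzburg--Rallis--Soudry (Section 2.2) for the isomorphism $J_{B,\psi_B}(\pi)\cong J_{D,\psi_D}(\pi)$, and for the ``moreover'' clause it notes that one direction is trivial ($J_{D,\psi_D}=J_X\circ J_{C,\psi_C}$) while the converse uses the injectivity, also proved in GRS, of the natural map $i:J_{C,\psi_C}(\pi)\to \Ind_D^A(J_{D,\psi_D}(\pi))$. The GRS construction is exactly the compact-open averaging $w\mapsto\int_{X_0}\pi(x)w\,dx$ that you sketch at the end, so your ``hands-on alternative'' is in fact the cited proof. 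Your main argument instead passes through the smooth Stone--von Neumann theorem: after reducing (correctly, via Jacquet functor in stages, using $B=C\rtimes Y$ and $D=C\rtimes X$, which follow from (d)) to $W=J_{C,\psi_C}(\pi)$, you observe that $X$ and $Y$ act on $W$ with commutators acting by the nondegenerate pairing of (e), so $W\cong\rho_\psi\otimes V$ with $\rho_\psi$ the Schr\"odinger model, and both $J_X(W)$ and $J_Y(W)$ are identified with $V$. This is valid — $X$ and $Y$ are abelian unipotent, hence vector groups, and (e) exhibits them as complementary Lagrangians of a symplectic $F$-space, so the isotypicity statement of Stone--von Neumann for smooth representations of the $p$-adic Heisenberg group (e.g. M\oe glin--Vign\'eras--Waldspurger) applies; the $C$-equivariance is automatic since $C$ acts by the scalar $\psi_C$ on everything in sight. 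What your route buys is a cleaner ``moreover'': both Jacquet modules equal the multiplicity space $V$, which vanishes iff $W$ does, so you avoid the separate injectivity argument the paper needs. What it costs is the appeal to a heavier black box, and (a minor point of hygiene) the subgroup $\langle X,Y,C\rangle$ of $A$ is not itself a Heisenberg group with central $C$ — only its image in $\mathrm{End}(W)$ is — so the Stone--von Neumann theorem should be applied to the action on $W$, as your computation in fact does.
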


\begin{proof}
The first isomorphism is proved in Ginzburg-Rallis-Soudry \cite{GRS1999} Section 2.2. We now prove the second statement. By symmetry, it suffices to prove
\[
J_{C,\psi_C}(\pi)=0\Longleftrightarrow J_{D,\psi_D}(\pi)=0.
\]
Clearly if $J_{C,\psi_C}(\pi)=0$, then
\[
J_{D,\psi_D}(\pi)=J_X(J_{C,\psi_C}(\pi))=0.
\]
Conversely, suppose $J_{D,\psi_D}(\pi)=0$. There is a natural map
\[
T:J_{C,\psi_C}(\pi)\to J_{D,\psi_D}(\pi)=0
\]
over $D$. This induces a map of $A$-modules
\[
i:J_{C,\psi_C}(\pi)\to \Ind_D^A(J_{D,\psi_D}(\pi))=0.
\]
It is shown in \cite{GRS1999} Section 2.2 that $i$ is injective. Thus $J_{C,\psi_C}(\pi)=0$.
\end{proof}

When $X$ and $Y$ are root subgroups, the above lemma is the local version of the root exchange in Friedberg-Ginzburg \cite{FG} Section 2.2 and Ginzburg \cite{Ginzburg2015} Section 2.2.2. This is always the case in our application. The above assumptions can always be verified by the Steinberg relations.

\subsection{Root exchange}
Given a unipotent orbit $\sco=(p_1\cdots p_k)$, we define several unipotent subgroups of $U$. Let  $U_{\sco}$ be the subgroup of $U$ consisting elements of the form
\[
u=\begin{pmatrix}
u_1 & n_1\\
&u_2
\end{pmatrix},
\]
where $u_1$ is a unipotent matrix in $\GL_{p_1}$, $n_1\in \mathrm{Mat}_{p_1\times (n-p_1)}$ with the last row being zero, and $u_2\in U_2((p_2\cdots p_k))\subset \GL_{r-p_1}$. We define a character $\psi_{U_\sco}: U_{\sco}\to \bc^\times$ as the product of the Whittaker character on $u_1$ and $\psi_{U_2((p_2\cdots p_k))}$ on $u_2$. We also define a unipotent subgroup $U_{\sco}'$ of $U_{\sco}$ by removing all the root subgroups $U_\al$ in the $n_1$ part for all $\al$ such that
\begin{equation}\label{eq:definition of removing root}
h'_{\sco}(t)x_\alpha(a)h'_{\sco}(t)^{-1}=x_\alpha(ta).
\end{equation}
Let  $\psi_{U'_\sco}$ be the restriction of $\psi_{U_{\sco}}$ to $U_\sco'$.

\begin{Rem}\label{rem:same parity}
If $p_i$'s have the same parity, then $U_{\sco}=U_{\sco}'$.
\end{Rem}

\begin{Lem}\label{lem:consequence of root exchange}
Let $\pi$ be a smooth representation of $\tgl_r$.
\begin{enumerate}[\normalfont(1)]
\item
\[
J_{V_2(\sco),\psi_{V_2(\sco)}}(\pi)\cong J_{U_{\sco}',\psi_{U'_\sco}}(\pi).
\]
\item
\[
J_{V_2(\sco),\psi_{V_2(\sco)}}(\pi)=0
\]
if and only if
\[
J_{U_{\sco},\psi_{U_\sco}}(\pi)=0.
\]
\item If $p_i$'s have the same parity, then
\[
J_{V_2(\sco),\psi_{V_2(\sco)}}(\pi)\cong J_{U_{\sco},\psi_{U_\sco}}(\pi).
\]
\end{enumerate}
\end{Lem}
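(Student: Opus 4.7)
The plan is as follows. Part (3) follows immediately from part (1) combined with Remark \ref{rem:same parity}: when all $p_i$ have the same parity, every root in the $n_1$ part has even $h'_\sco$-weight, so no root subgroups are removed in forming $U'_\sco$, hence $U_\sco=U'_\sco$ and the isomorphism is given by part (1).

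For part (1), I would apply Lemma \ref{lem:local root exchange} iteratively. The point of using the torus $h'_\sco(t)$ rather than $h_\sco(t)$ is that the roots in which $V_2(\sco)$ and $U'_\sco$ differ organize into Heisenberg pairs: each root subgroup on one side that is not on the other can be matched with a root subgroup whose commutator lands in a root of $V_2(\sco)$ on which $\psi_{V_2(\sco)}$ is nontrivial. The exclusion of $h'_\sco$-weight-one root subgroups from $U'_\sco$ is precisely what is required for these pairings to be non-degenerate, so that hypothesis (e) of Lemma \ref{lem:local root exchange} is satisfied. Concretely, I would identify $C$ as the common part $V_2(\sco)\cap U'_\sco$, let $X$ and $Y$ be the complementary root subgroups on each side, verify hypotheses (a)--(e) using the Steinberg commutator relations in $\GL_r$, and iterate the swap to obtain the desired isomorphism of twisted Jacquet modules.

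For part (2), the forward direction is immediate from part (1): the root subgroups in $U_\sco\setminus U'_\sco$ (the $h'_\sco$-weight-one roots in the $n_1$ part) lie in the kernel of $\psi_{U_\sco}$, so $J_{U_\sco,\psi_{U_\sco}}(\pi)$ is a further untwisted Jacquet of $J_{U'_\sco,\psi_{U'_\sco}}(\pi)$, and vanishing of the latter forces vanishing of the former. For the reverse direction, I would perform a Fourier expansion of $J_{U'_\sco,\psi_{U'_\sco}}(\pi)$ over the characters of these extra root subgroups: the trivial-character term is $J_{U_\sco,\psi_{U_\sco}}(\pi)$, which vanishes by hypothesis, and each non-trivial character combines with $\psi_{U'_\sco}$---after further applications of Lemma \ref{lem:local root exchange} inside that term---to produce a semi-Whittaker character along a partition with some part strictly larger than $n$. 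Such twisted Jacquet modules vanish by Corollary \ref{cor:Vanishing semi whittaker}, so the entire expansion is zero and $J_{U'_\sco,\psi_{U'_\sco}}(\pi)=0$, which gives the reverse implication via part (1).

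The hard part will be the reverse direction in part (2): organizing each non-trivial Fourier term so that the character it carries is unambiguously identified with a vanishing semi-Whittaker coefficient requires careful root-exchange manipulations inside each term, and one must check that every character appearing in the expansion can indeed be placed into this framework. A secondary difficulty is the combinatorial bookkeeping of the Heisenberg pairs in part (1), whose number grows with both $p_1$ and $k$, and which must be exchanged in a compatible order so that each application of Lemma \ref{lem:local root exchange} is legal at the moment it is invoked.
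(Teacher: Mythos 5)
Your parts (1) and (3) follow the paper's route: iterated applications of Lemma \ref{lem:local root exchange}, exchanging the columns of the lower-left blocks of $V_2(\sco)$ for rows of the upper-right blocks, with the weight-one entries left over exactly when the parities differ. One caveat: performing the exchange ``all at once'' with $C=V_2(\sco)\cap U_{\sco}'$ and $X,Y$ the full complementary groups risks violating hypothesis (e) of Lemma \ref{lem:local root exchange}; the paper instead exchanges one column of $b_2$ at a time, with $C$ a single simple root subgroup of $u_1$ on which the character is nontrivial, so that each pairing is visibly nondegenerate and of matching dimensions.

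The genuine gap is in your reverse direction of part (2). The lemma is stated for an \emph{arbitrary} smooth representation $\pi$ of $\tgl_r$, but your argument invokes Corollary \ref{cor:Vanishing semi whittaker}, which is a statement about exceptional representations only (and which moreover assumes $|n|_F=1$). For a general smooth $\pi$ there is no reason the nontrivial terms of your Fourier expansion over the extra root subgroups should vanish, so the argument cannot establish the stated equivalence. Even if one restricts to $\Theta$, the extra roots (the diagonal entries of $b_1$ in the odd-parity case) are not simple roots, and the identification of each nontrivial character with a semi-Whittaker character attached to a partition having a part $>n$ is neither carried out nor obvious. The paper's proof avoids all of this: it applies Lemma \ref{lem:local root exchange} once more, taking $Y$ to be the group of diagonal entries of $b_1$ (so that $Y$ normalizes $U_{\sco}'$, preserves the character, and $U_{\sco}'Y=U_{\sco}$), paired against a complementary group inside $n_2$; the ``moreover'' clause of that lemma then yields $J_{U_{\sco}',\psi_{U'_\sco}}(\pi)=0$ if and only if $J_{U_{\sco},\psi_{U_\sco}}(\pi)=0$ for \emph{every} smooth $\pi$, and part (2) follows from part (1). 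You should replace your Fourier-expansion step with this second application of the root exchange lemma.
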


\begin{proof}
Part (3) is clear from part (1) and Remark \ref{rem:same parity}. We first prove part (1).
The strategy is to use the root exchange lemma. Notice that any element of $V_2(\sco)$ has the following form:
\[
u=\begin{pmatrix}
u_1 & q\\
0 & u_2
\end{pmatrix}
\begin{pmatrix}
I_{p_1} & 0\\
p & I_{n-p_1}
\end{pmatrix},
\]
where $u_1\in\GL_{p_1}$ and $u_2\in U_{(p_2\cdots p_k)} \subset \GL_{n-p_1}$ are unipotent matrices, and $p\in \mathrm{Mat}_{p_1\times(n-p_1)}$ and $q\in \mathrm{Mat}_{(n-p_1)\times p_1}$ are certain matrices to be described later. The character $\psi_{V_2(\sco)}$ is the product of Whittaker character on $u_1$ and $\psi_{U_2((p_2\cdots p_k))}$. We use the simple roots in $u_1$ to move root subgroups contained in $p$ to $q$. The desired twisted Jacquet module is obtained after we finish this process.

Let us give more details in the case $\sco=(p_1p_2)$.  The general case follows by the same argument. There are two cases to consider, depending on the parity of $p_1-p_2$.

Case 1: $p_1-p_2$ is even. Notice that in this case part (1) implies part (2) immediately.

We can write $u\in V_2(\sco)$ as
\[
u=\begin{pmatrix}
u_1 & n_1\\
n_2& u_2
\end{pmatrix}.
\]
Here $u_1\in \GL_{p_1},u_2\in \GL_{p_2}$ are unipotent matrices, and
\[
n_1=\begin{pmatrix}a_1\\b_1\\c_1\end{pmatrix}\in\mathrm{Mat}_{p_1\times p_2},
\]
where
\[
a_1\in\mathrm{Mat}_{(\frac{p_1-p_2}{2})\times p_2}, b_1\in \mathrm{Mat}_{p_2\times p_2} \text{ is nilpotent}, c_1=0\in \mathrm{Mat}_{(\frac{p_1-p_2}{2})\times p_2},
\]
and
\[
n_2=\begin{pmatrix}0 & b_2 & c_2\end{pmatrix}\in\mathrm{Mat}_{p_2\times p_1}
\]
where
\[
0\in\mathrm{Mat}_{p_2\times (\frac{p_1-p_2}{2}+1)}, b_2\in \mathrm{Mat}_{p_2\times p_2}\text{ is upper triangular}, c_2\in \mathrm{Mat}_{p_2\times(\frac{p_1-p_2}{2}-1)}.
\]

Now we apply the root exchange lemma. For the first column of $b_2$, the only nonzero entry is the root subgroup corresponding to the (negative) root $(p_1+1,\frac{p_1-p_2}{2}+2)$. We now describe the groups $A,B, C, D,X,Y$ in Section \ref{sec:root exchange} in our current setting.   Let $A=V_2(\sco)$. Let $C$, $X$ and $Y$ be the root subgroups corresponding to the roots $(\frac{p_1-p_2}{2}+1,\frac{p_1-p_2}{2}+2)$, $(p_1+1,\frac{p_1-p_2}{2}+2)$ and $(\frac{p_1-p_2}{2}+1,p_1+1)$, respectively. This determines the groups $B$ and $D$. Notice that the character $\psi_{V_2(\sco)}$ is nontrivial on $C$. After applying Lemma \ref{lem:local root exchange}, we replace the root $(p_1+1,\frac{p_1-p_2}{2}+2)$ by the (positive) root $(\frac{p_1-p_2}{2}+1,p_1+1)$ in the twisted Jacquet module.

Similarly, the $i$th column of $b_2$ has $i$ nonzero entries corresponding to the roots
\[
\left(j,\frac{p_1-p_2}{2}+i+1\right),\qquad j=p_1+1,\cdots,p_1+i.
\]
We let $X$ be the group generated by the root subgroups corresponding to these roots. Let $C$ be the root subgroup corresponding to the root $(\frac{p_1-p_2}{2}+i,\frac{p_1-p_2}{2}+i+1)$. Let $Y$ be the group generated by the root subgroups corresponding to the roots
\[
\left(\frac{p_1-p_2}{2}+i,j\right),\qquad j=p_1+1,\cdots,p_1+i.
\]
These are exactly the missing entries in the $i$th row in $b_1$. By applying Lemma \ref{lem:local root exchange}, we replace $X$ by $Y$ in the twisted Jacquet module.  The $c_2$ part can be handled similarly. Indeed, using the simple roots in $u_1$, the entries in $c_2$ are moved to the first $(\frac{p_1-p_2}{2}-1)$ rows of $c_1$. Thus, in this case, we have shown that
\[
J_{V_2(\sco),\psi_{V_2(\sco)}}(\pi)\cong J_{U'_\sco,\psi_{U'_\sco}}(\pi).
\]

Case 2: $p_1-p_2$ is odd. The proof of part (1) is the same as Case 1, with minor differences. Indeed, $u\in V_2(\sco)$ can be written as
\[
u=\begin{pmatrix}
u_1 & n_1\\
n_2& u_2
\end{pmatrix}.
\]
Here $u_1\in \GL_{p_1},u_2\in \GL_{p_2}$ are unipotent matrices, and
\[
n_1=\begin{pmatrix}a_1\\b_1\\c_1\end{pmatrix}\in\mathrm{Mat}_{p_1\times p_2},
\]
where
\[
a_1\in\mathrm{Mat}_{(\frac{p_1-p_2-1}{2})\times p_2}, b_1\in \mathrm{Mat}_{p_2\times p_2} \text{ is nilpotent}, c_1=0\in \mathrm{Mat}_{(\frac{p_1-p_2+1}{2})\times p_2},
\]
and
\[
u_2=\begin{pmatrix}0 & b_2 & c_2\end{pmatrix}\in\mathrm{Mat}_{p_2\times p_1}
\]
where
\[
0\in\mathrm{Mat}_{p_2\times (\frac{p_1-p_2+1}{2})}, b_2\in \mathrm{Mat}_{p_2\times p_2}\text{ is nilpotent}, c_2\in \mathrm{Mat}_{p_2\times(\frac{p_1-p_2-1}{2})}.
\]

There is no element in the first column of $b_2$, and the first entry of $b_1$ is missing. For the second column of $b_2$, the only nontrivial entry corresponds to the root $(p_1+1,\frac{p_1-p_2+1}{2}+2)$. Let $X$ be the root subgroup corresponding to this root. We now let $C$ and $Y$ be the root subgroups corresponding to the roots $(\frac{p_1-p_2+1}{2}+1,\frac{p_1-p_2+1}{2}+2)$ and $(\frac{p_1-p_2+1}{2}+1,p_1+1)$. By applying Lemma \ref{lem:local root exchange}, we can replace $X$ by $Y$ in the twisted Jacquet module.  The group $Y$ gives the first entry of the second row of $b_1$. Now we only miss the second entry in the second row of $b_1$.

Similarly, the $(i+1)$th column of $b_2$ has $i$ entries corresponding to the roots
\[
(j,\frac{p_1-p_2+1}{2}+i+1),\qquad j=p_1+1,\cdots,p_1+i.
\]
Let $X$ be the group generated by the root subgroups corresponding to these roots. Let $C$ be the root subgroup corresponding to the root $(\frac{p_1-p_2}{2}+i,\frac{p_1-p_2+1}{2}+i+1)$. Let $Y$ be the group generated by the root subgroups corresponding to the roots
\[
(\frac{p_1-p_2+1}{2}+i,j),\qquad j=p_1+1,\cdots,p_1+i.
\]
By Lemma \ref{lem:local root exchange}, $X$ can be replaced by $Y$ in the twisted Jacquet module.
Thus, after this process, we only miss the $(i+1)$th entry in the $(i+1)$th row in $b_1$. The $c_2$ part can be handled similarly, and the entries in $c_2$ are moved to the first $(\frac{p_1-p_2+1}{2}-1)$ rows of $c_1$. The missing entries in $b_1$  are the diagonal entries, which are exactly the root subgroups that are removed in the definition of $U'_\sco$; see Eq. (\ref{eq:definition of removing root}). This finishes the proof of part (1).

For part (2), let $Y$ be the subgroup of $V_2(\sco)$ such that $u_1=I,u_2=I$, $n_2,a_1,c_1=0$, and $b_1$ is diagonal. Then we can verify that $Y$ normalizes $U_{\sco}'$ and preserves $\psi_{U_\sco}$. Moreover, $U_\sco' Y=U_\sco$. By Lemma \ref{lem:local root exchange},
\[
J_{U_{\sco},\psi_{\sco}}(\pi)=0
\]
if and only if
\[
J_{U_{\sco}',\psi_{\sco}'}(\pi)=0
\]
if and only if
\[
J_{V_2(\sco),\psi_{V_2(\sco)}}(\pi)=0.
\]

For the general case, we need to proceed inductively. Notice that if we perform root exchange on the $p_3$ part using $p_1$, what is done in the previous steps is unchanged. Therefore, the lemma is true for a general unipotent orbit $\sco$.
\end{proof}

\subsection{Vanishing results}

Now we prove the vanishing property of the twisted Jacquet modules of $\Theta$ attached to the unipotent orbits either greater than or not comparable with  $(n^ab)$.

Let $V_{1^{m-1},r-m+1}$ be the unipotent radical of the parabolic subgroup $P_{1^{m-1},r-m+1}$ with Levi part $\GL_1^{\times (m-1)}\times \GL_{r-m+1}$. Let
\[
\psi_{m-1}(v)=\psi(v_{1,2}+\cdots+v_{m-1,m}),
\]
and
\[
\tilde\psi_{m-1}(v)=\psi(v_{1,2}+\cdots+v_{m-2,m-1})
\]
be two characters of $V_{1^{m-1},r-m+1}$. Notice that $(V_{1^{m-1},r-m+1}, \psi_{m-1})$ is the same as $(U_{\sco}, \psi_{U_{\sco}})$ where $\sco=(m1^{r-m})$.

We consider  slightly more general characters. Let $m'\geq m$, and
\[
\underline{\epsilon}=(\epsilon_{m},\epsilon_{m+1},\cdots, \epsilon_{m'-1})\in F^{m'-m}.
\]
Let
\[
\psi_{m-1,\underline{\epsilon}}(v)=\psi(v_{1,2}+\cdots+v_{m-1,m}+\epsilon_{m}v_{m,m+1}+\cdots +\epsilon_{m'-1}v_{m'-1,m'})
\]
be a character of $V_{1^{m'-1},r-m'+1}$.

\begin{Lem}\label{lem:vanishing unipotent orbit 1}
If $m>n$, then
\[
J_{V_{1^{m'-1},r-m'+1},\psi_{m-1,\underline{\epsilon}}}(\Theta)=0.
\]
In particular,
\[
J_{V_{1^{m-1,r-m+1},\psi_{m-1}}}(\Theta)=0.
\]
\end{Lem}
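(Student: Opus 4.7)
The strategy is to reduce the twisted Jacquet module to semi-Whittaker functionals on the Borel unipotent $U$, then invoke Corollary \ref{cor:Vanishing semi whittaker}. I proceed by induction on $r-m'$.

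First, I would use conjugation by a diagonal element $t = \mathrm{diag}(t_1, \ldots, t_r) \in T$ to normalize the character. Imposing $t_1 = \cdots = t_m$ preserves the coefficients $1$ on the first $m-1$ simple roots, while choosing $t_{i+1} = \epsilon_i^{-1} t_i$ for $i \geq m$ with $\epsilon_i \neq 0$ (and arbitrary otherwise) lets me assume $\epsilon_i \in \{0, 1\}$. In the base case $m' = r$, the group $V_{1^{m'-1}, r-m'+1}$ is the full Borel unipotent $U$, and $\psi_{m-1, \underline{\epsilon}}$ is a character $\psi_\lambda$ for a partition $\lambda$ of $r$; the preserved coefficients force the first block of $\lambda$ to have size at least $m > n$, so Corollary \ref{cor:Vanishing semi whittaker} yields $J_{U, \psi_\lambda}(\Theta) = 0$.

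For the inductive step ($m' < r$), I would enlarge $V = V_{1^{m'-1}, r-m'+1}$ to $V'' = V_{1^{m'}, r-m'}$, which contains $V$ with abelian quotient $V''/V = \prod_{j > m'} U_{(m', j)}$. A direct commutator calculation shows $V''$ stabilizes $\psi_{m-1, \underline{\epsilon}}$: the commutators $[U_{(m', j)}, U_{(i, l)}]$ for $(i, l) \in V$ produce only non-simple-root subgroups $U_{(i, j)}$, on which the character is trivial. The Fourier-expansion principle for smooth representations of $p$-adic unipotent groups then reduces $J_{V, \psi_V}(\Theta) = 0$ to the vanishing of $J_{V'', \psi_V \otimes \chi}(\Theta)$ for every character $\chi$ of $V''/V$, parameterized by $(c_{m'+1}, \ldots, c_r) \in F^{r-m'}$. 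For each such $\chi$, when $c_j = 0$ for all non-simple $j > m'+1$, the extended character is $\psi_{m-1, \underline{\epsilon}'}$ with $\underline{\epsilon}' = (\epsilon_m, \ldots, \epsilon_{m'-1}, c_{m'+1})$, and the inductive hypothesis applies to $V'' = V_{1^{m'}, r-m'}$. When some $c_j \neq 0$ for a non-simple $j$, I would invoke Lemma \ref{lem:local root exchange} with $C = U_{(m', j)}$, $Y = U_{(m', k)}$, and $X = U_{(k, j)}$ for $m' < k < j$ chosen so that $c_k = 0$; the Steinberg relation $[X, Y] = C$ combined with the non-vanishing of $c_j$ supplies the non-degenerate pairing required by the lemma. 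The exchange transfers the non-simple-root coefficient onto roots that can be absorbed into a further Fourier expansion, iteratively reducing back to the case $c_j = 0$ for all non-simple $j$.

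The main obstacle is managing the root-exchange bookkeeping when the non-zero coefficients $c_j$ are clustered in consecutive positions: one must verify that a suitable $k$ with $c_k = 0$ can always be found (generally by working from the smallest non-simple index $j$ with $c_j \neq 0$, which forces $c_k = 0$ for $m'+1 < k < j$) and handle a few edge cases such as $j = m' + 2$ with $c_{m'+1} \neq 0$ by iterating Fourier expansion interleaved with root exchange. The invariant preserved throughout the entire procedure---that the first $m-1$ simple-root coefficients remain equal to $1$---is what ultimately drives the final vanishing in the base case.
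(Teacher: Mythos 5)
Your overall architecture is the same as the paper's: induction on $r-m'$, base case $m'=r$ reduced (after a torus conjugation normalizing the coefficients) to Corollary \ref{cor:Vanishing semi whittaker}, and inductive step by Fourier-expanding the Jacquet module along the abelian quotient obtained by adjoining one more row. The base case and the Fourier-expansion principle are handled correctly.

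The gap is in your treatment of the characters $\chi$ of $V''/V$ with $c_j\neq 0$ for some non-simple $j$. The root exchange you set up, with $C=U_{(m',j)}$, $Y=U_{(m',k)}$, $X=U_{(k,j)}$, does not ``transfer'' or remove the coefficient $c_j$: in Lemma \ref{lem:local root exchange} the character on $C$ is unchanged, so after the exchange $c_j$ still sits on $U_{(m',j)}$, while the integration domain has lost $U_{(m',k)}$ and gained $U_{(k,j)}$ (a root in row $k>m'$) and is therefore no longer of the form $V_{1^{m'},r-m'}$. The inductive hypothesis only covers characters $\psi_{m-1,\underline{\epsilon}'}$ on groups of that exact form, so it cannot be invoked, and the claimed iterative reduction to ``$c_j=0$ for all non-simple $j$'' is not established. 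The mechanism that actually works --- and the reason the lemma is stated for arbitrary $\underline{\epsilon}\in F^{m'-m}$ rather than coefficients in $\{0,1\}$ --- is conjugation by $\mathrm{diag}(I_{m'},h)$ with $h\in\GL_{r-m'}(F)$: this normalizes $V''=V_{1^{m'},r-m'}$, fixes the character on rows $1,\dots,m'-1$ (those simple-root entries lie in columns $\leq m'$), and acts on the row-$m'$ character vector $(c_{m'+1},\dots,c_r)$ through the standard action of $\GL_{r-m'}$, which has only the two orbits $\{0\}$ and ``nonzero''. Hence every $\chi$ is conjugate to a character supported on the single simple root $(m',m'+1)$, i.e.\ to some $\psi_{m-1,(\underline{\epsilon},\epsilon_{m'})}$, and the inductive hypothesis applies. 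With that replacement your proof closes; no root exchange is needed in this lemma (the paper reserves it for Lemma \ref{lem:consequence of root exchange}).
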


\begin{proof}
We prove this by induction on $r-m'$. When $r=m'\geq m$, the pair $(V_{1^{r-1},1},\psi_{r-1,\underline{\epsilon}})$ can only be $(U,\psi_{\lam})$ where $\lambda$ is a partition of the form $(m''\cdots)$ with some $m''\geq m$.  The result follows Corollary \ref{cor:Vanishing semi whittaker} since $m''\geq m>n$.

Now assume the result is true for $m'$ and we prove it for $m'-1$ if $m'-1\geq m$. Define $R_{m'-1}$ to be the subgroup of $U$ such that any element $u=(u_{j,l})\in R_{m'-1}$, $u_{j,l}=0$, unless $j=m'-1$. The group $R_{m'-1}$ acts on $V_{1^{m'-2},r-m'+2}$. For any character $\xi$ of $R_{m'-1}$,
\[
J_{R_{m'-1},\xi}(J_{V_{1^{m'-2},r-m'+2},\psi_{m'-2,\underline{\epsilon}}}(\Theta))=0
\]
by induction. This implies
\[
J_{V_{1^{m'-2},r-m'+2},\psi_{m'-2},\underline{\epsilon}}(\Theta)=0.
\]
This finishes the proof.
\end{proof}

\begin{Lem}\label{lem:vanishing unipotent orbit 2}
\[
J_{V_{1^{n-1},r-n+1},\psi_{n-1}}(\Theta)\cong J_{V_{1^{n},r-n},\tilde{\psi}_{n}}(\Theta).
\]
\end{Lem}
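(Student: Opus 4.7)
The plan is to realize $V_{1^n, r-n}$ as the product $V_{1^{n-1}, r-n+1} \cdot R_n$, where $R_n$ denotes the row-$n$ subgroup $\{\prod_{j=n+1}^r x_{n, j}(b_j) : b_j \in F\} \cong F^{r-n}$, and then recover the desired isomorphism via a Fourier expansion along $R_n$. A routine commutator calculation, using $[x_{i, n}(a), x_{n, j}(b)] = x_{i, j}(ab)$ for $i < n < j$, shows that $V_{1^{n-1}, r-n+1}$ is normal in $V_{1^n, r-n}$, that $R_n$ normalizes the pair $(V_{1^{n-1}, r-n+1}, \psi_{n-1})$, and that $\tilde\psi_n$ restricts to $\psi_{n-1}$ on $V_{1^{n-1}, r-n+1}$ and is trivial on $R_n$. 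By transitivity of Jacquet functors, for each character $\chi$ of $R_n$,
\[
J_{R_n, \chi}\bigl(J_{V_{1^{n-1}, r-n+1}, \psi_{n-1}}(\Theta)\bigr) \cong J_{V_{1^n, r-n}, \tilde\psi_n \cdot \chi}(\Theta),
\]
where $\tilde\psi_n \cdot \chi$ extends $\chi$ trivially across $V_{1^{n-1}, r-n+1}$.

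Next I would show that the right-hand side vanishes for every nontrivial $\chi$. The Levi factor $\GL_{r-n}$, embedded as the lower-right $(r-n)\times(r-n)$ block, normalizes $V_{1^n, r-n}$, fixes $\tilde\psi_n$ (whose defining roots sit in the upper-left $n\times n$ corner), and acts transitively on nonzero characters of $R_n$ through the natural right action on the coefficient row $(c_{n+1}, \ldots, c_r)$. Conjugating, it suffices to prove the vanishing for the representative $\chi_0(v) = \psi(v_{n, n+1})$. But then $\tilde\psi_n \cdot \chi_0$ is precisely the standard character $\psi_n$ on $V_{1^n, r-n}$, and this falls under Lemma \ref{lem:vanishing unipotent orbit 1} with $m = m' = n+1$ and $\underline\epsilon$ empty. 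Since $n+1 > n$, we conclude $J_{V_{1^n, r-n}, \psi_n}(\Theta) = 0$, hence $J_{V_{1^n, r-n}, \tilde\psi_n \cdot \chi}(\Theta) = 0$ for every nontrivial $\chi$.

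To close the argument, I would invoke the following standard principle: if $\pi$ is a smooth representation of the abelian $p$-adic group $R_n \cong F^{r-n}$ and $J_{R_n, \chi}(\pi) = 0$ for every nontrivial character $\chi$, then $R_n$ acts trivially on $\pi$, so the natural surjection $\pi \twoheadrightarrow J_{R_n, \mathbf{1}}(\pi)$ is an isomorphism. Applied to $\pi = J_{V_{1^{n-1}, r-n+1}, \psi_{n-1}}(\Theta)$ in combination with the first display, this yields the lemma. The main obstacle is justifying this last principle rigorously: it is not purely formal, and the cleanest route is to view $\pi$ as a module over $C_c^\infty(R_n) \cong \mathcal{S}(\widehat{R_n})$ via Fourier transform and to exploit the finite generation of $\pi$ (inherited from the admissibility of $\Theta$) to reduce to a support/stalk argument on $\widehat{R_n}$. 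The preliminary commutator bookkeeping and the Levi conjugation in the first two steps are essentially routine once this framework is in place.
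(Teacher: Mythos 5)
Your argument is correct and is essentially the paper's proof: both decompose $V_{1^n,r-n}=V_{1^{n-1},r-n+1}\cdot R_n$, kill all nontrivial characters of $R_n$ via Lemma \ref{lem:vanishing unipotent orbit 1} (the paper compresses the Levi/torus conjugation that reduces a general nontrivial $\xi$ to $\psi(v_{n,n+1})$, which you rightly make explicit), and conclude that $R_n$ acts trivially. The final principle you worry about is standard for smooth representations of $F^{r-n}$ via exactly the $C_c^\infty(\widehat{R_n})$-module/support argument you sketch, and it needs no finite generation or admissibility hypothesis.
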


\begin{proof}
The group $R_{n}$ acts on $V_{1^{n-1},r-n+1}$. For any nontrivial character $\xi$ of $R_{n}$,
\[
J_{R_{n},\xi}(J_{V_{1^{n-1},r-n+1},\psi_{n-1}}(\Theta))=0
\]
by Lemma \ref{lem:vanishing unipotent orbit 1}. Therefore, the action of $R_{n}$ on $J_{V_{1^{n-1},r-n+1},\psi_{n-1}}(\Theta)$ is trivial, and
\[
J_{V_{1^{n-1},r-n+1},\psi_{n-1}}(\Theta)\cong J_{V_{1^{n},r-n},\tilde\psi_{n}}(\Theta).
\]
\end{proof}

Now we are ready to prove Theorem \ref{thm:local unipotent orbit 1} part (1). Indeed,  since $p_1>n$,
\[
J_{U_{\sco},\psi_{\sco}}(\Theta)=J_{\ast}(J_{V_{1^{p_1-1},r-p_1+1},\psi_{p_1-1}}(\Theta))=0.
\]
Here $\ast$ is some unipotent subgroup of $V_2(\sco)$.
By Lemma \ref{lem:consequence of root exchange}, this implies
\[
J_{V_2(\sco),\psi_{V_2(\sco)}}(\Theta)=0.
\]

\subsection{Nonvanishing results}

In this subsection, $\sco=(n^ab)$. For $1\leq i\leq a$, consider $V_{1^{in-1},r-in+1}$ and its characters attached to the partitions $(n^i)$ and $(n^{i-1}(n+1))$, respectively. We can prove the following lemma by using the same arguments as in Lemma \ref{lem:vanishing unipotent orbit 1} and \ref{lem:vanishing unipotent orbit 2}.

\begin{Lem}\label{lem:vanishing unipotent orbit 3}
\hspace{2em}
\begin{enumerate}[\normalfont (1)]
\item $J_{V_{1^{in},r-in},\psi_{(n^{i-1}(n+1))}}(\Theta)=0$.
\item $J_{V_{1^{in-1},r-in+1},\psi_{(n^{i})}}(\Theta) \cong J_{V_{1^{in},r-in},\psi_{(n^{i})}}(\Theta).$
\end{enumerate}
\end{Lem}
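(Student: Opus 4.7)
The plan is to adapt the inductive arguments from Lemmas \ref{lem:vanishing unipotent orbit 1} and \ref{lem:vanishing unipotent orbit 2} to this shifted setting, in which the ``Whittaker block'' starts at column $(i-1)n+1$ rather than at column $1$. The reason part (1) has a chance of being true is that $\psi_{(n^{i-1}(n+1))}$, extended trivially from $V_{1^{in}, r-in}$ to $U$, is the semi-Whittaker character $\psi_\lambda$ attached to the partition $\lambda = (n^{i-1}, n+1, 1^{r-in-1})$, which has a part of size $n+1 > n$. By Corollary \ref{cor:Vanishing semi whittaker}, the semi-Whittaker module $J_{U,\psi_\lambda}(\Theta)$ vanishes. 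My task is to propagate this vanishing inward from $U$ to the intermediate unipotent $V_{1^{in}, r-in}$.

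For part (1), I would run the same downward induction on $m'$ as in the proof of Lemma \ref{lem:vanishing unipotent orbit 1}: for $r \geq m' \geq in+1$, show by induction that $J_{V_{1^{m'-1}, r-m'+1}, \psi'}(\Theta) = 0$ for every character $\psi'$ which equals $\psi_{(n^{i-1}(n+1))}$ on the positions $(j, j+1)$ with $j \leq in$ and equals $\psi(\epsilon_j v_{j, j+1})$ on the positions $(j, j+1)$ with $in < j < m'$, for arbitrary $\epsilon_j \in F$. The base case $m' = r$ reduces to a twisted Jacquet of $\Theta$ along $U$ for a partition whose first parts are $(n^{i-1}, n+1)$, so Corollary \ref{cor:Vanishing semi whittaker} applies. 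In the inductive step one conjugates the row subgroup $R_{m'-1}$ against the existing character and observes that the composite $J_{R_{m'-1},\xi}\bigl(J_{V_{1^{m'-2}, r-m'+2}, \psi''}(\Theta)\bigr)$ is a twisted Jacquet module at level $m'$ of the same form, hence zero by induction, for every character $\xi$; this forces vanishing at level $m'-1$.

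For part (2), the strategy of Lemma \ref{lem:vanishing unipotent orbit 2} applies directly with the row shifted to row $in$. The groups $V_{1^{in-1}, r-in+1}$ and $V_{1^{in}, r-in}$ differ exactly by the row subgroup $R_{in}$ consisting of entries in row $in$ beyond column $in$, and $R_{in}$ normalizes $V_{1^{in-1}, r-in+1}$ and preserves $\psi_{(n^i)}$. I would show that every nontrivial twisted Jacquet $J_{R_{in}, \xi}$ of $J_{V_{1^{in-1}, r-in+1}, \psi_{(n^i)}}(\Theta)$ vanishes: after conjugating by a permutation of the columns $\{in+1, \ldots, r\}$, which leaves $\psi_{(n^i)}$ unchanged because this character is supported on $(j, j+1)$ with $j < in$, we may assume $\xi$ is the standard $\psi$-character on the root subgroup $(in, in+1)$. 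The composite twisted Jacquet then becomes $J_{V_{1^{in}, r-in}, \psi_{(n^{i-1}(n+1))}}(\Theta)$, which is zero by part (1). Hence $R_{in}$ acts trivially on $J_{V_{1^{in-1}, r-in+1}, \psi_{(n^i)}}(\Theta)$, giving the claimed isomorphism.

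The main obstacle will be the careful bookkeeping for part (1): one must verify that the class of characters $\psi'$ to which the induction hypothesis applies is stable under extension by an arbitrary character $\xi$ of the next row subgroup $R_{m'-1}$. The key observation is that adding the $R_{m'-1}$-character only affects the positions $(m'-1, j)$ with $j \geq m'$; the $(m'-1, m')$-contribution fits cleanly into the form $\psi(\epsilon_{m'-1} v_{m'-1, m'})$, while the remaining entries $(m'-1, j)$ with $j > m'$ can be eliminated by root exchange (Lemma \ref{lem:local root exchange}) against columns further right. As long as this exchange produces a character still in the inductive class, the induction closes and both parts of the lemma follow.
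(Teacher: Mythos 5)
Your proposal is correct and is essentially the paper's own argument: the paper gives no separate proof, stating only that the arguments of Lemmas \ref{lem:vanishing unipotent orbit 1} and \ref{lem:vanishing unipotent orbit 2} carry over, and your reconstruction --- base case from Corollary \ref{cor:Vanishing semi whittaker} applied to a partition containing a part $n+1>n$, downward induction via Fourier expansion along the row subgroups $R_{m'-1}$, and deduction of part (2) from part (1) by showing every nontrivial character of $R_{in}$ contributes zero --- is exactly that argument. Two cosmetic corrections: a nontrivial character of a row subgroup is brought to standard position by conjugating with a general element of $\GL_{r-in}(F)$ (resp. $\GL_{r-m'+1}(F)$) acting on the trailing columns, not by a mere column permutation, and it is this same Levi conjugation (rather than root exchange) that absorbs the entries $(m'-1,j)$ with $j>m'$.
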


Now we prove the following nonvanishing result (Theorem \ref{thm:local unipotent orbit 1} part (2)).

\begin{Prop}\label{prop:nonvanishing 1}
$J_{V_2(\sco),\psi_{V_2(\sco)}}(\Theta)\neq 0.$
\end{Prop}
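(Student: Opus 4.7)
The plan is to deduce the nonvanishing from the already-established nonvanishing of the semi-Whittaker functional for the partition $\lambda=(n^ab)$, together with a sequence of root exchanges. By Lemma \ref{lem:consequence of root exchange}(2), showing $J_{V_2(\sco),\psi_{V_2(\sco)}}(\Theta)\neq 0$ is equivalent to showing $J_{U_\sco,\psi_{U_\sco}}(\Theta)\neq 0$. Since every part of $\lambda=(n^ab)$ is at most $n$, Corollary \ref{cor:local nonvanishing} gives $J_{U,\psi_\lambda}(\Theta)\neq 0$. Observe that $\psi_\lambda$ on $U$ and $\psi_{U_\sco}$ on $U_\sco$ coincide on common root subgroups (both act as $\psi$ on the simple roots interior to the blocks of $M_\lambda=\GL_n^a\times\GL_b$ and trivially elsewhere), and $U_\sco$ differs from $U$ exactly by the exclusion of the root subgroups $x_{(kn,j)}$ with $j>kn$ for $k=1,\ldots,a$---the ``last rows'' of the off-diagonal blocks in the recursive decomposition of $U_\sco$.

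The core of the argument is to pass between these two twisted Jacquet modules. I would induct on $a$, re-inserting one excluded row at a time. For each excluded root $(kn,j)$, performing a Fourier expansion along the corresponding one-parameter subgroup produces a sum over additive characters of $F$; the nontrivial summands correspond to Fourier coefficients attached to unipotent orbits strictly larger than $\sco$, namely orbits of the form $(\ldots (n+1)\ldots)$ containing a part greater than $n$. These all vanish by the same style of argument as in Lemma \ref{lem:vanishing unipotent orbit 1} and Lemma \ref{lem:vanishing unipotent orbit 3}(1): one further restricts attention to a semi-Whittaker character of type $(n^{k-1}(n+1)\ldots)$, which by Corollary \ref{cor:Vanishing semi whittaker} (or rather its reformulation via twisted Jacquet functors at one row at a time) cannot be supported on $\Theta$. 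Only the trivial character survives in the Fourier expansion, so the effect is to enlarge the unipotent group by the excluded root while preserving the character.

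Iterating this procedure, interleaved with applications of the root-exchange lemma (Lemma \ref{lem:local root exchange}) to move newly-inserted root subgroups into the correct position, yields---up to taking a (non-twisted) Jacquet functor along a trivial character, which preserves nonvanishing---an equivalence between $J_{U_\sco,\psi_{U_\sco}}(\Theta)$ and $J_{U,\psi_\lambda}(\Theta)$. Since the latter is nonzero, so is the former, and the proposition follows.

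The principal technical obstacle is that $U_\sco$ is not normal in $U$, so one cannot simply restrict; instead, the passage requires carefully chosen intermediate unipotent subgroups (analogous to the role of $V_{1^{in-1},r-in+1}$ in the proof of Lemma \ref{lem:vanishing unipotent orbit 2}) and a specific order in which to perform the Fourier expansions---roughly, innermost blocks first, by induction on $a$---so that the hypotheses (a)--(e) of Lemma \ref{lem:local root exchange} hold at each stage. A secondary complication appears when $n$ and $b$ have different parities: then $U_\sco\neq U'_\sco$ (cf.\ Remark \ref{rem:same parity}), so the desired statement is obtained only via the equivalence clause of Lemma \ref{lem:consequence of root exchange}(2) rather than the isomorphism of part (1), but this suffices since only nonvanishing is asserted.
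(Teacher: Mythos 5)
Your proposal is correct and follows essentially the same route as the paper: reduce to $J_{U_\sco,\psi_{U_\sco}}(\Theta)$ via Lemma \ref{lem:consequence of root exchange}, then alternately enlarge the unipotent group one ``missing row'' at a time (using the vanishing of the $(n^{i-1}(n+1))$-type coefficients, i.e.\ Lemma \ref{lem:vanishing unipotent orbit 3}, so that only the trivial character survives) and perform root exchange on the remaining blocks, terminating at the semi-Whittaker coefficient $J_{U,\psi_{(n^ab)}}(\Theta)\neq 0$ of Corollary \ref{cor:local nonvanishing}. The parity caveat you raise at the end matches the paper's treatment as well.
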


\begin{proof}
It suffices to show that $J_{U_\sco,\psi_{U_\sco}}(\Theta)\neq 0$. Indeed,
\[
\begin{aligned}
J_{U_{\sco},\psi_{U_\sco}}(\Theta)=&J_{U_2((n^{a-1}b)),\psi_{U_2((n^{a-1}b))}}(J_{V_{1^{n-1},r-n+1},\psi_{(n)}}(\Theta)) \\ \cong &J_{U_2((n^{a-1}b)),\psi_{U_2((n^{a-1}b))}}(J_{V_{1^{n},r-n},\psi_{(n)}}(\Theta)).
\end{aligned}
\]
Here, $U_2((n^{a-1}b))$ is viewed as a subgroup of $U$ via the  embedding $u\mapsto \mathrm{diag}(I_n, u)$. Now we  apply root exchange in $U_2((n^{a-1}b))$. The root exchange does not change anything we did in the previous step. Thus,
\[
J_{U_2((n^{a-1}b)),\psi_{U_2((n^{a-1}b))}}(J_{V_{1^{n},r-n},\psi_{(n)}}(\Theta))\neq 0
\]
if and only if
\[
J_{U_{(n^{a-1}b)},\psi_{U_{(n^{a-1}b)}}}(J_{V_{1^{n},r-n},\psi_{(n)}}(\Theta))\neq 0.
\]
Here, $U_{(n^{a-1}b)}$ is again viewed as a subgroup of $U$ via the same embedding. By Lemma \ref{lem:vanishing unipotent orbit 3},
\[
\begin{aligned}
&J_{U_{(n^{a-1}b)},\psi_{U_{(n^{a-1}b)}}}(J_{V_{1^{n},r-n},\psi_{(n)}}(\Theta))\\
=&J_{U_2((n^{a-2}b)),\psi_{U_2((n^{a-2}b))}}(J_{V_{1^{2n-1},r-2n+1},\psi_{(n^2)}}(\Theta))\\
\cong &J_{U_2((n^{a-2}b)),\psi_{U_2((n^{a-2}b))}}(J_{V_{1^{2n},r-2n},\psi_{(n^2)}}(\Theta)).
\end{aligned}
\]
Here, $U_2((n^{a-2}b))$ is viewed as a subgroup of $U$ via $u\mapsto \mathrm{diag}(I_{2n},u)$.

Now we repeat this process inductively. This implies that  $J_{V_2(\sco),\psi_{V_2(\sco)}}(\Theta)\neq 0$ if and only if
\[
0\neq J_{U_2((b)),\psi_{U_2((b))}}(J_{V_{1^{an},r-an},\psi_{(n^a)}}(\Theta))=J_{U,\psi_{((n^ab))}}(\Theta).
\]
The result follows from the nonvanishing results of the semi-Whittaker functionals (Corollary \ref{cor:local nonvanishing}).
\end{proof}

Now suppose that $n, b$ have the same parity. By Lemma \ref{lem:consequence of root exchange} part (3),  in all the steps of the  above proof, we actually obtain isomorphisms of twisted  Jacquet modules, instead of ``if and only if'' statements. This proves the following result.
\begin{Prop}
When $n$ and $b$ have the same parity,
\[
J_{V_2(\sco),\psi_{V_2(\sco)}}(\Theta)\cong J_{U,\psi_\lambda}(\Theta),
\]
where $\lambda$ is the partition $(n^ab)$.
\end{Prop}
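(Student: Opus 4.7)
The plan is to mimic the proof of Proposition \ref{prop:nonvanishing 1} step by step, but to replace every invocation of Lemma \ref{lem:consequence of root exchange} part (2)—which only gives equivalence of vanishing—by part (3) of the same lemma, which upgrades the statement to a genuine isomorphism of twisted Jacquet modules. The crucial input is that when $n$ and $b$ have the same parity, every partition of the form $(n^{a-i}b)$ appearing in the induction also has all parts of the same parity, so the hypothesis of Lemma \ref{lem:consequence of root exchange} part (3) is automatically satisfied at each stage.

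Concretely, I would first invoke Lemma \ref{lem:consequence of root exchange} part (3) on $\sco=(n^ab)$ to obtain
\[
J_{V_2(\sco),\psi_{V_2(\sco)}}(\Theta)\;\cong\; J_{U_\sco,\psi_{U_\sco}}(\Theta).
\]
Then, exactly as in the proof of Proposition \ref{prop:nonvanishing 1}, I rewrite the right-hand side in two stages,
\[
J_{U_\sco,\psi_{U_\sco}}(\Theta)=J_{U_2((n^{a-1}b)),\,\psi_{U_2((n^{a-1}b))}}\bigl(J_{V_{1^{n-1},r-n+1},\,\psi_{(n)}}(\Theta)\bigr),
\]
and use Lemma \ref{lem:vanishing unipotent orbit 3} part (2) to replace the inner $V_{1^{n-1},r-n+1}$ by $V_{1^{n},r-n}$, which is already an isomorphism in the original proof. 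Next, the sub-partition $(n^{a-1}b)$ still has all parts of the same parity, so Lemma \ref{lem:consequence of root exchange} part (3), now applied inside the embedded Levi $\GL_{r-n}\hookrightarrow\GL_r$, converts $U_2((n^{a-1}b))$ back into $U_{(n^{a-1}b)}$ (with matching character) via an isomorphism.

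Iterating this pair of moves—peeling off one $n$-block with Lemma \ref{lem:vanishing unipotent orbit 3} part (2), then rebalancing to the $U_{(\cdot)}$ form by Lemma \ref{lem:consequence of root exchange} part (3)—for a total of $a$ rounds produces a chain of isomorphisms ending in
\[
J_{U_2((b)),\,\psi_{U_2((b))}}\bigl(J_{V_{1^{an},r-an},\,\psi_{(n^a)}}(\Theta)\bigr)\;=\;J_{U,\psi_{(n^ab)}}(\Theta),
\]
which is exactly $J_{U,\psi_\lambda}(\Theta)$ for $\lambda=(n^ab)$.

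The step most in need of care is checking that the root exchange performed at stage $i+1$—which lives inside the Levi block of size $r-in$—is compatible with, and does not disturb, the exchanges carried out in the previous stages. Since at each stage the exchanged root subgroups occupy disjoint block positions determined by the nested embeddings $\GL_{r-in}\hookrightarrow\GL_r$, this compatibility is inherited directly from the construction of $U_2(\sco)$, $U_\sco$, and $U'_\sco$ in Lemma \ref{lem:consequence of root exchange}; the verification reduces to bookkeeping on nested block indices rather than any new computation.
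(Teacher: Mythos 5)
Your proposal is correct and follows essentially the same route as the paper: the paper's own proof simply says that, under the parity hypothesis, every ``if and only if'' step in the proof of Proposition \ref{prop:nonvanishing 1} upgrades to an isomorphism via Lemma \ref{lem:consequence of root exchange} part (3), which is exactly the replacement you carry out (your added check that the nested root exchanges in the successive Levi blocks do not interfere is the same observation the paper makes implicitly when it iterates the process).
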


When $r$ is a multiple of $n$, combining with Theorem \ref{thm:multiplicity one semi whittaker}, we obtain the following uniqueness result.
\begin{Thm}\label{thm:multiplicity one unipotent orbit}
When $r=mn$ and $\sco=(n^m)$,
\[
\dim J_{U_2(\sco),\psi_{U_2(\sco)}}(\Theta)= \dim J_{V_2(\sco),\psi_{V_2(\sco)}}(\Theta)=1.
\]
\end{Thm}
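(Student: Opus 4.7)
The plan is to reduce the theorem to Theorem \ref{thm:multiplicity one semi whittaker} by assembling the chain of isomorphisms already developed in the preceding lemmas and propositions; no new ideas beyond careful bookkeeping are required.

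First, I would observe that the pairs $(U_2(\sco),\psi_{U_2(\sco)})$ and $(V_2(\sco),\psi_{V_2(\sco)})$ are conjugate in $\GL_r$ by a Weyl group element, since the tori $h_\sco(t)$ and $h'_\sco(t)$ are conjugate by such an element by construction. Consequently the two corresponding twisted Jacquet modules of $\Theta$ are abstractly isomorphic (via the action of a lift of the Weyl element to $\tgl_r$), so it suffices to establish $\dim J_{V_2(\sco),\psi_{V_2(\sco)}}(\Theta)=1$.

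Next, since $r=mn$ forces $b=0$ and the partition $\sco=(n^m)$ consists entirely of copies of $n$, all parts trivially have the same parity. I would then invoke the proposition immediately preceding the theorem to obtain the isomorphism
\[
J_{V_2(\sco),\psi_{V_2(\sco)}}(\Theta)\cong J_{U,\psi_\lambda}(\Theta),\qquad \lambda=(n^m).
\]
Unwinding its proof for transparency: Lemma \ref{lem:consequence of root exchange}(3) identifies $J_{V_2(\sco),\psi_{V_2(\sco)}}(\Theta)$ with $J_{U_\sco,\psi_{U_\sco}}(\Theta)$, and then the inductive $n$-block stripping used in the proof of Proposition \ref{prop:nonvanishing 1}, combined with the isomorphism supplied by Lemma \ref{lem:vanishing unipotent orbit 3}(2), yields an actual identification of twisted Jacquet modules at each stage, not merely a vanishing equivalence. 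The chain terminates at $J_{U,\psi_\lambda}(\Theta)$ for $\lambda=(n^m)$.

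Finally, Theorem \ref{thm:multiplicity one semi whittaker} applied to the partition $\lambda=(n^m)$ gives $\dim J_{U,\psi_\lambda}(\Theta)=1$, and stringing together the three steps yields the desired equality of dimensions. The main thing to check is that every identification in the inductive stripping is indeed an isomorphism rather than an ``if and only if,'' which relies on all parts of $\sco$ having the same parity so that Lemma \ref{lem:consequence of root exchange}(3) applies unaltered at each stage; this is the only place where the restriction $r=mn$ enters in an essential way.
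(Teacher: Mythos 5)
Your proposal is correct and follows the paper's own argument: the paper likewise deduces the theorem by combining the proposition immediately preceding it (the isomorphism $J_{V_2(\sco),\psi_{V_2(\sco)}}(\Theta)\cong J_{U,\psi_\lambda}(\Theta)$, valid since all parts of $(n^m)$ trivially share the same parity, obtained by upgrading the ``if and only if'' steps of Proposition \ref{prop:nonvanishing 1} to isomorphisms via Lemma \ref{lem:consequence of root exchange}(3) and Lemma \ref{lem:vanishing unipotent orbit 3}(2)) with Theorem \ref{thm:multiplicity one semi whittaker}. Your additional remark that the $U_2$ and $V_2$ coefficients are Weyl-conjugate, hence give equal dimensions, is a correct and harmless elaboration of what the paper leaves implicit.
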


\begin{Rem}

We already proved the local results at good primes. At bad primes, these statements would be valid once we have the corresponding vanishing results of semi-Whittaker functionals.

\end{Rem}

This new unique functional in Theorem \ref{thm:multiplicity one unipotent orbit} is valuable and it already finds applications in Rankin-Selberg integrals for covering groups. The first -- doubling constructions for covering groups -- will be discussed in Section \ref{sec:WSS}. This unique functional also plays a
role in a new-way integral (Euler products with non-unique models) for covering groups; see
Ginzburg \cite{Ginzburg2016}.

\section{Unipotent Orbits: Global Results}\label{sec:global unipotent}
We are back to the global situation in this section. Let $\Theta_r$ be the global theta representation on $\tgl_r(\ba)$, defined in Section \ref{sec:theta global}. Let $(n^ab)$ be the unipotent orbit of $\GL_r$ as in  Section \ref{sec:local unipotent}. We determine $\sco(\Theta_r)$ in this section.

\subsection{Root exchange lemma: global version}\label{sec:global root exchange}

The following global root exchange lemma is proved in \cite{JL2013} Lemma 5.2; see also Ginzburg-Rallis-Soudry \cite{GRS2011} Section 7.1. This is the global version of Lemma \ref{lem:local root exchange}.

Let $C$ be an $F$-subgroup of a maximal unipotent subgroup of $\GL_r$, and let $\psi_C$ be a nontrivial character of $C(F)\bs C(\ba)$.  Let $X, Y$ be two unipotent $F$-subgroups satisfying the following conditions:

\begin{enumerate}[\normalfont (a)]
\item $X$ and $Y$ are abelian and normalize $C$;
\item $X(\ba)$ and $ Y(\ba)$ preserve $\psi_C$;
\item $X\cap C$ and $ Y\cap C$ are normal in $X$ and $ Y$, respectively;
\item $\psi_C$ is trivial on $(X\cap C)(\ba)$ and $(Y\cap C)(\ba)$;
\item $[ X,Y]\subset C$;
\item there is a nondegenerate pairing $( X\cap C)(\ba)\times ( Y\cap C)(\ba)\to \bc^\times$, given by $(x,y)\mapsto \psi_C([x,y])$, which is multiplicative in each coordinate, and identifies $( Y\cap C)(F)\bs  Y(F)$ with the dual of $X(F)(X\cap C)(\ba)\bs X(\ba)$, and $( X\cap C)(F)\bs X(F)$ with the dual of $Y(F)(Y\cap C)(\ba)\bs  Y(\ba)$.
\end{enumerate}

Let $B=CY$ and $D=C X$, and extend $\psi_C$ trivially to characters of $B(F)\bs B(\ba)$ and $D(F)\bs D(\ba)$, which are denoted by $\psi_B$ and $\psi_D$, respectively.

\begin{Lem}
Assume the quadruple $(C,\psi_C,X,Y)$ satisfies the above conditions. Let $f$ be an automorphic form on $\tgl_r(\ba)$. Then
\[
\int\limits_{C(F)\bs C(\ba)}f(cg)\psi_C^{-1}(c) \ dc\equiv 0,\qquad \forall g\in \tgl_r(\ba),
\]
if and only if
\[
\int\limits_{B(F)\bs B(\ba)}f(ug)\psi_B^{-1}(u) \ du\equiv 0,\qquad \forall g\in \tgl_r(\ba),
\]
if and only if
\[
\int\limits_{D(F)\bs D(\ba)}f(ug)\psi_D^{-1}(u) \ du\equiv 0,\qquad \forall g\in \tgl_r(\ba).
\]
\end{Lem}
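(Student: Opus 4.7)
The plan is to prove $(1) \Leftrightarrow (3)$; the equivalence $(1) \Leftrightarrow (2)$ then follows from the symmetry exchanging $X \leftrightarrow Y$, $B \leftrightarrow D$, under which all hypotheses (a)--(f) are preserved (the pairing in (f) is visibly symmetric).

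The implication $(1) \Rightarrow (3)$ is straightforward. Setting
\[
\Phi(g) := \int_{C(F) \bs C(\ba)} f(cg) \psi_C^{-1}(c) \, dc,
\]
assumption (1) says $\Phi \equiv 0$. Using $D = CX$, the normality of $X \cap C$ in $X$ by (c), the triviality of $\psi_C$ on $(X \cap C)(\ba)$ by (d), and the fact that $\psi_D$ extends $\psi_C$ trivially on $X$, we obtain the Fubini-type decomposition
\[
\int_{D(F) \bs D(\ba)} f(ug) \psi_D^{-1}(u) \, du = \int_{X(F)(X \cap C)(\ba) \bs X(\ba)} \Phi(xg) \, dx,
\]
which then vanishes identically in $g$.

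For the harder direction $(3) \Rightarrow (1)$, fix $g \in \tgl_r(\ba)$ and consider $\phi_g(x) := \Phi(xg)$. Using (a), (b), (d), one checks that $\phi_g$ descends to a smooth function on the compact abelian group $Q := X(F)(X \cap C)(\ba) \bs X(\ba)$: left $X(F)$-invariance follows by changing variables $c \mapsto \gamma c \gamma^{-1}$ in the definition of $\Phi$, using that $X(\ba)$ normalizes $C$ and preserves $\psi_C$; right $(X \cap C)(\ba)$-invariance uses that $X$ is abelian together with (d). By (f), the Pontryagin dual of $Q$ is identified with $(Y \cap C)(F) \bs Y(F)$ via $y \mapsto \chi_y$, where $\chi_y(x) := \psi_C([x, y])$, so Fourier expansion yields $\phi_g = \sum_y a_g(y)\, \chi_y$. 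It thus suffices to show each coefficient $a_g(y)$ vanishes, since then $\Phi(g) = \phi_g(1) = 0$, which is (1).

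The main step is to extract the vanishing of every $a_g(y)$ from (3). I apply (3) with $g$ replaced by $yg$ for $y \in Y(F)$, obtaining $0 = \int_Q \Phi(xyg) \, dx$. Using the commutation identity $xy = [x,y]\,yx$ with $[x,y] \in C(\ba)$ by (e), together with the left $C(\ba)$-equivariance of $\Phi$ and the invariance $\Phi(yg') = \Phi(g')$ for $y \in Y(F)$ (a change of variables in $c$ permitted by the normalization of $C$ by $Y$ from (a) and the preservation of $\psi_C$ from (b), plus left $\GL_r(F)$-invariance of $f$), this rewrites as $\int_Q \phi_g(x) \psi_C([x,y]) \, dx = 0$. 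By bilinearity of the pairing in (f), $\psi_C([x,y]) = \chi_{y^{-1}}(x)^{-1}$, so the displayed identity is precisely $a_g(y^{-1}) = 0$; as $y$ ranges over $Y(F)$, so does $y^{-1}$, hence $a_g(y) = 0$ for every $y \in (Y \cap C)(F) \bs Y(F)$, completing the proof. The main technical points to attend to are the correct measure normalizations in the Fubini-type decompositions of adelic unipotent integrals and the justification of Fourier expansion on the compact abelian quotient $Q$; both are standard in this setting.
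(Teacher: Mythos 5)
Your proof is correct and is essentially the standard argument: the paper itself gives no proof, citing Jiang--Liu (Lemma 5.2) and Ginzburg--Rallis--Soudry (Section 7.1), and your Fourier-expansion of $x\mapsto\Phi(xg)$ on the compact abelian quotient $Q$, with the coefficients identified via $\Phi(xyg)=\psi_C([x,y])\Phi(xg)$ as translates of the $D$-integral, is exactly the argument in those references (and is the global counterpart of the paper's Lemma \ref{lem:local root exchange}). The symmetry reduction of $(1)\Leftrightarrow(2)$ to $(1)\Leftrightarrow(3)$ is also as in the sources, since conditions (a)--(f) are symmetric in $(X,B)\leftrightarrow(Y,D)$.
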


\subsection{Vanishing results}

\begin{Prop}\label{prop:vanishing global unipotent orbit}
Let $\theta$ be in the space of $\Theta_r$. Let $\sco$ be a unipotent orbit which is greater than or not comparable to $(n^ab)$. Then the integral
\[
\int\limits_{U_2(\sco)(F)\bs U_2(\sco)(\ba)}\theta(ug)\psi_{U_2(\sco)}(u) \ du
\]
is zero for all choices of data.
\end{Prop}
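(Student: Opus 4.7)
The plan is to reduce Proposition 8.2 to the local vanishing result Theorem 7.1 part (1), by the same local--global argument that was used in the proof of Proposition 5.2. Suppose, toward a contradiction, that
\[
\ell(\theta) := \int\limits_{U_2(\sco)(F)\bs U_2(\sco)(\ba)} \theta(ug)\psi_{U_2(\sco)}(u)\,du
\]
is nonzero for some $\theta \in \Theta_r$ and $g \in \tgl_r(\ba)$. Then $\ell$ is a nonzero linear functional on $\Theta_r$. Writing $\Theta_r \cong \widetilde\otimes_v{}' \Theta_{r,v}$ as in Section 2.5 and choosing a factorizable vector $\otimes_v{}' \theta_{0,v}$ on which $\ell$ does not vanish, I fix a finite place $w$ with $|n|_w = 1$ at which $\Theta_{r,w}$ is unramified and define $\ell_w : \Theta_{r,w} \to \bc$ by
\[
\ell_w(\theta_w) = \ell\bigl(\theta_w \otimes (\otimes'_{v\neq w}\theta_{0,v})\bigr).
\]
This is a nonzero $(U_2(\sco)(F_w), \psi_{U_2(\sco),w})$-equivariant functional, so the twisted Jacquet module $J_{U_2(\sco)(F_w), \psi_{U_2(\sco),w}}(\Theta_{r,w})$ is nonzero.

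The next step is to observe that the hypothesis on $\sco$ forces $p_1 > n$, where $\sco = (p_1 \cdots p_k)$ with $p_1 \geq \cdots \geq p_k > 0$. If instead $p_1 \leq n$, then $p_1 + \cdots + p_i \leq in$ for every $i$, and comparing coordinatewise with the partial sums $n, 2n, \ldots, an, an+b = r$ of $(n^a b)$ yields $\sco \leq (n^a b)$, contradicting the assumption that $\sco$ is strictly greater than or incomparable to $(n^a b)$. Hence $p_1 > n$.

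Combining the two steps, Theorem 7.1 part (1) forces $J_{U_2(\sco)(F_w), \psi_{U_2(\sco),w}}(\Theta_{r,w}) = 0$, which contradicts the nonvanishing obtained above. The argument transfers from the semi-Whittaker setting of Proposition 5.2 to the present setting without modification because the local--global dictionary is formally identical: in both cases the Fourier coefficient is an integral over a unipotent subgroup of $\GL_r(\ba)$ against a global additive character, and such integrals factor through the local components at any unramified place. I therefore expect no substantive obstacle; the entire content of the proposition is carried by the local Theorem 7.1 part (1), together with the elementary partition comparison in the middle paragraph.
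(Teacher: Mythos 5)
Your proposal is correct and follows essentially the same route as the paper: reduce to the local vanishing statement (Theorem \ref{thm:local unipotent orbit 1} part (1)) via a factorizable vector and a nonzero local functional at an unramified place with $|n|_w=1$, exactly as in the proof of Proposition \ref{prop: vanishing local semi whittaker}. Your explicit verification that the hypothesis on $\sco$ forces $p_1>n$ is a detail the paper only states in passing (as the remark preceding Theorem \ref{thm:local unipotent orbit 2}), and it is correct.
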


\begin{proof}

As in the case of semi-Whittaker coefficients, the global vanishing result follows from the local vanishing result.  If
\[
\int\limits_{U_2(\sco)(F)\bs U_2(\sco)(\ba)}\theta(ug)\psi_{U_2(\sco)}(u) \ du
\]
is nonzero for some choice of data, then the functional $l:\Theta_r\to\bc$ defined by
\[
 \theta\longmapsto
\int\limits_{U_2(\sco)(F)\bs U_2(\sco)(\ba)}\theta(ug)\psi_{U_2(\sco)}(u) \ du
\]
is nonzero. As explained in the proof of Proposition \ref{prop: vanishing local semi whittaker}, we can choose a factorizable vector $\otimes'_v \theta_{0,v}$  such that $l(\otimes'_v\theta_{0,v})\neq 0$.

Let $w$ be a non-Archimedean place of $F$ such that   $|n|_w=1$ and $\Theta_r$ is unramified at $w$. Define a local functional $l_w:\Theta_{r,w}\to \bc$ by
\[
\theta_w \mapsto l(\theta_w \otimes (\otimes'_{v\neq w}\theta_{0,v} )).
\]
By our construction, $l_w$ is nonzero. Now the functional $l_w$ factors through the twisted Jacquet module of $\Theta_{r,w}$ for the character $\psi_{U_2(\sco)}(u)$ on the group $U_2(\sco)(F_w)$. This implies that $J_{U_2(\sco)(F_w),\psi_{U_2(\sco)}}(\Theta_{r,w})\neq 0$. This contradicts the local result.
\end{proof}

\subsection{Nonvanishing results}

\begin{Prop}\label{prop:nonvanishing global unipotent}
Let $\theta$ be in the space of $\Theta_r$. Let $\sco=(n^ab)$. Then the integral
\[
\int\limits_{U_2(\sco)(F)\bs U_2(\sco)(\ba)}\theta(ug)\psi_{U_2(\sco)}(u) \ du
\]
is nonzero for some choice of data.
\end{Prop}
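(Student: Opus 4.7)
The plan is to mirror the local argument in Proposition~\ref{prop:nonvanishing 1}, replacing each twisted Jacquet module statement by the corresponding Fourier coefficient statement and using the global root exchange lemma of Section~\ref{sec:global root exchange} in place of its local counterpart. The final ingredient will be the global nonvanishing of semi-Whittaker coefficients (Theorem~\ref{thm:Global Nonvanishing semi whittaker}) together with the vanishing result Proposition~\ref{prop:vanishing global unipotent orbit}.

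First I would establish global analogues of Lemma~\ref{lem:consequence of root exchange}(2) and Lemma~\ref{lem:vanishing unipotent orbit 3}. For the former, the very same root-exchange pattern carried out in the proof of Lemma~\ref{lem:consequence of root exchange}, but executed with the global root exchange lemma, shows that
\[
\int_{V_2(\sco)(F)\bs V_2(\sco)(\ba)}\theta(ug)\psi_{V_2(\sco)}(u)\,du
\]
is nonzero for some $\theta\in\Theta_r$ and $g\in\tgl_r(\ba)$ if and only if
\[
\int_{U_\sco(F)\bs U_\sco(\ba)}\theta(ug)\psi_{U_\sco}(u)\,du
\]
is nonzero for some such data. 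For the latter, the Fourier expansion of $\theta$ along the one-dimensional row $R_{in}$ sitting between $V_{1^{in-1},r-in+1}$ and $V_{1^{in},r-in}$ expresses the $V_{1^{in-1},r-in+1}$-coefficient against $\psi_{(n^i)}$ as a sum over all characters of $R_{in}(F)\bs R_{in}(\ba)$; every nontrivial summand is a Fourier coefficient attached to a partition containing a block of size $n+1$, and hence vanishes by Proposition~\ref{prop:vanishing global unipotent orbit}, leaving only the trivial character, so the integral equals the one over the larger group $V_{1^{in},r-in}$ against $\psi_{(n^i)}$.

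With these two ingredients I would iterate exactly as in the proof of Proposition~\ref{prop:nonvanishing 1}. The first block of size $n$ converts $U_\sco$ into $V_{1^{n-1},r-n+1}$ followed by $U_2((n^{a-1}b))$ (embedded via $u\mapsto\mathrm{diag}(I_n,u)$), after which the Fourier expansion step enlarges $V_{1^{n-1},r-n+1}$ to $V_{1^n,r-n}$; a further root exchange inside $U_2((n^{a-1}b))$ then replaces it by $U_{(n^{a-1}b)}$ without disturbing anything done before. Repeating this $a$ times reduces the nonvanishing of the original Fourier coefficient to the nonvanishing of the semi-Whittaker coefficient
\[
\int_{U(F)\bs U(\ba)}\theta(ug)\psi_{(n^ab)}(u)\,du.
\]
Since every part of the partition $(n^ab)$ is at most $n$, Theorem~\ref{thm:Global Nonvanishing semi whittaker} provides $\theta$ and $g$ for which this last integral is nonzero, and the proposition follows.

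The main obstacle I anticipate is the bookkeeping: at each iteration one must verify the assumptions (a)--(f) of the global root exchange lemma for the particular quadruples $(C,\psi_C,X,Y)$ that arise, and one must check that the characters of $R_{in}$ killed by Proposition~\ref{prop:vanishing global unipotent orbit} are indeed those producing a partition with a part exceeding $n$. These verifications are formally parallel to the local case in Section~\ref{sec:local unipotent} and rest only on the Steinberg relations and the combinatorics of the torus $h_\sco(t)$, so no new idea beyond a careful global transcription of the local argument should be required.
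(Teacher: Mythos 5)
Your proposal is correct and is essentially the paper's own (largely omitted) argument: globalize the root exchanges and the Fourier-expansion/vanishing lemmas of Section \ref{sec:local unipotent}, iterate as in Proposition \ref{prop:nonvanishing 1}, and reduce to the global nonvanishing of semi-Whittaker coefficients (Theorem \ref{thm:Global Nonvanishing semi whittaker}). One small correction: the nontrivial terms in the Fourier expansion along $R_{in}$ are coefficients of the form $\psi_{(n^{i-1}(n+1))}$ on $V_{1^{in},r-in}$, which are not literally Fourier coefficients attached to a unipotent orbit, so their vanishing should be cited to the global analogues of Lemmas \ref{lem:vanishing unipotent orbit 1} and \ref{lem:vanishing unipotent orbit 3} --- established by the same unramified-place, local-to-global argument used in Proposition \ref{prop:vanishing global unipotent orbit} --- rather than to Proposition \ref{prop:vanishing global unipotent orbit} itself.
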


\begin{proof}
The proof is analogous to the local case. Once we have the global root exchange lemma and global vanishing results, the nonvanishing results follow from the corresponding nonvanishing results on the semi-Whittaker coefficients. Notice that the global version of Lemma \ref{lem:vanishing unipotent orbit 1}, \ref{lem:vanishing unipotent orbit 2} and  \ref{lem:vanishing unipotent orbit 3} can be established by using the corresponding local results.  We omit the details. We remark that a similar argument can be found in \cite{JL2013} Section 5.
\end{proof}

\subsection{Unipotent orbits attached to Theta representations}

Finally we determine the unipotent orbit attached to $\Theta_r$.
\begin{Thm}\label{thm:global unipotent orbit}
The unipotent orbit attached to $\Theta_r$ is $(n^ab)$. In other words, $\sco(\Theta_r)=(n^ab)$.
\end{Thm}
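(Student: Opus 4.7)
The plan is to deduce this theorem immediately from the two propositions already proved in this section. By Definition \ref{def:unipotent orbit}, showing $\sco(\Theta_r)=(n^ab)$ amounts to two things: (i) $\Theta_r$ admits a nonzero Fourier coefficient attached to the orbit $(n^ab)$, and (ii) for every unipotent orbit $\sco'>(n^ab)$, all Fourier coefficients of $\Theta_r$ associated with $\sco'$ vanish identically. Part (i) is exactly Proposition \ref{prop:nonvanishing global unipotent}, so there is nothing further to do there.

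For part (ii), I would invoke Proposition \ref{prop:vanishing global unipotent orbit}, which already establishes vanishing for every orbit $\sco'$ that is either strictly larger than $(n^ab)$ or incomparable with it. The key combinatorial remark (recorded in the text just before Theorem \ref{thm:local unipotent orbit 2}) is that any partition $\sco'=(p_1\cdots p_l)$ of $r$ satisfying $\sco'>(n^ab)$ or incomparable with $(n^ab)$ must have $p_1>n$; this is what lets the vanishing statement of Proposition \ref{prop:vanishing global unipotent orbit} cover every orbit strictly above $(n^ab)$ in the partial order.

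Putting the two inputs together: if $\sco\in\sco(\Theta_r)$, then some Fourier coefficient attached to $\sco$ is nonzero, so by Proposition \ref{prop:vanishing global unipotent orbit} we cannot have $\sco>(n^ab)$ nor $\sco$ incomparable with $(n^ab)$, forcing $\sco\leq (n^ab)$. Conversely, by Proposition \ref{prop:nonvanishing global unipotent}, $(n^ab)$ itself supports a nonzero Fourier coefficient, and no orbit strictly larger does, so $(n^ab)\in\sco(\Theta_r)$. These two observations together give $\sco(\Theta_r)=(n^ab)$ in the sense of Definition \ref{def:unipotent orbit}.

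Since the real mathematical content lies in Propositions \ref{prop:vanishing global unipotent orbit} and \ref{prop:nonvanishing global unipotent}, there is no substantial obstacle at this final assembly step; the only thing to be careful about is the bookkeeping that identifies every partition above or incomparable to $(n^ab)$ as having first part $>n$, which is immediate from the definition of the dominance ordering on partitions of $r$.
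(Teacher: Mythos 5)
Your proposal is correct and is essentially identical to the paper's own (one-line) proof: the theorem is deduced directly from Definition \ref{def:unipotent orbit} together with Propositions \ref{prop:vanishing global unipotent orbit} and \ref{prop:nonvanishing global unipotent}. Your extra bookkeeping (that orbits above or incomparable to $(n^ab)$ have first part $>n$, and that no orbit strictly below $(n^ab)$ can lie in $\sco(\Theta_r)$ once $(n^ab)$ supports a nonzero coefficient) is accurate and only makes the assembly more explicit than the paper's.
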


\begin{proof}
By Definition \ref{def:unipotent orbit},  this follows from Propositions \ref{prop:vanishing global unipotent orbit} and \ref{prop:nonvanishing global unipotent}.
\end{proof}

\subsection{Whittaker-Speh-Shalika representations}\label{sec:WSS}

In the research announcement \cite{CFGK2016}, the famous doubling method is extended to the covering groups. A family of automorphic representations of $\tgl_r(\ba)$ are introduced as the induction data of Eisenstein series on covers of suitable split classical groups. We recall the definition here.

\begin{Def}\label{def1}
An irreducible genuine automorphic representation  $\pi$ of  $\tgl_{ab}({\ba})$ is a Whittaker-Speh-Shalika representation of type $(a,b)$ if:
\begin{enumerate}[\normalfont(1)]
\item\label{def:Whittaker-Speh-Shalika 1}
${\mathcal O}(\pi)=(a^b)$.
\item\label{def:Whittaker-Speh-Shalika 2}
For a finite place $v$, let $\pi_v$ denote the
irreducible constituent of $\pi$ at $v$.
Suppose that $\pi_v$ is an unramified representation. Then ${\mathcal O}(\pi_v)=(a^b)$.
(That is, the local analogue of part~(\ref{def:Whittaker-Speh-Shalika 1}) holds.) Moreover,
\begin{equation}
\dim{\Hom}_{U_2((a^b))(F_v)}(\pi_v,\psi_{U_2((a^b))})=1.
\end{equation}
\end{enumerate}
\end{Def}

Thus, when $r$ is a multiple of $n$, we can rephrase Theorem \ref{thm:local unipotent orbit 2}, \ref{thm:multiplicity one unipotent orbit} and \ref{thm:global unipotent orbit} as follows.

\begin{Thm}\label{thm:WSS}
When $r=mn$, the representation $\Theta_r$ is a Whittaker-Speh-Shalika representation of type $(n,m)$.
\end{Thm}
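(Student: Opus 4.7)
The plan is to verify the two conditions of Definition \ref{def1} directly, since Theorem \ref{thm:WSS} is essentially a repackaging of the earlier main results for the case where the rank equals a multiple of the cover degree. Specializing $r = mn$ in the expression $r = an + b$ gives $a = m$ and $b = 0$, so the partition $(n^a b)$ collapses to $(n^m)$. Thus we must show that $\Theta_r$, viewed as an irreducible genuine automorphic representation of $\tgl_{nm}(\ba)$, has unipotent orbit $(n^m)$ both globally and at unramified local places, together with the appropriate one-dimensional $\Hom$ space.

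For condition (1), that $\sco(\Theta_r) = (n^m)$, I would simply invoke Theorem \ref{thm:global unipotent orbit} with $a = m$, $b = 0$. For condition (2), fix a finite place $v$ at which $\Theta_{r,v}$ is unramified; then in particular $|n|_v = 1$, so the local hypotheses of Section \ref{sec:local unipotent} are in force. Theorem \ref{thm:local unipotent orbit 2} gives $\sco(\Theta_{r,v}) = (n^m)$, which is the local analogue of (1). Theorem \ref{thm:multiplicity one unipotent orbit} yields
\[
\dim J_{U_2((n^m))(F_v),\,\psi_{U_2((n^m))}}(\Theta_{r,v}) \;=\; 1.
\]
To convert this to the multiplicity statement in Definition \ref{def1}, I would use the standard duality between twisted Jacquet modules and equivariant linear functionals: a linear form $\ell$ on a smooth representation $\pi$ satisfies $\ell(\pi(u)v) = \psi_U(u)\ell(v)$ for all $u \in U$ precisely when $\ell$ factors through the coinvariant quotient $J_{U,\psi_U}(\pi)$, so
\[
\Hom_{U_2((n^m))(F_v)}\bigl(\Theta_{r,v},\,\psi_{U_2((n^m))}\bigr) \;\cong\; J_{U_2((n^m))(F_v),\,\psi_{U_2((n^m))}}(\Theta_{r,v})^{\ast},
\]
which is one-dimensional. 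This establishes condition (2).

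There is no genuinely hard step left at this stage: all the substance has been carried out earlier. The nontrivial content is located in Theorems \ref{thm:local unipotent orbit 2} and \ref{thm:global unipotent orbit} (determining the attached unipotent orbit via root exchange and the chain of reductions in Sections \ref{sec:local unipotent} and \ref{sec:global unipotent}) and in Theorem \ref{thm:multiplicity one unipotent orbit} (which in turn rests on Theorem \ref{thm:multiplicity one semi whittaker} on uniqueness of semi-Whittaker functionals when the rank is a multiple of the cover degree). The one point that requires a brief remark in the write-up, rather than a substantive argument, is the identification of the Hom space with the dual of the twisted Jacquet module, and the observation that unramified places in the sense of the definition automatically satisfy $|n|_v = 1$ so that the hypotheses of the cited local theorems apply.
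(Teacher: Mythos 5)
Your proposal is correct and takes essentially the same approach as the paper, which states Theorem \ref{thm:WSS} as a direct rephrasing of Theorems \ref{thm:local unipotent orbit 2}, \ref{thm:multiplicity one unipotent orbit} and \ref{thm:global unipotent orbit} --- exactly the three results you invoke for the two conditions of Definition \ref{def1}. Your added remarks on the duality between the twisted Jacquet module and the $\Hom$ space, and on unramifiedness of $\Theta_{r,v}$ presupposing $|n|_v=1$ (so that the hypotheses of the local theorems apply), correctly fill in details the paper leaves implicit.
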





\bibliographystyle{siam}




\end{document}